\DeclareMathAlphabet{\mathcal}{OMS}{cmsy}{m}{n} 
\newcommand{\DD}{\mathsf{D}}
\newcommand{\GG}{\mathcal{G}}
\newcommand{\homeoc}{\operatorname{Homeo}_c}
\newcommand{\homeo}{\operatorname{Homeo}}
\newcommand{\orb}{\operatorname{Orb}}
\newcommand{\G}{\operatorname{Germ}}
\newcommand{\osh}[1][E]{\sigma_{#1}}
\newcommand{\OSS}[1][E]{\partial{#1}}
\newcommand{\catname}[1]{\mathbf{#1}}
\newcommand{\TopG}{\catname{SpatG}}
\newcommand{\Groupoid}{\catname{Gpoid}}
\newcommand{\Cstar}{C^*}
\DeclareMathOperator{\supp}{supp}
\DeclareMathOperator{\aut}{Aut}
\DeclareMathOperator{\ck}{CK}
\DeclareMathOperator{\co}{CO}
\DeclareMathOperator{\id}{id}
\DeclareMathOperator{\spn}{span}
\DeclareRobustCommand{\SkipTocEntry}[5]{} 
\newtheorem{lemma}{Lemma}[section]
\newtheorem{corollary}[lemma]{Corollary}
\newtheorem{theorem}[lemma]{Theorem}
\newtheorem{proposition}[lemma]{Proposition}
\newtheorem{introtheorem}{Theorem}
\theoremstyle{definition}
\newtheorem{definition}[lemma]{Definition}
\newtheorem{example}[lemma]{Example}
\newtheorem{remark}[lemma]{Remark}
\def\l@section{\@tocline{1}{0pt}{1pc}{}{}}
\def\l@subsection{\@tocline{2}{0pt}{1pc}{4.6em}{}}
\def\l@subsubsection{\@tocline{3}{0pt}{1pc}{7.6em}{}}
\renewcommand{\tocsection}[3]{%
  \indentlabel{\@ifnotempty{#2}{\makebox[2.3em][l]{%
    \ignorespaces#1 #2.\hfill}}}#3}
\renewcommand{\tocsubsection}[3]{%
  \indentlabel{\@ifnotempty{#2}{\hspace*{2.3em}\makebox[2.3em][l]{%
    \ignorespaces#1 #2.\hfill}}}#3}
\renewcommand{\tocsubsubsection}[3]{%
  \indentlabel{\@ifnotempty{#2}{\hspace*{4.6em}\makebox[3em][l]{%
    \ignorespaces#1 #2.\hfill}}}#3}
\newcommand\@dotsep{4.5}
\def\@tocline#1#2#3#4#5#6#7{\relax
  \ifnum #1>\c@tocdepth 
  \else
    \par \addpenalty\@secpenalty\addvspace{#2}%
    \begingroup \hyphenpenalty\@M
    \@ifempty{#4}{%
      \@tempdima\csname r@tocindent\number#1\endcsname\relax
    }{%
      \@tempdima#4\relax
    }%
    \parindent\z@ \leftskip#3\relax
    \advance\leftskip\@tempdima\relax
    \rightskip\@pnumwidth plus1em \parfillskip-\@pnumwidth
    #5\leavevmode\hskip-\@tempdima #6\relax
    \leaders\hbox{$\m@th
      \mkern \@dotsep mu\hbox{.}\mkern \@dotsep mu$}\hfill
    \hbox to\@pnumwidth{\@tocpagenum{#7}}\par
    \nobreak
    \endgroup
  \fi}
 \def\l@subsection{\@tocline{2}{0pt}{30pt}{5pc}{}}
\title[Topological Full Groups of Ample Groupoids]{Topological Full Groups of Ample Groupoids with Applications to Graph Algebras}
\date{\today}
\subjclass[2010]{22A22, 46L05, 37B05, 54H20, 37B10, 16S10} 
\keywords{topological full group, étale groupoid, ample groupoid, graph groupoid, AF-groupoid, graph $\Cstar$-algebra, Leavitt path algebra}
\author[1]{Petter Nyland}
\author[2]{Eduard Ortega}
\address{Department of Mathematical Sciences, Faculty of Information Technology and Electrical Engineering, NTNU -- Norwegian University of Science and Technology, Trondheim, Norway}
\email{petter.nyland@ntnu.no}
\address{Department of Mathematical Sciences, Faculty of Information Technology and Electrical Engineering, NTNU -- Norwegian University of Science and Technology, Trondheim, Norway}
\email{eduard.ortega@ntnu.no}
\begin{document}

\begin{abstract}
We study the topological full group of ample groupoids over locally compact spaces. We extend Matui's definition of the topological full group from the compact, to the locally compact case. We provide two general classes of étale groupoids for which the topological full group, as an abstract group, is a complete isomorphism invariant. Hereby extending Matui's Isomorphism Theorem. As an application, we study graph groupoids and their topological full groups, and obtain sharper results for this class. The machinery developed in this process is used to prove an embedding theorem for ample groupoids, akin to Kirchberg's Embedding Theorem for \mbox{$C^*$-algebras}. Consequences for graph \mbox{$C^*$-algebras} and Leavitt path algebras are also spelled out. In particular, we improve on a recent embedding theorem of Brownlowe and S\o rensen for Leavitt path algebras.
\end{abstract}

\maketitle

\tableofcontents

\section{Introduction}

\addtocontents{toc}{\SkipTocEntry}
\subsection*{Background}
The study of (topological) full groups in the setting of topological dynamics was initiated by Giordano, Putnam and Skau~\cite{GPS}. This was inspired by the work of Dye~\cite{Dye} in the measurable setting, and by Krieger's study of so-called ample groups on the Cantor space~\cite{Kri}. For Cantor minimal systems, Giordano, Putnam and Skau showed that certain distinguished subgroups of the full group determine completely the orbit equivalence class, the strong orbit equivalence class, and the flip conjugacy class, respectively, of the system. The \emph{full group} of a Cantor system (i.e.\ a $\mathbb{Z}$-action on a Cantor space) consists of all homeomorphisms of the Cantor space which leave the orbits invariant. Roughly speaking, the \emph{topological full group} is the subgroup of the full group consisting of those homeomorphisms which additionally preserve the orbits in a continuous manner. Giordano, Putnam and Skau also connected the dynamics with the theory of \mbox{$C^*$-algebras}, via the crossed product construction and its $K$-theory~\cite{GPS3}. Thus, they exhibited a strong relationship between these, a priori, quite different mathematical structures.

This is but one example of the rich interplay between dynamical systems and \mbox{$C^*$-algebras}. (This interplay essentially goes all the way back to the inception of the field by Murray and von~Neumann~\cite{MvN}.) Another prominent example of this interplay is the connection between shifts of finite type and Cuntz-Krieger algebras; discovered by Cuntz and Krieger in the early eighties~\cite{CK}. In the setting of irreducible one-sided shifts of finite type, Matsumoto defined the topological full group of such a dynamical system and proved that this group determines the shift up to continous orbit equivalence, and also the associated Cuntz-Krieger algebra up to diagonal preserving isomorphism~\cite{Mats},~\cite{Mats2}. This parallelled Giordano, Putnam and Skau's results, although the dynamical systems were quite different. For instance, the former has no periodic points whereas the latter has a dense set of periodic points.

Using topological groupoids to model dynamical systems has unified many of these seemingly different connections between dynamics and \mbox{$C^*$-algebras}. Whenever one has a dynamical system of some sort, one may typically associate to it a topological groupoid, and from the groupoid one can construct its groupoid \mbox{$C^*$-algebra}. In many cases, isomorphism of such groupoids correspond to some suitable notion of continuous orbit equivalence of the dynamical systems, and also to diagonal preserving isomorphism of the groupoid \mbox{$C^*$-algebras}~\cite{MM}, \cite{BCW}, \cite{Li}, \cite{Li2}. That groupoid isomorphism corresponds to diagonal preserving isomorphism of the \mbox{$C^*$-algebras} (in the topologically principal case) is due to the pioneering work of Renault~\cite{Ren2}. This reconstruction result has recently been generalized in e.g.~\cite{CRST}; wherein it is also shown that by adding more structure on the groupoids, such as gradings, one can recover stronger types of equivalence of the dynamical systems.  

In~\cite{Mat1}, Matui defined the topological full group of an étale groupoid with compact unit space. His definition generalized virtually all the previously given definitions for different kinds of dynamical systems at one fell swoop. Matui realized that homeomorphisms which preserve orbits in a continuous manner are always given by \emph{full bisections} from the associated groupoid. In the subsequent paper~\cite{Mat} Matui proved (among other things) a remarkable isomorphism theorem. Supressing some assumptions, this theorem says that any two minimal étale groupoids over a Cantor space are isomorphic, as topological groupoids, if and only if their topological full groups\footnote{Actually, the same is true for several distinguished subgroups of the topological full group as well, such as its commutator subgroup. See~\cite{Mat} and \cite{Nek} for details.} are isomorphic, as abstract groups. Matui's Isomorphism Theorem generalized the results of Giordano, Putnam and Skau, and Matsumoto, and others. 

The study of topological full groups has also found interesting applications to group theory. Matui's isomorphism theorem means that one can classify the groupoids (and therefore any underlying dynamics, and the \mbox{$C^*$-algebras}) in terms of the topological full group. However, by going the other direction, one can use étale groupoids to distinguish certain discrete groups. Given two discrete groups, say in terms of their generators and relations, it can be hard to tell whether they are isomorphic or not. But if one can realize these groups as topological full groups (or distinguished subgroups) of some groupoids, then one can use the groupoids (i.e.\ the dynamics) to tell the groups apart---as one often has much dynamical information about the groupoids. For instance, this was the strategy used by Brin to show that Thompson's group $V$ is not isomorphic to its two-dimensional analog $2V$~\cite{Brin} (although he did not consider the groupoid explicitly). A more recent application of this form is by Matte Bon~\cite{MB} who showed that the higher dimensional Thompson group\footnote{It is known that the groups $nV$ are all non-isomorphic~\cite{BL}.} $nV$ embeds into $mV$ if and only if $n \leq m$. Matte Bon's paper also includes a novel approach to Matui's Isomorphism Theorem in terms of a certain dichotomy for such groupoids. Another application is that topological full groups have provided new examples of groups with exotic properties. Most notably, topological full groups (or more precisely, their commutator subgroups) of Cantor minimal systems provided the first examples of finitely generated simple groups that are amenable (and infinite)~\cite{JM}. On another note, topological full groups arising from non-amenable groups acting minimally and topologically free on the Cantor space were recently shown to be $C^*$-simple~\cite{BrixS}.

Topological full groups have also found their way into Lawson's program of non-commutative Stone duality~\cite{Law}. In~\cite{Law2}, the topological full group of an étale groupoid is shown to coincide with the group of units of the so-called Tarski monoid to which the groupoid corresponds under non-commutative Stone duality.

\addtocontents{toc}{\SkipTocEntry}
\subsection*{Our results}
The main motivation for the present paper was Matsumoto and Matui's work on irreducible one-sided shifts of finite type mentioned above. If we rephrase their work in terms of (directed) graphs, then they showed that for two strongly connected finite graphs~$E$ and~$F$ the following are equivalent:
\begin{enumerate}
\item The shifts $(E^\infty, \sigma_E)$ and $(F^\infty, \sigma_F)$ are continuously orbit equivalent.
\item The graph groupoids $\mathcal{G}_E$ and $\mathcal{G}_F$ are isomorphic as topological groupoids.
\item There is an isomorphism of the graph $C^*$-algebras $C^*(E)$ and $C^*(F)$ which maps the diagonal $\mathcal{D}(E)$  onto $\mathcal{D}(F)$.   
\item The topological full groups $\llbracket \mathcal{G}_{E} \rrbracket$ and $\llbracket \mathcal{G}_{F} \rrbracket$ are isomorphic as abstract groups.
\end{enumerate}
The equivalence of (1), (2) and (3) above have since been generalized to more general graphs which need neither be finite nor strongly connected~\cite{CEOR}, \cite{BCW}. Our initial goal was to study the topological full group $\llbracket \mathcal{G}_{E} \rrbracket$ of general graph groupoids $\mathcal{G}_E$ and see if we could also add statement (4) to said equivalence.

Matui's Isomorphism Theorem~\cite[Theorem~3.10]{Mat} gives the equivalence of (2) and~(4) above for the general class of ample effective Hausdorff minimal second countable groupoids over (compact) Cantor spaces (see Subsection~\ref{subs:groupoids} for definitions). This covers in particular graph groupoids of strongly connected finite graphs. In light of this we attempted to extend Matui's Isomorphism Theorem a little further in order to cover graph groupoids of more general graphs. To do this it is necessary to relax both the compactness assumption of the unit space (which corresponds to the graph having finitely many vertices) and the minimality assumption (which corresponds to strong connectedness of the graph).

As our main findings we first describe two modest extensions of Matui's Isomorphism Theorem that apply to general ample groupoids. Then we describe two (sharper) isomorphism theorems for the class of graph groupoids. Finally, we present a novel embedding theorem for ample groupoids. First of all we have to extend the definition of the topological full group to the locally compact setting. This is done in Definition~\ref{def:tfg}, where we stipulate that the homeomorphisms in the topological full group should be compactly supported (in addition to being induced by bisections). This seems a natural choice, as we then retain the ``finitary'' nature of the elements in the topological full group, as well as the countability of the topological full group (for second countable groupoids). Additionally, most of the arguments from~\cite{Mat} still work with suitable modifications. For an ample groupoid $\mathcal{G}$ we denote its unit space by $\mathcal{G}^{(0)}$. The topological full group of $\mathcal{G}$ is denoted by $\llbracket \mathcal{G} \rrbracket$. And the commutator subgroup of $\llbracket \mathcal{G} \rrbracket$ is denoted by $\DD(\llbracket \mathcal{G} \rrbracket)$. The first of these isomorphism theorems is a straightforward extension of Matui's Isomorphism Theorem which relaxes the compactness assumption on~$\mathcal{G}^{(0)}$ and the second countability assumption on $\mathcal{G}$.

\begin{introtheorem}[c.f.\ Theorem~\ref{thm:KFgroupoid}, {\cite[Theorem~3.10]{Mat}}]\label{intro:KFgroupoid}
Suppose $\mathcal{G}_1$ and $\mathcal{G}_2$ are effective ample minimal Hausdorff groupoids whose unit spaces have no isolated points. Then following are equivalent:
\begin{enumerate}
\item $\mathcal{G}_1 \cong \mathcal{G}_2$ as topological groupoids.
\item $\llbracket \mathcal{G}_1 \rrbracket \cong \llbracket \mathcal{G}_2 \rrbracket$ as abstract groups.
\item $\DD(\llbracket \mathcal{G}_1 \rrbracket) \cong \DD(\llbracket \mathcal{G}_2 \rrbracket)$ as abstract groups.
\end{enumerate}
\end{introtheorem}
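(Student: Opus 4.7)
The plan is to prove the cyclic chain (1) $\Rightarrow$ (2) $\Rightarrow$ (3) $\Rightarrow$ (1), where the first two implications are the easy directions and the last is the substantive content.

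For (1) $\Rightarrow$ (2), a topological groupoid isomorphism $\Phi\colon \mathcal{G}_1 \to \mathcal{G}_2$ is in particular a homeomorphism, so it sends compact open bisections to compact open bisections and preserves composition; hence the induced map on full bisections descends to a group isomorphism $\llbracket \mathcal{G}_1 \rrbracket \to \llbracket \mathcal{G}_2 \rrbracket$ that respects the compact support condition built into Definition~\ref{def:tfg}. The implication (2) $\Rightarrow$ (3) is automatic, since the commutator subgroup is a characteristic subgroup.

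The heart of the matter is (3) $\Rightarrow$ (1). Following the broad strategy of~\cite{Mat}, I would deduce it from a Rubin-type reconstruction theorem applied to the natural action of $\DD(\llbracket \mathcal{G}_i \rrbracket)$ on $\mathcal{G}_i^{(0)}$. To do this I would first verify that this action is faithful, has no fixed points, and is \emph{locally dense} in the sense of Rubin: for every $x \in \mathcal{G}_i^{(0)}$ and every open neighbourhood $U$ of $x$, the orbit of $x$ under the subgroup of commutators supported in $U$ has nonempty interior in $U$. Faithfulness follows from effectiveness of $\mathcal{G}_i$; the no-fixed-point hypothesis will use minimality together with the absence of isolated points (so that we can always produce a genuinely nontrivial compactly supported bisection moving a prescribed point). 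Local density is the main technical input: given a small compact open set $V \subseteq U$ containing $x$, I would use minimality of $\mathcal{G}_i$ plus an ample basis to produce many compact open bisections contained in $\mathcal{G}_i|_U$, then adapt the commutator-generation arguments from~\cite{Mat} to the locally compact setting to express the associated transpositions as commutators supported in $U$. Applying Rubin's theorem then yields a unique homeomorphism $\phi\colon \mathcal{G}_1^{(0)} \to \mathcal{G}_2^{(0)}$ implementing any given abstract isomorphism $\DD(\llbracket \mathcal{G}_1 \rrbracket) \cong \DD(\llbracket \mathcal{G}_2 \rrbracket)$ by conjugation on the unit spaces.

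The final step is to promote $\phi$ to a topological groupoid isomorphism $\mathcal{G}_1 \to \mathcal{G}_2$. Because both groupoids are effective and ample, every element $\gamma \in \mathcal{G}_i$ is determined by its \emph{germ}, i.e.\ by a compact open bisection through $\gamma$ modulo the equivalence of agreeing on a neighbourhood of $s(\gamma)$. Using that compact open bisections near any unit can be assembled into elements of $\DD(\llbracket \mathcal{G}_i \rrbracket)$ (again via the commutator-density arguments developed for the previous step), I would transport bisections of $\mathcal{G}_1$ through $\phi$ and check, germ by germ, that the resulting correspondence is well-defined, multiplicative and a homeomorphism onto $\mathcal{G}_2$. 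Effectiveness ensures there is no ambiguity in passing from a bisection to its germ map, and compact supports together with locally compact Hausdorffness make the gluing argument go through without requiring second countability.

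The main obstacle I anticipate is precisely the local-density / commutator-density step in the non-compact setting: in~\cite{Mat} compactness of $\mathcal{G}^{(0)}$ and second countability are used repeatedly to exhaust the unit space by a single finite (or countable) partition into compact open bisections and to realize arbitrary alternating permutations as commutators. Replacing such global partitions by compactly supported local ones, and verifying that the resulting commutators still generate enough of the topological full group to satisfy Rubin's hypothesis, is where the careful technical work lies; once this is established, the remaining steps follow by adapting the arguments of~\cite{Mat} with the modifications built into Definition~\ref{def:tfg}.
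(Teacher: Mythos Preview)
Your overall architecture---spatial realization for $\DD(\llbracket \mathcal{G}_i \rrbracket)$ acting on $\mathcal{G}_i^{(0)}$, followed by a germ-by-germ promotion to a groupoid isomorphism---matches the paper exactly, and your treatment of the ``downstream'' step (passing from a spatial isomorphism to an isomorphism of groupoids via germs and covering by full bisections) is essentially Proposition~\ref{prop_germs} together with Lemma~\ref{ex_cover}. Where you diverge is in the spatial realization itself: you propose to invoke a Rubin-type ``locally dense'' theorem, whereas the paper instead verifies Matui's class~$K^F$ conditions (F1)--(F3) for $(\Gamma,\mathcal{G}^{(0)})$ whenever $\DD(\llbracket \mathcal{G} \rrbracket)\leq\Gamma\leq\llbracket \mathcal{G} \rrbracket$ (Proposition~\ref{KFprop}), and then appeals to the faithfulness of $K^F$ (Theorem~\ref{classF}), which is an extension of Matui's algebraic argument rather than Rubin's Boolean-algebra reconstruction.

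This is worth flagging because the paper does develop a Rubin-based route (the class $K^{LCC}$, Theorem~\ref{thm:KBfaithful}), but explicitly records that it could \emph{not} push that approach down to the commutator subgroup: condition~(K4), that every clopen set be recognizable by $\Gamma$, resisted verification for $\Gamma=\DD(\llbracket \mathcal{G} \rrbracket)$ (see the remark closing Section~\ref{sec:reggrpd}). Your proposal uses a different Rubin hypothesis (local density rather than the locally moving/flexible/recognizable package), so it is not ruled out by that remark, and indeed local density for $\DD(\llbracket \mathcal{G} \rrbracket)$ under minimality is plausible along the lines you sketch. But you should be aware that this is a genuinely different spatial realization input from the one the paper uses, and that the paper's choice of $K^F$ over Rubin for this theorem was deliberate: the $K^F$ conditions are tailored to involutions and pass uniformly to all intermediate subgroups $\DD(\llbracket \mathcal{G} \rrbracket)\leq\Gamma\leq\llbracket \mathcal{G} \rrbracket$, which is what yields the clean statement of Theorem~\ref{thm:KFgroupoid}. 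If you pursue the locally dense route, the burden is exactly where you identify it---producing enough commutators supported in a given open set---and you would need to carry this out carefully rather than defer to~\cite{Mat}, since Matui's proof of the analogous statement already goes through (F1)--(F3).
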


We mention that when restricting to the class of graph groupoids we are also able to relax the minimality assumption in Theorem~\ref{intro:KFgroupoid} substantially (see Theorem~\ref{intro:KFrigid} below). The second isomorphism theorem replaces the minimality assumption with a significantly weaker ``mixing property'' that we call \emph{non-wandering} (see Definition~\ref{def:wandering}). However, the result does not apply to the commutator subgroups. And we also require the unit spaces to be second countable. (By a \emph{locally compact Cantor space} we mean either the \emph{compact} Cantor space or the locally compact \emph{non-compact} Cantor space (up to homeomorphism) c.f.~Subsection~\ref{subs:top}.)

\begin{introtheorem}[c.f.\ Theorem~\ref{thm:KLCCgroupoid}]\label{intro:KLCCgroupoid}
Let $\mathcal{G}_1$ and $\mathcal{G}_2$ be effective ample Hausdorff groupoids over locally compact Cantor spaces. If, for $i = 1,2$, $\mathcal{G}_i$ is non-wandering and each $\mathcal{G}_i$-orbit has length at least $3$, then the following are equivalent:
\begin{enumerate}
\item $\mathcal{G}_1 \cong \mathcal{G}_2$ as topological groupoids.
\item $\llbracket \mathcal{G}_1 \rrbracket \cong \llbracket \mathcal{G}_2 \rrbracket$ as abstract groups.
\end{enumerate}
\end{introtheorem}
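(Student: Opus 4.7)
The plan is to handle the easy direction $(1) \Rightarrow (2)$ by functoriality, and to attack the nontrivial direction $(2) \Rightarrow (1)$ by first reconstructing a homeomorphism $\phi : \mathcal{G}_1^{(0)} \to \mathcal{G}_2^{(0)}$ from the abstract group isomorphism via a Rubin-type spatial realization theorem, and then promoting $\phi$ to a topological groupoid isomorphism using the germ description of effective ample groupoids. For $(1) \Rightarrow (2)$, any topological groupoid isomorphism $\Theta : \mathcal{G}_1 \to \mathcal{G}_2$ sends compact open full bisections bijectively to compact open full bisections and preserves compact support, so conjugation by $\Theta$ yields an isomorphism $\llbracket \mathcal{G}_1 \rrbracket \cong \llbracket \mathcal{G}_2 \rrbracket$.

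For the first step in $(2)\Rightarrow(1)$, I would show that the canonical action of $\llbracket \mathcal{G}_i \rrbracket$ on $\mathcal{G}_i^{(0)}$ is \emph{locally moving}: every nonempty clopen $U \subseteq \mathcal{G}_i^{(0)}$ supports a nontrivial element of $\llbracket \mathcal{G}_i \rrbracket$. The hypothesis that $U$ meets some $\mathcal{G}_i$-orbit in (at least) two distinct points $x,y$ produces a groupoid element $g$ with source $x$ and range $y$. Ampleness and Hausdorffness then deliver a compact open bisection $B$ through $g$ whose source and range are disjoint clopen subsets of $U$. Closing $B \cup B^{-1}$ up by the identity on $\mathcal{G}_i^{(0)}$ gives a compactly supported involution in $\llbracket \mathcal{G}_i \rrbracket$ supported in $U$. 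The length-$\geq 3$ assumption is used here to exclude degenerate orbit configurations that would prevent disjointing $s(B)$ from $r(B)$, and to produce $3$-cycles (rather than mere transpositions) when needed in later arguments.

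Since $\mathcal{G}_i$ is effective, $\llbracket \mathcal{G}_i \rrbracket$ acts faithfully on the Cantor (in particular, locally compact Hausdorff and perfect) space $\mathcal{G}_i^{(0)}$. Combined with local movability, a Rubin-type spatial realization theorem (of the kind discussed in Section~\ref{sec:spatrel}) then applies, yielding a unique homeomorphism $\phi : \mathcal{G}_1^{(0)} \to \mathcal{G}_2^{(0)}$ implementing any isomorphism $\Phi : \llbracket \mathcal{G}_1 \rrbracket \to \llbracket \mathcal{G}_2 \rrbracket$, in the sense that $\phi \circ \pi = \Phi(\pi) \circ \phi$ for every $\pi \in \llbracket \mathcal{G}_1 \rrbracket$. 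To upgrade $\phi$ to a groupoid isomorphism, I would use that, in an effective ample Hausdorff groupoid, every element $g \in \mathcal{G}_i$ is the germ at $s(g)$ of some $\pi \in \llbracket \mathcal{G}_i \rrbracket$ (built by extending a compact open bisection through $g$ by the identity, which local movability makes possible). The map $\Theta(\mathrm{germ}_x \pi) := \mathrm{germ}_{\phi(x)} \Phi(\pi)$ is then well-defined, multiplicative, and bijective; continuity follows from $\phi$ being a homeomorphism and the fact that the topology of each $\mathcal{G}_i$ is generated by compact open bisections of elements of $\llbracket \mathcal{G}_i \rrbracket$.

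The main obstacle I expect is Step 1: turning the very weak hypothesis ``each clopen meets some orbit twice'' into a genuinely locally moving action by compactly supported bisection-implemented homeomorphisms. The tension is that the bisection produced through the two points $x,y \in U$ must be trimmed and extended to yield an honest element of the topological full group whose support stays inside $U$, without unintentionally moving other points of that orbit outside $U$; this is where the length-$\geq 3$ assumption is essential, and it is also the reason why the analogous rigidity statement for the commutator subgroup $\DD(\llbracket \mathcal{G} \rrbracket)$ is not accessible by this method, since the commutators of the involutions produced above need not themselves generate a locally moving subgroup under so weak a mixing hypothesis.
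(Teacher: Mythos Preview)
Your overall architecture is right and matches the paper: verify that $(\llbracket \mathcal{G}_i \rrbracket, \mathcal{G}_i^{(0)})$ lies in a class for which a Rubin-type spatial realization theorem applies, then reconstruct the groupoid as the groupoid of germs. But you misattribute where the hypotheses actually enter, and you skip some conditions the paper needs.

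First, the length-$\geq 3$ assumption is \emph{not} used for local movability or for disjointing $s(B)$ from $r(B)$. If $x \neq y$ lie in a clopen $U$, then Hausdorffness of $\mathcal{G}_i^{(0)}$ alone lets you shrink a compact bisection through $g$ so that $s(B)$ and $r(B)$ are disjoint and contained in $U$ (this is Lemma~\ref{lemma:bisectionExistence}). So ``each clopen meets some orbit twice'' is exactly what gives local movability (Proposition~\ref{prop:LMChar}), with no orbit-length condition needed. Second, the paper's spatial realization (Theorem~\ref{thm:KBfaithful}) requires more than locally moving: it also needs local flexibility and that every clopen be \emph{recognizable} by $\llbracket \mathcal{G}_i \rrbracket$ (conditions (K3) and (K4)), both of which are checked separately for topological full groups, plus the length-$\geq 3$ condition (K5). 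The latter is used in Lemma~\ref{clopenLemma} to detect clopenness inside $\mathcal{R}(X)$; that is where ``orbits of length $\geq 3$'' genuinely enters the spatial realization step, not in producing supported involutions. Third, in the germ step you say covering works by ``extending a compact open bisection through $g$ by the identity''; this fails when $s(g) = r(g)$, and the actual argument (Lemma~\ref{ex_cover}) factors such $g$ through a second orbit point, which is why an orbit-length hypothesis is needed there too (length $\geq 2$ suffices for $\llbracket \mathcal{G} \rrbracket$). Finally, your diagnosis of why the commutator version fails is off: the paper's obstruction is not local movability but rather condition~(K4), which they could not verify for $\DD(\llbracket \mathcal{G} \rrbracket)$.
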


Let us say a few words about the proofs.  As the implications $(1) \Rightarrow (2) \Rightarrow (3)$ in Theorem~\ref{intro:KFgroupoid} and $(1) \Rightarrow (2)$ in Theorem~\ref{intro:KLCCgroupoid} are trivial, there is only one direction to prove. The proof strategy is similar in both cases and is summarized in the following diagram\footnote{If $\Gamma \leq \homeo(X)$ and $\Lambda \leq \homeo(Y)$ are groups of homeomorphisms, then a \emph{spatial isomorphism} between them is a homeomorphism $\phi \colon X \to Y$ such that $\gamma \mapsto \phi \circ \gamma \circ \phi^{-1}$ for $\gamma \in \Gamma$ is a group isomorphism.}, where $\Gamma_i$ is a subgroup of $\homeo\left(\mathcal{G}_i^{(0)}\right)$:
\[\xymatrix@C=-3em{
{\Gamma_1 \cong \Gamma_2} \ar@{=>}[d]  & {\text{(abstract isomorphism)}}  \\
{\left(\Gamma_1, \mathcal{G}_1^{(0)}\right) \cong \left(\Gamma_2, \mathcal{G}_2^{(0)}\right)} \ar@{=>}[d]^{\text{ Functoriality}}  & {\text{(spatial isomorphism)}} \\
{\G\left(\Gamma_1, \mathcal{G}_1^{(0)}\right) \cong \G\left(\Gamma_2, \mathcal{G}_2^{(0)}\right)} \ar@{=>}[d]^{\ \Gamma_i \text{ covers } \mathcal{G}_i}  &     \\
{ \G\left(\Gamma_1, \mathcal{G}_1^{(0)}\right) \cong \ \mathcal{G}_1 \cong \mathcal{G}_2 \ \cong\G\left(\Gamma_2, \mathcal{G}_2^{(0)}\right)} &       } \]
The first step is showing that for certain classes of homeomorphism groups, any (abstract) group isomorphism is induced by a homeomorphism of the underlying spaces. We call this a \emph{spatial realization result}. In~\cite{Mat}, Matui proves a spatial realization result that applies to any $\Gamma$ with $\DD(\llbracket \mathcal{G} \rrbracket) \leq \Gamma \leq \llbracket \mathcal{G}\rrbracket$ (for minimal $\mathcal{G}$). And from a spatial isomorphism he directly constructs an isomorphism of the groupoids and obtains his Isomorphism Theorem. In this paper we have chosen to break this direct step into two more parts in order to also study when the groupoid can be recovered from the action of (subgroups of) the topological full group on the unit space, as the groupoid of germs of this action. We find that such a groupoid of germs always embed into the groupoid we started with, and that they are isomorphic if and only if the subgroup in question is generated by enough bisections to cover the groupoid (Proposition~\ref{prop_germs}, Corollary~\ref{cor:necessaryCover}). We also show that for a natural choice of maps, the assignment of the groupoid of germs is functorial (Proposition~\ref{prop:functorial}).

Having this machinery in place, proving Theorem~\ref{intro:KFgroupoid} is then just a matter of checking that Matui's spatial realization result also holds in the locally compact setting (Theorem~\ref{classF}). Although this is but a small extension of Matui's result we have chosen to include it as a theorem since it is applicable to a larger class of groupoids. Regarding our initial motivation, namely the graph groupoids, we are able to characterize exactly when the aforementioned spatial realization result applies, and it turns out that we can get away with much weaker mixing properties than minimality when we restrict to graph groupoids---see Theorem~\ref{intro:KFrigid} below. 

For the proof of Theorem~\ref{intro:KLCCgroupoid} we employ a spatial realization result (Theorem~\ref{thm:KBfaithful}) based on Rubin's work in~\cite{Rub} in the first step. We mention that Medynets has previously obtained a similar spatial realization result~\cite[Remark~3]{Med} for (topological) full groups arising from group actions on the Cantor space, building on Fremlins work in~\cite[Section~384]{Frem}. After some modifications, Theorem~\ref{intro:KLCCgroupoid} could also be deduced from this result. However, Theorem~\ref{thm:KBfaithful} is more general as it can potentially be applied to other groups than topological full groups, e.g.\ homeomorphism groups of 0-dimensional linearly ordered spaces. See Remark~\ref{rem:Medynets} for a more detailed discussion on the differences and similarities of these approaches. Although Theorems~\ref{intro:KFgroupoid} and~\ref{intro:KLCCgroupoid} can be deduced by employing arguments along the lines of~\cite{Mat} and~\cite{Med}, we believe that the way we trisect the proofs does add some new insight. In particular, this was how we discovered the embedding result given below in Theorem~\ref{intro:eqEmbed}. 

Let us now describe the isomorphism theorem we obtain for graph groupoids, when starting with the spatial reconstruction result à la Matui. As mentioned above, it turns out that we can replace minimality (strong connectedness of the graphs) with some weaker ``exit and return''-conditions. Each of these three conditions (see Definition~\ref{def:adhoc}) can be considered strengthenings of the three conditions that characterize when the boundary path space $\partial E$ has no isolated points (Proposition~\ref{prop:DEperfect}). Condition~(K) means that every cycle can be exited, and then returned to. Condition~(W) means that every wandering path can be exited, and then returned to. And Condition~($\infty$) means that every singular vertex can be exited (i.e.\ is an infinite emitter), and then returned to (along infinitely many of the emitted edges).

\begin{introtheorem}[c.f.\ Theorem~\ref{thm:KFrigid}]\label{intro:KFrigid}
Let $E$ and $F$ be graphs with no sinks, and suppose they both satisfy Condition~(K), (W) and ($\infty$). Then the following are equivalent:
\begin{enumerate}
\item $\mathcal{G}_{E} \cong \mathcal{G}_{F}$ as topological groupoids.
\item $\llbracket \mathcal{G}_{E} \rrbracket \cong \llbracket \mathcal{G}_{F} \rrbracket$ as abstract groups.
\item $\DD(\llbracket \mathcal{G}_{E} \rrbracket) \cong \DD(\llbracket \mathcal{G}_{F} \rrbracket)$ as abstract groups.
\end{enumerate}
\end{introtheorem}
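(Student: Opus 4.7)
The implications $(1) \Rightarrow (2) \Rightarrow (3)$ are automatic, so the substantive content is $(3) \Rightarrow (1)$. The plan is to follow the four-step diagram displayed in the introduction, but to run it on the commutator subgroup $\DD(\llbracket \mathcal{G}_E \rrbracket)$ acting on $\partial E$ rather than on the full group, since assumption~(3) only provides an abstract isomorphism at that level. Before entering the main argument, I would record that $\mathcal{G}_E$ is automatically ample and Hausdorff; that Condition~(K) is a strengthening of Condition~(L), so $\mathcal{G}_E$ is effective; and that (K), (W), ($\infty$) are precisely strengthenings of the three conditions of Proposition~\ref{prop:DEperfect} that characterize when $\partial E$ has no isolated points. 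Hence the unit space is a perfect locally compact Hausdorff space with a compact open basis, and $\mathcal{G}_E$ sits in the class to which Theorem~\ref{classF} is tailored.

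The first step of the diagram---the spatial realization---is by far the main obstacle. Given an abstract group isomorphism $\Phi \colon \DD(\llbracket \mathcal{G}_E \rrbracket) \to \DD(\llbracket \mathcal{G}_F \rrbracket)$, one must produce a homeomorphism $\varphi \colon \partial E \to \partial F$ such that $\Phi(\alpha) = \varphi \circ \alpha \circ \varphi^{-1}$ for every $\alpha$. My plan is to invoke Theorem~\ref{classF} (the Matui-style spatial realization result), whose hypotheses demand that inside every clopen neighborhood one finds enough non-trivial commutators---equivalently, ``3-cycle'' configurations of disjoint compact-open bisections---to reconstruct the topology and action. The verification of these hypotheses is where (K), (W), and ($\infty$) earn their keep: classifying a boundary path $x \in \partial E$ according to whether it is eventually cyclic, wandering through regular vertices, or cofinally visits a singular vertex, Conditions (K), (W), ($\infty$) respectively supply an exit-and-return pattern near $x$ that produces three pairwise disjoint compact open subsets of a single $\mathcal{G}_E$-orbit inside any prescribed clopen neighborhood. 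This is precisely what is needed to implement the orbit moves by commutators of transpositions; the ``return'' part of each condition is what replaces the minimality hypothesis of Theorem~\ref{intro:KFgroupoid}, and without it one would be able to leave a clopen set without producing commutators supported within it.

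Once the spatial isomorphism $\varphi$ has been constructed, the remaining two steps of the diagram are largely formal. Functoriality of the groupoid-of-germs construction gives $\G(\DD(\llbracket \mathcal{G}_E \rrbracket), \partial E) \cong \G(\DD(\llbracket \mathcal{G}_F \rrbracket), \partial F)$. Finally, I would show that $\mathcal{G}_E$ is covered by full bisections arising from compactly supported elements of $\DD(\llbracket \mathcal{G}_E \rrbracket)$, so that the natural evaluation map $\G(\DD(\llbracket \mathcal{G}_E \rrbracket), \partial E) \to \mathcal{G}_E$ is an isomorphism of topological groupoids---and the analogous statement for $F$. Chaining the three isomorphisms yields $\mathcal{G}_E \cong \mathcal{G}_F$. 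For the covering-by-bisections step at the level of the commutator subgroup one uses the standard fact that the ample inverse semigroup of $\mathcal{G}_E$ is generated by cylinder bisections $Z(\mu,\nu)$ between pairs of finite paths sharing a range, combined with the ``third translate'' furnished again by (K), (W), ($\infty$) to express each such bisection as part of a commutator of two transpositions in $\llbracket \mathcal{G}_E \rrbracket$.
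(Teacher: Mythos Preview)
Your proposal is correct and follows essentially the same route as the paper: verify that $(\DD(\llbracket \mathcal{G}_E \rrbracket), \partial E) \in K^F$ via the case analysis on boundary path types (this is Proposition~\ref{prop:KFgraph}), invoke Theorem~\ref{classF} for spatial realization, pass to germs by functoriality, and then identify the germ groupoid with $\mathcal{G}_E$ via covering. The only small divergence is your covering argument: you sketch a graph-specific approach using cylinder bisections and a ``third translate'', whereas the paper handles this step more cleanly and generically by first observing (Lemma~\ref{KForbits}) that Conditions~(K) and~($\infty$) force every $\mathcal{G}_E$-orbit to be infinite, and then invoking the general Lemma~\ref{ex_cover}(2), which says that orbits of length at least~$3$ already guarantee $\DD(\llbracket \mathcal{G} \rrbracket)$ covers~$\mathcal{G}$.
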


By interpreting the assumptions in Theorem~\ref{intro:KLCCgroupoid} for graph groupoids we obtain Theorem~\ref{intro:KLCCrigid} below. Therein, Condition~(L) is the well-known exit condition of Kumjian, Pask and Raeburn~\cite{KPR}, namely, that every cycle should have an exit. Condition~(T) (see Definition~\ref{condT}) essentially means that the graph does not have a component which is a tree. Finally, what we call \emph{degenerate vertices} (see Definition~\ref{def:degenerate}) are the ones giving $\mathcal{G}_E$-orbits of length $1$ or $2$. This theorem may be considered a  generalization of Matsumoto's result in the case of irreducible one-sided shifts of finite type~\cite{Mats2} (which correspond to finite strongly connected graphs).

\begin{introtheorem}[c.f.\ Theorem~\ref{thm:KLCCrigid}]\label{intro:KLCCrigid}
Let $E$ and $F$ be countable graphs satisfying Condition~(L) and (T), and having no degenerate vertices. Then the following are equivalent:
\begin{enumerate}
\item $\mathcal{G}_{E} \cong \mathcal{G}_{F}$ as topological groupoids.
\item $\llbracket \mathcal{G}_{E} \rrbracket \cong \llbracket \mathcal{G}_{F} \rrbracket$ as abstract groups.
\end{enumerate}
\end{introtheorem}

Hence we establish the equivalence of (1)--(4) mentioned in the beginning of this subsection for graphs satisfying the assumptions of Theorem~\ref{intro:KLCCrigid}. In Corollary~\ref{cor:BCW}, we spell out this rigidity result for the associated graph algebras.

Our final main result is an embedding theorem for ample groupoids---inspired by embedding theorems for \mbox{$C^*$-algebras} and Leavitt path algebras. The seminal embedding theorem of Kirchberg~\cite{KP} states that any separable exact (unital) \mbox{$C^*$-algebra} embeds (unitally) into the Cuntz algebra $\mathcal{O}_2$. In particular, this means that any graph \mbox{$C^*$-algebra} $C^*(E)$, where~$E$ is a countable graph, embeds into $\mathcal{O}_2$. The latter, being the universal \mbox{$C^*$-algebra} generated by two orthogonal isometries, can be canonically identified with a graph \mbox{$C^*$-algebra}. Namely, the graph \mbox{$C^*$-algebra} of the graph $E_2$ which consists of a single vertex with two loops. In~\cite{BS}, Brownlowe and Sørensen show that the Leavitt path algebra $L_R(E)$, where $E$ is any countable graph and $R$ any commutative unital ring, embeds into $L_R(E_2)$---the algebraic analog of $\mathcal{O}_2$. An inspection of their proof reveals that this embedding also maps the canonical diagonal subalgebra $D_R(E)$ into $D_R(E_2)$. As a consequence, Kirchberg's embedding for the graph \mbox{$C^*$-algebras} may then also be taken to be diagonal preserving---with respect to the diagonal\footnote{Technically, this is a Cartan subalgebra in the sense of Renault, not a \mbox{$C^*$-diagonal} in the sense of Kumjian. But it's common to refer to it as ``the diagonal'' in a graph \mbox{$C^*$-algebra}.} in $\mathcal{O}_2$ coming from its identification with $C^*(E_2)$. At this point, it starts smelling a bit like groupoids might be lurking about. Indeed, using the properties of the \emph{Germ-functor} (see Section~\ref{sec:SpatG}), we are able to prove that the underlying graph groupoid~$\mathcal{G}_E$ embeds into the Cuntz groupoid $\mathcal{G}_{E_2}$ (modulo topological obstructions in the sense of isolated points). Thus, the known embeddings of the graph algebras actually occur at the level of the underlying groupoid models. We were also able to extend this embedding result to all groupoids which are groupoid equivalent (or stably isomorphic) to a graph groupoid. To the best of the authors' knowledge, this is the first embedding result of its kind for ample groupoids.

\begin{introtheorem}[c.f.\ Theorem~\ref{eqEmbed}]\label{intro:eqEmbed}
Let $\mathcal{H}$ be an effective ample second countable Hausdorff groupoid with $\mathcal{H}^{(0)}$ a locally compact Cantor space. If $\mathcal{H}$ is groupoid equivalent to $\mathcal{G}_E$, for some countable graph $E$ satisfying Condition~(L) and having no sinks nor semi-tails, then~$\mathcal{H}$ embeds into $\mathcal{G}_{E_2}$. Moreover, if $\mathcal{H}^{(0)}$ is compact, then the embedding maps $\mathcal{H}^{(0)}$ onto $E_2^\infty$.

In particular, any graph groupoid $\mathcal{G}_E$, with $E$ as above, embeds into $\mathcal{G}_{E_2}$, and any \mbox{AF-groupoid} (with perfect unit space) embeds into $\mathcal{G}_{E_2}$.
\end{introtheorem}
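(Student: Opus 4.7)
The plan is to first construct an embedding $\iota_E \colon \mathcal{G}_E \hookrightarrow \mathcal{G}_{E_2}$ for the graph groupoid itself---adapting the Leavitt path algebra embedding of Brownlowe and Sørensen to the groupoid setting---and then to bootstrap to an arbitrary $\mathcal{H}$ groupoid equivalent to $\mathcal{G}_E$.

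For the first step, I would choose, for each vertex $v \in E^0$, a finite path $\mu_v$ in $E_2$, and for each edge $e \in E^1$, another finite path $\alpha_e$ in $E_2$, so that the induced symbolic encoding $\Phi \colon \partial E \to E_2^\infty$ sending a boundary path $e_1 e_2 \cdots$ starting at $v_0$ to the concatenation $\mu_{v_0} \alpha_{e_1} \alpha_{e_2} \cdots$ is a homeomorphism onto a clopen subset of $E_2^\infty$. Conditions~(L), no sinks, and no semi-tails are precisely what allow the $\mu_v$ and $\alpha_e$ to be assembled into a prefix code producing an injection which intertwines the shift dynamics. When $\partial E$ is compact, a sharper choice (in which the families $\{\mu_v\}$ and $\{\alpha_e : s(e)=v\}$ form complete prefix codes) can be made so that $\Phi$ is onto $E_2^\infty$.

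To promote $\Phi$ to a groupoid embedding, I would use the Germ functor from Section~\ref{sec:SpatG}: each compact open bisection $B$ of $\mathcal{G}_E$ implements a compactly supported partial homeomorphism of $\partial E$, and conjugating by $\Phi$ (using that $\Phi(\partial E)$ is clopen) yields a compact open bisection of $\mathcal{G}_{E_2}$. Since $\mathcal{G}_E$ is effective, it is reconstructed from its topological full group by the Germ functor, so the pushforward of bisections assembles into an injective étale homomorphism $\iota_E$ restricting to $\Phi$ on unit spaces. For the general case, the structure theory of equivalence for effective ample groupoids over locally compact Cantor spaces provides clopen full subsets $U \subseteq \mathcal{H}^{(0)}$ and $V \subseteq \partial E$ with $\mathcal{H}|_U \cong \mathcal{G}_E|_V$, so composing with $\iota_E|_V$ embeds $\mathcal{H}|_U$ into $\mathcal{G}_{E_2}|_{\Phi(V)}$. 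The self-similarity of the Cuntz groupoid---namely $\mathcal{G}_{E_2}|_W \cong \mathcal{G}_{E_2}$ for every non-empty clopen $W \subseteq E_2^\infty$---combined with a routine argument transporting compactly supported bisections of $\mathcal{H}$ onto $U$ extends this to an embedding of all of $\mathcal{H}$. The AF-groupoid corollary is then immediate, since every AF-groupoid is equivalent to the graph groupoid of a Bratteli diagram, which trivially satisfies the required hypotheses.

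The main obstacle is the first step: designing the labels $(\mu_v, \alpha_e)$ so that $\Phi$ is simultaneously a homeomorphism onto a clopen subset, intertwines the shift cleanly enough that pushforwards of compact open bisections remain compact open bisections of $\mathcal{G}_{E_2}$, and is surjective in the compact case---all in the presence of infinite emitters, where the prefix-code combinatorics become delicate. A secondary technical issue is checking that the pushforwards assemble compatibly under the Germ functor to yield a well-defined injective groupoid homomorphism; the hypothesis of no semi-tails is where I expect to use the absence of boundary pathologies that would otherwise obstruct injectivity or continuity of $\iota_E$.
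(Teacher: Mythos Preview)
Your plan for the first step---encoding $\partial E$ into $E_2^\infty$ via prefix codes indexed by vertices and edges, then pushing compact bisections forward---is essentially the paper's approach (Lemma~\ref{lem:emb} and Theorem~\ref{thm:emb}). One small wrinkle: you write that ``since $\mathcal{G}_E$ is effective, it is reconstructed from its topological full group by the Germ functor,'' but effectiveness alone is not enough---you need $\llbracket \mathcal{G}_E \rrbracket$ to \emph{cover} $\mathcal{G}_E$, which can fail (see Corollary~\ref{cor:necessaryCover} and the examples after Proposition~\ref{prop:bis}). The paper avoids this by working with the inverse semigroup $\mathcal{P}_c(\mathcal{G}_E)$ of partial homeomorphisms coming from compact bisections, which always covers; your earlier sentence about ``compactly supported partial homeomorphisms'' suggests you may have had this in mind, but the next sentence conflates it with the topological full group.

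The substantive gap is in the bootstrapping to general $\mathcal{H}$. You invoke a Kakutani-type characterization of equivalence (full clopen $U \subseteq \mathcal{H}^{(0)}$, $V \subseteq \partial E$ with $\mathcal{H}|_U \cong \mathcal{G}_E|_V$) and then assert that ``a routine argument transporting compactly supported bisections of $\mathcal{H}$ onto $U$'' extends the embedding of $\mathcal{H}|_U$ to one of $\mathcal{H}$. This last step is not routine: having $\mathcal{H}|_U \hookrightarrow \mathcal{G}_{E_2}$ with $U$ full does not hand you an embedding of $\mathcal{H}$. To make it work you would have to choose a countable family of compact bisections $\{B_i\}$ in $\mathcal{H}$ with $\{s(B_i)\}$ partitioning $\mathcal{H}^{(0)}$ and $r(B_i) \subseteq U$, find matching bisections $\{C_i\}$ in $\mathcal{G}_{E_2}$ with pairwise disjoint ranges, and then verify that $h \mapsto C_j \cdot \iota(B_j h B_i^{-1}) \cdot C_i^{-1}$ assembles into an injective étale homomorphism---this is essentially reproving one direction of the equivalence between Kakutani and stable isomorphism, plus the self-absorption $\mathcal{G}_{E_2} \times \mathcal{R} \hookrightarrow \mathcal{G}_{E_2}$. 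The paper bypasses all of this by using stabilization directly: by \cite[Theorem~3.2]{CRS}, equivalence gives $\mathcal{H} \times \mathcal{R} \cong \mathcal{G}_E \times \mathcal{R} \cong \mathcal{G}_{SE}$, and since $SE$ again satisfies the hypotheses of Theorem~\ref{thm:emb}, one obtains $\mathcal{H} \hookrightarrow \mathcal{H} \times \mathcal{R} \cong \mathcal{G}_{SE} \hookrightarrow \mathcal{G}_{E_2}$ as a clean composition of embeddings. The compact-unit-space refinement is then handled separately (Lemma~\ref{unitalEmb}) using that any clopen subset of $E_2^\infty$ is the source of a bisection with full range.
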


The main ingredient in the proof is constructing an injective local homeomorphism $\phi \colon \partial E \to E_2^\infty$ which induces a spatial embedding of the associated topological full groups. This construction is entirely explicit. As a consequence we also obtain explicit embeddings of any graph \mbox{$C^*$-algebra} $C^*(E)$ (or Leavitt path algebra $L_R(E)$), in terms of their canonical generators, into $\mathcal{O}_2$ (or $L_R(E_2)$. This embedding is \emph{diagonal preserving}, and when $C^*(E)$ is unital (i.e.\ $E^0$ is finite) this embedding is unital and maps the diagonal \emph{onto} the diagonal. These embeddings are described in Corollary~\ref{cor:graphAlgEmb} and Remark~\ref{rem:emb}. We also record a result on diagonal embeddings of \mbox{AF-algebras} in Corollary~\ref{cor:AFemb}.

Another consequence of Theorem~\ref{intro:eqEmbed} is that each topological full group $\llbracket \mathcal{G}_E \rrbracket$, for $E$ as above, embeds into Thompson's group $V$---since $V$ is isomorphic to $\llbracket \mathcal{G}_{E_2} \rrbracket$. The Higman-Thompson groups $V_{n,r}$ (where $nV = V_{n,1}$) can be realized as topological full groups of graph groupoids of certain strongly connected finite graphs (see Subsection~11.3). Hence, our embedding theorem may be considered a generalization of the well-known embedding of~$V_{n,r}$ into $V$. The embedding entails that the topological full groups $\llbracket \mathcal{H} \rrbracket$, of groupoids $\mathcal{H}$ as in Theorem~\ref{intro:eqEmbed}, has the Haagerup property (but they are generally not amenable). In terms of groups, our embedding also includes all the so-called \emph{LDA-groups} (see Remark~\ref{rem:LDA}).

In~\cite{Mat3}, Matui introduced two conjectures for minimal ample groupoids over the Cantor space. The \emph{HK-conjecture} relates the groupoid homology to the $K$-theory of the groupoid \mbox{$C^*$-algebra}. And the \emph{AH-conjecture} relates the topological full group to the groupoid homology. These conjectures have been verified in several cases~\cite{Mat2}, in particular for (products of) graph groupoids arising from strongly connected finite graphs. For the more general graph groupoids studied in the present paper, the second named author will, together with Toke Meier Carlsen, attack these conjectures in a forthcoming paper. (In the recent preprint~\cite{Ort}, the second named author verifies the HK-conjecture for a class of groupoids which includes the graph groupoids of row-finite graphs.)

\addtocontents{toc}{\SkipTocEntry}
\subsection*{Précis}
The structure of the paper is as follows. We recall some basic notions regarding étale groupoids and (classical) Stone duality in Section~\ref{sec:prelim}. This section also serves the purpose of establishing notation and conventions. The rest of the paper is divided into two parts. The first, sections~\ref{sec:tfg}--\ref{sec:reggrpd}, deals with ample groupoids in general, while the second, sections~\ref{sec:gg}--\ref{sec:embed}, deals with graph groupoids.

In Section~\ref{sec:tfg} we give the definition of the topological full group $\llbracket \mathcal{G} \rrbracket$ of an ample groupoid~$\mathcal{G}$ with locally compact unit space $\mathcal{G}^{(0)}$. We also prove some elementary results on the existence of elements in the topological full group with certain properties. Then we move on to study the groupoid of germs $\smash{\G\left(\Gamma, \mathcal{G}^{(0)} \right)}$ associated to a subgroup~$\Gamma \leq \llbracket \mathcal{G} \rrbracket$ of the topological full group, in Section~\ref{sec:germ}. We establish that $\G\left(\Gamma, \mathcal{G}^{(0)} \right)$ always embeds into~$\mathcal{G}$, and that this embedding is an isomorphism as long as $\Gamma$ contains ``enough elements''. In Section~\ref{sec:SpatG} we introduce the two categories; $\TopG$ and $\Groupoid$. The former consists of pairs~$(\Gamma, X)$ where $X$ is a space and $\Gamma$ is a subgroup of $\homeo(X)$. The latter consists of certain ample groupoids. By defining suitable morphisms in these categories and what the germ of a morphism in $\TopG$ should be, we establish that the assigment~$(\Gamma, X) \mapsto \G\left(\Gamma, X \right)$ is functorial.  We also show that monomorphisms in $\TopG$ induce étale embeddings of the associated groupoids of germs.

The spatial realization results needed to deduce that an abstract isomorphism of two topological full groups always is spatially implement are provided in Section~\ref{sec:spatrel}. In Section~\ref{sec:reggrpd} we prove the two general isomorphism theorems, Theorem~\ref{intro:KFgroupoid} and Theorem~\ref{intro:KLCCgroupoid}. This is now mostly a matter of interpreting the spatial realization results from Section~\ref{sec:spatrel}  in terms of the groupoid and its topological full group, and then combine this with the results of Section~\ref{sec:germ} and Section~\ref{sec:SpatG}.

In Section~\ref{sec:gg} we begin our in-depth study of graph groupoids $\mathcal{G}_E$ of general graphs $E$. This section is devoted to a thorough introduction of graph terminology and the dynamics that give rise to the graph groupoids. For several of the generic properties a topological groupoid can have, we list their characterizations for graph groupoids in terms of the graphs. We continue in Section~\ref{sec:tfgg} with describing explicitly all elements in the topological full group~$\llbracket \mathcal{G}_E \rrbracket$ of any graph groupoid. To do this we need to specify a new (yet equivalent) basis for the topology on $\mathcal{G}_E$. We then pursue specialized isomorphism theorems for the class of graph groupoids in Section~\ref{sec:isogg}. This yields Theorem~\ref{intro:KFrigid} and Theorem~\ref{intro:KLCCrigid}. At the end of this section we spell out the induced rigidity result for the associated graph algebras.

In the final section of the paper we employ the machinery from Sections~\ref{sec:germ}, \ref{sec:SpatG} and \ref{sec:tfgg} to obtain our groupoid embedding result; Theorem~\ref{intro:eqEmbed}. We also describe the explicit diagonal embeddings of the graph algebras that follow from the embedding of the groupoids. Examples of these embeddings for graph algebras are provided for several infinite graphs. At the end of Section~\ref{sec:embed} we show that any \mbox{AF-groupoid} is groupoid equivalent to a graph groupoid, going via Bratteli diagrams, hence $\mathcal{G}_{E_2}$-embeddable. We then spell out consequences for diagonal embeddings of \mbox{AF-algebras}. Additionally, we remark that transformation groupoids arising from locally compact (non-compact) Cantor minimal systems are \mbox{AF-groupoids}, and hence $\mathcal{G}_{E_2}$-embeddable as well.

\addtocontents{toc}{\SkipTocEntry}
\subsection*{Acknowledgments}
We would like to express our gratitude to Volodymyr Nekrashevych for sharing his private notes on Rubin's theorems. We would also like to thank Hiroki Matui for pointing out Lemma~\ref{lem_haus} to us, as well as for other valuable comments while he visited NTNU in the spring of 2018. The first named author thanks Eric Wofsey for helpful remarks on Stone duality. We also wish to thank Ulrik Enstad and Christian Skau for comments on the first draft of this paper. Further, we are grateful to Fredrik Hildrum for assistance with typesetting. Finally, we want to thank the anonymous referee for his or her thorough feedback.

\section{Preliminaries}\label{sec:prelim}

We will now recall the basic notions needed throughout the paper, as well as establish notation and conventions. We denote the positive integers by $\mathbb{N}$ and the non-negative integers by $\mathbb{N}_0$. If two sets $A$ and $B$ are disjoint we will denote their union by $A \sqcup B$ if we wish to emphasize that they are disjoint. When we write $C = A \sqcup B$ we mean that $C = A \cup B$ \emph{and} that $A$ and $B$ are disjoint sets.

\subsection{Topological notions}\label{subs:top}
Following~\cite{KL}, \cite{Stein} we say that a topological space is \emph{Boolean} if it is Hausdorff and has a basis of compact open sets. (This is also the terminology orginally used by Stone~\cite{Stone}.) A \emph{Stone space} is then a compact Boolean space. We say that a topological space is \emph{perfect} if it has no isolated points. By a \emph{locally compact Cantor space} we mean a (non-empty) second countable perfect Boolean space. Up to homeomorphism there are two such spaces; one compact (the Cantor set) and one non-compact (the Cantor set with a point removed). The latter may also be realized as any non-closed open subset of the Cantor set, or as the product of the Cantor set and a countably infinite discrete space.

For a topological space $X$ we denote the group of self-homeomorphisms of~$X$ by $\homeo(X)$. We will occasionally denote $\id_X$ simply by $1$ for brevity. By an \emph{involution} we mean a homeomorphism (or more generally, a group element) $\phi$ with $\phi^2 = 1$.  For a homeomorphism~$\phi \in \homeo(X)$, we define the \emph{support of $\phi$} to be the (regular) closed set~$\overline{\{x\in X \ \vert \ \phi(x)\neq x\}}$, and denote it by $\supp(\phi)$. We also define \[\homeoc(X) \coloneqq \{ \phi \in \homeo(X) \ \vert \ \supp(\phi) \text{ compact open} \}.\] 
When $\Gamma$ is a subgroup of a group $\Gamma'$ we write $\Gamma \leq \Gamma'$. Beware that we will abuse this notation when we write $\Gamma \leq \homeoc(X)$ to mean that $\Gamma$ is a subgroup of $\homeo(X)$ \emph{and} that $\Gamma \subseteq \homeoc(X)$. (It is not clear whether $\homeoc(X)$ itself is a group.)

\subsection{Stone duality}
We will now briefly recall the basics of (classical) Stone duality needed for Section~\ref{sec:spatrel}. For more details the reader may consult~\cite{Kop}, \cite[Chapter~31]{Frem} (or even the fountainhead~\cite{Stone}, \cite{Doct}). By a \emph{Boolean algebra} we mean a complemented distributive lattice with a top and bottom element. And by a \emph{generalized Boolean algebra} we mean a relatively complemented distributive lattice with a bottom element. For a topological space $X$, we denote the set of clopen subsets of $X$ by $\co(X)$. The set of compact open subsets of $X$ are denoted by $\ck(X)$. Finally, the set of regular open subsets of $X$ are denoted by $\mathcal{R}(X)$.

\begin{example}
Let $X$ be a topological space.
\begin{enumerate}
\item $\co(X)$ is a Boolean algebra under the operations of set-theorietic union, intersection and complement by $X$.
\item $\ck(X)$ is a generalized Boolean algebra in the same way as $\co(X)$, except for admitting only relative (set-theoretic) complements.
\item $\mathcal{R}(X)$ is a Boolean algebra with the following operations. Let $A,B \in \mathcal{R}(X)$. The join of $A$ and $B$ is $\left(\overline{A \cup B}\right)^\circ$, where $\circ$ denotes the interior. The meet of $A$ and $B$ is $A \cap B$. And the complement of $A$ is $\sim A \coloneqq (X \setminus A)^\circ$.
\end{enumerate}
\end{example}

A crude way of stating Stone duality is to say that every Boolean algebra arises as $\co(X)$ for some Stone space $X$, and that every generalized Boolean algebra arises as $\ck(Y)$ for some Boolean space $Y$. Hence, Stone spaces correspond to Boolean algebas and Boolean spaces correspond to generalized Boolean algebras.

More precisely, it is a duality in the following sense. A continuous map $f \colon X \to Y$ between topological spaces $X$ and $Y$ is \emph{proper} if $f^{-1}(K)$ is compact in $X$ whenever $K$ is a compact subset of $Y$. A map $\psi \colon \mathcal{A} \to \mathcal{B}$ between generalized Boolean algebras $\mathcal{A}$ and $\mathcal{B}$ is a \emph{Boolean homomorphism} if it preserves joins, meets and relative complements. We say that $\psi$ is \emph{proper} if for each $b \in \mathcal{B}$, there exists $a \in \mathcal{A}$ such that $\psi(a) \geq b$. Boolean spaces with proper continuous maps form a category. So does generalized Boolean algebras with proper Boolean homomorphisms. For a proper continuous map $f \colon X \to Y$, let $\ck(f)(A) \coloneqq f^{-1}(A)$ for $A \in \ck(Y)$. This makes $\ck(-)$ a contravariant functor from the category of Boolean spaces to the category of generalized Boolean algebras (with maps as above).

For a generalized Boolean algebra $\mathcal{A}$, let $\mathbb{S}(\mathcal{A})$ denote the set of ultrafilters in $\mathcal{A}$. For each~$a \in \mathcal{A}$, let $\mathbb{S}(a) \coloneqq \{ \alpha \in \mathbb{S}(\mathcal{A}) \mid a \in \alpha \}$. Equipping $\mathbb{S}(\mathcal{A})$ with the topology generated by the (compact open) cylinder sets $\mathbb{S}(a)$ turns it into a Boolean space. For a proper Boolean homomorphism $\psi \colon \mathcal{A} \to \mathcal{B}$ and an ultrafilter $\beta \in \mathbb{S}(\mathcal{B})$, let $\mathbb{S}(\psi)(\beta) \coloneqq \{ \psi^{-1}(b) \mid b \in \beta \}$. This makes $\mathbb{S}(-)$ a contravariant functor in the other direction, and we refer to it as the \emph{Stone functor}. \emph{Stone duality} asserts that the contravariant functors $\ck(-)$ and $\mathbb{S}(-)$ implement a dual equivalence. In other words, the category of Boolean spaces is dually equivalent to the category of generalized Boolean algebras. It is more common to state Stone duality in terms of Stone spaces and Boolean algebras. This is just the restriction of the duality above to the aforementioned sub-categories.

For a generalized Boolean algebra $\mathcal{A}$, we let $\aut(\mathcal{A})$ denote the group of Boolean isomorphisms from $\mathcal{A}$ to $\mathcal{A}$.

\subsection{Étale groupoids}\label{subs:groupoids}
The standard references for étale groupoids (and their \mbox{$C^*$-algebras}) are Renault's thesis~\cite{Ren} and Paterson's book~\cite{Pat}. See also the excellent lecture notes by Sims~\cite{Sims}. A \emph{groupoid} is a small category of isomorphisms, that is, a set $\GG$ (the morphisms, or arrows in the category) equipped with a partially defined multiplication~$(g_1, g_2) \mapsto g_1 \cdot g_2$ for a distinguished subset $\mathcal{G}^{(2)} \subseteq \mathcal{G} \times \mathcal{G}$, and everywhere defined involution $g \mapsto g^{-1}$ satisfying the following axioms:
\begin{enumerate}
\item If $g_1g_2$ and $(g_1g_2)g_3$ are defined, then $g_2g_3$ and $g_1(g_2g_3)$ are defined and $(g_1g_2)g_3=g_1(g_2g_3)$,
\item The products $gg^{-1}$ and $g^{-1}g$ are always defined. If $g_1g_2$ is defined, then $g_1=g_1g_2g_2^{-1}$ and $g_2=g_1^{-1}g_1g_2$.
\end{enumerate}

A \emph{topological groupoid} is a groupoid equipped with a topology making the operations of multiplication and taking inverse continuous. The elements of the form $gg^{-1}$ are called \emph{units}. We denote the set of units of a groupoid $\GG$ by $\GG^{(0)}$, and refer to this as the \emph{unit space}. We think of the unit space as a topological space equipped with the relative topology from~$\mathcal{G}$. The \emph{source} and \emph{range} maps are
 \[s(g) \coloneqq g^{-1}g\qquad\text{and}\qquad r(g) \coloneqq gg^{-1}\]
for $g\in \GG$. These maps are necessarily continuous when $\mathcal{G}$ is a topological groupoid. We implicitly assume that all unit spaces appearing are of infinite cardinality (in order to avoid some degenerate cases). An \emph{étale} groupoid is a topological groupoid where the range map (and necessarily also the source map) is a local homeomorphism (as a map from $\mathcal{G}$ to $\mathcal{G}$). The unit space $\mathcal{G}^{(0)}$ of an étale groupoid is always an open subset of $\mathcal{G}$.  An \emph{ample} groupoid is an étale groupoid whose unit space is a Boolean space. 

It is quite common for operator algebraists to restrict to Hausdorff groupoids. One reason for this is that a topological groupoid is Hausdorff if and only if the unit space is a closed subset of the groupoid. In the end our main results will only apply to groupoids that are Hausdorff, but some of the theory applies when $\mathcal{G}$ is merely ample (and effective). For as long as the unit space $\mathcal{G}^{(0)}$ is Hausdorff the groupoid will be locally Hausdorff. We shall therefore clearly indicate whenever we actually need the groupoid to be Hausdorff for some result to hold.
 
Two units $x,y\in \GG^{(0)}$ belong to the same \emph{$\GG$-orbit} if there exists $g\in \GG$ such that $s(g)=x$ and $r(g)=y$.  We denote by $\orb_{\GG}(x)$ the $\GG$-orbit of $x$. When every $\GG$-orbit is dense in $\GG^{(0)}$, $\GG$ is called \emph{minimal}. In the special case that there is just one orbit, we call $\GG$ \emph{transitive}. A subset~$A \subseteq \GG^{(0)}$ is called \emph{$\mathcal{G}$-full} if $r(s^{-1}(A)) = \mathcal{G}^{(0)}$, in other words if $A$ meets every $\mathcal{G}$-orbit. For an open subset $A \subseteq \GG^{(0)}$ the subgroupoid~$\GG_{|A} \coloneqq\{g\in \GG \mid s(g), r(g)\in A \}$ is called the \emph{restriction} of $\GG$ to $A$. When $\mathcal{G}$ is étale, the restriction $\GG_{|A}$ is an open étale subgroupoid. The \emph{isotropy group} of a unit $x\in \GG^{(0)}$ is the group $\GG_x^x \coloneqq \{g\in \GG \mid s(g)=r(g)=x\}$, and the \emph{isotropy bundle} is
\[\GG' \coloneqq \{g\in \GG \mid s(g)=r(g)\} = \bigsqcup_{x \in \mathcal{G}^{(0)}} \GG_x^x.\]
A groupoid $\GG$ is said to be \emph{principal} if $\GG' = \mathcal{G}^{(0)}$, i.e.\ if all isotropy groups are trivial. Any principal groupoid can be identified with an equivalence relation on its unit space $\mathcal{G}^{(0)}$, but the topology need not be the relative topology from $\mathcal{G}^{(0)} \times \mathcal{G}^{(0)}$. We say that $\GG$ is \emph{effective} if the interior of $\GG'$ equals $\mathcal{G}^{(0)}$. We call $\mathcal{G}$ \emph{topologically principal} if the set of points in $\mathcal{G}^{(0)}$ with trivial isotropy group are dense in $\mathcal{G}^{(0)}$.
 
\begin{remark}\label{rem:effective}
We should point out that the condition we are calling \emph{effective} often goes under the name \emph{essentially principal} (or even topologically principal) elsewhere in the literature. In general, topologically principal implies effective. However, for most groupoids considered by operator algebraists the two notions are in fact equivalent (see~\cite[Proposition~3.1]{Ren2}), so often these names all mean the same thing. In particular, this is the case for second countable locally compact Hausdorff étale groupoids.
 \end{remark}
 
\begin{definition}
Let $\mathcal{G}$ be an étale groupoid. A \emph{bisection} is an open subset $U\subseteq \GG$ such that $s$ and $r$ are both injective when restricted to $U$. A bisection $U$ is called \emph{full} if we have~$s(U)=r(U)=\GG^{(0)}$.  
\end{definition}
 
When $U$ is a bisection in $\mathcal{G}$, then $s_{\vert U} \colon U \to s(U)$ is a homeomorphism, and similarly for the range map. An étale groupoid can thus be characterized by admitting a topological basis consisting of bisections, and an ample groupoid as one with a basis of compact bisections. In particular, ample groupoids are locally compact, and if $\mathcal{G}$ is Hausdorff and ample, then $\mathcal{G}$ is also a Boolean space. One of the most basic class of examples of étale groupoids are the following, which arise from group actions.

\begin{example}
Let $\Gamma$ be a discrete group acting by homeomorphisms on a topological space~$X$. The associated \emph{transformation groupoid} is  \[\Gamma \ltimes X \coloneqq  \Gamma \times X\] with product according to~${(\tau, \gamma(x)) \cdot (\gamma,x) = (\tau \gamma, x)}$ (and undefined otherwise), and inverse~$(\gamma,x)^{-1} = (\gamma^{-1}, \gamma(x))$. Identifying the unit space $\left(\Gamma \ltimes X \right)^{(0)} = \{1\} \times X$ with $X$ in the obvious way we have $s((\gamma, x)) = x$ and $r((\gamma, x)) = \gamma(x)$. Equipping $\Gamma \ltimes X$ with the product topology makes it an étale groupoid (essentially because $\Gamma$ is discrete), and a basis of bisections is given by the cylinder sets \[Z(\gamma, U) \coloneqq \{(\gamma,x) \ \vert \ x\in U\}\]
indexed over $\gamma \in \Gamma$ and open subsets $U \subseteq X$. The identification of $X$ with the unit space as above is compatible with this topology. In particular $\Gamma \ltimes X$ is Hausdorff and ample exactly when $X$ is Boolean, and second countable when $\Gamma$ is countable and $X$ is second countable. The transformation groupoid is effective if and only if every non-trivial group element has support equal to $X$. In the second countable setting, this coincides with the action being topologically principal (meaning that the set of points that are fixed only by the identity element of the group form a dense subset of $X$). The groupoid orbit $\orb_{\Gamma \ltimes X}(x)$ of a point $x \in X$ coincide with the orbit under the action, i.e.\ $\orb_{\Gamma \ltimes X}(x) = \{\gamma(x) \ \vert \gamma \in \Gamma \} = \orb_{\Gamma \curvearrowright X}(x)$. 
\end{example}

A \emph{groupoid homomorphism} between two groupoids $\mathcal{G}$ and $\mathcal{H}$ is a map $\Phi \colon \mathcal{G} \to \mathcal{H}$ such that $(\Phi(g), \Phi(g')) \in \mathcal{H}^{(2)}$ whenever $(g, g') \in \mathcal{G}^{(2)}$, and moreover $\Phi(g) \cdot \Phi(g') = \Phi(g\cdot g')$. It follows that $\Phi(g^{-1}) = \Phi(g)^{-1}$ for all $g \in \mathcal{G}$, $\Phi$ commutes with the source and range maps and $\Phi\left(\mathcal{G}^{(0)}\right) \subseteq \mathcal{H}^{(0)}$. If $\Phi$ is a bijection, then $\Phi^{-1}$ is a groupoid homomorphism and we call $\Phi$ an \emph{algebraic isomorphism}. For étale groupoids $\mathcal{G}$ and $\mathcal{H}$ an \emph{étale homomorphism} is a groupoid homomorphism $\Phi \colon \mathcal{G} \to \mathcal{H}$ which is also a local homeomorphism. It is a fact that a groupoid homomorphism $\Phi \colon \mathcal{G} \to \mathcal{H}$ between étale groupoids is a local homeomorphism if and only if the restriction $\Phi^{(0)} \colon \mathcal{G}^{(0)} \to \mathcal{H}^{(0)}$ to the unit spaces is a local homemorphism. By an \emph{isomorphism} of topogical (or étale) groupoids we mean an algebraic isomorphism which is also a homeomorphism. So a bijective étale homomorphism is an isomorphism of étale groupoids. Note that if $\Phi \colon \mathcal{G} \to \mathcal{H}$ is an étale homomorphism, then the image $\Phi(\mathcal{G})$ is an open étale subgroupoid of $\mathcal{H}$.

\section{The topological full group}\label{sec:tfg}

In this section we will expand Matui's definition of the topological full group of an ample groupoid from the compact to the locally compact case, and establish some elementary properties. To each bisection $U\subseteq \GG$ in an étale groupoid we associate a homeomorphism \[\pi_U \colon s(U)\to r(U)\] given by $r_{\vert U} \circ (s_{\vert U})^{-1}$. This means that for each $g \in U,$ $\pi_U$ maps $s(g)$ to $r(g)$. Whenever $U$ is a full bisection, $\pi_U$ is a homeomorphism of $\mathcal{G}^{(0)}$. We now show that the (partial) homeomorphism $\pi_U$ determines the bisection $U$, when the groupoid is effective and Hausdorff.

\begin{lemma}\label{lemma:homeoBis}
Let $\GG$ be an effective ample Hausdorff groupoid and let $U,V \subseteq \mathcal{G}$ be bisections with $s(U) = s(V)$ and $r(U) = r(V)$. If $\pi_U = \pi_V$, then $U = V$.
\end{lemma}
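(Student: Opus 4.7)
The plan is to form the product bisection $W := UV^{-1}$ (i.e.\ the set of composable products $uv^{-1}$ with $u\in U$, $v\in V$), show that it lies inside the isotropy bundle $\GG'$, and then use effectiveness to force $W\subseteq\GG^{(0)}$, which collapses $U$ onto $V$.

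First I would recall that in an étale groupoid, the product of two open bisections is again an open bisection; applied to $U$ and $V^{-1}$, this makes $W$ an open bisection. Because $s(U)=s(V)$ and the source map is injective on each of $U$ and $V$, the product $W$ is nonempty and consists precisely of the elements $u\,v^{-1}$ where $u\in U$, $v\in V$ and $s(u)=s(v)$; such a pair exists uniquely for each point $x\in s(U)=s(V)$.

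Next I would compute, for such an element $uv^{-1}$ with $x=s(u)=s(v)$, that
\[
s(uv^{-1}) = r(v) = \pi_V(x), \qquad r(uv^{-1}) = r(u) = \pi_U(x).
\]
Since $\pi_U=\pi_V$ by hypothesis, $s(uv^{-1})=r(uv^{-1})$, hence $uv^{-1}\in\GG'$. Thus $W\subseteq\GG'$.

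Finally, effectiveness of $\GG$ says that the interior of $\GG'$ equals $\GG^{(0)}$. Since $W$ is open and contained in $\GG'$, we conclude $W\subseteq\GG^{(0)}$. Thus every element $uv^{-1}\in W$ is a unit, so $uv^{-1}=r(uv^{-1})=r(v)$. Since $s(uv^{-1})=r(v)$, the pair $(uv^{-1},v)$ is composable, and $u = (uv^{-1})\cdot v = r(v)\cdot v = v$. This gives $U\subseteq V$; interchanging the roles of $U$ and $V$ yields equality. The one step requiring care is the invocation of effectiveness on the open set $W$, which is why Hausdorffness is stated (to keep the notions of bisection and of effective behaving as expected); the rest is a routine manipulation of bisections and composition in $\GG$.
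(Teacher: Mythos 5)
Your proof is correct and follows essentially the same route as the paper's: form the product bisection (the paper uses $V^{-1}U$ rather than $UV^{-1}$, which is immaterial), observe that it lies in the isotropy bundle $\GG'$, and invoke effectiveness to force it into $\GG^{(0)}$. The only cosmetic difference is that you conclude directly from $W$ being an open subset of $\GG'$ that $W\subseteq\operatorname{int}(\GG')=\GG^{(0)}$, whereas the paper phrases the same step via Hausdorffness ($\GG^{(0)}$ closed, so $V^{-1}U\setminus\GG^{(0)}$ is an open subset of $\GG'\setminus\GG^{(0)}$, hence empty); both are valid.
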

\begin{proof}
That $\pi_U = \pi_V$ means that for each $x \in s(U)$, the unique elements $g \in U, h \in V$ with~$s(g) = x = s(h)$ also satisfies $r(g) = r(h)$. This implies that $V^{-1}U \subseteq \mathcal{G}'$. As $\mathcal{G}$ is Hausdorff, $\mathcal{G}^{(0)}$ is closed, and therefore $V^{-1}U \cap \left( \mathcal{G} \setminus \mathcal{G}^{(0)}\right)$ is an open subset of $\mathcal{G}' \setminus \mathcal{G}^{(0)}$. But since $\mathcal{G}$ is effective this set must be empty. This entails that $V^{-1}U \subseteq \mathcal{G}^{(0)}$, and hence~$U = V$.
\end{proof}

\begin{definition}\label{def:tfg}
Let $\GG$ be an effective ample groupoid. The \emph{topological full group} of $\mathcal{G}$, denoted $\llbracket \mathcal{G} \rrbracket$, is the subgroup of $\homeo\left(\mathcal{G}^{(0)}\right)$ consisting of all homeomorphisms of the form $\pi_U$, where $U$ is a full bisection in $\GG$ such that $\supp(\pi_U)$ is compact. We will denote by~$\DD(\llbracket \mathcal{G} \rrbracket)$ its commutator subgroup.
\end{definition}

In the topological full group, composition and inversion of the homeomorphisms correspond to multiplication and inversion of the bisections, viz.: 
\begin{itemize}
\item $\pi_{\mathcal{G}^{(0)}} = \id_{\mathcal{G}^{(0)}} = 1$
\item $\pi_U \circ \pi_V = \pi_{UV}$
\item $\left(\pi_U\right)^{-1} = \pi_{U^{-1}}$
\end{itemize}

\begin{remark}\label{rem:matui}
It is clear that when the unit space is compact, this definition coincides with Matui's~\cite[Definition~2.3]{Mat1}---which again generalizes the definitions given in~\cite{GPS} and~\cite{Mats}, for Cantor dynamical systems and one-sided shifts of finite type, respectively, to étale groupoids. Moreover, in~\cite{Mat4} Matui defined six different full groups associated with a minimal homeomorphism $\phi$ of a locally compact Cantor space. The smallest one of these, denoted $\tau[\phi]_c$ in~\cite{Mat4}, equals the topological full group (as in Definition~\ref{def:tfg}) of the associated transformation groupoid.
\end{remark}

\begin{remark}
After the completion of this work, we were made aware of Matte Bon's preprint~\cite{MB} where he defines the topological full group of an arbitrary étale groupoid~$\mathcal{G}$ as the group of all full bisections $U \subseteq \mathcal{G}$ such that $U \setminus \mathcal{G}^{(0)}$ is compact. For effective groupoids, this agrees with Definition~\ref{def:tfg}, modulo identifying a full bisection with its associated homeomorphism. For not necessarily effective groupoids it is arguably better to define the topological full group in terms of the bisections themselves, for then one does not ``lose'' the information contained in the (non-trivial) isotropy (but also to separate the group from its canonical---no longer faithful---action on the unit space). This is done in e.g.~\cite{Nek} and~\cite{BrixS} as well. However, the approach taken in this paper---in particular in Section~\ref{sec:spatrel}---is based on working with subgroups of the homeomorphism group of a space (i.e.\ faithful group actions), which is why we have defined $\llbracket \mathcal{G} \rrbracket$ as we have.
\end{remark}

\begin{remark}
We emphasize that the topological full group $\llbracket \mathcal{G} \rrbracket$ is viewed as a \emph{discrete} group. The term \emph{topological} is historical, and refers to the fact that the homeomorphisms in the topological full group preserves orbits in a ``continuous way'', as opposed to the full groups, which appeared first---in the measurable setting---c.f.~\cite[page~2]{GPS}. 
\end{remark}

For descriptions of the topological full group in certain classes of examples, see Proposition~\ref{prop:bis}, Remark~\ref{AFfullgroups} and Remark~\ref{LCCMfullgroups}. See also~\cite{Mat2} for a survey of about topological full groups of étale groupoids with compact unit space. 

By virtue of the groupoid being effective, the support of a homeomorphism in the topological full group is in fact open as well. Matui's proof of this fact for compact unit spaces carries over verbatim to our setting.

\begin{lemma}[c.f.\ {\cite[Lemma~2.2]{Mat}}]\label{lem_clopen}
Let $\GG$ be an effective ample Hausdorff groupoid. Then $\supp(\pi_U) = s(U \setminus \mathcal{G}^{(0)})$ for each $\pi_U \in \llbracket \mathcal{G} \rrbracket$. In particular, $\supp(\pi_U)$ is a compact open subset of $\mathcal{G}^{(0)}$.
\end{lemma}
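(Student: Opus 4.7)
The plan is to prove the set equality $\supp(\pi_U)=s(U\setminus\mathcal{G}^{(0)})$ by a double inclusion and then read off that the right-hand side is compact and open. Write $M:=\{x\in\mathcal{G}^{(0)}\mid\pi_U(x)\neq x\}$, so $\supp(\pi_U)=\overline{M}$. For each $x\in\mathcal{G}^{(0)}$, fullness of $U$ provides a unique $g_x\in U$ with $s(g_x)=x$, and $\pi_U(x)=r(g_x)$. Hence $x\in M$ iff $g_x\notin\mathcal{G}'$, while $x\in s(U\setminus\mathcal{G}^{(0)})$ iff $g_x\notin\mathcal{G}^{(0)}$; since $\mathcal{G}^{(0)}\subseteq\mathcal{G}'$ this immediately gives $M\subseteq s(U\setminus\mathcal{G}^{(0)})$.

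Next I would observe that $s(U\setminus\mathcal{G}^{(0)})$ is clopen. Because $s|_U$ is a bijection onto $\mathcal{G}^{(0)}$, and $s$ is the identity on units, the partition $U=(U\cap\mathcal{G}^{(0)})\sqcup(U\setminus\mathcal{G}^{(0)})$ yields $s(U\setminus\mathcal{G}^{(0)})=\mathcal{G}^{(0)}\setminus(U\cap\mathcal{G}^{(0)})$. Since $\mathcal{G}^{(0)}$ is clopen in $\mathcal{G}$ (open because $\mathcal{G}$ is étale, closed because $\mathcal{G}$ is Hausdorff) and $U$ is open, $U\cap\mathcal{G}^{(0)}$ is clopen in $\mathcal{G}^{(0)}$, and therefore its complement $s(U\setminus\mathcal{G}^{(0)})$ is clopen as well. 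Together with the first step, this already yields $\overline{M}\subseteq s(U\setminus\mathcal{G}^{(0)})$.

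The main obstacle is the reverse inclusion, where effectiveness enters decisively. Given $x\in s(U\setminus\mathcal{G}^{(0)})$ with corresponding $g_x\in U\setminus\mathcal{G}^{(0)}$, if $g_x\notin\mathcal{G}'$ then $x\in M$ and we are done. Otherwise $g_x\in\mathcal{G}'\setminus\mathcal{G}^{(0)}$, and for an arbitrary open neighborhood $W$ of $x$ in $\mathcal{G}^{(0)}$ I would set $V:=U\cap s^{-1}(W)$, which is an open bisection in $\mathcal{G}$ containing $g_x$. If $V$ were contained in $\mathcal{G}'$, then $V$ would be an open subset of the isotropy bundle, and effectiveness---$(\mathcal{G}')^{\circ}=\mathcal{G}^{(0)}$---would force $V\subseteq\mathcal{G}^{(0)}$, contradicting $g_x\in V\setminus\mathcal{G}^{(0)}$. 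Hence some $h\in V$ satisfies $h\notin\mathcal{G}'$, and then $s(h)\in W\cap M$, showing that every neighborhood of $x$ meets $M$ and so $x\in\overline{M}$. This completes the equality; compactness of $\supp(\pi_U)$ is built into the definition of $\llbracket\mathcal{G}\rrbracket$, and openness was already established in the second step.
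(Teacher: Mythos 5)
Your overall strategy is sound and is essentially the argument of Matui's Lemma~2.2 that the paper invokes: the inclusion $M\subseteq s(U\setminus\mathcal{G}^{(0)})$ follows from $\mathcal{G}^{(0)}\subseteq\mathcal{G}'$; the identity $s(U\setminus\mathcal{G}^{(0)})=\mathcal{G}^{(0)}\setminus(U\cap\mathcal{G}^{(0)})$ shows this set is closed (only openness of $U\cap\mathcal{G}^{(0)}$ is needed for that), so $\overline{M}\subseteq s(U\setminus\mathcal{G}^{(0)})$; and the reverse inclusion is exactly where effectiveness enters, via the observation that an open subset of $\mathcal{G}'$ must lie in $\mathcal{G}^{(0)}$. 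All of that is correct.

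The step that does not hold up as written is the claim that $U\cap\mathcal{G}^{(0)}$ is \emph{clopen} in $\mathcal{G}^{(0)}$ "since $\mathcal{G}^{(0)}$ is clopen in $\mathcal{G}$ and $U$ is open": the intersection of an open set with a clopen set is only open, so this reasoning does not give the closedness you need for the final openness assertion. The claim itself is true, but the correct justification is that $U\cap\mathcal{G}^{(0)}$ is the preimage of $\mathcal{G}^{(0)}$ under the continuous section $(s_{\vert U})^{-1}\colon\mathcal{G}^{(0)}\to\mathcal{G}$ (equivalently, the coincidence set of $(s_{\vert U})^{-1}$ with the identity), which is closed precisely because $\mathcal{G}^{(0)}$ is closed in $\mathcal{G}$. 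This brings out a second point: you invoke Hausdorffness of $\mathcal{G}$, which is not among the stated hypotheses. The set equality $\supp(\pi_U)=s(U\setminus\mathcal{G}^{(0)})$ genuinely does not require it, but the ``in particular'' clause does need $\mathcal{G}^{(0)}$ to be closed in $\mathcal{G}$: by Lemma~\ref{lem_haus}, a non-Hausdorff groupoid of germs admits full bisections whose associated homeomorphism has compact but non-open support, so openness of $\supp(\pi_U)$ can fail for merely effective ample groupoids. You should therefore separate the two conclusions, proving the displayed equality in general and using the closedness of $\mathcal{G}^{(0)}$ (as in Matui's Hausdorff setting) only for the openness statement.
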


We now present a few basic results on the existence of elements in the topological full group. They will be used in later sections to construct elements in the topological full group with localized support.

\begin{lemma}\label{lem:extendBisection}
Let $\GG$ be an effective ample groupoid, and let $\pi_U \in \llbracket \mathcal{G} \rrbracket$. Then we have a decomposition 
\[U = U^\perp \bigsqcup \left( \mathcal{G}^{(0)} \setminus \supp(\pi_U) \right),\]
where $U^\perp$ is a compact bisection with $s(U^\perp) = r(U^\perp) = \supp(\pi_U)$.

Conversely, any compact bisection $V \subseteq \mathcal{G}$ with $s(V) = r(V)$ defines an element $\pi_{\tilde{V}} \in \llbracket \mathcal{G} \rrbracket$ with $\supp(\pi_{\tilde{V}}) \subseteq s(V)$ by setting $\tilde{V} = V \sqcup \left( \mathcal{G}^{(0)} \setminus s(V) \right)$.
\end{lemma}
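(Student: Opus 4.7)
\medskip

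The plan is to handle the two directions separately, with the forward direction relying on effectiveness to promote an isotropy-valued open set to the unit space.

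For the forward direction, first invoke Lemma~\ref{lem_clopen} so that $\supp(\pi_U)$ is a compact open subset of $\mathcal{G}^{(0)}$, and in particular $\mathcal{G}^{(0)} \setminus \supp(\pi_U)$ is open. Split $U$ as $U = U^\perp \sqcup U_0$, where $U_0 \coloneqq U \cap s^{-1}\bigl(\mathcal{G}^{(0)} \setminus \supp(\pi_U)\bigr)$ and $U^\perp \coloneqq U \cap s^{-1}(\supp(\pi_U))$. Both pieces are open in $U$ since $s$ is continuous. For any $g \in U_0$, the fact that $\pi_U$ fixes $s(g)$ forces $r(g) = s(g)$, so $U_0 \subseteq \mathcal{G}'$. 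Being open, $U_0$ is contained in the interior of $\mathcal{G}'$, which equals $\mathcal{G}^{(0)}$ by effectiveness; hence $U_0 = s(U_0) = \mathcal{G}^{(0)} \setminus \supp(\pi_U)$, using here that $U$ is full.

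The key observation for compactness of $U^\perp$ is that $s$ restricts to a homeomorphism $U \to s(U) = \mathcal{G}^{(0)}$, so $U^\perp = (s|_U)^{-1}(\supp(\pi_U))$ is the homeomorphic preimage of a compact set, hence compact. Clearly $s(U^\perp) = \supp(\pi_U)$, and since $\supp(\pi_U)$ is $\pi_U$-invariant (it is the complement of the fixed point set), also $r(U^\perp) = \pi_U(\supp(\pi_U)) = \supp(\pi_U)$.

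For the converse, given a compact bisection $V \subseteq \mathcal{G}$ with $s(V) = r(V)$, note first that $s(V)$ is compact (continuous image) and open (image under the local homeomorphism $s$ of the open set $V$), hence clopen in the Hausdorff unit space. Therefore $\tilde V \coloneqq V \sqcup \bigl(\mathcal{G}^{(0)} \setminus s(V)\bigr)$ is a union of two disjoint open sets, hence open in $\mathcal{G}$. Since $s(V)$ and its complement in $\mathcal{G}^{(0)}$ partition the unit space, both $s$ and $r$ are injective on $\tilde V$ and surject onto $\mathcal{G}^{(0)}$, so $\tilde V$ is a full bisection. By construction $\pi_{\tilde V}$ acts as the identity on $\mathcal{G}^{(0)} \setminus s(V)$, so $\supp(\pi_{\tilde V}) \subseteq s(V)$ is a closed subset of a compact set, and therefore compact; this shows $\pi_{\tilde V} \in \llbracket \mathcal{G} \rrbracket$.

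The only delicate step is the appeal to effectiveness in the forward direction; the remainder is bookkeeping with the fact that on an ample groupoid, $s|_U$ is a homeomorphism for every bisection~$U$.
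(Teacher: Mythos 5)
Your proof is correct and follows essentially the same route as the paper: the paper simply sets $U^\perp = (s|_U)^{-1}(\supp(\pi_U))$ after noting invariance of the support, and declares the converse obvious. Your extra detail — using effectiveness to show the complementary piece $U_0$ sits inside $\mathcal{G}^{(0)}$ — is exactly the content of Lemma~\ref{lem_clopen}, which you could have cited directly, but re-deriving it does no harm.
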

\begin{proof}
It is clear that $\supp(\pi_U)$ is invariant under $\pi_U$. Therefore we may simply put $U^\perp = \smash{s_{\vert U}^{-1}(\supp(\pi_U))}$. The second statement is obvious.
\end{proof}

\begin{lemma}\label{lemma:bisectionInvolution}
Let $\GG$ be an effective ample groupoid. Any compact bisection $V \subseteq \mathcal{G}$ satisfying~$s(V) \cap r(V) = \emptyset$ defines an involutive element $\pi_{\hat{V}} \in \llbracket \mathcal{G} \rrbracket$ with $\supp(\pi_{\hat{V}}) \subseteq s(V) \cup r(V)$ by setting $\hat{V} = V \sqcup V^{-1} \sqcup \left( \mathcal{G}^{(0)} \setminus \left( s(V) \cup r(V) \right) \right)$.
\end{lemma}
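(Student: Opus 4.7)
The plan is to verify three things: (a) $\hat{V}$ is a full bisection; (b) $\supp(\pi_{\hat{V}}) \subseteq s(V) \cup r(V)$, which is compact, placing $\pi_{\hat{V}}$ in $\llbracket \mathcal{G} \rrbracket$; and (c) $\pi_{\hat{V}}^2 = \id_{\mathcal{G}^{(0)}}$. Throughout, I would work with the three pieces $V$, $V^{-1}$, and $A := \mathcal{G}^{(0)} \setminus (s(V) \cup r(V))$, leaning on the hypothesis $s(V) \cap r(V) = \emptyset$ to keep track of which of $s(V)$, $r(V)$, and $A$ each source or range lands in.

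For step (a), I would first observe that $V \cap \mathcal{G}^{(0)} = \emptyset$ and $V \cap V^{-1} = \emptyset$: a unit $x \in V$ would satisfy $x \in s(V) \cap r(V)$, and a common element $g \in V \cap V^{-1}$ would force $r(g) \in r(V) \cap s(V)$. Since $s$ and $r$ are open maps on the étale groupoid that restrict to homeomorphisms on the compact bisection $V$, both $s(V)$ and $r(V)$ are compact open subsets of the Hausdorff space $\mathcal{G}^{(0)}$, so $A$ is open and the three pieces of $\hat{V}$ are open and pairwise disjoint. To conclude that $\hat{V}$ is a bisection I would check that the three $s$-images---$s(V)$, $r(V)$, and $A$---are pairwise disjoint, so $s|_{\hat{V}}$ is globally injective; the range side is symmetric. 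Fullness is then immediate from $s(\hat{V}) = s(V) \cup r(V) \cup A = \mathcal{G}^{(0)}$. Step (b) is a direct observation: for $x \in A$, the unique arrow in $\hat{V}$ with source $x$ is $x$ itself, so $\pi_{\hat{V}}$ fixes $x$, yielding $\supp(\pi_{\hat{V}}) \subseteq s(V) \cup r(V)$, which is compact.

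The main step is (c). By the bisection product rule $\pi_U \circ \pi_W = \pi_{UW}$, it suffices to show $\hat{V} \cdot \hat{V} = \mathcal{G}^{(0)}$ as subsets of $\mathcal{G}$, for then $\pi_{\hat{V}}^2 = \pi_{\hat{V}\hat{V}} = \pi_{\mathcal{G}^{(0)}} = \id$. Expanding the product gives nine pieces, and pairwise disjointness of $s(V)$, $r(V)$, $A$ rules out six of them as containing no composable pairs. The three surviving pieces evaluate to $V \cdot V^{-1} = r(V)$, $V^{-1} \cdot V = s(V)$, and $A \cdot A = A$ (for the first, each $g \in V$ produces $gg^{-1} = r(g)$ as the only composable product against $V^{-1}$, using injectivity of $s|_V$). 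Their disjoint union is precisely $\mathcal{G}^{(0)}$, as required. I do not expect a conceptual obstacle---the lemma is essentially a set-theoretic bookkeeping exercise---and the case analysis in (c) is the one place that warrants careful attention.
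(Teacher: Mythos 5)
Your proof is correct, and it is exactly the verification the authors have in mind: the paper's own proof of this lemma is the single word ``Immediate,'' so your three steps (disjointness of the pieces and fullness, triviality of $\pi_{\hat V}$ on $\mathcal{G}^{(0)}\setminus(s(V)\cup r(V))$, and the product computation $\hat V\hat V=\mathcal{G}^{(0)}$ via $VV^{-1}=r(V)$, $V^{-1}V=s(V)$) are precisely the bookkeeping being suppressed. No gaps.
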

\begin{proof}
Immediate.
\end{proof}

\begin{lemma}\label{lemma:bisectionExistence}
Let $\mathcal{G}$ be an effective ample groupoid. If $g \in \mathcal{G} \setminus \mathcal{G}'$, that is $s(g) \neq r(g)$, then there is a (nontrivial) bisection $U \subseteq \mathcal{G}$ containing $g$ with $\pi_U \in \llbracket \mathcal{G} \rrbracket$. Furthermore, for any open set $A \subseteq \mathcal{G}^{(0)}$ containing both $s(g)$ and $r(g)$, $U$ can be chosen so that $\supp(\pi_U) \subseteq A$. We may also choose $\pi_U$ to be an involution.
\end{lemma}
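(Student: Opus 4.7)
My plan is to produce a compact bisection through $g$ whose source and range sets are disjoint, and then invoke Lemma~\ref{lemma:bisectionInvolution} to extend it to a full bisection defining an element of $\llbracket \mathcal{G} \rrbracket$.

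First, since $\mathcal{G}$ is ample, there exists a compact open bisection $V_0 \subseteq \mathcal{G}$ containing $g$. Because $\mathcal{G}^{(0)}$ is Boolean and Hausdorff, and $s(g) \neq r(g)$ both lie in the open set $A$, I can choose disjoint compact open sets $S, R \subseteq A$ with $s(g) \in S$ and $r(g) \in R$. The key construction is then to shrink $V_0$ to
\[ V \coloneqq (s|_{V_0})^{-1}\bigl(s(V_0) \cap S\bigr) \cap (r|_{V_0})^{-1}\bigl(r(V_0) \cap R\bigr). \]
This $V$ is still an open bisection containing $g$; it is compact because, via the homeomorphism $s|_{V_0}$, it corresponds to a closed subset of the compact set $s(V_0) \cap S$ (using that $\mathcal{G}^{(0)}$ is Hausdorff, so compact subsets are closed). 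By design $s(V) \subseteq S$ and $r(V) \subseteq R$, and hence $s(V) \cap r(V) = \emptyset$.

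Now I apply Lemma~\ref{lemma:bisectionInvolution} to $V$. This produces an involutive element $\pi_{\hat{V}} \in \llbracket \mathcal{G} \rrbracket$ with
\[ \supp(\pi_{\hat{V}}) \subseteq s(V) \cup r(V) \subseteq S \cup R \subseteq A. \]
Setting $U \coloneqq \hat{V}$, we have $g \in V \subseteq \hat{V} = U$, so $U$ is a (full) bisection containing $g$. The element $\pi_U$ is nontrivial because it sends $s(g)$ to $r(g) \neq s(g)$, and its support is contained in $A$ as required.

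No step looks like a serious obstacle here; the only mild subtlety is verifying that the shrunk bisection $V$ remains compact, which follows because in the Hausdorff space $\mathcal{G}^{(0)}$ the intersection of a closed (compact) set with a compact open set is compact, and $s|_{V_0}$ is a homeomorphism onto its image. The lemma itself is essentially a packaging of the ample basis of compact bisections together with the Hausdorff separation property applied to $s(g)$ and $r(g)$.
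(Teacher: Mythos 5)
Your proof is correct and follows essentially the same route as the paper's: take a compact bisection through $g$, separate $s(g)$ and $r(g)$ by disjoint open sets inside $A$, shrink the bisection so its source and range land in those sets, and then apply Lemma~\ref{lemma:bisectionInvolution}. The only cosmetic difference is that you write the shrunk bisection explicitly as an intersection of $s$- and $r$-preimages, whereas the paper phrases it as intersecting two compact open subsets $W_1 \cap W_2$ of the original bisection; your compactness check (working inside the Hausdorff space $V_0 \cong s(V_0)$) is valid.
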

\begin{proof}
As $\mathcal{G}$ is ample there is a compact bisection $W$ containing $g$. Let $B_1$, $B_2$ be disjoint open neighbourhoods of $s(g)$, $r(g)$ respectively in $\mathcal{G}^{(0)}$. By intersecting we may take~${B_1 \subseteq s(W) \cap A}$ and ${B_2 \subseteq r(W) \cap A}$. By continuity of $s$ and $r$ there are compact open sets $W_1, W_2 \subseteq W$, both containing $g$, such that $s(W_1) \subseteq B_1$ and $r(W_2) \subseteq B_2$. And then~$V = W_1 \cap W_2$ is a compact bisection containing $g$ with $s(V) \cap r(V) = \emptyset$ and~$s(V) \cup r(V) \subseteq A$. Hence $U = \hat{V}$ (as in Lemma~\ref{lemma:bisectionInvolution}) is the desired full bisection.
\end{proof}

\begin{remark}
In the non-compact case we may view the topological full group as a direct limit of topological full groups of groupoids over \emph{compact spaces} as follows. Consider~$\smash{\ck\left(\mathcal{G}^{(0)}\right)}$ as a directed set (ordered by inclusion). Given $A,B \in \smash{\ck\left(\mathcal{G}^{(0)}\right)}$ with~$A\subseteq B$ we define the group homomorphism $\iota_{A,B} \colon \llbracket \mathcal{G}_A \rrbracket \to \llbracket \mathcal{G}_B \rrbracket$ by $\pi_U\mapsto \pi_{\tilde{U}}$ where we set~$\tilde{U} = U \sqcup (B\setminus A)$. Then we have that \[\llbracket \mathcal{G} \rrbracket  \cong \lim\limits_{\rightarrow}(\llbracket \mathcal{G}_A \rrbracket, \iota).\]
\end{remark}

\section{The groupoid of germs}\label{sec:germ}

We are now going to adapt the notions of~\cite[Section~3]{Ren2} to the (special) case of groups, rather than inverse semigroups, to fit the framework of the topological full group and its subgroups, rather than the \emph{pseudogroup} studied in~\cite{Ren2}. Our goal is to reconstruct an ample groupoid $\mathcal{G}$ from subgroups of the topological full group $\llbracket \mathcal{G} \rrbracket$ as a so-called \emph{groupoid of germs}---which is a quotient of a transformation groupoid.

\begin{remark}
In the following three sections we will be working with subgroups of $\homeo(X)$, where~$X$ is a topological space. Thus we are essentially studying faithful actions by discrete groups on $X$. In the end we will have $X = \mathcal{G}^{(0)}$ for some ample groupoid~$\mathcal{G}$, and we will be looking at subgroups of $\llbracket \mathcal{G} \rrbracket$. Yet it will be convenient to state most results for general subgroups $\Gamma \leq \homeo(X)$ without reference to groupoids. Also, beware that the term \emph{faithful} will be used differently in Section~\ref{sec:spatrel} (c.f.\ Definition~\ref{def:faithful}).
\end{remark}

Recall that two homeomorphisms $\gamma, \tau \colon X \to X$ have the same \emph{germ} at a point $x \in X$ if there is a neighbourhood $U$ of $x$ such that $\smash{{\gamma}_{|U} = {\tau}_{|U}}$.

\begin{definition}
Let $X$ be a locally compact Hausdorff space and let $\Gamma \leq \homeo(X)$. The \emph{groupoid of germs} of $(\Gamma,X)$ is 
\[\G(\Gamma,X) \coloneqq \left(\Gamma \ltimes X\right) / \sim \]
where $(\gamma, x) \sim (\tau, y)$ iff $x=y$ and $\gamma, \tau$ have the same germ at $x$. 
\end{definition}

Denote the equivalence class of $(\gamma, x) \in \Gamma \ltimes X$ under $\sim$ by $[\gamma, x]$. It is straightforward to check that the groupoid operations of the transformation groupoid are well-defined on representatives of the equivalence classes in the groupoid of germs (and that they are continuous). The bisections 
\[Z[\gamma, A] \coloneqq \{[\gamma,x] \ \vert \ x\in A\},\] for $\gamma \in \Gamma$ and $A \subseteq X$ open, form a basis for the quotient topology. The unit space of~$\G(\Gamma,X)$ is also identified with $X$ in the obvious way. Hence the groupoid $\G(\Gamma,X)$ is étale (and ample when $X$ is Boolean), and it is furthermore always effective (as any group element acting identically on an open set is identified with the identity at each point of this open set). Hausdorffness of the groupoid however, is no longer guaranteed, but it can be characterized as follows.

\begin{lemma}\label{lem_haus}
Let $X$ be a locally compact Hausdorff space and let $\Gamma \leq \homeo(X)$. Then the groupoid of germs $\G(\Gamma,X)$ is Hausdorff if and only if $\supp(\gamma)$ is clopen in $X$ for every $\gamma \in \Gamma$.
\end{lemma}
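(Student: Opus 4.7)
The plan is to prove the equivalence by directly analyzing when two distinct germs admit disjoint basic neighborhoods of the form $Z[\gamma, A]$, isolating the one case where the clopen-support condition is actually forced.

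For the forward direction (Hausdorff implies clopen supports), I would argue the contrapositive. Since $\supp(\gamma)$ is always closed by definition, failure of being clopen amounts to failure of being open, so $X \setminus \supp(\gamma)$ is open but not closed. Hence there exists $x \in \supp(\gamma) \cap \overline{X \setminus \supp(\gamma)}$. Since $x \in \supp(\gamma)$, every neighborhood of $x$ contains a point moved by $\gamma$, so $\gamma$ is not the identity on any neighborhood of $x$, giving $[\gamma, x] \neq [\id_X, x]$. On the other hand, for any basic neighborhoods $Z[\gamma, A]$ of $[\gamma, x]$ and $Z[\id_X, B]$ of $[\id_X, x]$, the open set $A \cap B$ meets $X \setminus \supp(\gamma)$ (since $x$ is in the closure of the latter), and at any such common point $y$ the element $\gamma$ is identically the identity on a neighborhood of $y$, so $[\gamma, y] = [\id_X, y]$ lies in $Z[\gamma, A] \cap Z[\id_X, B]$. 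Thus the two distinct germs cannot be separated.

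For the reverse direction, I would separate two distinct germs $[\gamma_1, x_1]$ and $[\gamma_2, x_2]$ by cases. If $x_1 \neq x_2$, Hausdorffness of $X$ provides disjoint open neighborhoods that immediately yield disjoint basic sets on the source side. If $x_1 = x_2$ but $\gamma_1(x_1) \neq \gamma_2(x_2)$, choose disjoint open $W_1, W_2$ around the two range points and use $Z[\gamma_i, \gamma_i^{-1}(W_i)]$; a common germ would then force equal images in disjoint sets. The delicate case is when both sources and ranges agree. Setting $\delta := \gamma_2^{-1} \gamma_1$, this fixes $x_1$, yet cannot equal the identity on any neighborhood of $x_1$ (else the germs would coincide), so $x_1 \in \supp(\delta)$. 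By hypothesis $V := \supp(\delta)$ is clopen and hence an open neighborhood of $x_1$, and I would verify that $Z[\gamma_1, V] \cap Z[\gamma_2, V] = \emptyset$: any common germ $[\gamma_1, y] = [\gamma_2, y]$ with $y \in V$ would force $\delta$ to be the identity on a neighborhood of $y$, contradicting $y \in V = \supp(\delta)$.

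The main obstacle is precisely this final subcase of the reverse direction, which is where the clopen-support hypothesis is used in an essential way to ``open up'' a neighborhood of the fixed point $x_1$ of $\delta$ still lying entirely in the support; the other separations are formal consequences of Hausdorffness of $X$.
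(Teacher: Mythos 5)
Your proof is correct and follows essentially the same route as the paper's: the non-Hausdorff witness at a boundary point of a non-open support is identical, and your case analysis in the converse reduces, just as the paper does, to applying the clopen-support hypothesis to the quotient $\gamma_2^{-1}\gamma_1$ (the paper works with $\gamma\tau^{-1}$ at $\tau(x)$ instead of at $x$, a cosmetic difference). No gaps.
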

\begin{proof}
Since $X$ is Hausdorff, any two groupoid elements $[\gamma, x], [\tau, y] \in \G(\Gamma,X)$ with distinct sources (i.e.\ $x \neq y$) can always be separated by open sets. We only have to worry about separating elements in the same isotropy group, and it suffices to be able to separate the unit from any other element. Also note that $[\gamma, x] \neq [1, x]$ if and only if $x \in \supp(\gamma)$.

First, assume that all the supports are clopen. If $[\gamma, x] \neq [1, x]$, then by the observation above, $Z[\gamma, \supp(\gamma)]$ and $Z[1, \supp(\gamma)]$ are disjoint open neighbourhoods of these elements. To separate $[\gamma, x]$ from $[\tau, x]$ (when these are distinct), we first note that~${[\gamma, x][\tau, x]^{-1} = [\gamma \tau^{-1}, \tau(x)] \neq [1, \tau(x)]}$. Hence $\tau(x) \in \supp(\gamma \tau^{-1})$, so by the argument above $Z[\gamma \tau^{-1}, A]$ and $Z[1, A]$, with $A = \supp(\gamma \tau^{-1})$, separates $[\gamma \tau^{-1}, \tau(x)]$ from~$[1, \tau(x)]$. It follows that $Z[\gamma, \tau^{-1}(A)]$ and $Z[\tau, \tau^{-1}(A)]$ separates $[\gamma, x]$ and $[\tau, x]$.

Conversely, suppose there is a $\gamma\in\Gamma$ such that $\supp(\gamma)$ is not open. Let $x$ be any point on the boundary of $\supp(\gamma)$. Then $\gamma(x) = x$, but $[\gamma, x] \neq [1, x]$, and these two groupoid elements cannot be separated by open sets. To see this take any two basic neighbourhoods~$Z[\gamma, A], Z[1, B]$ where $A,B$ are open neighbourhoods of $x$ in $X$. They both contain the basic set $Z[1, C]$ where $C = (A \cap B) \setminus \supp(\gamma)$, since $\gamma$ acts identically on $C$.
\end{proof}

In the sequel we shall restrict our attention to groups of homeomorphisms which have open, as well as compact, support. Topological full groups are determined by the ``local behaviour'' of its elements. This is made precise in the following definition.

\begin{definition}
Let $X$ be a locally compact Hausdorff space and let $\Gamma\leq \homeoc(X)$. We say that a homeomorphism $\varphi\in \homeoc(X)$ \emph{locally belongs to $\Gamma$} if for every $x\in X$, there exists an open neighborhood $U$ of $x$ and $\gamma\in \Gamma$ such that $\varphi_{|U}=\gamma_{|U}$. The group  $\Gamma$ is called \emph{locally closed} if whenever $\varphi\in \homeoc(X)$ locally belongs to $\Gamma$, then $\varphi\in \Gamma$. 
\end{definition}

\begin{proposition}\label{prop_TFGample}
Let $\GG$ be an effective ample Hausdorff groupoid. Then the topological full group $\llbracket \mathcal{G} \rrbracket \leq \smash{\homeoc\left(\mathcal{G}^{(0)}\right)}$ is locally closed.
\end{proposition}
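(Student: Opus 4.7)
The plan is to take an arbitrary $\varphi \in \homeoc\bigl(\mathcal{G}^{(0)}\bigr)$ that locally belongs to $\llbracket \mathcal{G} \rrbracket$ and exhibit a full bisection $W \subseteq \mathcal{G}$ with $\pi_W = \varphi$ and $\supp(\pi_W)$ compact. First I would set $K \coloneqq \supp(\varphi)$, which is compact open by hypothesis, hence clopen, since the unit space of an ample groupoid is Hausdorff. Note that $\varphi$ restricts to the identity on $\mathcal{G}^{(0)} \setminus K$. For each $x \in K$, the local-belonging hypothesis provides an open neighborhood $U_x$ of $x$ and a bisection $V_x$ with $\pi_{V_x} \in \llbracket \mathcal{G} \rrbracket$ and $\varphi|_{U_x} = \pi_{V_x}|_{U_x}$. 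Using that $\mathcal{G}^{(0)}$ admits a basis of compact open sets, I would shrink $U_x$ so that it is compact open and contained in $s(V_x)$, then replace $V_x$ by $V_x \cap s^{-1}(U_x)$; this is still a bisection, now with $s(V_x) = U_x$, and the associated partial homeomorphism still agrees with $\varphi$ on $U_x$.

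Next I would exploit compactness of $K$ to extract a finite subcover $U_{x_1},\dots,U_{x_n}$ of $K$. Since $\mathcal{G}^{(0)}$ is a Boolean space, I can disjointify by setting $A_i \coloneqq U_{x_i} \setminus (U_{x_1} \cup \cdots \cup U_{x_{i-1}})$; each $A_i$ is compact open, the $A_i$ are pairwise disjoint, and $\bigsqcup_{i=1}^n A_i = K$. Then I would define
\[
W \;\coloneqq\; \Bigl( \bigsqcup_{i=1}^{n} V_{x_i} \cap s^{-1}(A_i) \Bigr) \;\sqcup\; \bigl(\mathcal{G}^{(0)} \setminus K\bigr).
\]
This $W$ is open in $\mathcal{G}$ as a union of open sets. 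Injectivity of $s$ on $W$ follows from injectivity on each piece together with the fact that the sources of the pieces, namely the sets $A_i$ and $\mathcal{G}^{(0)} \setminus K$, are pairwise disjoint and exhaust $\mathcal{G}^{(0)}$. Injectivity of $r$ follows from injectivity of $\varphi$: the range of $V_{x_i} \cap s^{-1}(A_i)$ equals $\varphi(A_i)$, and these sets, together with $\mathcal{G}^{(0)} \setminus K = \varphi(\mathcal{G}^{(0)} \setminus K)$, partition $\mathcal{G}^{(0)}$. Thus $W$ is a full bisection.

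By construction $\pi_W(y) = \varphi(y)$ for $y \in A_i$, and $\pi_W$ is the identity on $\mathcal{G}^{(0)}\setminus K$, where $\varphi$ also fixes points. Hence $\pi_W = \varphi$. Finally, $\{y : \pi_W(y) \neq y\} \subseteq K$ with $K$ closed, so $\supp(\pi_W) \subseteq K$ is a closed subset of the compact set $K$, hence compact. Therefore $\varphi = \pi_W \in \llbracket \mathcal{G} \rrbracket$, which shows that $\llbracket \mathcal{G} \rrbracket$ is locally closed.

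The argument is essentially a standard patching-and-disjointifying construction; the only point requiring care is the verification that the glued object is actually a bisection, which hinges on $\varphi$ being a bijection and the $A_i$ being pairwise disjoint. I don't anticipate any serious obstacle; notably, effectiveness of $\mathcal{G}$ plays no role in the patching itself (it is only needed so that $\llbracket\mathcal{G}\rrbracket$ is defined in the first place), and Hausdorffness of $\mathcal{G}$ is not required—only of $\mathcal{G}^{(0)}$, which is automatic since $\mathcal{G}$ is ample.
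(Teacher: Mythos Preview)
Your argument is essentially the same as the paper's: cover the compact open support by finitely many compact open sets on which $\varphi$ agrees with some $\pi_{U_i}$, disjointify, and patch the restricted bisections together with the complement of the support. One small slip: you assert $\bigsqcup_i A_i = K$, but your disjointification only gives $\bigsqcup_i A_i = \bigcup_i U_{x_i} \supseteq K$, since you never arranged $U_{x_i} \subseteq K$; simply intersect each $U_x$ with the open set $K$ when shrinking (or replace $K$ by $\bigcup_i U_{x_i}$ throughout, which is still compact open and on which $\pi_W$ still agrees with $\varphi$), and the rest goes through unchanged.
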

\begin{proof}
Let $\varphi \in \homeoc(\mathcal{G}^{(0)})$ locally belong to $\llbracket \mathcal{G} \rrbracket$. Then, since $\supp(\varphi)$ is compact open, we can find finitely many open sets $A_i \subseteq \supp(\varphi)$, covering $\supp(\varphi)$, such that~$\smash{\varphi_{\vert A_i} = (\pi_{U_i})_{\vert A_i}}$, where $\pi_{U_i} \in \llbracket \mathcal{G} \rrbracket$. Since $\mathcal{G}^{(0)}$ is Boolean we may assume that the $A_i$'s are clopen and disjoint. We then have a clopen partition $\supp(\varphi) = A_1 \sqcup A_2 \sqcup \cdots \sqcup A_n$, and~$\varphi$ restricts to a self-homeomorphism of $\supp(\varphi)$ which on each region $A_i$ equals $\pi_{U_i}$. It follows that the set $\smash{V = \cup_{i=1}^n V_i}$, where $\smash{V_i = \left(s_{\vert U_i} \right)^{-1}(A_i)}$, is a compact bisection in $\mathcal{G}$ with~$s(V) = \supp(\varphi) = r(V)$. And then $\varphi = \pi_{\tilde{V}} \in \llbracket \mathcal{G} \rrbracket$, where $\tilde{V}$ is as in Lemma~\ref{lem:extendBisection}.
\end{proof}

Given a group $\Gamma\leq \homeoc(X)$ we denote by $\langle \Gamma \rangle$ the set of $\varphi \in \homeoc(X)$ which locally belong to $\Gamma$. Clearly $\langle \Gamma \rangle$ is a locally closed group in $\homeoc(X)$ and $\Gamma \leq \langle \Gamma \rangle$. As the groupoid of germs is defined in the same local terms as the local closure we have a canonical isomorphism $\G(\langle \Gamma \rangle, X) \cong \G(\Gamma,X)$. From this we obtain the analog of~\cite[Proposition~3.2]{Ren2}, namely that the topological full group of a groupoid of germs equals the local closure of the group we started with.

\begin{proposition}\label{prop_ample}
Let $X$ be a Boolean space and let $\Gamma \leq \homeoc(X)$. Then we have that $\llbracket \G(\Gamma,X) \rrbracket \cong \langle \Gamma \rangle$. 
\end{proposition}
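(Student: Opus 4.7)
The plan is to show that $\llbracket \G(\Gamma,X) \rrbracket$ and $\langle \Gamma \rangle$ coincide as subgroups of $\homeo(X)$, from which the isomorphism follows at once. First observe that since every $\gamma \in \Gamma \leq \homeoc(X)$ has compact open (hence clopen, as $X$ is Hausdorff) support, Lemma~\ref{lem_haus} ensures that $\G(\Gamma,X)$ is Hausdorff. It is moreover always effective and, since $X$ is Boolean, ample; so $\llbracket \G(\Gamma,X) \rrbracket$ makes sense as a subgroup of $\homeo(X)$.

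For the inclusion $\llbracket \G(\Gamma,X) \rrbracket \subseteq \langle \Gamma \rangle$, I would fix $\pi_U \in \llbracket \G(\Gamma,X) \rrbracket$ and any $x \in X$. Fullness of $U$ produces some $\gamma \in \Gamma$ with $[\gamma,x] \in U$; since the sets $Z[\gamma,V]$ (with $V \ni x$ open) form a neighborhood basis at $[\gamma,x]$, openness of $U$ yields such a $V$ with $Z[\gamma,V] \subseteq U$. Because $U$ is a bisection, $[\gamma,y]$ is the unique element of $U$ with source $y$ for each $y \in V$, so $\pi_U$ agrees with $\gamma$ on $V$. Thus $\pi_U$ locally belongs to $\Gamma$, and combined with compactness of $\supp(\pi_U)$ this places $\pi_U$ in $\langle \Gamma \rangle$.

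For the reverse inclusion, take $\varphi \in \langle \Gamma \rangle$: for each $x \in X$ pick an open $V_x \ni x$ and $\gamma_x \in \Gamma$ with $\varphi_{|V_x} = (\gamma_x)_{|V_x}$, and set $U_\varphi \coloneqq \bigcup_{x \in X} Z[\gamma_x, V_x]$, an open subset of $\G(\Gamma,X)$. The one step requiring care---essentially the only substantive point in the whole argument---is checking that $U_\varphi$ is a bisection. But if $[\gamma_x,y]$ and $[\gamma_{x'},y]$ both lie in $U_\varphi$ with common source $y$, then each of $\gamma_x, \gamma_{x'}$ agrees with $\varphi$ on some neighborhood of $y$, so they determine the same germ at $y$; fullness is evident since every $y \in X$ is the source of $[\gamma_y,y] \in U_\varphi$. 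By construction $\pi_{U_\varphi} = \varphi$, and since $\varphi \in \homeoc(X)$ has compact support, $\varphi = \pi_{U_\varphi} \in \llbracket \G(\Gamma,X) \rrbracket$. There is no genuine obstacle; the proof is a clean application of the basis description of the topology on $\G(\Gamma,X)$ together with the germ equivalence relation.
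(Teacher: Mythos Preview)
Your proof is correct and close in spirit to the paper's, but the organization differs in one noteworthy way. The paper first invokes the canonical isomorphism $\G(\Gamma,X) \cong \G(\langle \Gamma \rangle, X)$ and proves instead that $\llbracket \G(\langle \Gamma \rangle, X) \rrbracket = \langle \Gamma \rangle$; this makes the inclusion $\langle \Gamma \rangle \subseteq \llbracket \cdot \rrbracket$ a one-liner, since for $\varphi \in \langle \Gamma \rangle$ the single basic set $Z[\varphi, X]$ is already a full bisection with $\pi_{Z[\varphi,X]} = \varphi$. You instead work directly in $\G(\Gamma,X)$ and assemble $U_\varphi$ as a union of local pieces, which buys you a more self-contained argument at the cost of having to verify the bisection property. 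One small gap: you check that $s$ is injective on $U_\varphi$ but not $r$; this is immediate once you observe that $r = \varphi \circ s$ on $U_\varphi$ and $\varphi$ is a bijection, so range-injectivity follows from source-injectivity.
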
 
\begin{proof}
Since $\G(\Gamma,X) \cong \G(\langle \Gamma \rangle, X)$, it suffices to show that $\llbracket \G(\langle \Gamma \rangle, X) \rrbracket = \langle \Gamma \rangle$. For each $\varphi \in \langle \Gamma \rangle$ the full bisection $Z[\varphi, X] = U_\varphi$ in $\G(\langle \Gamma \rangle, X)$ satisfies $\pi_{U_\varphi} = \varphi$. And since $\varphi$ has compact support it belongs to $\llbracket \G(\langle \Gamma \rangle, X) \rrbracket$.

For the reverse inclusion, take any $\pi_U \in \llbracket \G(\langle \Gamma \rangle, X) \rrbracket$. Recall that the support of~$\pi_U$ is open, as well as compact, since any groupoid of germs is effective (c.f.\ Lemma~\ref{lem_clopen}). To see that $\pi_U$ locally belongs to $\Gamma$ take any $x \in X$, and let $[\varphi, x]$ be the unique element in~$U$ whose source is $x$. Since $U$ is open there is a basic set $Z[\varphi, A] \subseteq U$, where $A$ is an open neighbourhood of $x$ in $X$. As $\varphi \in \langle \Gamma \rangle$ there is an open neighbourhood $B$ of $x$ and an element~$\gamma \in \Gamma$ with $\smash{\varphi_{\vert B} = \gamma_{\vert B}}$. By intersecting with $A$ we may assume that $B \subseteq A$. Now observe that~$\smash{(\pi_U)_{\vert B} = \varphi_{\vert B} = \gamma_{\vert B}}$, and we are done.
\end{proof}

As topological full groups are locally closed (Proposition~\ref{prop_TFGample}) we obtain the following immediate corollary.

\begin{corollary}\label{cor:tfgGerm}
Let $\mathcal{G}$ be an effective ample Hausdorff groupoid. Then \[\llbracket \G(\llbracket \mathcal{G} \rrbracket, \mathcal{G}^{(0)}) \rrbracket \cong \llbracket \mathcal{G} \rrbracket. \]
\end{corollary}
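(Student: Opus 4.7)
The plan is to combine Proposition~\ref{prop_ample} with Proposition~\ref{prop_TFGample} in a one-line chain of identifications. Since $\mathcal{G}$ is ample, its unit space $\mathcal{G}^{(0)}$ is a Boolean space, so I can apply Proposition~\ref{prop_ample} with $X = \mathcal{G}^{(0)}$ and $\Gamma = \llbracket \mathcal{G} \rrbracket \leq \homeoc(\mathcal{G}^{(0)})$ to conclude
\[
\llbracket \G(\llbracket \mathcal{G} \rrbracket, \mathcal{G}^{(0)}) \rrbracket \cong \langle \llbracket \mathcal{G} \rrbracket \rangle.
\]
The hypotheses needed for Proposition~\ref{prop_ample} are satisfied: $\llbracket \mathcal{G} \rrbracket$ lies in $\homeoc(\mathcal{G}^{(0)})$ by definition, since elements of the topological full group have compact (and, by Lemma~\ref{lem_clopen}, open) support.

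Then I would invoke Proposition~\ref{prop_TFGample}, which asserts that $\llbracket \mathcal{G} \rrbracket$ is locally closed as a subgroup of $\homeoc(\mathcal{G}^{(0)})$. By the definition of local closure, this says precisely that $\langle \llbracket \mathcal{G} \rrbracket \rangle = \llbracket \mathcal{G} \rrbracket$. Substituting into the isomorphism above yields
\[
\llbracket \G(\llbracket \mathcal{G} \rrbracket, \mathcal{G}^{(0)}) \rrbracket \cong \llbracket \mathcal{G} \rrbracket,
\]
which is the desired conclusion.

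There is no real obstacle here, as the corollary is genuinely immediate from the two preceding propositions; the content resides entirely in those results. The only subtlety worth flagging is that the isomorphism produced by Proposition~\ref{prop_ample} is the natural one sending the class $\pi_{Z[\varphi, X]}$ to $\varphi$, so the resulting isomorphism $\llbracket \G(\llbracket \mathcal{G} \rrbracket, \mathcal{G}^{(0)}) \rrbracket \cong \llbracket \mathcal{G} \rrbracket$ is canonical and compatible with the respective actions on $\mathcal{G}^{(0)}$, which will be relevant when this corollary is used in Section~\ref{sec:SpatG} and Section~\ref{sec:reggrpd}.
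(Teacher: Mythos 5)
Your proof is correct and is exactly the paper's argument: the corollary is obtained by applying Proposition~\ref{prop_ample} to $(\llbracket \mathcal{G} \rrbracket, \mathcal{G}^{(0)})$ and then using Proposition~\ref{prop_TFGample} to identify $\langle \llbracket \mathcal{G} \rrbracket \rangle$ with $\llbracket \mathcal{G} \rrbracket$. The hypotheses you check (that $\mathcal{G}^{(0)}$ is Boolean and that $\llbracket \mathcal{G} \rrbracket \leq \homeoc(\mathcal{G}^{(0)})$ via Lemma~\ref{lem_clopen}) are precisely what is needed, so nothing is missing.
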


The preceding results show that a locally closed group $\Gamma \leq \homeoc(X)$ can be reconstructed from its associated groupoid of germs $\smash{\G\left(\Gamma,\mathcal{G}^{(0)}\right)}$, namely as the topological full group of this groupoid. We now turn to the question of how an ample groupoid $\mathcal{G}$ relates to the groupoid of germs, $\smash{\G\left(\llbracket \mathcal{G} \rrbracket, \mathcal{G}^{(0)}\right)}$, determined by its topological full group. We will see that these will also be isomorphic under some mild condition on the groupoid---namely that the groupoid can be covered by bisections as in the following definition.

\begin{definition}
Let $\mathcal{G}$ be an effective ample groupoid. We say that a subgroup $\Gamma \leq \llbracket \mathcal{G} \rrbracket$ \emph{covers $\mathcal{G}$} if there for each $g \in \mathcal{G}$ exists a $\pi_U \in \Gamma$ such that $g \in U$.
\end{definition}

Note that if $\Gamma \leq \llbracket \mathcal{G} \rrbracket$ covers $\mathcal{G}$, then so does any group $\Gamma'$ in between, i.e.\ $\Gamma \leq \Gamma' \leq \llbracket \mathcal{G} \rrbracket$, and in particular $\llbracket \mathcal{G} \rrbracket$ itself covers $\mathcal{G}$.  Sufficient conditions on the orbits of $\mathcal{G}$ for $\llbracket \mathcal{G} \rrbracket$, or the commutator $\DD(\llbracket \mathcal{G} \rrbracket)$, to cover $\mathcal{G}$ is given by the following result (which is the analog of~\cite[Lemma~3.7]{Mat}).

\begin{lemma}\label{ex_cover}
Let $\mathcal{G}$ be an effective ample groupoid.
\begin{enumerate}
\item If $\vert \orb_\GG(x) \vert \geq 2$ for every $x\in \GG^{(0)}$, then $\llbracket \mathcal{G} \rrbracket$ covers $\mathcal{G}$.
\item If $\vert \orb_\GG(x) \vert \geq 3$ for every $x\in \GG^{(0)}$, then $\DD(\llbracket \mathcal{G} \rrbracket)$ covers $\mathcal{G}$.
\end{enumerate}
\end{lemma}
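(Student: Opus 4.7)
The plan is to split both parts into cases according to whether $g\in\mathcal{G}$ is a unit, a non-trivial isotropy element, or has distinct source and range. Units are covered trivially by $U=\mathcal{G}^{(0)}$, since $\pi_{\mathcal{G}^{(0)}}=\id$ belongs to both $\llbracket \mathcal{G} \rrbracket$ and $\DD(\llbracket \mathcal{G} \rrbracket)$.

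For part~(1), the non-isotropy case ($s(g)\neq r(g)$) is immediate from Lemma~\ref{lemma:bisectionExistence}, which produces an involutive full bisection $U\ni g$ with $\pi_U\in\llbracket \mathcal{G} \rrbracket$. For the isotropy case ($s(g)=r(g)=x$, $g\notin\mathcal{G}^{(0)}$), invoke the orbit hypothesis to pick $y\in\orb_\mathcal{G}(x)\setminus\{x\}$ and $h\in\mathcal{G}$ with $s(h)=x$, $r(h)=y$. Write $g=(gh^{-1})\cdot h$; each factor has distinct source and range, so the previous case supplies full bisections $U_1\ni gh^{-1}$ and $U_2\ni h$ with $\pi_{U_i}\in\llbracket \mathcal{G} \rrbracket$. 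The product $U_1U_2$ is a full bisection containing $g$, and $\pi_{U_1U_2}=\pi_{U_1}\pi_{U_2}\in\llbracket \mathcal{G} \rrbracket$.

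For part~(2), the key case is again $s(g)\neq r(g)$: here one must realize $g$ inside the full bisection of a commutator. Set $x=s(g)$, $y=r(g)$, and use $|\orb_\mathcal{G}(x)|\geq 3$ to pick $z\in\orb_\mathcal{G}(x)\setminus\{x,y\}$. Choose pairwise disjoint clopen neighbourhoods $A\ni x$, $B\ni y$, $C\ni z$, a compact bisection $V_1\ni g$ with $s(V_1)\subseteq A$, $r(V_1)\subseteq B$, and a compact bisection $V_2$ with $s(V_2)\subseteq B$, $r(V_2)\subseteq C$ containing a connecting element $y\to z$; by trimming one can arrange $s(V_2)=r(V_1)$. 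Let $\alpha=\pi_{\hat V_1}$ and $\beta=\pi_{\hat V_2}$ be the involutions of Lemma~\ref{lemma:bisectionInvolution}. A region-by-region computation on the pieces $s(V_1)$, $r(V_1)=s(V_2)$, $r(V_2)$ shows that $[\beta,\alpha]=\beta\alpha\beta\alpha$ cycles $s(V_1)\to r(V_1)\to r(V_2)\to s(V_1)$, with defining full bisection
\[
V_1\sqcup V_2\sqcup (V_2V_1)^{-1}\sqcup\bigl(\mathcal{G}^{(0)}\setminus(s(V_1)\cup r(V_1)\cup r(V_2))\bigr),
\]
which contains $V_1$ and hence $g$. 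Since $[\beta,\alpha]\in\DD(\llbracket \mathcal{G} \rrbracket)$, this finishes the non-isotropy case. The isotropy case for (2) mirrors that of (1): decompose $g=(gh^{-1})\cdot h$ for some $h$ going from $x$ to $y\in\orb(x)\setminus\{x\}$, apply the non-isotropy case of (2) to each factor (using a third orbit point $y'\in\orb(x)\setminus\{x,y\}$, available since $|\orb(x)|\geq 3$), and multiply the resulting commutator bisections, noting that products of commutators remain in $\DD(\llbracket \mathcal{G} \rrbracket)$.

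The main technical obstacle is the explicit ``$3$-cycle'' verification in part~(2): one must arrange the three regions $s(V_1)$, $r(V_1)=s(V_2)$, and $r(V_2)$ to sit in disjoint clopen neighbourhoods so that the supports of $\alpha$ and $\beta$ interact \emph{only} on the intermediate region, and then check, region by region, that the bisection of $[\beta,\alpha]$ restricts to $V_1$ on $s^{-1}(s(V_1))$---this is precisely what guarantees $g\in V_1$ lies in the commutator's bisection rather than in some shifted product bisection like $V_2V_1$. Once the geometric layout is set up correctly, the remaining computations are routine.
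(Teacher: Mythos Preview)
Your proof is correct and follows the same overall strategy as the paper: handle units trivially, use Lemma~\ref{lemma:bisectionExistence} for the non-isotropy case, and reduce the isotropy case to the non-isotropy case via a factorization $g=(gh^{-1})\cdot h$.

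The only real difference is in the commutator construction for part~(2). You build an explicit $3$-cycle by carefully trimming compact bisections $V_1\ni g$ and $V_2$ so that $s(V_2)=r(V_1)$ and the three regions $s(V_1),r(V_1),r(V_2)$ are disjoint, then verify region by region that the bisection of $[\pi_{\hat V_2},\pi_{\hat V_1}]$ restricts to $V_1$ over $s(V_1)$. The paper instead works directly with the \emph{full} involutive bisections $U\ni g$ and $V\ni h$ produced by Lemma~\ref{lemma:bisectionExistence} (where $h$ goes from a third orbit point $y$ to $s(g)$), arranges only that $y\notin\supp(\pi_U)$, and then observes the algebraic identity $g=g\cdot h\cdot y\cdot h^{-1}\in UVUV=(UV)^2$, using that $y\in U$ as a unit and $h^{-1}\in V=V^{-1}$. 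This avoids the trimming step and the explicit region decomposition, at the cost of being slightly less transparent about why the commutator captures $g$. Your approach buys a concrete description of the commutator's bisection; the paper's buys brevity. They are equivalent in substance.
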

\begin{proof}
(1) First consider $g \in \mathcal{G} \setminus \mathcal{G}'$. Then Lemma~\ref{lemma:bisectionExistence} immediately gives a~$\pi_U \in \llbracket \mathcal{G} \rrbracket$ with~$g \in U$. Next, suppose $s(g) = r(g) = x$. By assumption there is a point $y$ different from $x$ in $\orb_\GG(x)$. This means that there is some $h \in \mathcal{G}$ with $s(h) = x \neq y = r(h)$. And then $h^{-1}$ is composable with $g$ and $gh^{-1} \in \mathcal{G} \setminus \mathcal{G}'$.  Applying Lemma~\ref{lemma:bisectionExistence} to both $gh^{-1}$ and~$h$ we get $\pi_{U_1}, \pi_{U_2}  \in \llbracket \mathcal{G} \rrbracket$ with $gh^{-1} \in U_1$ and $h \in U_2$. Since $\pi_{U_1U_2} \in \llbracket \mathcal{G} \rrbracket$ and $g \in U_1U_2$ we see that $\llbracket \mathcal{G} \rrbracket$ covers $\mathcal{G}$. 

(2) As in the previous part we first consider $g \in \mathcal{G} \setminus \mathcal{G}'$. By assumption there is a third (distinct) point $y$ in the same orbit as $s(g)$ and $r(g)$. Therefore there is an element $h \in \mathcal{G}$ with $s(h) = y$ and $r(h) = s(g)$.  Lemma~\ref{lemma:bisectionExistence} gives involutions $\pi_U, \pi_V \in \llbracket \mathcal{G} \rrbracket$ such that $g \in U$ and $h \in V$. We may also arrange so that $y \notin \supp(\pi_U)$ by the second part of Lemma~\ref{lemma:bisectionExistence}. Then
\[ [\pi_U, \pi_V] = \pi_U \pi_V (\pi_U)^{-1} (\pi_V)^{-1} = \pi_{(UV)^2}  \in \DD(\llbracket \mathcal{G} \rrbracket), \]
and we claim that $g$ belongs to the associated full bisection $(UV)^2$. To see that this is the case, note that $y \in U$ since $y \notin \supp(\pi_U)$. Thus we have $g = g\cdot h \cdot y \cdot h^{-1} \in UVUV$ as~$s(h) = y$.

Finally, for the case $s(g) = r(g)$ we proceed similar as in part (1). We take $h \in \mathcal{G}$ with~$s(h) = s(g)$ and $r(h) \neq s(g)$ and apply the above part to $gh^{-1}$ and $h$, which both belong to $\mathcal{G} \setminus \mathcal{G}'$. Multiplying the bisections we get gives the desired bisection containing~$g$. 
\end{proof}

The conditions in Lemma~\ref{ex_cover} are not necessary (c.f.\ Example~\ref{ex:1orbitCover}), but they are typically easy to check in specific examples. Note that for minimal groupoids all orbits are in particular infinite, so the covering as above is automatic. We are now ready to give the main result on how a groupoid $\mathcal{G}$ can be reconstructed from the germs of $\llbracket \mathcal{G} \rrbracket$. It is the analog of~\cite[Proposition~3.2]{Ren2}.

\begin{proposition}\label{prop_germs}
Let $\mathcal{G}$ be an effective ample Hausdorff groupoid and let $\Gamma \leq \llbracket \mathcal{G} \rrbracket$. Then there is an injective étale homomorphism 
\[\smash{\iota \colon \G\left(\Gamma, \mathcal{G}^{(0)} \right) \hookrightarrow \mathcal{G}}\]
given by $\iota([\pi_U,x]) = (s_{\vert U})^{-1}(x)$ for ${[\pi_U,x] \in \G\left(\Gamma, \mathcal{G}^{(0)} \right)}$. Furthermore, $\iota$ is surjective, and hence an isomorphism, if and only if $\Gamma$ covers $\mathcal{G}$.
\end{proposition}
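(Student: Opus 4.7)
The plan is first to establish that the formula $\iota([\pi_U, x]) := (s_{|U})^{-1}(x)$ produces a well-defined function on $\G(\Gamma, \mathcal{G}^{(0)})$. Suppose $[\pi_U, x] = [\pi_V, x]$, meaning that $\pi_U$ and $\pi_V$ coincide on some open neighbourhood $W$ of $x$; since $U$ and $V$ are full bisections we have $W \subseteq s(U) \cap s(V) = \mathcal{G}^{(0)}$. Setting $U' := U \cap s^{-1}(W)$ and $V' := V \cap s^{-1}(W)$ yields two bisections with $s(U') = W = s(V')$ and $r(U') = \pi_U(W) = \pi_V(W) = r(V')$, and their associated partial homeomorphisms $\pi_{U'}$ and $\pi_{V'}$ coincide. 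Lemma~\ref{lemma:homeoBis}, which crucially uses the Hausdorff and effective hypotheses on $\mathcal{G}$, then gives $U' = V'$, so in particular the unique elements of $U$ and $V$ above $x$ agree. This step is the one that leans most heavily on the standing assumptions, and is the main technical point of the proof.

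Next I would check that $\iota$ is a groupoid homomorphism. It clearly maps the unit $[1, x]$ to $x \in \mathcal{G}^{(0)}$, and intertwines source and range by design. For a composable pair in $\G(\Gamma, \mathcal{G}^{(0)})$, say $[\pi_V, \pi_U(x)] \cdot [\pi_U, x] = [\pi_{VU}, x]$, the product $(s_{|V})^{-1}(\pi_U(x)) \cdot (s_{|U})^{-1}(x)$ lies in the bisection $VU$ and has source $x$, and so by uniqueness of the source fibre in a bisection it must equal $(s_{|VU})^{-1}(x) = \iota([\pi_{VU}, x])$. Compatibility with inversion is analogous.

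For the topological content, I would observe that on a basic open set $Z[\pi_U, A]$ with $A$ open in $\mathcal{G}^{(0)}$, the restriction of $\iota$ factors as the source homeomorphism $Z[\pi_U, A] \to A$ composed with $(s_{|U})^{-1} \colon A \to (s_{|U})^{-1}(A)$, which is a homeomorphism onto the open subset $(s_{|U})^{-1}(A) \subseteq U \subseteq \mathcal{G}$. Hence $\iota$ is a local homeomorphism, i.e.\ an étale homomorphism. Injectivity then comes for free with only a small argument: if $\iota([\pi_U, x]) = \iota([\pi_V, y]) =: g$, then $x = s(g) = y$ and $g \in U \cap V$; picking any bisection neighbourhood $N$ of $g$ contained in the open set $U \cap V$ yields $\pi_U|_{s(N)} = r \circ (s|_N)^{-1} = \pi_V|_{s(N)}$, so the two germs at $x$ coincide.

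Finally, the image of $\iota$ is by construction the union $\bigcup_{\pi_U \in \Gamma} U$, and this equals all of $\mathcal{G}$ precisely when every $g \in \mathcal{G}$ sits inside some bisection $U$ with $\pi_U \in \Gamma$, which is the very definition of $\Gamma$ covering $\mathcal{G}$. Combined with the injectivity and étale property already established, this gives the ``isomorphism'' conclusion in the covering case. The only genuine obstacle in the whole proof is the well-definedness step, where the effective and Hausdorff assumptions enter through Lemma~\ref{lemma:homeoBis}; everything else is a direct verification.
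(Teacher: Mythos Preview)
Your proof is correct and follows essentially the same approach as the paper's: well-definedness via Lemma~\ref{lemma:homeoBis}, direct verification of the homomorphism property, injectivity by looking at $U \cap V$, and surjectivity as a restatement of the covering condition. The only minor variation is that you verify the local homeomorphism property directly on basic open sets, whereas the paper observes that $\iota^{(0)} = \id_{\mathcal{G}^{(0)}}$ and invokes the general fact (stated in the preliminaries) that an algebraic groupoid homomorphism between \'etale groupoids is \'etale as soon as its restriction to the unit spaces is a local homeomorphism.
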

\begin{proof}
We first have to verify that $\iota$ is well-defined. Let $x \in \mathcal{G}^{(0)}$ and suppose that $\pi_U, \pi_V \in \Gamma$ have the same germ over $x$. Let $A$ be an open neighbourhood of $x$ on which $\pi_U$ and $\pi_V$ agree. Then 
\[\pi_{UA} = \left(\pi_U\right)_{|A} = \left(\pi_V\right)_{|A} = \pi_{VA},\] so by Lemma~\ref{lemma:homeoBis} we have $UA = VA$. This means that the unique groupoid elements in $U$ and $V$ that have source equal to $x$ coincide, so $\iota$ is well-defined.

To see that $\iota$ is a groupoid homomorphism recall that $([\pi_V,y],[\pi_U,x])$ is a composable pair iff $\pi_U(x) = y$. Suppose this is the case and let $g \in U$ be the element with $s(g) = x$, and let $h \in V$ be the element with $s(h) = y$. As $r(g) = \pi_U(x) = y = s(h)$ we have $(h,g) \in \mathcal{G}^{(2)}$ and 
\[\iota([\pi_V,y] \cdot [\pi_U,x]) = \iota([\pi_{VU},x]) = hg,\] since $hg \in VU$ and $s(hg) = x$.

Now note that $\iota(x) = x$ for $x \in \mathcal{G}^{(0)}$ (under the identification of the unit space of the groupoid of germs). So $\iota^{(0)} = \id_{\mathcal{G}^{(0)}}$ is a (local) homeomorphism, hence $\iota$ is an étale homomorphism.

To see that $\iota$ is injective note first that $\iota([\pi_U,x]) \neq \iota([\pi_V,y])$ if $x \neq y$ since $\iota^{(0)}$ is the identity. Suppose now that $\iota([\pi_U,x]) = \iota([\pi_V,x])$ for some $\pi_U, \pi_V \in \Gamma$. This means that there is a groupoid element $g \in U \cap V$ with $s(g) = x$. Thus $B = s(U \cap V)$ is an open neighbourhood of $x$ in $\mathcal{G}^{(0)}$ and clearly $\left(\pi_U\right)_{|B} = \left(\pi_V\right)_{|B}$, which means that $[\pi_U,x] = [\pi_V,x]$.

Finally, that $\iota$ is surjective is easily seen to be the same as $\Gamma$ covering $\mathcal{G}$.
\end{proof}

\begin{remark}
When the map $\iota$ in the previous proposition is an isomorphism the inverse is given by $\iota^{-1}(g) = [\pi_U,s(g)]$, where $U$ is any full bisection such that $\pi_U \in \Gamma$ and $g \in U$.
\end{remark}

\begin{remark}
Let $\mathcal{G}$ be an effective ample Hausdorff groupoid. Combining Propositions~\ref{prop_germs} and~\ref{prop_ample} we see that for each locally closed subgroup $\Gamma \leq \llbracket \mathcal{G} \rrbracket$, there is an open  étale subgroupoid $\mathcal{H}_\Gamma \subseteq \mathcal{G}$ such that $\llbracket \mathcal{H}_\Gamma \rrbracket \cong \Gamma$, namely $\mathcal{H}_\Gamma = \G (\Gamma, \mathcal{G}^{(0)})$.
\end{remark}

Since we are really interested in knowing when $\mathcal{G}$ is isomorphic to $\smash{\G\left(\Gamma, \mathcal{G}^{(0)} \right)}$ (particularly for the case $\Gamma = \llbracket \mathcal{G} \rrbracket$) it is natural to ask whether they could be isomorphic even if the canonical map $\iota$ fails to be an isomorphism. We will see shortly that this is not possible. For~$\Gamma \leq \homeoc(X)$ with $X$ Boolean we have seen that $\Gamma \leq \langle \Gamma \rangle \cong \llbracket \G(\Gamma,X) \rrbracket$. Identifying the latter two we see that $\Gamma$ covers $\G(\Gamma,X)$ since $[\gamma, x] \in Z[\gamma, X]$ and~$\pi_{Z[\gamma, X]} = \gamma \in \Gamma$ for each $[\gamma, x] \in \G(\Gamma,X)$.

\begin{corollary}\label{cor:necessaryCover}
Let $\mathcal{G}$ be an effective ample Hausdorff groupoid. Then~$\smash{\G\left(\llbracket \mathcal{G} \rrbracket, \mathcal{G}^{(0)} \right)}$ and~$\mathcal{G}$ are isomorphic as étale groupoids if and only if $\llbracket \mathcal{G} \rrbracket$ covers $\mathcal{G}$.
\end{corollary}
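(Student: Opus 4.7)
The ($\Leftarrow$) direction is free: it is precisely the statement of Proposition~\ref{prop_germs} specialized to $\Gamma = \llbracket \mathcal{G} \rrbracket$, since whenever $\llbracket \mathcal{G} \rrbracket$ covers $\mathcal{G}$ the canonical map $\iota$ is a surjective étale homomorphism, hence an isomorphism. So the only work is in the ($\Rightarrow$) direction.

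My plan for ($\Rightarrow$) is to transport the ``tautological'' covering of a groupoid of germs by its topological full group across the hypothesized isomorphism. First I would observe that, as noted in the paragraph immediately preceding the corollary, for any $\Gamma \leq \homeoc(X)$ with $X$ Boolean the group $\llbracket \G(\Gamma,X) \rrbracket$ covers $\G(\Gamma,X)$, because each germ $[\gamma,x]$ lies in the full bisection $Z[\gamma,X]$ whose associated homeomorphism is $\gamma \in \Gamma \leq \llbracket \G(\Gamma,X) \rrbracket$. Applied to $\Gamma = \llbracket \mathcal{G} \rrbracket$ and $X = \mathcal{G}^{(0)}$ this says that $\llbracket \G(\llbracket \mathcal{G} \rrbracket, \mathcal{G}^{(0)}) \rrbracket$ covers $\G(\llbracket \mathcal{G} \rrbracket, \mathcal{G}^{(0)})$. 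Moreover, by Proposition~\ref{prop_TFGample} the group $\llbracket \mathcal{G} \rrbracket$ is locally closed, so Proposition~\ref{prop_ample} (equivalently Corollary~\ref{cor:tfgGerm}) gives a canonical identification $\llbracket \G(\llbracket \mathcal{G} \rrbracket, \mathcal{G}^{(0)}) \rrbracket \cong \llbracket \mathcal{G} \rrbracket$.

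Now suppose $\Phi \colon \G(\llbracket \mathcal{G} \rrbracket, \mathcal{G}^{(0)}) \to \mathcal{G}$ is an isomorphism of étale groupoids. Since $\Phi$ is a homeomorphism that preserves multiplication, it sends full bisections to full bisections, and the restriction $\Phi^{(0)}$ to unit spaces is a self-homeomorphism of $\mathcal{G}^{(0)}$ satisfying $\pi_{\Phi(U)} = \Phi^{(0)} \circ \pi_U \circ (\Phi^{(0)})^{-1}$ for every full bisection $U$. In particular, $\pi_{\Phi(U)}$ has compact support whenever $\pi_U$ does, so $\Phi$ induces a group isomorphism $\llbracket \G(\llbracket \mathcal{G} \rrbracket, \mathcal{G}^{(0)}) \rrbracket \to \llbracket \mathcal{G} \rrbracket$ that carries covering full bisections to covering full bisections. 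Given $g \in \mathcal{G}$, pick a full bisection $U$ in $\G(\llbracket \mathcal{G} \rrbracket, \mathcal{G}^{(0)})$ with $\pi_U \in \llbracket \G(\llbracket \mathcal{G} \rrbracket, \mathcal{G}^{(0)}) \rrbracket$ containing $\Phi^{-1}(g)$; then $\Phi(U)$ is a full bisection of $\mathcal{G}$ containing $g$ whose homeomorphism lies in $\llbracket \mathcal{G} \rrbracket$. Hence $\llbracket \mathcal{G} \rrbracket$ covers $\mathcal{G}$.

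The potential subtlety, and really the only one, is making sure that ``covering'' is invariant under groupoid isomorphism even though $\Phi^{(0)}$ need not be the identity on $\mathcal{G}^{(0)}$. This is dealt with by the conjugation formula $\pi_{\Phi(U)} = \Phi^{(0)} \circ \pi_U \circ (\Phi^{(0)})^{-1}$ above, which shows both that the full bisection image has compactly supported associated homeomorphism and that the resulting group isomorphism matches the one from Corollary~\ref{cor:tfgGerm}. Once this is checked the argument is essentially a one-line transport, so I do not expect any genuine obstacle beyond bookkeeping.
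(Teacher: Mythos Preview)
Your proof is correct and follows essentially the same approach as the paper: transport the tautological covering of $\G(\llbracket \mathcal{G} \rrbracket, \mathcal{G}^{(0)})$ by $\llbracket \mathcal{G} \rrbracket$ across the assumed isomorphism, using Corollary~\ref{cor:tfgGerm} to identify the two topological full groups. The paper's version is terser about why $\pi_{\Phi^{-1}(V)} \in \llbracket \mathcal{G} \rrbracket$, but your explicit conjugation formula $\pi_{\Phi(U)} = \Phi^{(0)} \circ \pi_U \circ (\Phi^{(0)})^{-1}$ is precisely the justification that makes this step go through.
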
 
\begin{proof}
Suppose $\smash{\Phi \colon \mathcal{G} \to \G\left(\llbracket \mathcal{G} \rrbracket, \mathcal{G}^{(0)}\right)}$ is an isomorphism. Then $\Phi$ induces an isomorphism between the topological full groups by $\pi_U \mapsto \pi_{\Phi(U)}$ for $\pi_U \in \llbracket \mathcal{G} \rrbracket$. Let $g \in \mathcal{G}$ be given. As $\llbracket \mathcal{G} \rrbracket$ covers $\smash{\G\big(\llbracket \mathcal{G} \rrbracket, \mathcal{G}^{(0)} \big)}$ there is a full bisection $V$ containing $\Phi(g)$ such that $\pi_V \in \llbracket \G(\llbracket \mathcal{G} \rrbracket, \mathcal{G}^{(0)}) \rrbracket = \llbracket \mathcal{G} \rrbracket$. And then $\Phi^{-1}(V)$ is a full bisection in $\mathcal{G}$ containing~$g$ with~$\pi_{\Phi^{-1}(V)} \in \llbracket \mathcal{G} \rrbracket$. Hence $\llbracket \mathcal{G} \rrbracket$ covers $\mathcal{G}$.
\end{proof}

\section{The category of spatial groups}\label{sec:SpatG}
In this section we will study the groupoid of germs from a categorical point of view. By introducing suitable categories we will see that the assigment $(\Gamma, X) \mapsto \G(\Gamma,X)$ is indeed functorial. We will also see that certain equivariant maps between the spaces induce embeddings of the groupoids of germs.

\begin{definition}
The category of \emph{spatial groups}, denoted $\TopG$, consists of pairs $(\Gamma,X)$, where $X$ is a Boolean space and $\Gamma \leq \homeoc(X)$. A morphism in $\TopG$ from~$(\Gamma_1,X_1)$ to~$(\Gamma_2,X_2)$ is a local homeomorphism $\phi \colon X_1\to X_2$ satisfying $\phi \circ \Gamma_1\subseteq \Gamma_2\circ \phi$.
\end{definition}

We shall sometimes refer to a pair $(\Gamma, X)$ as a \emph{space-group pair}. Observe that an isomorphism in the category $\TopG$ is a homeomorphism $\phi$ such that $\phi \circ \Gamma_1\circ \phi^{-1}=\Gamma_2$. We call such an isomorphism a \emph{spatial isomorphism} (as it is a group isomorphism implemented by a homeomorphism).

\begin{definition}
The category $\Groupoid$ consists of ample effective Hausdorff groupoids, and the morphisms are étale homomorphisms.
\end{definition}

\begin{remark}
The choice of morphisms in $\TopG$ is done so that they induce étale homomorphisms between the groupoid of germs in a natural way. As for the morphisms in $\Groupoid$, there are several reasons for stipulating that they should be étale homomorphisms (rather than merely continuous groupoid homomorphisms). First of all, since all the structure maps in an étale groupoid are local homeomorphisms, it is reasonable to prescribe that maps between étale groupoids should be as well. Moreover, the image under an étale homomorphism is always an open étale subgroupoid in the codomain. An important consequence of this is that an injective étale homomorphism induce (diagonal preserving) injective $*$-homomorphisms between both the full and reduced groupoid $C^*$-algebras, respectively (and also between the Steinberg algebras), c.f.~\cite[page~113]{BNRSW} and~\cite[Proposition~1.9]{Phil}. Whereas the groupoid $C^*$-algebra construction is not functorial in general.
\end{remark}

It is straightforward to check that $\TopG$ and $\Groupoid$ indeed are categories. We will now define a functor from $\TopG$ to $\Groupoid$, which on objects is the groupoid of germs. Let~$\phi$ be a spatial morphism between two space-group pairs $(\Gamma_1,X_1)$ and $(\Gamma_2,X_2)$ in $\TopG$. Given~$[\gamma,x] \in \G(\Gamma_1,X_1)$, there is a $\gamma' \in \Gamma_2$ with $\phi \circ \gamma = \gamma' \circ \phi$. We then propose to define an étale homomorphism $\G(\phi)$ from $\G(\Gamma_1,X_1)$ to $\G(\Gamma_2,X_2)$ by setting~${\G(\phi)([\gamma,x]) = [\gamma',\phi(x)]}$.

\begin{proposition}\label{prop:functorial}
The mapping $\G(\phi)$ described above is a well-defined étale homomorphism, and $\G(-) \colon \TopG \to \Groupoid$ is a (covariant) functor.
\end{proposition}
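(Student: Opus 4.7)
The plan is to verify, in order, three things: (i) that $\G(\phi)([\gamma,x])$ depends on neither the ambiguity in choosing $\gamma'$ nor on the representative $\gamma$ of the germ, (ii) that $\G(\phi)$ is a groupoid homomorphism whose restriction to the unit spaces is a local homeomorphism (and hence an étale homomorphism), and (iii) that $\G(-)$ respects identities and composition. I expect the main obstacle to be step (i); the hypothesis that $\phi$ is a \emph{local} homeomorphism (not merely continuous) is exactly what forces the various admissible $\gamma'$'s to agree on a neighbourhood of $\phi(x)$.

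For step (i), if $\gamma_1', \gamma_2' \in \Gamma_2$ both satisfy $\phi \circ \gamma = \gamma_i' \circ \phi$, then $\gamma_1' \circ \phi = \gamma_2' \circ \phi$; picking an open neighbourhood $U$ of $x$ on which $\phi|_U$ is a homeomorphism onto the open set $\phi(U) \ni \phi(x)$, I conclude $\gamma_1' = \gamma_2'$ on $\phi(U)$, whence their germs at $\phi(x)$ coincide. The same trick handles a change of representative of $[\gamma,x]$: if $\gamma_1$ and $\gamma_2$ agree on some open $V \ni x$, then $\gamma_1' \circ \phi = \phi \circ \gamma_1 = \phi \circ \gamma_2 = \gamma_2' \circ \phi$ on $V$, and the local homeomorphism argument again yields equal germs at $\phi(x)$. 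For step (ii), given a composable pair $([\gamma_2, \gamma_1(x)], [\gamma_1, x])$ and choices $\phi \circ \gamma_i = \gamma_i' \circ \phi$, the relation $\gamma_1'(\phi(x)) = \phi(\gamma_1(x))$ shows the images are composable in $\G(\Gamma_2,X_2)$ with product $[\gamma_2'\gamma_1', \phi(x)]$; since $\phi \circ (\gamma_2\gamma_1) = (\gamma_2'\gamma_1') \circ \phi$, well-definedness identifies this with $\G(\phi)([\gamma_2\gamma_1, x])$. The restriction of $\G(\phi)$ to the unit spaces, under the canonical identifications with $X_i$, is simply $\phi$, so by the criterion recalled in Section~\ref{sec:prelim} (an étale groupoid homomorphism is a local homeomorphism iff its restriction to unit spaces is one), $\G(\phi)$ is an étale homomorphism.

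I would also briefly confirm that $\G(\Gamma,X)$ genuinely lies in $\Groupoid$: ampleness and effectiveness were noted just after the definition of the groupoid of germs, and Hausdorffness follows from Lemma~\ref{lem_haus}, because $\Gamma \leq \homeoc(X)$ forces every $\supp(\gamma)$ to be compact and open, hence clopen, in the Boolean space $X$. Finally, for step (iii), $\G(\id_X) = \id_{\G(\Gamma,X)}$ because $\gamma' = \gamma$ is an admissible choice when $\phi = \id_X$; and for a composition $\psi \circ \phi$ I would chain the intertwiners $\phi \circ \gamma = \gamma' \circ \phi$ and $\psi \circ \gamma' = \gamma'' \circ \psi$ into $(\psi\circ\phi) \circ \gamma = \gamma'' \circ (\psi\circ\phi)$, so that $\G(\psi\circ\phi)([\gamma,x]) = [\gamma'', \psi(\phi(x))] = \G(\psi)([\gamma',\phi(x)]) = (\G(\psi)\circ\G(\phi))([\gamma,x])$, again invoking well-definedness to absorb the non-uniqueness of $\gamma'$ and $\gamma''$.
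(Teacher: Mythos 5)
Your proposal is correct and follows essentially the same route as the paper: well-definedness via the openness of $\phi$ (the paper uses that the intertwiners agree on the open set $\phi(X_1)$, you use $\phi(U)$ for a local homeomorphism neighbourhood, which is the same point), then the computation of composability and products, the observation that $\G(\phi)^{(0)}=\phi$ gives étaleness, and the chaining of intertwiners for functoriality. Your additional check that $\G(\Gamma,X)$ actually lies in $\Groupoid$ (via Lemma~\ref{lem_haus}) is a harmless and correct supplement to what the paper established after the definition of the groupoid of germs.
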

\begin{proof}
Let $\phi \colon (\Gamma_1,X_1) \to (\Gamma_2,X_2)$ be a spatial morphism. We first verify that $\G(\phi)$ is well-defined. Given $[\gamma,x] \in \G(\Gamma_1,X_1)$, suppose $\gamma',  \gamma'' \in \Gamma_2$ satisfy \[\phi \circ \gamma = \gamma' \circ \phi = \gamma'' \circ \phi.\] Then $\gamma'$ and $\gamma''$ agree on $\phi(X_1)$, which is an open neighbourhood of $\phi(x)$, hence we have~${[\gamma',\phi(x)] = [\gamma'',\phi(x)]}$. So the choice of $\gamma'$ doesn't matter. As for the choice of $\gamma$, suppose $\tau \in \Gamma_1$ has the same germ over $x$ as $\gamma$, i.e.\  $\smash{\gamma_{|A} = \tau_{|A}}$ for some open neighbourhood~$A$ of~$x$ in~$X_1$. Let $\tau' \in \Gamma_2$ satisfy $\phi \circ \tau = \tau' \circ \phi$. Then \[\smash{\gamma' \circ \phi_{|A} = \phi \circ \gamma_{|A} = \phi \circ \tau_{|A} = \tau' \circ \phi_{|A}}.\]
This means that $\smash{\gamma'_{|\phi(A)} = \tau'_{|\phi(A)}}$, hence $[\gamma',\phi(x)] = [\tau',\phi(x)]$. This shows that $\G(\phi)$ is well-defined.

Observe that the restriction to the unit spaces is just $\G(\phi)^{(0)} = \phi \colon X_1 \to X_2$. From this we obtain
\[s\left(\G(\phi)([\gamma,x])\right) = \phi(x) = \G(\phi)\left(s([\gamma,x])\right), \] and 
\[r\left(\G(\phi)([\gamma,x])\right) = \gamma' \circ \phi(x) = \phi \circ \gamma(x) =  \G(\phi)\left(r([\gamma,x])\right). \]
This means that $\G(\phi)$ takes composable pairs to composable pairs. As for preserving the product itself, we verify that
\begin{align*}
\G(\phi)([\tau,\gamma(x)]) \cdot \G(\phi)([\gamma,x]) &= [\tau',\phi \gamma(x)] \cdot [\gamma',\phi(x)] = [\tau' \gamma', \phi(x)] \\
&= \G(\phi)([\tau \gamma,x]), \text{ since } \phi \tau \gamma = \phi \tau' \gamma'.
\end{align*}
As $\G(\phi)^{(0)} = \phi$ is a local homeomorphism, we have shown that $\G(\phi)$ is an étale homomorphism. Similar computations as above shows that $\G(-)$ sends identity morphisms to identity morphisms and preserves composition of morphisms.
\end{proof}

We record some consequences of this functoriality.

\begin{corollary}\label{corol_inj_groupoid}
Let $\phi \colon (X_1,\Gamma_1)\to(X_2,\Gamma_2)$ be a spatial morphism in $\TopG$. 
\begin{enumerate}
\item If $\phi$ is a spatial isomorphism, then $\G(\phi) \colon \G(\Gamma_1,X_1)\to \G(\Gamma_2,X_2)$ is an isomorphism of étale groupoids.
\item $\G(\phi)^{(0)} = \phi$, in particular $\G(\phi)$ maps $X_1$ onto $X_2$ if and only if $\phi$ is surjective.
\item If $\phi \colon X_1\to X_2$ is injective, then $\G(\phi) \colon \G(\Gamma_1,X_1)\to \G(\Gamma_2,X_2)$ is also injective.
\item If $\phi \colon X_1\to X_2$ is surjective and $\phi \circ \Gamma_1 = \Gamma_2 \circ \phi$, then $\G(\phi) \colon \G(\Gamma_1,X_1)\to \G(\Gamma_2,X_2)$ is also surjective.
\end{enumerate}
\end{corollary}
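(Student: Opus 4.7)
The plan is to derive all four parts directly from the explicit formula $\G(\phi)([\gamma, x]) = [\gamma', \phi(x)]$ (where $\gamma' \in \Gamma_2$ satisfies $\phi \circ \gamma = \gamma' \circ \phi$) together with the functoriality established in Proposition~\ref{prop:functorial}. Parts (1), (2) and (4) will fall out with almost no work; the only genuinely delicate point is (3), where one must combine global injectivity of $\phi$ with the fact that $\phi$ is a local homeomorphism.

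For (1), if $\phi$ is a spatial isomorphism then $\phi^{-1}$ is also a spatial morphism: the relation $\phi \circ \Gamma_1 \circ \phi^{-1} = \Gamma_2$ rearranges to $\phi^{-1} \circ \Gamma_2 \subseteq \Gamma_1 \circ \phi^{-1}$, and $\phi^{-1}$ is a local homeomorphism. Functoriality then gives $\G(\phi^{-1}) \circ \G(\phi) = \G(\id_{X_1}) = \id$ and symmetrically, so $\G(\phi)$ is an isomorphism of étale groupoids. Statement (2) is immediate: under the canonical identifications $X_i \cong \G(\Gamma_i, X_i)^{(0)}$ the restriction $\G(\phi)^{(0)}$ equals $\phi$, as already noted in the proof of Proposition~\ref{prop:functorial}. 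For (4), given $[\gamma_2, y] \in \G(\Gamma_2, X_2)$, surjectivity of $\phi$ yields $x \in X_1$ with $\phi(x) = y$, and the hypothesis $\Gamma_2 \circ \phi = \phi \circ \Gamma_1$ supplies $\gamma_1 \in \Gamma_1$ with $\gamma_2 \circ \phi = \phi \circ \gamma_1$; then $\G(\phi)([\gamma_1, x]) = [\gamma_2, \phi(x)] = [\gamma_2, y]$.

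The main step is (3). Assume $\phi$ is injective and $\G(\phi)([\gamma_1, x_1]) = \G(\phi)([\gamma_2, x_2])$. Comparing sources gives $\phi(x_1) = \phi(x_2)$, hence $x_1 = x_2 =: x$. The germ identity $[\gamma_1', \phi(x)] = [\gamma_2', \phi(x)]$ means that $\gamma_1'$ and $\gamma_2'$ coincide on some open neighbourhood $V$ of $\phi(x)$. Here I use that $\phi$ is a local homeomorphism to choose an open neighbourhood $U$ of $x$ on which $\phi|_U$ is a homeomorphism onto its image and $\phi(U) \subseteq V$. For $y \in U$ the intertwining relation yields
\[\phi(\gamma_1(y)) = \gamma_1'(\phi(y)) = \gamma_2'(\phi(y)) = \phi(\gamma_2(y)),\]
and injectivity of $\phi$ upgrades this to $\gamma_1(y) = \gamma_2(y)$ on $U$, so $[\gamma_1, x] = [\gamma_2, x]$. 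The potential obstacle lies precisely in this last step: the intertwining only controls $\gamma_i'$ on $\phi(U)$, so without injectivity of $\phi$ one could not cancel it to deduce equality of $\gamma_1$ and $\gamma_2$ on a neighbourhood of $x$; and without $\phi$ being a local homeomorphism one could not arrange for $\phi|_U$ to be invertible on its image.
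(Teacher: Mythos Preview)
Your proof is correct and follows essentially the same route as the paper. One minor remark on part (3): the detour through the local homeomorphism property is not actually needed---continuity of $\phi$ already makes $\phi^{-1}(V)$ an open neighbourhood of $x$, and global injectivity of $\phi$ alone lets you cancel $\phi$ in $\phi(\gamma_1(y)) = \phi(\gamma_2(y))$ for $y \in \phi^{-1}(V)$; the paper proceeds exactly this way.
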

\begin{proof}
Statement (1) follows immediately from functoriality, and statement (2) was observed in the proof of Proposition~\ref{prop:functorial}.

(3) Assume that $\phi \colon X_1\to X_2$ is injective. Then clearly $\G(\phi)$ maps elements with distinct sources to distinct elements. So suppose \[[\gamma',\phi(x)] = \G(\phi)([\gamma,x]) = \G(\phi)([\tau,x]) = [\tau',\phi(x)]. \]
Then $\smash{\gamma'_{|A} = \tau'_{|A}}$ for some open neighbourhood $A$ of $\phi(x)$ in $X_2$. As $\phi \circ \gamma = \gamma' \circ \phi$ and~${\phi \circ \tau = \tau' \circ \phi}$ we have that $\phi \circ  \gamma$ and $\phi \circ \tau$ agree on $\phi^{-1}(A)$. The injectivity of~$\phi$ now implies that $\gamma$ and $\tau$ agree on $\phi^{-1}(A)$, which is an open neighbourhood of $x$, hence~$[\gamma,x] = [\tau,x]$ and $\G(\phi)$ is injective.

(4) Suppose $\phi \colon X_1\to X_2$ is surjective and that $\phi \circ \Gamma_1 = \Gamma_2 \circ \phi$. Given an element $[\tau, y]$ in $\G(\Gamma_2, X_2)$, pick any $x \in X_1$ with $\phi(x) = y$. By assumption there is some $\gamma \in \Gamma_1$ such that~$\phi \circ \gamma = \tau \circ \phi$, and then $\G(\phi)([\gamma, x]) = [\tau, y]$.
\end{proof}

\begin{remark}
It is natural to ask whether a spatial morphism $\phi \colon (X_1,\Gamma_1)\to(X_2,\Gamma_2)$ induces a (algebraic) group homomorphism from $\Gamma_1$ to $\Gamma_2$. This is not so clear. But at least if~$\phi \colon X_1\to X_2$ is injective and $\Gamma_2$ is locally closed, then one can define an injective group homomorphism $f_\phi \colon \Gamma_1 \to \Gamma_2$ in the following way. First observe that given~$\gamma \in \Gamma_1$, there is a~$\gamma_2 \in \Gamma_2$ with~$\phi \circ\gamma=\gamma_2\circ \phi$, and then $\gamma_2(\phi(X_1)) = \phi(X_1)$ and~$\smash{\supp((\gamma_2)_{|\phi(X_1)})= \phi(\supp(\gamma))}$. Given another $\gamma_3 \in \Gamma_2$ with $\phi \circ\gamma=\gamma_3\circ \phi$ we have \[\smash{(\gamma_2)_{|\phi(X_1)}=(\gamma_3)_{|\phi(X_1)}\in \homeoc(\phi(X_1))}.\] So we can define $f_\phi(\gamma) = \gamma'$ to be the homeomorphism $\gamma'$ on $X_2$ given by \[(\gamma')_{|\phi(X_1)}= (\gamma_2)_{|\phi(X_1)} \quad \text{and} \quad (\gamma')_{| X_2 \setminus \phi(X_1)} = \id_{X_2 \setminus \phi(X_1)}. \]
The homeomorphism $\gamma'$ belongs to $\Gamma_2$ because $\Gamma_2$ is locally closed. It is straightforward to check that $f_\phi$ is an injective group homomorphism, and also that $\supp(f_\phi(\gamma))= \phi(\supp(\gamma))$ for every $\gamma \in\Gamma_1$. If $\phi$ is a spatial isomorphism, then $f_\phi$ is a group isomorphism and~$f_\phi$ satisfies~$f_\phi(\gamma) = \phi \circ \gamma \circ \phi^{-1}$ for each $\gamma \in \Gamma_1$.
\end{remark} 

\begin{remark}
Viewing the functor $\G$ as a ``free'' functor turning a space-group pair into an effective ample Hausdorff groupoid (in the ``most efficient'' way), one could ask for a ``forgetful'' functor in the opposite direction. Proposition~\ref{prop_ample} suggests that this functor should be \[\smash{\llbracket - \rrbracket \colon  \Groupoid \to \TopG \quad \text{assigning} \quad \GG \mapsto \left(\llbracket \mathcal{G} \rrbracket,\mathcal{G}^{(0)}\right).} \]
The natural choice of mapping on the morphisms is for an étale homomorphism $\Phi \colon \mathcal{G} \to \mathcal{H}$ to let \[\smash{ \llbracket \Phi \rrbracket \coloneqq \Phi^{(0)} \colon \left(\llbracket \mathcal{G} \rrbracket, \mathcal{G}^{(0)} \right) \to \left(\llbracket \mathcal{H} \rrbracket, \mathcal{H}^{(0)} \right),} \]
i.e.\ restriction to the unit space. Unfortunately, this \emph{fails} to be a morphism in $\TopG$ in general. For injective étale homomorphisms though, the restriction to the unit spaces does yield an injective spatial morphism.
\end{remark}

\section{Spatial realization theorems}\label{sec:spatrel}

In this section we shall study reconstruction of topological spaces from subgroups of their homeomorphism group in the sense of the following definition. 

\begin{definition}\label{def:faithful}
A class $K$ of space-group pairs is called \emph{faithful} if every group isomorphism~$\Phi \colon \Gamma_1 \to \Gamma_2$, where $(\Gamma_1,X_1), (\Gamma_2,X_2) \in K$, is \emph{spatially implemented}, that is, there is a homeomorphism $\phi \colon X_1 \to X_2$ such that $\Phi(\gamma)=\phi \circ \gamma \circ \phi^{-1}$ for every $\gamma\in \Gamma_1$.
\end{definition}

We stress the fact that the isomorphisms $\Phi$ considered in the preceding definition are, a priori, \emph{abstract} group isomorphisms. They only ``see'' the algebraic structure of the $\Gamma_i$'s, not the actions on the underlying spaces. We may rephrase faithfulness to saying that ``every group isomorphism is a spatial isomorphism''. In relation to the previous section we obtain the following from Corollary \ref{corol_inj_groupoid}.

\begin{proposition}\label{faithGerm}
Suppose $K$ is a faithful class of space-group pairs from $\TopG$. If $(\Gamma_1,X_1)$ and $(\Gamma_2,X_2)$ belong to $K$ and $\Gamma_1$ is isomorphic to $\Gamma_2$ as abstract groups, then the groupoids of germs $\G(\Gamma_1,X_1)$ and  $\G(\Gamma_2,X_2)$ are isomorphic as topological groupoids.
\end{proposition}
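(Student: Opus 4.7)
The statement is essentially a direct consequence of combining faithfulness with the functoriality established in Section~\ref{sec:SpatG}. My plan is to package an abstract group isomorphism into a spatial isomorphism in $\TopG$, and then apply the germ functor.

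First, given a group isomorphism $\Phi \colon \Gamma_1 \to \Gamma_2$ with both pairs in $K$, faithfulness of $K$ yields a homeomorphism $\phi \colon X_1 \to X_2$ implementing $\Phi$, i.e.\ $\Phi(\gamma) = \phi \circ \gamma \circ \phi^{-1}$ for every $\gamma \in \Gamma_1$. Next I would verify that $\phi$ is a spatial isomorphism in the category $\TopG$. Since $\Phi(\gamma) \in \Gamma_2$ for each $\gamma \in \Gamma_1$, we immediately get $\phi \circ \Gamma_1 \circ \phi^{-1} \subseteq \Gamma_2$, which (being a homeomorphism, hence in particular a local homeomorphism) makes $\phi$ a morphism from $(\Gamma_1, X_1)$ to $(\Gamma_2, X_2)$ in $\TopG$. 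Because $\Phi$ is bijective onto $\Gamma_2$, one has $\phi \circ \Gamma_1 \circ \phi^{-1} = \Gamma_2$, so $\phi$ is indeed an isomorphism in $\TopG$.

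Finally, I would invoke Corollary~\ref{corol_inj_groupoid}(1), which asserts that the functor $\G(-) \colon \TopG \to \Groupoid$ sends spatial isomorphisms to isomorphisms of étale groupoids. This produces the desired isomorphism $\G(\phi) \colon \G(\Gamma_1, X_1) \to \G(\Gamma_2, X_2)$.

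There is no real obstacle here; the entire content of the proposition is already carried by the definition of faithfulness together with the functoriality of $\G(-)$. The only thing worth stating explicitly is the check that the homeomorphism coming out of the faithfulness hypothesis actually satisfies the conjugation identity in both directions, which follows because $\Phi$ is surjective onto $\Gamma_2$.
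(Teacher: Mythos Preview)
Your proposal is correct and matches the paper's approach exactly: the paper simply states that the proposition follows from Corollary~\ref{corol_inj_groupoid}, and you have spelled out precisely this deduction---use faithfulness to upgrade the abstract isomorphism to a spatial one, then apply functoriality of $\G(-)$ via Corollary~\ref{corol_inj_groupoid}(1).
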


In conjunction with Proposition~\ref{prop_germs} this will allow us to deduce that in many cases, the topological full group of an ample groupoid, considered as an abstract group, is a complete invariant for the isomorphism class of the groupoid. This will be done in the next section. The rest of this section will be devoted to proving two faithfulness results. The first one is a straightforward extension of Matui's spatial realization result~\cite[Theorem~3.5]{Mat} to our locally compact setting (Theorem~\ref{classF}). This result will not only apply to the topological full group, but also to any subgroup containing the commutator. The second result we present (Theorem~\ref{thm:KBfaithful}) has more relaxed assumptions on the ``mixing properties'' of the action, but we were not able to apply it to the commutator subgroup of the topological full group.

\subsection{The class $K^F$} 
We now present the main definition from~\cite[Section~3]{Mat}, adapted to our setting.

\begin{definition}\label{def_F}
We define the class \emph{$K^F$} to consist of all space-group pairs $(\Gamma,X) \in \TopG$ which satisfy the following conditions:
\begin{enumerate}
\item[(F1)] For any $x\in X$ and any clopen neighbourhood $A\subset X$ of $x$, there exists an involution $\alpha\in\Gamma$ such that $x\in \supp(\alpha)$ and $\supp(\alpha)\subseteq A$.
\item[(F2)] For any involution $\alpha\in\Gamma\setminus\{1\}$, and any non-empty clopen set $A\subseteq \supp(\alpha)$, there exists a $\beta\in\Gamma\setminus\{1\}$ such that $\supp(\beta)\subseteq A\cup\alpha(A)$ and $\alpha(x)=\beta(x)$ for every $x\in \supp(\beta)$.
\item[(F3)] For any non-empty clopen set $A\subseteq X$, there exists an $\alpha\in\Gamma$ such that $\supp(\alpha)\subseteq A$ and $\alpha^2\neq 1$.
\end{enumerate}
\end{definition}

\begin{remark}
In~\cite[Definition~3.1]{Mat} there is also a condition (F0), stipulating that the support of any involution should be clopen. This is already implicit in the definition above, since all supports of elements in $\Gamma$ are assumed to be compact and open. We also remark that Definition~\ref{def_F} does not impose any countability restrictions on the space $X$. However, condition~(F1) (and also (F3)) implies that $X$ cannot have isolated points.
\end{remark}

\begin{remark}
The notation $K^F$ to denote a class of space-group pairs is in the same style as Rubin uses in his paper~\cite{Rub}. Elsewhere in the literature, in particular~\cite{Mat} and~\cite{GPS}, groups $\Gamma$ with $(\Gamma, X) \in K^F$ are called groups of \emph{class F} (and $X$ is assumed to be a (compact) Cantor space).
\end{remark}
 
We now state a simple extension of Matui's Spatial Realization Theorem.  
 
\begin{theorem}[c.f.\ {\cite[Theorem~3.5]{Mat}}]\label{classF}
The class $K^F$ is faithful. 
\end{theorem}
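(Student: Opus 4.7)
The strategy is to reproduce the proof of Matui's spatial realization theorem \cite[Theorem~3.5]{Mat} essentially verbatim, observing that his argument—although stated for compact Cantor spaces—never exploits global compactness of $X$, only compactness of the supports of individual group elements. Since our hypotheses already guarantee that $\Gamma_i \leq \homeoc(X_i)$ and that conditions (F1)--(F3) make reference only to clopen subsets of supports (which are automatically compact), every tool Matui deploys remains available in our locally compact setting. In particular, the relation ``$X_i$ has no isolated points'' still holds, since it is forced by (F1).

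The first core step is to express the relevant topological data of a pair $(\Gamma, X) \in K^F$ in purely group-theoretic terms. Being an involution is already algebraic ($\alpha^2 = 1$). Using (F1)--(F3), one characterizes the relation $\supp(\alpha) \cap \supp(\beta) = \emptyset$ between involutions via commutation conditions of $\alpha$ with conjugates of $\beta$, and one characterizes nested inclusion of supports similarly—this is where (F2) is indispensable, since it is used to fabricate ``copies'' of an involution $\alpha$ sitting inside arbitrary clopen subsets of $\supp(\alpha)$, which is what allows us to detect supports algebraically. Once these relations are shown to be invariants of the abstract group, the isomorphism $\Phi \colon \Gamma_1 \to \Gamma_2$ transports them, yielding an inclusion-preserving bijection on the lattice of supports of involutions.

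The second core step is to define $\phi \colon X_1 \to X_2$ by identifying each point $x \in X_1$ with the filter $\mathcal{F}_x = \{\alpha \in \Gamma_1 \setminus \{1\} \mid \alpha^2 = 1,\ x \in \supp(\alpha)\}$. By (F1), the family $\{\supp(\alpha) \mid \alpha \in \mathcal{F}_x\}$ is a neighborhood basis of $x$ consisting of compact open sets, so $\bigcap_{\alpha \in \mathcal{F}_x} \supp(\Phi(\alpha))$ reduces (again by (F1), applied to $X_2$) to a single point, which we declare to be $\phi(x)$. That $\phi$ is a bijection follows by symmetry using $\Phi^{-1}$, and it is a homeomorphism because compact open supports of involutions form a basis for the topology on both sides and are preserved by $\Phi$. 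To see $\phi \circ \gamma = \Phi(\gamma) \circ \phi$ for each $\gamma \in \Gamma_1$, note that $\gamma$ conjugates $\mathcal{F}_x$ to $\mathcal{F}_{\gamma(x)}$ (since $x \in \supp(\alpha)$ iff $\gamma(x) \in \supp(\gamma \alpha \gamma^{-1})$), and $\Phi$ intertwines this conjugation; so the defining filter of $\phi(\gamma(x))$ coincides with the $\Phi(\gamma)$-image of the defining filter of $\phi(x)$.

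The main technical obstacle is merely bookkeeping: one must check that none of the lemmas Matui proves en route to \cite[Theorem~3.5]{Mat} invoke compactness of $X$ in a way that cannot be replaced by compactness of the relevant supports. Because the action always takes place inside $\supp(\alpha)$ for one involution or another, and because all three of (F1)--(F3) are stated locally in terms of clopen subsets of $X$ (or of such supports), this substitution is automatic, and no further argument is required.
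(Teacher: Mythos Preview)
Your proposal is correct and takes essentially the same approach as the paper: both defer to Matui's original argument in \cite[Theorem~3.5]{Mat} and observe that global compactness of $X$ is used only once (the finite intersection property for $\bigcap_{\alpha \in \mathcal{F}_x} \supp(\Phi(\alpha))$), where it can be replaced by compactness of the individual supports, which is already built into the hypothesis $\Gamma \leq \homeoc(X)$. The paper's own proof is even terser than yours---it simply points to this single place in Matui's argument and notes that the preceding lemmas are purely algebraic---whereas you additionally sketch the filter construction of $\phi$; both are valid.
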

\begin{proof} 
By closely inspecting the proof of~\cite[Theorem~3.5]{Mat} and the three lemmas preceding it, one finds that the compactness of the spaces is not needed until the proof of~\cite[Theorem~3.5]{Mat} itself. The lemmas  preceding it are completely algebraic. Furthermore,  the compactness is used only to guarantee that a certain intersection of supports become non-empty---by appealing to the finite intersection property. However, since all supports in our setting are already compact (by assumption) the conclusion that the intersection is non-empty still holds. The second countability is never needed. Therefore, Matui's proof remains valid.
\end{proof}

\begin{remark}
We remark that Matui's proof of~\cite[Theorem~3.5]{Mat} is similar to the approach used by Bezuglyi and Medynets in~\cite[Section~5]{BezMed}, wherein the authors prove a precursor of Matui's Isomorphism Theorem for Cantor minimal systems. Both of these build on Fremlin's book~\cite[Section~384]{Frem}.
\end{remark}

\subsection{The class $K^{LCC}$}
We now turn to obtaining the second spatial realization result, by providing another faithful class of space group-pairs. In comparison with $K^F$, we'll impose more restrictions on the spaces (second countability---resulting in locally compact Cantor spaces), but the conditions on the actions will be less ``localized'' in some sense. We will of course still need the groups $\Gamma$ to be very ``rich'' in order to recover the action on the space~$X$, but we do not focus solely on involutive group elements, as was the case for $K^F$.

Some of the (many) results from Rubins remarkable paper~\cite{Rub} will form the backbone of this spatial realization result. In that paper, Rubin exhibits the faithfulness of several general classes of space-group pairs. However, many of the classes considered there required quite different proofs. Arguably, the most commonly cited result from~\cite{Rub} in our context is~\cite[Corollary~3.5]{Rub}, but this spatial realization result is not strong enough to prove Theorem~\ref{intro:KLCCgroupoid}. We essentially end up reprove Rubin's result on $0$-dimensional spaces, but we obtain a slightly different statement. Also, our proof is a bit more straightforward (since we aim for a less general setting; namely perfect unit spaces of ample groupoids).

\subsubsection{Reconstructing the Boolean algebra $\mathcal{R}(X)$}
The main theorem from Section~$2$ of Rubin's paper (given below in Theorem~\ref{reconstructRX}) gives general conditions for when the abstract isomorphism class of a group $\Gamma \leq \homeo(X)$ determines the Boolean algebra $\mathcal{R}(X)$, and the induced action by $\Gamma$ on it.  We may view $\Gamma$ as a subgroup of $\aut(\mathcal{R}(X))$ by taking images of regular open sets in $\mathcal{R}(X)$ under the homeomorphisms in $\Gamma$. In~\cite[Section~3]{Rub}, Rubin defines several classes of space-group pairs and proves, in a case-by-case manner, that the space $X$ and the action by $\Gamma$ on it, can be recovered from the induced action of $\Gamma$ on $\mathcal{R}(X)$. Let us begin with some terminology (adapted from~\cite{Rub}).

\begin{definition}
Let $(\Gamma,X)$ be a space-group pair.
\begin{enumerate}
\item We say that $(\Gamma,X)$ is \emph{locally moving}  if for every non-empty open subset $A \subseteq X$ there exists $\gamma\in \Gamma\setminus\{1\}$ with $\supp(\gamma)\subseteq A$.
\item An open set $B \subseteq X$ is called \emph{flexible} if for every pair of open subsets $C_1,C_2\subseteq B$,  if there exists $\gamma \in \Gamma$ such that $\gamma(C_1)\cap C_2\neq\emptyset$, then there exists $\tau \in\Gamma$ such that $\tau(C_1)\cap C_2\neq \emptyset$ and $\supp(\tau)\subseteq B$.
\item We say that $(\Gamma,X)$ is \emph{locally flexible} if every non-empty open subset $A$ contains a non-empty open flexible subset $B \subseteq A$.
\end{enumerate} 
\end{definition}

\begin{remark}
Note that if $(\Gamma,X)$ is locally moving, then the space $X$ has no isolated points. 
\end{remark}

\begin{remark}
In~\cite{Rub}, ``locally moving'' goes by the name ``regionally disrigid'', whilst the former terminology is from a later paper of Rubin~\cite{Rub2}.
\end{remark}

We now state a special case of the main result from~\cite[Section~2]{Rub}.

\begin{theorem}[cf.\ {\cite[Theorem~0.2, Theorem~2.14(a)]{Rub}}] \label{reconstructRX}
Let $(\Gamma_1,X_1)$ and $(\Gamma_2,X_2)$ be in $\TopG$, and assume they are both locally moving and locally flexible. If $\Phi \colon \Gamma_1 \to \Gamma_2$ is an isomorphism of groups, then there exists a Boolean isomorphism $\psi \colon \mathcal{R}(X_1) \to \mathcal{R}(X_2)$ such that $\psi(g(A)) = \Phi(g)(\psi(A))$ for each $A \in \mathcal{R}(X_1)$ and $g \in \Gamma_1$.
\end{theorem}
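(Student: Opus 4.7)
The strategy is to reconstruct $\mathcal{R}(X)$ from the abstract group $\Gamma$ by identifying the regular open sets (up to density) with group-theoretically distinguished ``regions'' inside $\Gamma$, and to encode the inclusion order, Boolean operations, and the $\Gamma$-action in purely algebraic terms. Once this is achieved, an abstract isomorphism $\Phi \colon \Gamma_1 \to \Gamma_2$ automatically transports all the recovered structure, yielding the desired equivariant Boolean-algebra isomorphism.

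First I would assign to each $\gamma \in \Gamma \setminus \{1\}$ the regular open set $U_\gamma := \supp(\gamma)^\circ$ and consider the family $\mathcal{U}(\Gamma) := \{U_\gamma : \gamma \in \Gamma \setminus \{1\}\} \subseteq \mathcal{R}(X)$. Local movingness says that for every nonempty open $V$ there is $\gamma \neq 1$ with $U_\gamma \subseteq V$; this makes $\mathcal{U}(\Gamma)$ $\vee$-dense in $\mathcal{R}(X)$. The MacNeille completion of any sub-poset lying between $\mathcal{U}(\Gamma)$ and $\mathcal{R}(X)$ then recovers $\mathcal{R}(X)$ as a Boolean algebra, so it is enough to reconstruct the poset $(\mathcal{U}(\Gamma),\subseteq)$ from group-theoretic data.

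The core technical work is to express the following relations on $\Gamma$ using only multiplication, inversion, and the identity:
\begin{enumerate}
\item[(a)] $U_\gamma \cap U_\delta = \emptyset$ (disjointness of supports);
\item[(b)] $U_\gamma \subseteq U_\delta$ (inclusion of supports).
\end{enumerate}
For (a) I would define $\gamma \perp \delta$ to mean $[\gamma, \alpha \delta \alpha^{-1}] = 1$ for every $\alpha \in \Gamma$. The direction ``disjoint supports $\Rightarrow$ commutation of all conjugates'' is straightforward from the fact that two homeomorphisms whose supports are disjoint commute. The converse is where local flexibility enters decisively: if $U_\gamma \cap U_\delta \neq \emptyset$, one picks a flexible open set $B$ inside $U_\gamma \cap U_\delta$ and uses flexibility to produce $\tau \in \Gamma$ supported in $B$ that witnesses the noncommutativity of some $\tau\delta\tau^{-1}$ with $\gamma$, contradicting $\gamma \perp \delta$. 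Once $\perp$ is available, (b) is definable by $U_\gamma \subseteq U_\delta$ iff every $\alpha \in \Gamma$ with $\alpha \perp \delta$ also satisfies $\alpha \perp \gamma$; the $\Leftarrow$ implication again uses density of $\mathcal{U}(\Gamma)$ to produce enough ``witnesses''. Equality $U_\gamma = U_\delta$ is then mutual containment, giving an equivalence relation $\sim$ on $\Gamma \setminus \{1\}$ and a poset $(\Gamma/{\sim}, \subseteq)$ isomorphic to $(\mathcal{U}(\Gamma), \subseteq)$.

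With $(\mathcal{U}(\Gamma),\subseteq)$ group-theoretically reconstructed, the Boolean operations on $\mathcal{R}(X)$ are determined by their universal properties (meets and joins as infima/suprema, complement via $\perp$ plus density), so MacNeille completion produces $\mathcal{R}(X)$ intrinsically from $\Gamma$. Applying this to $(\Gamma_i, X_i)$ for $i=1,2$ and using that $\Phi$ preserves commutators, conjugations, and hence $\perp$, one obtains a poset isomorphism $\mathcal{U}(\Gamma_1) \to \mathcal{U}(\Gamma_2)$ sending $U_\gamma \mapsto U_{\Phi(\gamma)}$, which extends uniquely to the Boolean-algebra isomorphism $\psi \colon \mathcal{R}(X_1) \to \mathcal{R}(X_2)$. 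Equivariance is then automatic on the dense sublattice, because the natural action on $\mathcal{U}(\Gamma)$ satisfies $g \cdot U_\gamma = U_{g \gamma g^{-1}}$, and $\Phi$ respects conjugation; uniqueness of the extension propagates the intertwining to all of $\mathcal{R}(X_1)$. The main obstacle throughout is the implication ``not $\perp$ $\Rightarrow$ noncommutation of some conjugate pair'' in step (a); this is exactly where local flexibility is indispensable, since without it algebraic commutations could occur for reasons unrelated to disjoint supports and derail the whole reconstruction.
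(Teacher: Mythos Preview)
Your overall architecture---reconstruct a dense sub-poset of $\mathcal{R}(X)$ algebraically, then complete---is reasonable, and it is in the spirit of Rubin's argument (which the paper does not reprove but summarizes in the remark following the theorem: Rubin models $A \in \mathcal{R}(X)$ by its rigid stabilizer $Q(A) = \{\gamma \in \Gamma \mid \supp(\gamma) \subseteq A\}$ and describes the Boolean operations in terms of these subgroups). However, your proposed algebraic characterization of disjointness has a genuine error that breaks the argument.

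You define $\gamma \perp \delta$ by $[\gamma, \alpha\delta\alpha^{-1}] = 1$ for \emph{every} $\alpha \in \Gamma$, and claim that disjoint supports implies this. That implication is false. The support of $\alpha\delta\alpha^{-1}$ is $\alpha(\supp(\delta))$, and even when $\supp(\gamma) \cap \supp(\delta) = \emptyset$ there is typically some $\alpha \in \Gamma$ moving $\supp(\delta)$ to overlap $\supp(\gamma)$; then $[\gamma, \alpha\delta\alpha^{-1}]$ has no reason to vanish. Indeed, in any locally moving pair where orbits are dense enough (e.g.\ the topological full group of a minimal groupoid), your relation $\gamma \perp \delta$ forces $\gamma$ to centralize the entire normal closure of $\delta$, which generically forces $\gamma = 1$. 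So $\perp$ is far too strong to encode ``disjoint supports'', and step (a) collapses; without (a), step (b) and the rest of the reconstruction cannot proceed.

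The fix is not a small tweak of the commutator condition on single elements: one really needs to work with families of elements, as Rubin does via the rigid stabilizers $Q(A)$. Disjointness of $A$ and $B$ is then detected by $[Q(A), Q(B)] = 1$, and the delicate part---where local flexibility is actually used---is showing that enough subgroups of the form $Q(A)$ can be singled out inside $\Gamma$ by first-order group-theoretic conditions. Your use of local flexibility (to produce a witness to noncommutation when supports overlap) is the right instinct, but it has to be deployed at the level of rigid stabilizers rather than single group elements.
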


If we think of $g$ and $\Phi(g)$ as elements in $\aut(\mathcal{R}(X_1))$ and $\aut(\mathcal{R}(X_2))$ respectively, then we can rewrite the conclusion in the preceding theorem as 
\[\Phi(g) = \psi \circ g \circ \psi^{-1}.  \] Thus, Theorem~\ref{reconstructRX} says that any group isomorphism between $\Gamma_1$ and $\Gamma_2$ is actually induced by an isomorphism of the Boolean algebras of regular open sets of the underlying spaces.

\begin{remark}
We remark that what Rubin proves in~\cite[Theorem~2.14(a)]{Rub} is a somewhat stronger statement than the one we gave above. First of all, the spaces need really only be Hausdorff (and perfect). Rubin shows that if $(\Gamma,X)$ is locally moving and locally flexible, then starting with $\Gamma$ alone, one can canonically reconstruct the Boolean algebra~$\mathcal{R}(X)$ (up to isomorphism) using only group theoretic constructions. Moreover, one obtains a natural action by $\Gamma$ on this Boolean algebra which is conjugate to the action by~$\Gamma$ on~$\mathcal{R}(X)$. The strategy of the proof is to model a regular set $A \in \mathcal{R}(X)$ by its rigid stabilizer~$Q(A) \coloneqq  \{\gamma \in \Gamma \ \vert \supp(\gamma) \subseteq A \}$, and then to describe the Boolean operations in $\mathcal{R}(X)$ in group theoretic terms, in terms of the subgroups $Q(A)$. Finally one shows that there are enough regular sets $A$ for which subgroups of the form $Q(A)$ can be detected inside $\Gamma$ in order to generate the whole of $\mathcal{R}(X)$.
\end{remark}

\subsubsection{Reconstructing the space $X$}
We now turn to reconstructing~$X$ (and the original action by~$\Gamma$) from its Boolean algebra of regular sets. The strategy is to first impose conditions making it possible to detect clopenness. And then characterize the compact open sets among the clopen sets, which in turn allow us to recover~$X$ from Stone duality.

\begin{definition}\label{recognizable}
Let $(\Gamma,X)$ be a space-group pair. A clopen set $A \subseteq X$ is said to be \emph{recognizable by $\Gamma$} if it satisfies:
\begin{enumerate}
\item For every $\gamma \in \Gamma$ with $\gamma(A) = A$ the homeomorphism $\tau$ given by 
\[\tau(x) =
\begin{cases}
\gamma(x) & x \in A,\\
x & \text{otherwise,}
\end{cases}\]
belongs to $\Gamma$.
\item For every $\gamma \in \Gamma$ with $\gamma(A) \cap A = \emptyset$ the involution $\alpha$ given by 
\[\alpha(x) =
\begin{cases}
\gamma(x) & x \in A,\\
\gamma^{-1}(x) & x \in \gamma(A), \\
x & \text{otherwise,}
\end{cases}\]
belongs to $\Gamma$.
\end{enumerate}
\end{definition}

We shall see later that in our setting of topological full groups, all clopen subsets of the unit space are recognizable. And whenever this is the case, it is possible to characterize when a regular set is closed (i.e.\ clopen) using the following Boolean algebra notion.

\begin{definition}
Let $(\Gamma,X)$ be a space-group pair, and let $A \in \mathcal{R}(X)$ be a regular open set. We say that $A$ is \emph{weakly clopen} if for every $\gamma \in \Gamma$ satisfying $\gamma\left(A \cap \gamma(A)\right) = A \cap \gamma(A)$, there exists an element $\rho \in \Gamma$ such that
\begin{enumerate}
\item $\rho(B) = \gamma(B)$ for each $B \in \mathcal{R}(X)$ with $B \subseteq A \cap \gamma(A)$,
\item $\rho(B) = B$ for each $B \in \mathcal{R}(X)$ with $B \subseteq \ \sim\left(A \cap \gamma(A)\right)$.
\end{enumerate}
\end{definition}

Note that the notion of being weakly clopen is formulated solely in terms of the action by~$\Gamma$ on the Boolean algebra $\mathcal{R}(X)$. And as the next result shows---under suitable hypotheses---being weakly clopen is the same as being clopen.

\begin{lemma}\label{clopenLemma}
Let $(\Gamma,X) \in \TopG$. Assume that every clopen subset of $X$ is recognizable by $\Gamma$, and that the $\Gamma$-orbit of each point contains at least $3$ points. Then a regular open set~$A \in \mathcal{R}(X)$ is clopen if and only if both $A$ and $\sim A$ are weakly clopen.
\end{lemma}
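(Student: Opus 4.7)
If $A$ is clopen then so is $\sim A = X\setminus A$, and by symmetry it suffices to check that $A$ is weakly clopen. Given $\gamma \in \Gamma$ with $\gamma(A\cap\gamma(A)) = A\cap\gamma(A)$, the set $C \coloneqq A\cap\gamma(A)$ is clopen and $\gamma$-invariant. Recognizability of $C$ via condition~(1) of Definition~\ref{recognizable} yields $\gamma' \in \Gamma$ coinciding with $\gamma$ on $C$ and with the identity on $X\setminus C = \sim C$, and this $\gamma'$ clearly witnesses weak clopenness.

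\textbf{Backward direction: strategy.} Suppose $A$ and $\sim A$ are weakly clopen and, for a contradiction, that $A$ is not clopen. Since $A$ is regular open we have $\sim A = X\setminus \overline{A}$, and the nonempty boundary $\partial A = \overline{A}\setminus A$ sits inside $\overline{A}\cap \overline{\sim A}$. The crucial observation is this: for weak clopenness to hold with a particular $\gamma$ satisfying $\gamma(A\cap\gamma(A)) = A\cap\gamma(A)$, any witnessing $\gamma'$ must, by continuity, agree with $\gamma$ on $\overline{A\cap\gamma(A)}$ and with the identity on $\overline{\sim(A\cap\gamma(A))}$; hence $\gamma$ is forced to fix the intersection of these two closures---which equals $\partial(A\cap\gamma(A))$---pointwise. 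My plan is therefore to produce an involution $\alpha \in \Gamma$ and a point $x \in \partial A$ with $\alpha(x)\neq x$ and $x \in \partial(A\cap\alpha(A))$, contradicting weak clopenness of $A$ (or, dually, of $\sim A$).

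\textbf{Building the involution.} Fix $x \in \partial A$. Using the orbit hypothesis I pick $y \in \orb(x)\setminus\{x\}$ and $\tau \in \Gamma$ with $\tau(x) = y$; Hausdorffness then lets me shrink to a clopen neighborhood $N$ of $x$ with $\tau(N)\cap N = \emptyset$. Applying condition~(2) of Definition~\ref{recognizable} to $\tau$ and $N$ produces an involution $\alpha \in \Gamma$ supported in $N\cup \tau(N)$ that agrees with $\tau$ on $N$, for which automatically $\alpha(A\cap\alpha(A)) = A\cap\alpha(A)$. If $y \in A$, I further shrink $N$ so that $\tau(N) \subseteq A$; then for any clopen neighborhood $U \subseteq N$ of $x$, each $a \in U\cap A$ (nonempty since $x \in \overline{A}$) satisfies $\alpha(a) = \tau(a) \in A$, so $a \in A\cap\alpha(A)$, showing $x \in \overline{A\cap\alpha(A)}$. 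Combined with $\sim A \subseteq \sim(A\cap\alpha(A))$ and $x \in \overline{\sim A}$, this places $x$ in $\partial(A\cap\alpha(A))$ while $\alpha(x) = y \neq x$---the desired contradiction via weak clopenness of $A$. The case $y \in \sim A$ is symmetric, invoking weak clopenness of $\sim A$ instead.

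\textbf{The main obstacle.} The step I expect to be hardest is guaranteeing that some $y \in \orb(x)\setminus\{x\}$ actually lies in $A\cup\sim A$: the orbit-size-$\geq 3$ hypothesis supplies two orbit points distinct from $x$, but a priori both might lie back in $\partial A$. I plan to eliminate this trapped-orbit case as follows: if every orbit of a point in $\partial A$ were contained in $\partial A$, then $\partial A$ would be $\Gamma$-invariant; applying weak clopenness of $A$ to elements $\gamma \in \Gamma$ with $\gamma(A) = A$ and invoking the continuity principle above forces $\gamma|_{\partial A} = \id$ for all such $\gamma$, and a careful use of recognizability---producing enough $A$-preserving elements that nevertheless act nontrivially on $\partial A$---then contradicts the orbit-size condition restricted to $\partial A$.
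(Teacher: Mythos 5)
Your forward direction and the main branch of your backward direction are correct, and your ``crucial observation'' --- that any witness $\gamma'$ to weak clopenness must agree pointwise with $\gamma$ on $\overline{A\cap\gamma(A)}$ and with the identity on $\overline{\sim(A\cap\gamma(A))}$, forcing $\gamma$ to fix $\partial(A\cap\gamma(A))$ --- is exactly the right engine. (Note the paper does not argue this way at all: it simply cites Rubin's Lemma~3.45, so a self-contained proof along your lines would be a genuine addition.) The gap is precisely the case you flag: a point $x\in\partial A$ whose entire orbit lies in $\partial A$. Your plan there --- ``producing enough $A$-preserving elements that nevertheless act nontrivially on $\partial A$'' --- cannot work as stated. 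Recognizability manufactures elements of $\Gamma$ only from \emph{clopen} sets, and $A$ is not clopen in this branch, so you have no mechanism whatsoever for producing elements with $\gamma(A)=A$; worse, your own continuity principle shows that weak clopenness forces every such element to act \emph{trivially} on $\partial A$, so the contradiction cannot be reached by first exhibiting $A$-preserving elements and then arguing they move boundary points.

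What actually closes this case --- and it is the only place the hypothesis $|\orb_\Gamma(x)|\geq 3$ is used --- is a composition trick in the spirit of Lemma~\ref{ex_cover}(2). Run your main argument on a recognizability-involution $\alpha$ with $\alpha(x)=y\in\partial A$: since the conclusion $\alpha(x)=x$ is false, its failure yields a neighbourhood $U$ of $x$ with $\alpha(U\cap A)\cap A=\emptyset$ and $\alpha(U\cap\sim A)\cap\sim A=\emptyset$; as $\alpha(U\cap A)$ is open and contained in $X\setminus A$ it lies in $(X\setminus A)^{\circ}=\sim A$, and dually $\alpha(U\cap\sim A)\subseteq(\overline{A})^{\circ}=A$, so $\alpha$ \emph{swaps} $A$ and $\sim A$ near $x$, and symmetrically near $y$. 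Build a second such involution $\beta$ with $\beta(y)=z$ for a third orbit point $z$, supported away from $x$, and set $\gamma=\beta\alpha\beta$. This is again an involution in $\Gamma$ with $\gamma(x)=z\neq x$, and near $x$ it is the composite of two swaps, hence maps $U'\cap A$ into $A$ for a suitably shrunken neighbourhood $U'$ of $x$; therefore $x\in\overline{A\cap\gamma(A)}$ while $x\notin A\supseteq A\cap\gamma(A)$, so $x\in\partial(A\cap\gamma(A))$ after all, and weak clopenness of $A$ applied to $\gamma$ forces $\gamma(x)=x$, the desired contradiction. Without some such use of the third orbit point, the trapped case is entirely consistent with everything you have established (an involution swapping $A$ and $\sim A$ around two boundary points violates nothing), so this is a genuine gap rather than a routine omission.
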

\begin{proof}
This is a special case of~\cite[Lemma~3.45]{Rub}, where the dense subset $R$ is taken to be all of $\mathcal{R}(X)$. The assumptions 3.V.1~(a), (b), (c) and 3.V.2~(a), (b) preceding~\cite[Lemma~3.45]{Rub} follow from those above. In particular, what Rubin calls ``recognizably clopen'' coincides with~(2) in Definition~\ref{recognizable}, and ``strongly recognizably clopen'' is slightly weaker than~(1) in Definition~\ref{recognizable} (together with~(2)).
\end{proof}

In order to invoke Stone duality for Boolean spaces we need to recover the generalized Boolean algebra of compact open sets. The previous lemma gives us the clopen sets, and from these we obtain the compact open ones as follows.

\begin{lemma}\label{lem:countCompact}
Let $X$ be a second countable Boolean space. Then $X$ is compact if and only if $\co(X)$ is countable.
\end{lemma}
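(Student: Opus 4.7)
The proof splits naturally into the two implications. A preliminary observation is that $X$ admits a countable basis $\mathcal{B}$ consisting of compact open sets: since $X$ is Boolean it has a (possibly uncountable) basis of compact opens, and since $X$ is second countable it is Lindelöf, so this basis can be thinned to a countable subfamily still forming a basis. The forward direction is then essentially a compactness counting argument, while the reverse direction is proved by contrapositive via constructing $2^{\aleph_0}$ distinct clopen sets in a non-compact $X$.

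\textbf{Forward direction.} Suppose $X$ is compact. Any $U \in \co(X)$ is closed in the compact space $X$, hence compact. The family $\{B \in \mathcal{B} : B \subseteq U\}$ is an open cover of $U$, so it admits a finite subcover; thus $U$ is a finite union of elements of $\mathcal{B}$. The set of finite subsets of the countable set $\mathcal{B}$ is countable, so $\co(X)$ is countable.

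\textbf{Reverse direction (contrapositive).} Suppose $X$ is not compact; I will exhibit uncountably many clopen sets. Enumerate $\mathcal{B} = \{B_1, B_2, \ldots\}$ and set $K_n := B_1 \cup \cdots \cup B_n$. Each $K_n$ is compact open (hence clopen), $K_n \subseteq K_{n+1}$, and $\bigcup_n K_n = X$. Since $X$ is not compact, no $K_n$ equals $X$, so the sequence cannot eventually stabilize; I extract a strictly increasing subsequence $K_{n_1} \subsetneq K_{n_2} \subsetneq \cdots$ and put $A_0 := K_{n_1}$ and $A_j := K_{n_{j+1}} \setminus K_{n_j}$ for $j \geq 1$. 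These are pairwise disjoint nonempty clopen sets with $X = \bigsqcup_{j \geq 0} A_j$. For every $S \subseteq \mathbb{N}$ the set $C_S := \bigsqcup_{j \in S} A_j$ is clopen, since both $C_S$ and its complement $\bigsqcup_{j \notin S} A_j$ are unions of open sets. Distinct subsets $S$ give distinct $C_S$ (as the $A_j$ are nonempty and disjoint), so $|\co(X)| \geq 2^{\aleph_0}$.

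\textbf{Main obstacle.} There is no real obstacle here; the only slightly non-trivial step is extracting a countable basis of compact opens (via Lindelöfness) and then extracting a strictly increasing subsequence in the non-compact case, but both are routine once one organizes the setup. Everything else is a straightforward compactness argument and a standard ``partition into disjoint clopens gives a Cantor-sized Boolean algebra'' observation.
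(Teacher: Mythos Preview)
Your proof is correct and follows essentially the same approach as the paper's: both directions match almost line for line, including the construction of the strictly increasing compact exhaustion and the disjoint clopen pieces $A_j$ (the paper's $D_k$). Your justification that each $C_S$ is clopen---because its complement is also a union of the open $A_j$'s---is arguably cleaner than the paper's locally-finite-closedness argument, but this is a cosmetic difference rather than a genuinely different route.
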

\begin{proof}
If $X$ is compact, then $\co(X) = \ck(X)$, and any second countable space has countably many compact open subsets. 

Suppose $X$ is non-compact. Let $\{K_n \}_{n=1}^\infty$ be a countable basis for $X$ consisting of compact open sets. Now form the compact open sets $C_k = \cup_{n=1}^k K_n$. As $X$ is not compact, we must have~$C_k \neq X$ for each $k$. Also, $C_k \subseteq C_{k+1}$ and they cover $X$. By passing to a subsequence, if necessary, we may assume that $C_k \subsetneq C_{k+1}$ for each $k$. Finally, let~$D_k = C_{k+1} \setminus C_k$. Then the $D_k$'s are pairwise disjoint non-empty compact open sets. We claim that for each subset~$S$ of the natural numbers, the set $\cup_{k \in S} D_k$ is clopen. And then we have produced uncountably many distinct clopen sets. The claim follows from the fact that for each $C_m$, the intersection~$C_m \cap (\cup_{k \in S} D_k)$ is a finite intersection, hence closed, and that the $C_m$'s cover~$X$. 
\end{proof}

\begin{corollary}\label{compactOpenLemma}
Let $X$ be a second countable Boolean space, and let $A \in \co(X)$ be a clopen set. Then $A$ is compact if and only if the set $\{B \in \co(X) \ \vert \ B \subseteq A\}$ is countable.
\end{corollary}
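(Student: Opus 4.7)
The plan is to reduce this statement directly to Lemma~\ref{lem:countCompact} by viewing $A$ as a Boolean space in its own right under the subspace topology. Since $A$ is clopen in $X$, it is both open and closed, so the subspace topology on $A$ inherits the Hausdorff property and a basis of compact open sets from $X$ (any basis element $K \subseteq X$ of compact opens intersected with $A$ is again compact open, since $A$ is closed). Moreover, since $X$ is second countable, so is $A$. Hence $A$ is a second countable Boolean space, and Lemma~\ref{lem:countCompact} applies to it directly.

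The key bookkeeping step I would then carry out is identifying $\co(A)$ with the set $\{B \in \co(X) \mid B \subseteq A\}$. One direction is trivial: if $B \in \co(X)$ and $B \subseteq A$, then $B = B \cap A$ is clopen in the subspace $A$. The converse uses that $A$ itself is clopen in $X$: if $B \subseteq A$ is clopen in $A$, then $B$ is open in $A$ and hence open in $X$ (as $A$ is open in $X$), and similarly $B$ is closed in $A$ and hence closed in $X$ (as $A$ is closed in $X$). Thus $B \in \co(X)$.

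With this identification established, applying Lemma~\ref{lem:countCompact} to $A$ shows that $A$ is compact precisely when $\co(A) = \{B \in \co(X) \mid B \subseteq A\}$ is countable, which is exactly the desired statement. I do not anticipate any serious obstacle here; this is essentially a bookkeeping corollary, and the only thing one must be careful about is verifying that the subspace $A$ satisfies the hypotheses of Lemma~\ref{lem:countCompact} and that clopen subsets do not get lost or gained upon passing between $X$ and $A$.
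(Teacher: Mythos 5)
Your proposal is correct and follows essentially the same route as the paper: identify $\{B \in \co(X) \mid B \subseteq A\}$ with $\co(A)$ for the clopen subspace $A$ and apply Lemma~\ref{lem:countCompact}. You simply spell out the routine verifications (that $A$ is a second countable Boolean space and that clopen sets of $A$ and of $X$ contained in $A$ coincide) that the paper leaves implicit.
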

\begin{proof}
The set $\{B \in \co(X) \ \vert \ B \subseteq A\}$ coincides with $\co(A)$ when viewing $A$ as a subspace of~$X$. The result now follows from Lemma~\ref{lem:countCompact}.
\end{proof}

This shows that in the generalized Boolean algebra $\co(X)$ compactness is characterized by having only countably many elements below. We are now ready to define the class $K^{LCC}$ and give the second spatial realization result of this section.

\begin{definition}\label{def_KB}
We define the class \emph{$K^{LCC}$} to consist of all space-group pairs $(\Gamma,X)$ in $\TopG$ which satisfy the following conditions:
\begin{enumerate}[label=(K\arabic*)]
\item $X$ is a locally compact Cantor space.
\item $(\Gamma,X)$ is locally moving.
\item $(\Gamma,X)$ is locally flexible.
\item Every clopen subset of $X$ is recognizable by $\Gamma$.
\item The $\Gamma$-orbit of each point contains at least $3$ points.
\end{enumerate}
\end{definition}

\begin{theorem}[c.f.\ {\cite[Theorem~3.50(a)]{Rub}}]\label{thm:KBfaithful}
The class $K^{LCC}$ is faithful.
\end{theorem}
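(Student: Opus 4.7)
The plan is to assemble the reconstruction in three stages: first recover the Boolean algebra of regular open sets via Rubin's theorem, then cut down to the compact open sets using the hypotheses (K1), (K4), (K5), and finally apply Stone duality to produce the implementing homeomorphism.

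Concretely, let $\Phi\colon \Gamma_1\to\Gamma_2$ be any abstract group isomorphism between pairs $(\Gamma_1,X_1),(\Gamma_2,X_2)\in K^{LCC}$. Since (K2) and (K3) are exactly the hypotheses of Theorem~\ref{reconstructRX}, we obtain a Boolean algebra isomorphism $\psi\colon \mathcal{R}(X_1)\to\mathcal{R}(X_2)$ which is $\Phi$-equivariant, i.e.\ $\psi(\gamma(A))=\Phi(\gamma)(\psi(A))$ for all $\gamma\in\Gamma_1$, $A\in\mathcal{R}(X_1)$. The key observation for the next stage is that the property ``$A$ is weakly clopen'' is formulated purely in terms of the $\Gamma$-action on $\mathcal{R}(X)$, and hence is preserved by $\psi$ (via $\Phi$). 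Consequently, by Lemma~\ref{clopenLemma} --- whose applicability is guaranteed by (K4) and (K5) --- $\psi$ restricts to a Boolean algebra isomorphism $\psi_{\co}\colon \co(X_1)\to\co(X_2)$ that still intertwines the two actions.

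To pass from clopen sets to compact open sets I would use (K1): both $X_i$ are second countable, so Corollary~\ref{compactOpenLemma} characterizes $\ck(X_i)$ inside $\co(X_i)$ purely in terms of the cardinality of principal down-sets, which is a Boolean algebra invariant preserved by $\psi_{\co}$. Hence $\psi_{\co}$ restricts further to an isomorphism $\psi_{\ck}\colon \ck(X_1)\to\ck(X_2)$ of generalized Boolean algebras (in particular, $\psi_{\ck}$ is a proper Boolean homomorphism). Applying the Stone functor $\mathbb{S}(-)$ then yields a homeomorphism $\phi\colon X_1\to X_2$ by the duality recalled in the Preliminaries. Moreover, the intertwining property of $\psi_{\ck}$ under $\Phi$ is transported by functoriality of $\mathbb{S}(-)$ to the relation
\[
\phi\circ\gamma\circ\phi^{-1}=\Phi(\gamma)\qquad\text{for every }\gamma\in\Gamma_1,
\]
since both sides induce, upon taking preimages of compact open sets, the same Boolean automorphism $\psi_{\ck}\circ\gamma\circ\psi_{\ck}^{-1}=\Phi(\gamma)$ of $\ck(X_2)$, and compact open sets separate points in a Boolean space. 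This shows $\Phi$ is spatially implemented by $\phi$, establishing faithfulness.

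I expect two steps to require the most care. The first is verifying that the isomorphism $\psi$ from Theorem~\ref{reconstructRX} really does restrict along each of the three stages (regular open $\to$ clopen $\to$ compact open) while preserving equivariance under $\Phi$; the argument is algebraic once one checks that each descending property (weak clopenness, countability of the principal down-set) is phrased intrinsically in the Boolean algebra. The second, and genuinely the main obstacle, is the bookkeeping for Lemma~\ref{clopenLemma}: one must check that (K4) supplies precisely Rubin's ``recognizably clopen'' hypotheses, which is done by feeding the involution and the fixed-set modification from Definition~\ref{recognizable} into the formula defining weak clopenness. Once that translation is made, the remainder is a routine application of Stone duality and naturality.
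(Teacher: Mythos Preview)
Your proposal is correct and follows essentially the same approach as the paper: invoke Rubin's Theorem~\ref{reconstructRX} via (K2)--(K3), restrict $\psi$ to $\co(X_i)$ using Lemma~\ref{clopenLemma} via (K4)--(K5), then to $\ck(X_i)$ using Corollary~\ref{compactOpenLemma} via (K1), and finally apply Stone duality to obtain the implementing homeomorphism. The paper's proof is slightly terser on the final verification that $\phi$ spatially implements $\Phi$, but the structure is identical.
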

\begin{proof}
Suppose we have two space-group pairs $(\Gamma_1,X_1)$, $(\Gamma_2,X_2) \in K^{LCC}$ and a group isomorphism $\Phi \colon \Gamma_1 \to \Gamma_2$. Invoking Theorem~\ref{reconstructRX} yields an isomorphism of Boolean algebras~${\psi \colon \mathcal{R}(X_1) \to \mathcal{R}(X_2)}$ such that $\psi(g(A)) = \Phi(g)(\psi(U))$ for each $A \in \mathcal{R}(X_1)$ and~$g \in \Gamma_1$. We first argue that $\psi(\co(X_1)) = \co(X_2)$, and then that $\psi(\ck(X_1)) = \ck(X_2)$.

First of all, note that both $\co(X_i)$ and $\ck(X_i)$ are invariant under $\Gamma_i$ ($i = 1,2$). Lemma~\ref{clopenLemma} characterizes clopenness of regular sets in $X_i$  solely in terms of the (induced) actions by $\Gamma_i$ on $\mathcal{R}(X_i)$. Since $\psi$ is an equivariant Boolean algebra isomorphism, it follows that $\psi(\co(X_1)) = \co(X_2)$. Next, Corollary~\ref{compactOpenLemma} characterizes compactness of a clopen set in terms of a countability condition in the generalized Boolean algebra $\co(X_i)$. Clearly, this is then also preserved by $\psi$. Consequently, $\psi$ restricts to an equivariant isomorphism of the generalized Boolean algebras $\ck(X_1)$ and $\ck(X_2)$.

By applying the Stone functor to the generalized Boolean algebra isomorphism \[\psi \colon \ck(X_1) \to \ck(X_2)\] we obtain a homeomorphism \[\mathbb{S}(\psi) \colon \mathbb{S}(\ck(X_2)) \to \mathbb{S}(\ck(X_1))\] of the spaces of ultrafilters. The induced actions by the groups $\Gamma_i$ on~$\mathbb{S}(\ck(X_i))$ is given by~${g \cdot \alpha = \{g(K) \ \vert \ K \in \alpha \}}$ for an ultrafilter $\alpha \in \mathbb{S}(\ck(X_i))$. Finally,  let $\phi \colon X_1 \to X_2$ be the homeomorphism given by the composition 
\[\begin{CD}
X_1 @>\Omega_{X_1}>> \mathbb{S}(\ck(X_1)) @>\mathbb{S}(\psi)^{-1}>> \mathbb{S}(\ck(X_2)) @>\Omega_{X_2}^{-1}>> X_2
\end{CD} \]
where $\Omega_{X_i}$ is the canonical homeomorphism mapping a point to its compact open neighbourhood ultrafilter. It is now easy to check that the original group isomorphism $\Phi$ is spatially implemented by $\phi$, i.e.\ that $\Phi(g) = \phi \circ g \circ \phi^{-1}$ for each $g \in \Gamma_1$. 
\end{proof}

\begin{remark}\label{rem:Medynets}
As mentioned in the introduction, Medynets has obtained a spatial realization result for full groups of group actions on the Cantor space~\cite{Med}. The arguments therein also apply to the topological full group, and could be adapted to the topological full group of the ample groupoids over locally compact Cantor spaces considered here. And then in turned be used to prove Theorem~\ref{intro:KLCCgroupoid} instead of using Theorem~\ref{thm:KBfaithful}. Medynets' starting point is a Boolean algebra reconstruction result of Fremlin~\cite[Theorem~384D]{Frem}. This result is very similar to Rubin's Boolean algebra reconstruction result; Theorem~\ref{reconstructRX}. Rubin requires the space-group pair to be locally moving and locally flexible, whereas Fremlin requires it to be locally moving in terms of involutions. Yet they both apply to the topological full group, since it is both (globally) flexible and has enough involutions to witness locally moving. Medynets then goes on to characterize the clopen sets among the regular open sets in an algebraic way and use this to show that the Boolean algebra isomorphism must preserve the Boolean subalgebra of clopen subsets and in turn give rise to a spatial isomorphism via Stone duality. This is exactly the same approach as we use here, via Rubin, but Medynets' characterization of the clopens~\cite[Lemma~2.5]{Med} looks (at least on the surface) a bit different from the one we give here in Lemma~\ref{clopenLemma}. Finally, we remark that Medynets' arguments does not seem to apply to the commutator subgroup either (c.f.\ Remark~\ref{rem:KLCCcommutator}).
\end{remark}

\section{Isomorphism theorems for ample groupoids}\label{sec:reggrpd}

In this section we shall apply the spatial realization results of the previous section to (subgroups of) the topological full group. As corollaries we are able to  reconstruct certain ample groupoids from their topological full group. The two faithful classes considered in the previous section allows us to lift an abstract group isomorphism of (subgroups of) the topological full groups to a spatial one. This in turn yields an isomorphism of the associated groupoids of germs (c.f.\ Corollary~\ref{corol_inj_groupoid}). In order to conclude that the groupoids themselves are isomorphic we need, by Proposition~\ref{prop_germs} and Corollary~\ref{cor:necessaryCover}, to assume that the subgroups in question cover the groupoids. As we saw in Lemma~\ref{ex_cover}, if every $\mathcal{G}$-orbit has length at least $2$, or respectively $3$, then $\llbracket \mathcal{G} \rrbracket$, or respectively any $\Gamma$ with $D(\llbracket \mathcal{G} \rrbracket) \leq \Gamma \leq \llbracket \mathcal{G} \rrbracket$, covers $\mathcal{G}$.

We first extract an isomorphism theorem from the faithfulness of the class $K^F$. For a general ample groupoid the only general condition we know to imply that $\smash{\left(\llbracket \mathcal{G} \rrbracket, \mathcal{G}^{(0)} \right)}$ belong to $K^F$ is minimality. So for general groupoids we obtain only a straightforward minor extension of~\cite[Theorem~3.9 \&~3.10]{Mat} in Theorem~\ref{thm:KFgroupoid} below. However, for the class of graph groupoids we will see in Section~\ref{sec:isogg} that we can weaken minimality quite a lot and still have the topological full group (and its commutator) in $K^F$, and thereby obtain a significantly more general result within the class of graph groupoids. It would therefore be interesting to find general conditions on a general ample groupoid $\GG$, weaker than minimality, ensuring that $\left(\llbracket \mathcal{G} \rrbracket,\GG^{(0)}\right)$ and $\left(\DD(\llbracket \mathcal{G} \rrbracket),\GG^{(0)}\right)$ belong to $K^F$.

\begin{proposition}[c.f.\ {\cite[Proposition~3.6]{Mat}}]\label{KFprop}
Let $\GG$ be an effective ample Hausdorff groupoid whose unit space has no isolated points. If $\GG$ is minimal and $\Gamma$ is any subgroup of~$\llbracket \mathcal{G} \rrbracket$ containing $\DD(\llbracket \mathcal{G} \rrbracket)$, then $\smash{\left(\Gamma, \mathcal{G}^{(0)} \right)} \in K^F$.
\end{proposition}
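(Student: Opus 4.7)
We verify the three axioms (F1)--(F3) defining $K^F$. In each case the required element will be built as a commutator in $\llbracket \mathcal{G} \rrbracket$, so that it lies in $\DD(\llbracket \mathcal{G} \rrbracket) \leq \Gamma$ automatically. The unifying observation is: if $\gamma, \kappa \in \llbracket \mathcal{G} \rrbracket$ with $\kappa$ an involution and $\gamma(\supp \kappa) \cap \supp \kappa = \emptyset$, then $[\gamma, \kappa] = (\gamma \kappa \gamma^{-1})\kappa^{-1}$ is a product of two \emph{commuting} involutions with disjoint supports, hence itself a nontrivial involution in $\DD(\llbracket \mathcal{G} \rrbracket)$. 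To manufacture the relevant bisections we exploit the hypotheses that $\mathcal{G}$ is minimal and $\mathcal{G}^{(0)}$ is perfect: every $\mathcal{G}$-orbit is then dense \emph{and} infinite, so for any nonempty clopen $C \subseteq \mathcal{G}^{(0)}$ and any point $y \in \mathcal{G}^{(0)}$, $\orb_{\mathcal{G}}(y) \cap C$ is infinite. Combined with ampleness and Hausdorffness (which allows separation of distinct orbit points by disjoint compact clopens), this lets us produce, on demand, compact bisections whose sources and ranges are prescribed disjoint clopen subsets inside a chosen clopen set.

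To prove (F1), given $x \in X$ and a clopen neighbourhood $A \ni x$, pick four distinct points $x_1 = x, x_2, x_3, x_4$ in $\orb_{\mathcal{G}}(x) \cap A$ and, by shrinking compact bisections joining $x_1$ to the $x_i$'s in the spirit of Lemma~\ref{lemma:bisectionExistence}, obtain pairwise disjoint clopens $B_1, \ldots, B_4 \subseteq A$ with $x_i \in B_i$ together with compact bisections $V_i \subseteq \mathcal{G}$ satisfying $s(V_i) = B_1$ and $r(V_i) = B_i$ for $i = 2, 3, 4$. Define the involutions $\alpha_1 := \pi_{\widehat{V_2}}$ (swapping $B_1 \leftrightarrow B_2$) and $\gamma := \pi_{\widehat{V_3 \sqcup V_4 V_2^{-1}}}$ (swapping $B_1 \leftrightarrow B_3$ and $B_2 \leftrightarrow B_4$). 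Since $\gamma(\supp \alpha_1) = B_3 \sqcup B_4$ is disjoint from $\supp \alpha_1 = B_1 \sqcup B_2$, the unifying trick yields the involution $[\gamma, \alpha_1] \in \DD(\llbracket \mathcal{G} \rrbracket) \leq \Gamma$ with compact support $\bigsqcup_{i=1}^{4} B_i \subseteq A$ containing $x \in B_1$. For (F3), an entirely parallel construction with only three distinct orbit points $x_1, x_2, x_3 \in \orb_{\mathcal{G}}(x) \cap A$ produces two involutions $\alpha_1$ swapping $B_1 \leftrightarrow B_2$ and $\alpha_2$ swapping $B_1 \leftrightarrow B_3$; a direct calculation shows $[\alpha_1, \alpha_2]$ acts as the cyclic rotation $B_1 \to B_2 \to B_3 \to B_1$ (of order $3$), so $([\alpha_1, \alpha_2])^2 \neq 1$, its support lies in $A$, and it belongs to $\DD(\llbracket \mathcal{G} \rrbracket) \leq \Gamma$.

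For (F2), write the given involution as $\alpha = \pi_{\hat{V}}$ for a compact bisection $V$ with $s(V) \cap r(V) = \emptyset$, so that $\supp \alpha = s(V) \sqcup r(V)$; after swapping the roles of $s(V), r(V)$ if necessary, assume $A \cap s(V) \neq \emptyset$. Use minimality (with perfectness) to produce a compact bisection $W$ with $s(W) = D_1$ and $r(W) = D_2$ two disjoint nonempty clopen subsets of $A \cap s(V)$. Let $V_i := V \cap s^{-1}(D_i)$ and $\beta_i := \pi_{\widehat{V_i}}$, the involution that restricts $\alpha$ to the pair $D_i \sqcup \alpha(D_i)$. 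Take $\gamma := \pi_{\widehat{W \sqcup (V_2 W V_1^{-1})}}$, the involution swapping $D_1 \leftrightarrow D_2$ and $\alpha(D_1) \leftrightarrow \alpha(D_2)$ in a manner compatible with $\alpha$; a direct groupoid-level computation (invoking Lemma~\ref{lemma:homeoBis} to go from equality of homeomorphisms to equality of bisections) confirms the intertwining identity $\gamma \beta_1 \gamma^{-1} = \beta_2$. Consequently $[\gamma, \beta_1] = \beta_2 \beta_1$ is an involution in $\DD(\llbracket \mathcal{G} \rrbracket) \leq \Gamma$ supported on $(D_1 \sqcup D_2) \sqcup \alpha(D_1 \sqcup D_2) \subseteq A \cup \alpha(A)$ and agreeing with $\alpha$ throughout its support, so serves as the required $\beta$. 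The hardest step is precisely this last compatibility check in (F2): one must assemble $W$ and $\gamma$ so that the intertwining identity holds on the nose, and not merely on the level of supports.
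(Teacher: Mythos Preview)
Your argument is correct in spirit and follows the same commutator-based strategy as Matui's proof of \cite[Proposition 3.6]{Mat}, to which the paper simply defers. The constructions for (F1) and (F3) are fine.

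There is one small gap in your treatment of (F2): you write the given involution globally as $\alpha = \pi_{\hat V}$ with $s(V) \cap r(V) = \emptyset$, but this need not be possible. The support $\supp(\alpha) = s(U \setminus \mathcal{G}^{(0)})$ can contain fixed points of $\alpha$ (namely points with nontrivial isotropy lying in $U$), so $\alpha$ need not act freely on its support, and hence the support need not split as $s(V) \sqcup r(V)$. The fix is immediate: since the moved points are open and dense in $\supp(\alpha)$, the given clopen $A$ contains some $y$ with $\alpha(y) \neq y$; shrink $A$ to a clopen $A' \ni y$ with $A' \cap \alpha(A') = \emptyset$, and take $V$ to be the part of the bisection for $\alpha$ lying over $A'$. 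Your construction of $W$, $\gamma$, and $\beta = \beta_2\beta_1$ then goes through unchanged with $A'$ in place of $A \cap s(V)$, and $\supp(\beta) \subseteq A' \cup \alpha(A') \subseteq A \cup \alpha(A)$ as required.
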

\begin{proof}
The proof of~\cite[Proposition~3.6]{Mat} goes through verbatim in this slightly more general setting. The proof makes heavy use of the minimality of $\GG$ and combine this with Lemma~\ref{lemma:bisectionInvolution} to find the desired elements in $\DD(\llbracket \mathcal{G} \rrbracket)$.
\end{proof}

\begin{theorem}\label{thm:KFgroupoid}
Let $\mathcal{G}_1, \mathcal{G}_2$ be effective ample minimal Hausdorff groupoids whose unit spaces have no isolated points. Suppose $\Gamma_1, \Gamma_2$ are subgroups with $\DD(\llbracket \mathcal{G}_i \rrbracket) \leq \Gamma_i \leq \llbracket \mathcal{G}_i \rrbracket$. If $\Gamma_1 \cong \Gamma_2$ as abstract groups, then $\mathcal{G}_1 \cong \mathcal{G}_2$ as topological groupoids. In particular, the following are equivalent:
\begin{enumerate}
\item $\mathcal{G}_1 \cong \mathcal{G}_2$ as topological groupoids.
\item $\llbracket \mathcal{G}_1 \rrbracket \cong \llbracket \mathcal{G}_2 \rrbracket$ as abstract groups.
\item $\DD(\llbracket \mathcal{G}_1 \rrbracket) \cong \DD(\llbracket \mathcal{G}_2 \rrbracket)$ as abstract groups.
\end{enumerate}
\end{theorem}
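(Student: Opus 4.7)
The plan is to follow the diagram sketched in the introduction, in three substantive steps: (i) promote the abstract group isomorphism $\Phi \colon \Gamma_1 \to \Gamma_2$ to a spatial one via the faithfulness of the class $K^F$; (ii) apply the germ functor $\G(-)$ to obtain an isomorphism of groupoids of germs; (iii) identify each groupoid of germs with the ambient groupoid by showing $\Gamma_i$ covers $\mathcal{G}_i$.

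For step (i), I would first observe that the hypotheses of Proposition~\ref{KFprop} are satisfied: each $\mathcal{G}_i$ is effective ample minimal Hausdorff with unit space having no isolated points, and $\DD(\llbracket \mathcal{G}_i \rrbracket) \leq \Gamma_i \leq \llbracket \mathcal{G}_i \rrbracket$. That proposition then places the space-group pair $(\Gamma_i, \mathcal{G}_i^{(0)})$ in the class $K^F$. Theorem~\ref{classF} now yields a homeomorphism $\phi \colon \mathcal{G}_1^{(0)} \to \mathcal{G}_2^{(0)}$ implementing $\Phi$, i.e.\ $\Phi(\gamma) = \phi \circ \gamma \circ \phi^{-1}$ for every $\gamma \in \Gamma_1$. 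This is precisely a spatial isomorphism $(\Gamma_1, \mathcal{G}_1^{(0)}) \cong (\Gamma_2, \mathcal{G}_2^{(0)})$ in $\TopG$. For step (ii), applying the functor $\G(-) \colon \TopG \to \Groupoid$ of Proposition~\ref{prop:functorial} and invoking Corollary~\ref{corol_inj_groupoid}(1), the spatial isomorphism is pushed to an isomorphism $\G(\Gamma_1, \mathcal{G}_1^{(0)}) \cong \G(\Gamma_2, \mathcal{G}_2^{(0)})$ of (effective, ample, Hausdorff) étale groupoids.

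Step (iii) is where minimality pays off once more. Since $\mathcal{G}_i$ is minimal and $\mathcal{G}_i^{(0)}$ has no isolated points, every $\mathcal{G}_i$-orbit is dense in an infinite Hausdorff space, hence infinite; in particular, each orbit has length at least $3$. Lemma~\ref{ex_cover}(2) then gives that $\DD(\llbracket \mathcal{G}_i \rrbracket)$ covers $\mathcal{G}_i$, and since $\DD(\llbracket \mathcal{G}_i \rrbracket) \leq \Gamma_i$, so does $\Gamma_i$. Proposition~\ref{prop_germs} now asserts that the canonical étale homomorphism $\iota_i \colon \G(\Gamma_i, \mathcal{G}_i^{(0)}) \hookrightarrow \mathcal{G}_i$ is an isomorphism. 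Chaining the three isomorphisms yields $\mathcal{G}_1 \cong \mathcal{G}_2$ as topological groupoids, proving the main statement.

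The equivalence of (1), (2) and (3) then follows at once. The implication (1)$\Rightarrow$(2) is trivial, since an isomorphism of topological groupoids $\Psi \colon \mathcal{G}_1 \to \mathcal{G}_2$ induces a group isomorphism $\pi_U \mapsto \pi_{\Psi(U)}$ of topological full groups; (2)$\Rightarrow$(3) holds because any abstract group isomorphism sends commutator subgroup to commutator subgroup; and (3)$\Rightarrow$(1) is the main statement applied with $\Gamma_i = \DD(\llbracket \mathcal{G}_i \rrbracket)$. The genuine work is concentrated in step (i)---the spatial realization---but that is precisely what Theorem~\ref{classF} and Proposition~\ref{KFprop} together deliver, so the proof is essentially a matter of assembling the pieces.
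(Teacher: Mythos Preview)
Your proof is correct and follows essentially the same route as the paper's own argument: invoke Proposition~\ref{KFprop} and Theorem~\ref{classF} for spatial realization, pass to groupoids of germs via functoriality (the paper packages this as Proposition~\ref{faithGerm}), and then use Lemma~\ref{ex_cover}(2) together with Proposition~\ref{prop_germs} to identify each germ groupoid with the ambient $\mathcal{G}_i$. You have simply unpacked the steps a bit more explicitly than the paper does.
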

\begin{proof}
Clearly every $\mathcal{G}_i$-orbit is infinite, for $i = 1,2$. Thus the result follows from combining Proposition~\ref{KFprop}, Theorem~\ref{classF}, Proposition~\ref{faithGerm}, Lemma~\ref{ex_cover} and Proposition~\ref{prop_germs}.
\end{proof}

\begin{remark}
For transformation groupoids arising from minimal $\mathbb{Z}$-actions on locally compact Cantor spaces, a variant of this result appears in~\cite[Theorem~4.13~(vi)]{Mat4}. See also Remark~\ref{rem:matui}.
\end{remark}

\begin{remark}
In~\cite[Theorem~3.10]{Mat} the kernel of the so-called \emph{index map} also appears (as $\llbracket \mathcal{G} \rrbracket_0$). We could equally well have included it in Theorem~\ref{thm:KFgroupoid} since it is a distinguished subgroup lying between $\llbracket \mathcal{G} \rrbracket$ and $\DD(\llbracket \mathcal{G} \rrbracket)$.
\end{remark}

Our next goal is to analyze the conditions in the definition of the class $K^{LCC}$, when the space-group pair under consideration is the topological full group and the unit space of an ample groupoid. Unfortunately, the commutator subgroup $\DD(\llbracket \mathcal{G} \rrbracket)$ does not seem to belong to $K^{LCC}$, which is why we only consider $\llbracket \mathcal{G} \rrbracket$ itself (see Remark~\ref{rem:KLCCcommutator} below). We begin by showing that the groupoid-orbits coincide with the orbits of the action by the topological full group on the unit space.

\begin{lemma}\label{lemma:orbit}
Let $\mathcal{G}$ be an effective ample groupoid and let $x \in \mathcal{G}^{(0)}$. Then
\[\orb_\mathcal{G}(x) = \orb_{\llbracket \mathcal{G} \rrbracket \curvearrowright \mathcal{G}^{(0)}}(x).\]
\end{lemma}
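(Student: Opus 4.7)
The plan is to establish the two inclusions separately, and both are almost immediate from results already in the paper.

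For the inclusion $\orb_{\llbracket \mathcal{G} \rrbracket}(x) \subseteq \orb_\mathcal{G}(x)$, I would take any $y = \pi_U(x)$ for some $\pi_U \in \llbracket \mathcal{G} \rrbracket$ and let $g$ be the unique element of the full bisection $U$ with $s(g) = x$. By definition of $\pi_U$ we then have $r(g) = \pi_U(s(g)) = \pi_U(x) = y$, which exhibits $y$ as lying in $\orb_\mathcal{G}(x)$.

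For the reverse inclusion $\orb_\mathcal{G}(x) \subseteq \orb_{\llbracket \mathcal{G} \rrbracket}(x)$, I would split into two cases. The trivial case $y = x$ is handled by the identity element $\pi_{\mathcal{G}^{(0)}} = \id_{\mathcal{G}^{(0)}} \in \llbracket \mathcal{G} \rrbracket$. If $y \neq x$ and there exists $g \in \mathcal{G}$ with $s(g) = x$, $r(g) = y$, then $g \in \mathcal{G} \setminus \mathcal{G}'$, so Lemma~\ref{lemma:bisectionExistence} supplies a full bisection $U$ with $g \in U$ and $\pi_U \in \llbracket \mathcal{G} \rrbracket$. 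Reading off $\pi_U(x) = \pi_U(s(g)) = r(g) = y$ places $y$ in $\orb_{\llbracket \mathcal{G} \rrbracket}(x)$.

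There is no real obstacle here: the work is all concentrated in Lemma~\ref{lemma:bisectionExistence}, which is where the ampleness, effectiveness, and Hausdorffness of $\mathcal{G}^{(0)}$ are used to extend a point-wise piece of the groupoid to a full bisection inducing an element of the topological full group. Once that tool is available, the equality of the two orbits is a short bookkeeping argument on sources and ranges.
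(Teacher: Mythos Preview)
Your proof is correct and follows essentially the same approach as the paper: the forward inclusion is immediate from the definition, and the reverse inclusion is obtained by applying Lemma~\ref{lemma:bisectionExistence} to any $g \in \mathcal{G} \setminus \mathcal{G}'$ with $s(g)=x$ and $r(g)=y$. You have simply spelled out a bit more detail (the trivial case $y=x$ and the unpacking of the ``obvious'' direction) than the paper does.
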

\begin{proof}
From the definition of the topological full group it is obvious that the groupoid orbit~$\orb_\mathcal{G}(x)$ contains the orbit of the action $\orb_{\llbracket \mathcal{G} \rrbracket \curvearrowright \mathcal{G}^{(0)}}(x)$. For the reverse inclusion, suppose $y \in \orb_\mathcal{G}(x)$ is distinct from $x$, and let $\gamma \in \mathcal{G}$ be an arrow from $x$ to $y$. Applying Lemma~\ref{lemma:bisectionExistence} to $\gamma$ we obtain an element $\pi_U \in \llbracket \mathcal{G} \rrbracket$ with $\pi_U(x) = y$. Thus $y \in \orb_{\llbracket \mathcal{G} \rrbracket \curvearrowright \mathcal{G}^{(0)}}(x)$.
\end{proof} 

In other words, when the space group pair is $(\llbracket \mathcal{G} \rrbracket, \mathcal{G}^{(0)})$ condition (K5) of Definition~\ref{def_KB} is equivalent to saying that every $\mathcal{G}$-orbit has length at least $3$ (which, incidentally, implies that $\llbracket \mathcal{G} \rrbracket$ covers $\mathcal{G}$). Next we show that conditions (K3) and (K4) of Definition~\ref{def_KB} are always satisfied for topological full groups. In fact, $(\llbracket \mathcal{G} \rrbracket, \mathcal{G}^{(0)})$ is even ``globally flexible''.

\begin{lemma}
Let $\mathcal{G}$ be an effective ample groupoid. Then every open subset of $\mathcal{G}^{(0)}$ is flexible with respect to $\llbracket \mathcal{G} \rrbracket$. In particular, $\smash{\big(\llbracket \mathcal{G} \rrbracket, \mathcal{G}^{(0)}\big)}$ is locally flexible.
\end{lemma}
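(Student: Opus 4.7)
The plan is to verify directly that every open subset of $\mathcal{G}^{(0)}$ meets the definition of flexible, which is much stronger than local flexibility. Fix an open set $B \subseteq \mathcal{G}^{(0)}$, open subsets $C_1, C_2 \subseteq B$, and an element $\pi_U \in \llbracket \mathcal{G} \rrbracket$ with $\pi_U(C_1) \cap C_2 \neq \emptyset$. Choose any $x \in C_1$ with $y \coloneqq \pi_U(x) \in C_2$; note that both $x$ and $y$ lie in $B$.

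I split on whether $y = x$ or $y \neq x$. If $y = x$, then $C_1 \cap C_2$ is a nonempty open subset of $B$ containing $x$, and $\tau = \id_{\mathcal{G}^{(0)}} \in \llbracket \mathcal{G} \rrbracket$ has empty support, so trivially $\supp(\tau) \subseteq B$ and $\tau(C_1) \cap C_2 \ni x$. If $y \neq x$, then by the definition of $\pi_U$ there is a unique $g \in U$ with $s(g) = x$ and $r(g) = y$, and in particular $g \in \mathcal{G} \setminus \mathcal{G}'$. Since $B$ is an open set containing both $s(g)$ and $r(g)$, Lemma~\ref{lemma:bisectionExistence} produces a bisection $V \subseteq \mathcal{G}$ containing $g$ with $\pi_V \in \llbracket \mathcal{G} \rrbracket$ and $\supp(\pi_V) \subseteq B$. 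Then $\pi_V(x) = y \in C_2$, so $\pi_V(C_1) \cap C_2 \neq \emptyset$, as required.

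This verifies flexibility of $B$. Local flexibility of $(\llbracket \mathcal{G} \rrbracket, \mathcal{G}^{(0)})$ is then immediate: any nonempty open $A \subseteq \mathcal{G}^{(0)}$ is itself a flexible open subset contained in $A$.

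I do not expect any real obstacle here; the content of the lemma is essentially a repackaging of Lemma~\ref{lemma:bisectionExistence}, whose key feature (that the support of the witnessing bisection can be localized into any open neighbourhood of $\{s(g), r(g)\}$) is exactly what the definition of flexibility demands.
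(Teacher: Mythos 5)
Your proof is correct and follows essentially the same route as the paper's: both reduce to the case of a non-isotropy element $g \in U$ with source in $C_1$ and range in $C_2$ (you by splitting on whether the chosen point is fixed, the paper by splitting on whether $C_1 \cap C_2 = \emptyset$) and then invoke Lemma~\ref{lemma:bisectionExistence} to localize the support inside $B$. No issues.
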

\begin{proof}
Let $A$ be a non-empty open subset of $\mathcal{G}^{(0)}$, and let $B_1, B_2$ be two open subsets of~$A$. We may assume that these are disjoint, for otherwise the identity homeomorphism trivially witnesses flexibility. Suppose $\pi_U \in \llbracket \mathcal{G} \rrbracket$ satisfies $\pi_U(B_1) \cap B_2 \neq \emptyset$. Then there is a~$g \in U$ with $s(g) \in B_1$ and $r(g) \in B_2$. Lemma~\ref{lemma:bisectionExistence} applied to $g$ and $B_1 \sqcup B_2$ produces  an element~$\pi_V \in \llbracket \mathcal{G} \rrbracket$ with $\supp(\pi_V) \subseteq B_1 \sqcup B_2 \subseteq A$ and  $\pi_V(B_1) \cap B_2 \neq \emptyset$. This shows that $A$ is flexible.
\end{proof}

\begin{lemma}
Let $\mathcal{G}$ be an effective ample groupoid. Then every clopen subset of $\mathcal{G}^{(0)}$ is recognizable by $\llbracket \mathcal{G} \rrbracket$.
\end{lemma}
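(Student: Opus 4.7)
The plan is to unpack the two conditions of Definition~\ref{recognizable} and verify each by constructing an explicit bisection whose associated topological full group element realizes the prescribed homeomorphism. The main tools are the decomposition from Lemma~\ref{lem:extendBisection}, which tells us that any $\pi_U \in \llbracket \mathcal{G} \rrbracket$ splits as $U = U^\perp \sqcup (\mathcal{G}^{(0)} \setminus \supp(\pi_U))$ with $U^\perp$ a compact bisection and $s(U^\perp) = r(U^\perp) = \supp(\pi_U)$, together with Lemma~\ref{lemma:bisectionInvolution} for the involutive case.

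Fix a clopen $A \subseteq \mathcal{G}^{(0)}$ and $\pi_U \in \llbracket \mathcal{G} \rrbracket$, and in both cases set $V \coloneqq U^\perp \cap s^{-1}(A)$. Since $A$ is clopen and $s$ is continuous, $s^{-1}(A)$ is clopen in $\mathcal{G}$, so $V$ is a clopen subset of the compact bisection $U^\perp$; in particular $V$ is itself a compact bisection with $s(V) = A \cap \supp(\pi_U)$ and $r(V) = \pi_U(A \cap \supp(\pi_U))$.

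For condition (1), assume $\pi_U(A) = A$. Since $\supp(\pi_U)$ is invariant under $\pi_U$, we get $r(V) = \pi_U(A) \cap \supp(\pi_U) = A \cap \supp(\pi_U) = s(V)$, so Lemma~\ref{lem:extendBisection} produces $\pi_{\tilde V} \in \llbracket \mathcal{G} \rrbracket$ with $\supp(\pi_{\tilde V}) \subseteq A \cap \supp(\pi_U)$ and $\pi_{\tilde V}$ agreeing with $\pi_U$ on that set. Checking on the partition $A \cap \supp(\pi_U)$, $A \setminus \supp(\pi_U)$, and $\mathcal{G}^{(0)} \setminus A$ then shows $\pi_{\tilde V}$ equals the homeomorphism $\tau$ of Definition~\ref{recognizable}(1).

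For condition (2), assume $\pi_U(A) \cap A = \emptyset$. Then $s(V) = A \cap \supp(\pi_U) \subseteq A$ and $r(V) = \pi_U(A) \cap \supp(\pi_U) \subseteq \pi_U(A)$, so $s(V) \cap r(V) = \emptyset$, and Lemma~\ref{lemma:bisectionInvolution} gives an involution $\pi_{\hat V} \in \llbracket \mathcal{G} \rrbracket$ with $\supp(\pi_{\hat V}) \subseteq s(V) \cup r(V)$. Splitting $\mathcal{G}^{(0)}$ into the five regions $A \cap \supp(\pi_U)$, $A \setminus \supp(\pi_U)$, $\pi_U(A) \cap \supp(\pi_U)$, $\pi_U(A) \setminus \supp(\pi_U)$ and the remaining complement, and using that $\pi_U$ acts as the identity off $\supp(\pi_U)$, one verifies on each region that $\pi_{\hat V}$ coincides with the involution $\alpha$ of Definition~\ref{recognizable}(2). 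The only delicate point in the whole proof is the bookkeeping in this case-by-case verification; the construction of the bisection $V$ itself is entirely routine once one uses $U^\perp$ to keep everything compact.
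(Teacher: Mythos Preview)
Your proof is correct and follows essentially the same approach as the paper's: restrict the bisection $U$ over $A$ to obtain $V$, then apply Lemma~\ref{lem:extendBisection} for condition~(1) and Lemma~\ref{lemma:bisectionInvolution} for condition~(2). The only difference is that you intersect with $U^\perp$ first to guarantee $V$ is compact before invoking the lemmas, whereas the paper takes $V = s_{\vert U}^{-1}(A)$ (which need not be compact when $A$ is not) and instead argues afterwards that $\supp(\pi_{\tilde V})$ and $\supp(\pi_{\hat V})$ are contained in $\supp(\pi_U)$, hence compact; your bookkeeping is slightly cleaner in this respect.
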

\begin{proof}
Let $A \subseteq \mathcal{G}^{(0)}$ be clopen.

(1) Suppose $\pi_U \in \llbracket \mathcal{G} \rrbracket$ satisfies $\pi_U(A) = A$. Then $V = \smash{s_{\vert U}^{-1}(A)}  \subseteq U$ is a clopen bisection with $s(V) = r(V) = A$. Then $\tilde{V}$ as in Lemma~\ref{lem:extendBisection} is a full bisection with $\supp(\pi_{\tilde{V}}) \subseteq \supp(\pi_U)$, hence $\pi_{\tilde{V}} \in \llbracket \mathcal{G} \rrbracket$. The homeomorphism $\pi_{\tilde{V}}$ is the one from condition (1) of Definition~\ref{recognizable}.

(2) Suppose now that $\pi_U \in \llbracket \mathcal{G} \rrbracket$ satisfies $\pi_U(A) \cap A = \emptyset$. Again we set $V = \smash{s_{\vert U}^{-1}(A)}$. Then~$s(V) \cap r(V) = A \cap \pi_U(A) = \emptyset$. The full bisection $\hat{V}$ as in Lemma~\ref{lemma:bisectionInvolution} also has compact support since $\supp(\pi_{\hat{V}}) \subseteq \supp(\pi_U)$, and so $\pi_{\hat{V}} \in \llbracket \mathcal{G} \rrbracket$. The involution $\pi_{\hat{V}}$ is the one from condition (2) of Definition~\ref{recognizable}.
\end{proof}

It remains to consider condition~(K2) of Definition~\ref{def_KB}. Inspired by~\cite[Proposition~2.2]{Med} we introduce the the notion of a \emph{non-wandering} groupoid, in order to characterize when $\smash{\left(\llbracket \mathcal{G} \rrbracket, \mathcal{G}^{(0)}\right)}$ is locally moving in terms of the groupoid~$\mathcal{G}$.

\begin{definition}\label{def:wandering}
Let $\mathcal{G}$ be an ample groupoid. A subset $A \subseteq \mathcal{G}^{(0)}$ is called \emph{wandering} if~$\vert A \cap \orb_\mathcal{G}(x) \vert = 1$ for all $x \in A$. We say that $\mathcal{G}$ is \emph{non-wandering} if $\mathcal{G}^{(0)}$ has no non-empty clopen wandering subsets.
\end{definition}

In words, a non-wandering groupoid is one in which every clopen subset of the unit space meets some orbit at least twice. This may be viewed as a ``mixing condition'' which is far weaker than minimality. For if $\mathcal{G}$ is minimal, then in particular $\vert A \cap \orb_\mathcal{G}(x) \vert$ is infinite (from being dense) for each clopen neighbourhood $A$ of $x$.

\begin{proposition}\label{prop:LMChar}
Let $\mathcal{G}$ be an effective ample Hausdorff groupoid. Then the following are equivalent:
\begin{enumerate}
\item The space-group pair $\left(\llbracket \mathcal{G} \rrbracket, \mathcal{G}^{(0)}\right)$ is locally moving.
\item The groupoid $\mathcal{G}$ is non-wandering.
\end{enumerate}
\end{proposition}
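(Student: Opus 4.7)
The plan is to verify each direction directly using the preceding lemmas; the equivalence essentially amounts to translating between ``a clopen set meeting an orbit twice'' and ``a non-trivial full bisection supported in that clopen set''.

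For $(2) \Rightarrow (1)$, I would start with an arbitrary non-empty open subset $A \subseteq \mathcal{G}^{(0)}$. Since $\mathcal{G}^{(0)}$ is Boolean, I can shrink $A$ to a non-empty clopen subset $B \subseteq A$. By hypothesis, $B$ contains two distinct points $x \neq y$ lying in the same $\mathcal{G}$-orbit, so there exists $g \in \mathcal{G} \setminus \mathcal{G}'$ with $s(g) = x$ and $r(g) = y$. Applying Lemma~\ref{lemma:bisectionExistence} to $g$ with the open neighborhood $B$ of both $s(g)$ and $r(g)$ produces a full bisection $U$ containing $g$ with $\pi_U \in \llbracket \mathcal{G} \rrbracket$ and $\supp(\pi_U) \subseteq B \subseteq A$. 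Since $\pi_U(x) = y \neq x$, the element $\pi_U$ is non-trivial, as required.

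For $(1) \Rightarrow (2)$, I would take a non-empty clopen subset $A \subseteq \mathcal{G}^{(0)}$. By hypothesis, there exists $\pi_U \in \llbracket \mathcal{G} \rrbracket \setminus \{1\}$ with $\supp(\pi_U) \subseteq A$. By Lemma~\ref{lem_clopen} the set $\supp(\pi_U)$ is non-empty (and clopen), so I can pick $x \in \supp(\pi_U)$. Then $\pi_U(x) \neq x$, and since $\pi_U$ is a homeomorphism that fixes every point outside $\supp(\pi_U)$, its support is $\pi_U$-invariant; hence $\pi_U(x) \in \supp(\pi_U) \subseteq A$. Finally, Lemma~\ref{lemma:orbit} gives $\pi_U(x) \in \orb_{\llbracket \mathcal{G} \rrbracket}(x) = \orb_\mathcal{G}(x)$, so the distinct points $x$ and $\pi_U(x)$ both lie in $A$ and in the same $\mathcal{G}$-orbit.

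There is no significant obstacle here; everything reduces to the two key facts already established, namely the construction of elements of $\llbracket \mathcal{G} \rrbracket$ with prescribed support (Lemma~\ref{lemma:bisectionExistence}) and the identification of orbits (Lemma~\ref{lemma:orbit}). The only small care needed is the invariance of $\supp(\pi_U)$ under $\pi_U$ in the second direction, which is an immediate consequence of $\pi_U$ being a homeomorphism fixing the complement of its support.
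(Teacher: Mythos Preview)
Your proof is correct and follows essentially the same route as the paper: both directions hinge on Lemma~\ref{lemma:bisectionExistence} (to produce a non-trivial element supported in a given clopen) and on the fact that $\llbracket \mathcal{G} \rrbracket$-orbits coincide with $\mathcal{G}$-orbits. One small imprecision in your $(1)\Rightarrow(2)$ step: picking an arbitrary $x \in \supp(\pi_U)$ does not automatically give $\pi_U(x)\neq x$, since $\supp(\pi_U)$ is the \emph{closure} of the moved set and a point with non-trivial isotropy could lie in $s(U\setminus\mathcal{G}^{(0)})$ yet be fixed; instead simply choose $x$ with $\pi_U(x)\neq x$ (such $x$ exists since $\pi_U\neq 1$), and then $x\in\supp(\pi_U)\subseteq A$ as needed.
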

\begin{proof}
Let $A$ be a non-empty clopen subset of $\mathcal{G}^{(0)}$. We will prove that $A$ meets some~\mbox{$\mathcal{G}$-orbit} twice (i.e.\ $A$ is not wandering) if and only if there is an element $\pi_U \in \llbracket \mathcal{G} \rrbracket \setminus \{1\}$ with $\supp(\pi_U) \subseteq A$. If $\emptyset \neq \supp(\pi_U) \subseteq A$, then, since both sets are clopen, there is an $x \in A$ with $x \neq \pi_U(x) \in A$. In other words, $\vert A \cap \orb_\mathcal{G}(x) \vert \geq 2$. Conversely, if $\vert A \cap \orb_\mathcal{G}(x) \vert \geq 2$ holds for some $x \in A$, then there is a  $g \in \mathcal{G} \setminus \mathcal{G}'$ such that $s(g)$ and $r(g)$ both belong to~$A$. Now Lemma~\ref{lemma:bisectionExistence} gives us a nontrivial group element in $\llbracket \mathcal{G} \rrbracket$ supported on $A$. As the clopens form a base for the topology on $\mathcal{G}^{(0)}$ we are done.
\end{proof}

Putting it all together, we arrive at the second main result of this section.

\begin{theorem}\label{thm:KLCCgroupoid}
Let $\mathcal{G}_1, \mathcal{G}_2$ be effective ample Hausdorff groupoids over locally compact Cantor spaces. Suppose that, for $i = 1,2$, $\mathcal{G}_i$ is non-wandering and that each $\mathcal{G}_i$-orbit has length at least $3$. Then any isomorphism between $\llbracket \mathcal{G}_1 \rrbracket$ and $\llbracket \mathcal{G}_2 \rrbracket$ is spatial. In particular, the following are equivalent:
\begin{enumerate}
\item $\mathcal{G}_1 \cong \mathcal{G}_2$ as topological groupoids.
\item $\llbracket \mathcal{G}_1 \rrbracket \cong \llbracket \mathcal{G}_2 \rrbracket$ as abstract groups.
\end{enumerate}
\end{theorem}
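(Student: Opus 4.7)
The strategy is to verify that under the hypotheses the space-group pairs $(\llbracket \mathcal{G}_i \rrbracket, \mathcal{G}_i^{(0)})$ both belong to the faithful class $K^{LCC}$, and then assemble the pieces from the diagram in the introduction. The implication $(1) \Rightarrow (2)$ is trivial, since a groupoid isomorphism carries full bisections to full bisections and preserves compactness of support. So the bulk of the work is $(2) \Rightarrow (1)$.

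First I would check conditions (K1)--(K5) of Definition~\ref{def_KB} for each pair $(\llbracket \mathcal{G}_i \rrbracket, \mathcal{G}_i^{(0)})$. Condition (K1) is given. Conditions (K3) and (K4) hold for the topological full group of \emph{any} effective ample groupoid, as shown in the two lemmas immediately preceding the statement. Condition (K2) is precisely the content of Proposition~\ref{prop:LMChar}, translated via the standing assumption that every clopen subset of $\mathcal{G}_i^{(0)}$ meets some $\mathcal{G}_i$-orbit twice. Condition (K5) follows from Lemma~\ref{lemma:orbit}, which identifies $\orb_{\llbracket \mathcal{G}_i \rrbracket}(x)$ with $\orb_{\mathcal{G}_i}(x)$, combined with the hypothesis that every $\mathcal{G}_i$-orbit has length at least $3$.

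Once this verification is done, the heavy lifting is already accomplished by Theorem~\ref{thm:KBfaithful}: an abstract group isomorphism $\Phi \colon \llbracket \mathcal{G}_1 \rrbracket \to \llbracket \mathcal{G}_2 \rrbracket$ is spatially implemented by a homeomorphism $\phi \colon \mathcal{G}_1^{(0)} \to \mathcal{G}_2^{(0)}$, giving a spatial isomorphism $(\llbracket \mathcal{G}_1 \rrbracket, \mathcal{G}_1^{(0)}) \cong (\llbracket \mathcal{G}_2 \rrbracket, \mathcal{G}_2^{(0)})$ in $\TopG$. Applying the functor $\G(-)$ (Proposition~\ref{prop:functorial}, or equivalently Proposition~\ref{faithGerm}) then yields an isomorphism of the associated groupoids of germs: $\G(\llbracket \mathcal{G}_1 \rrbracket, \mathcal{G}_1^{(0)}) \cong \G(\llbracket \mathcal{G}_2 \rrbracket, \mathcal{G}_2^{(0)})$ as topological groupoids.

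Finally, I would invoke Lemma~\ref{ex_cover}(1): since every $\mathcal{G}_i$-orbit has length at least $3 \geq 2$, the group $\llbracket \mathcal{G}_i \rrbracket$ covers $\mathcal{G}_i$. By Proposition~\ref{prop_germs} (or equivalently Corollary~\ref{cor:necessaryCover}), the canonical map $\iota \colon \G(\llbracket \mathcal{G}_i \rrbracket, \mathcal{G}_i^{(0)}) \to \mathcal{G}_i$ is an isomorphism of étale groupoids. Stringing these isomorphisms together produces $\mathcal{G}_1 \cong \mathcal{G}_2$ as topological groupoids, completing the proof. The only real obstacle was the spatial realization step, but that has already been discharged in Theorem~\ref{thm:KBfaithful}; what remains here is purely a matter of translating the abstract hypotheses into the axioms of $K^{LCC}$ and invoking covering to identify the germ groupoid with the original.
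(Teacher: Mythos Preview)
Your proposal is correct and follows essentially the same approach as the paper: verify that the pairs $(\llbracket \mathcal{G}_i \rrbracket, \mathcal{G}_i^{(0)})$ lie in $K^{LCC}$ using the lemmas preceding the theorem together with Proposition~\ref{prop:LMChar} and Lemma~\ref{lemma:orbit}, then invoke Theorem~\ref{thm:KBfaithful}, Proposition~\ref{faithGerm}, Lemma~\ref{ex_cover}, and Proposition~\ref{prop_germs} exactly as in the diagram from the introduction. The paper leaves the proof implicit after assembling these ingredients, and your write-up makes the assembly explicit in the intended way.
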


\begin{remark}\label{rem:KLCCcommutator}
It would be desirable to also obtain a spatial realization result for the commutator subgroup $\DD(\llbracket \mathcal{G} \rrbracket)$ in terms of the class $K^{LCC}$. Unfortunately we were not able to show that $\DD(\llbracket \mathcal{G} \rrbracket)$ satisfies condition (K4). This is also the reason why the arguments of~\cite{Med} do not apply to the commutator subgroup either. However, it might be that Theorem~\ref{thm:KLCCgroupoid} holds for the commutator subgroups as well.
\end{remark}

As mentioned above, non-wandering is a much weaker ``mixing property'' than minimality. Below we include two other ``mixing properties'' that lie between non-wandering and minimality.

\begin{definition}[c.f.\ {\cite[page~8]{Nek}}]
An ample groupoid $\GG$ is called \emph{locally minimal} if there exists a basis for $\mathcal{G}^{(0)}$ consisting of clopen sets $A$ such that $\GG_A$ is minimal.
\end{definition}

\begin{definition}
An ample groupoid $\GG$ is called \emph{densely minimal} if for every non-empty open subset $A$ of $\GG^{(0)}$ there exists a non-empty clopen subset $B \subseteq A$ such that $\GG_B$ is minimal.
\end{definition}

We clearly have the following implications for an ample groupoid:
\[\text{minimal} \implies \text{locally minimal} \implies \text{densely minimal} \implies \text{non-wandering}.\]
We will give examples of densely minimal groupoids which are not minimal in the next section (Examples~\ref{ex:1orbitCover} and~\ref{ex:2orbitCover}), as well as non-wandering groupoids which are not densely minimal (Remark~\ref{rem:notDM}).

\section{Graph groupoids}\label{sec:gg}

The rest of the paper will be focused on graph groupoids. This section recalls the relevant terminology for graphs and their associated groupoids (as they appear in the literature on graph algebras). We also record the characterizations of many properties of a graph groupoid in terms of the graph. This is fairly standard and may also be found in many other papers, e.g.~\cite{BCW}, \cite{KPRR}.

\subsection{Graph terminology}
By a \emph{graph} we shall always mean a directed graph, that is, a quadruple ${E=(E^0,E^1,r,s)}$, where $E^0, E^1$ are (non-empty) sets and $r,s \colon E^1 \to E^0$ are maps. The elements in $E^0$ and $E^1$ are called \emph{vertices} and \emph{edges}, respectively, while the maps $r$ and $s$ are called the \emph{range} and \emph{source} map\footnote{Although the notation collides with the range and source maps in a groupoid, both conventions are well established. In the sequel it will always be clear from context whether we mean the source/range of an edge in a graph or of an element in a groupoid.}, respectively. We say that $E$ is \emph{finite} if~$E^0$ and~$E^1$ both are finite sets, and similarly that $E$ is \emph{countable} if $E^0$ and $E^1$ are countable.

A \emph{path} in a graph $E$ is a sequence of edges $\mu=e_1 e_2 \ldots e_n$ such that $r(e_i)=s(e_{i+1})$ for~$1\leq i\leq n-1$. The \emph{length} of $\mu$ is $\left| \mu \right| \coloneqq n$. The set of paths of length $n$ is denoted~$E^n$. The vertices, $E^0$, are considered trivial paths of length $0$. The set of all finite paths is denoted $E^* \coloneqq \bigcup_{n=0}^\infty E^n$. The range and source maps extend to $E^*$ by setting~$r(\mu) \coloneqq r(e_n)$ and~$s(\mu) \coloneqq s(e_1)$. For $v \in E^0$, we set~$s(v) = r(v) = v$. Given another path $\lambda = f_1 \ldots f_m$ with $s(\lambda) = r(\mu)$ we denote the concatenated path $e_1 \ldots e_n f_1 \ldots f_m$ by $\mu \lambda$. In particular, we set~$s(\mu) \mu = \mu = \mu \ r(\mu)$ for each $\mu \in E^*$. Given two paths $\mu, \mu' \in E^*$ we write $\mu < \mu'$ if there exists a path $\lambda$ with $\left| \lambda \right| \geq 1$ such that $\mu' = \mu \lambda$. Writing $\mu \leq \mu'$ allows for $\mu = \mu'$. We say that $\mu$ and $\mu'$ are \emph{disjoint} if $\mu \nleq \mu'$ and $\mu' \nleq \mu$, i.e.\ neither is a subpath of the other.

A \emph{cycle} is a nontrivial path $\mu$ (i.e.\ $\left| \mu \right| \geq 1$) with $r(\mu) = s(\mu)$, and we say that $\mu$ is \emph{based} at $s(\mu)$. We also say that the vertex $s(\mu)$ \emph{supports} the cycle $\mu$. By a \emph{loop} we mean a cycle of length $1$. Beware that some authors use the term loop to denote what we here call cycles. When $\mu$ is a cycle and $k \in \mathbb{N}$, $\mu^k$ denotes the cycle $\mu \mu \ldots \mu$, where $\mu$ is repeated $k$ times. A cycle $\mu = e_1 \ldots e_n$ is called a \emph{return path} if $r(e_i) \neq r(\mu)$ for all $i < n$. This simply means that $\mu$ does not pass through $s(\mu)$ multiple times. An \emph{exit} for a path $\mu = e_1 \ldots e_n$ is an edge~$e$ such that $s(e) = s(e_i)$ and $e \neq e_i$ for some $1 \leq i \leq n$.

For $v,w\in E^0$ we set $vE^n \coloneqq \{ \mu \in E^n \mid s(\mu) = v\}$, $E^n w \coloneqq \{ \mu \in E^n \mid r(\mu) = w\}$ and~$vE^n w \coloneqq vE^n \cap E^n w.$ A vertex $v \in E^0$ is called a \emph{sink} if $vE^1 = \emptyset$, and a \emph{source} if $E^1 v = \emptyset$. Further, $v$ is called an \emph{infinite emitter} if $vE^1$ is an infinite set. The set of \emph{regular} vertices is $\smash{E^0_{\text{reg}} \coloneqq \{ v \in E^0 \mid 0 < | vE^1 | < \infty \}}$, and the set of \emph{singular} vertices is~$\smash{E^0_{\text{sing}} \coloneqq E^0\setminus E^0_{\text{reg}}}$. In other words, sinks and infinite emitters are singular vertices, while all other vertices are regular. We equip the vertex set $E^0$ with a preorder $\geq$ by definining~$v \geq w$ iff $vE^* w \neq \emptyset$, i.e.\ there is a path from $v$ to $w$. The graph $E$ is called \emph{strongly connected} if for each pair of vertices $v,w \in E^0$ we have $v \geq w$.

To close this subsection we describe three exit conditions on graphs that appear frequently in the graph algebra literature. They will play a central role in what follows. A graph $E$ is said to satisfy \emph{Condition~(L)} if every cycle in $E$ has an exit. The graph $E$ satisfies \emph{Condition~(K)} if for every vertex $v \in E^0$, either there is no return path based at $v$ or there are at least two distinct return paths based at $v$. We say that $E$ satisfies \emph{Condition~(I)} if for every vertex $v \in E^0$, there exists a vertex $w \in E^0$ supporting at least two distinct return paths and $v \geq w$. These conditions first appeared in~\cite{KPR},~\cite{KPRR} and~\cite{CK}, respectively. In general, Condition~(K) and~(I) both imply~(L), while~(K) and~(I) are not comparable. For graphs with finitely many vertices and no sinks, Condition~(I) is equivalent to Condition~(L).

\subsection{The boundary path space}
An \emph{infinite path} in a graph $E$ is an infinite sequence of edges $x = e_1 e_2 e_3 \ldots$ such that $r(e_i)=s(e_{i+1})$ for all $i \in \mathbb{N}$. We define~${s(x) \coloneqq s(e_1)}$ and~${\left| x \right| \coloneqq \infty}$. The set of all infinite paths in $E$ is denoted $E^\infty$. Given a finite path~${\mu = f_1 \ldots f_n}$ and an infinite path $x = e_1 e_2 e_3 \ldots \in E^\infty$ such that $r(\mu) = s(x)$ we denote the infinite path~$f_1 \ldots f_n e_1 e_2 e_3 \ldots$ by $\mu x$. For natural numbers $m < n$, we set $x_{[m,n]} \coloneqq e_m e_{m+1} \ldots e_n$, and we denote the infinite path $e_m e_{m+1} e_{m+2} \ldots$ by $x_{[m,\infty)}$. Given a cycle $\lambda \in E^*$ we denote the infinite path $\lambda \lambda \lambda \ldots$ by $\lambda^\infty$. An infinite path of the form $\mu \lambda^\infty$, where $\lambda$ is a cycle with~$s(\lambda) = r(\mu)$, is called \emph{eventually periodic}. An infinite path $e_1 e_2 \ldots \in E^\infty$ is \emph{wandering} if the set $\{ i \in \mathbb{N} \mid s(e_i) = v \}$ is finite for each $v \in E^0$. Note that there are no wandering infinite paths in a graph with finitely many vertices. We call a wandering infinite path~$e_1 e_2 \ldots \in E^\infty$ a \emph{semi-tail}\footnote{By comparison, a \emph{tail} is a wandering path with $s(e_i)E^1 = \{e_i\} = E^1r(e_i)$ for all $i$, c.f.~\cite{BPRS}.} if $s(e_i)E^1 = \{e_i\}$ for each~{$i \in \mathbb{N}$}. The graph $E$ is called \emph{cofinal} if for every vertex $v \in E^0$ and for every infinite path $e_1 e_2 \ldots \in E^\infty$, there exists~$n \in \mathbb{N}$ such that $v \geq s(e_n)$. 

The \emph{boundary path space} of $E$ is
\[\smash{\partial E \coloneqq E^\infty \cup \{\mu\in E^* \mid r(\mu)\in E^0_{\text{sing}}\}},\]
whose topology will be specified shortly. Note that if $v \in E^0$ is a singular vertex, then~$v$ belongs to~$\partial E$. For any vertex $v \in E^0$ we define~$v \partial E \coloneqq \{x \in \partial E \mid s(x) = v \}$ and similarly~$v E^\infty \coloneqq \{x \in E^\infty \mid s(x) = v \}$. The \emph{cylinder set} of a finite path $\mu \in E^*$ is~$Z(\mu) \coloneqq \{ \mu x \mid x \in r(\mu) \partial E \}$. Given a finite subset $F \subseteq r(\mu)E^1$, we  define the ``punctured'' cylinder set~$Z(\mu \setminus F) \coloneqq Z(\mu) \setminus \left( \bigcup_{e\in F} Z(\mu e) \right)$. Note that two finite paths are disjoint if and only if their cylinder sets are disjoint sets. A basis for the topology on the boundary path space $\partial E$ is given by $\left\{Z(\mu \setminus F)  \mid \mu \in E^*, F \subseteq_{\text{finite}} r(\mu)E^1 \right\}$, c.f.~\cite{Web}. Each basic set~$Z(\mu \setminus F)$ is compact open and these separate points, so $\partial E$ is a Boolean space. Moreover, each open set in $\partial E$ is a disjoint union of basic sets $Z(\mu \setminus F)$ (\cite[Lemma~2.1]{BCW}).  The boundary path space $\partial E$ is second countable exactly when $E$ is countable, and it is compact if and only if $E^0$ is finite. When it comes to (topologically) isolated points, these are classified as follows.

\begin{proposition}[{\cite[Proposition~3.1]{CW}}]\label{prop:DEperfect}
Let $E$ be a graph.
\begin{enumerate}
\item If $v \in E^0$ is a sink, then any finite path $\mu \in E^*$ with $r(\mu) = v$ is an isolated point in $\partial E$.
\item If $x = \mu \lambda^\infty \in E^\infty$ is eventually periodic, then $x$ is an isolated point if and only if the cycle $\lambda$ has no exit.
\item If $x = e_1 e_2 \ldots \in E^\infty$ is wandering, then $x$ is an isolated point if and only if for some $n \in \mathbb{N}$, $e_n e_{n+1} \ldots$ is a semi-tail.
\end{enumerate}
These are the only isolated points in $\partial E$.
\end{proposition}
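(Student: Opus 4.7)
\medskip

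My plan is to work directly from the fact that a basis for $\partial E$ is given by the compact open cylinders $Z(\mu \setminus F)$, so a point $x \in \partial E$ is isolated exactly when some such basic set equals $\{x\}$. One key auxiliary fact I will use throughout is that \emph{every} vertex $w \in E^0$ admits a boundary path starting at $w$: either $w$ is singular (so $w \in \partial E$), or $w$ is regular and we can keep extending, producing either an infinite path or a finite path terminating at a singular vertex. This gives me the freedom to build competing elements of a purported singleton neighbourhood.

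For (1), if $r(\mu) = v$ is a sink, then $v$ has no outgoing edges, so $Z(\mu) = \{\mu\}$. For (2), if $x = \mu \lambda^\infty$ and $\lambda$ has no exit, then every vertex on $\lambda$ emits exactly one edge (the next one in $\lambda$), so $Z(\mu) = \{x\}$; moreover each vertex on $\lambda$ is regular, so $\mu$ itself is not in $\partial E$ and no finite boundary path is in $Z(\mu)$. Conversely, if $\lambda$ has an exit $e$ at the $i$-th cycle position, then given any basic neighbourhood $Z(\nu \setminus F)$ of $x$ with $\nu \leq x$, I pick $k$ large enough that the prefix $\mu_k := \mu \lambda^k \lambda_{1} \cdots \lambda_{i-1}$ of $x$ is strictly longer than $\nu$, and form $y := \mu_k e z$ for any boundary path $z$ starting at $r(e)$. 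Since $\mu_k$ extends $\nu$ and shares the edge $e_{|\nu|+1}$ with $x$, we have $y \in Z(\nu \setminus F)$, and $y \neq x$ because the edges disagree at the $|\mu_k|+1$-st position.

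For (3), the forward direction is immediate: if $e_n e_{n+1} \cdots$ is a semi-tail, then $Z(e_1 \cdots e_{n-1}) = \{x\}$ by the same unique-extension argument as in (2), taking into account that semi-tail vertices are regular. The reverse direction is the main technical point. Assume $\{x\} = Z(\nu \setminus F)$ for some $\nu \leq x$ and finite $F \subseteq r(\nu)E^1$. For any $j \geq |\nu| + 2$, suppose for contradiction that $s(e_j)$ emits some edge $f \neq e_j$; then $e_1 \cdots e_{j-1} f$ extended by any boundary path out of $r(f)$ yields a point $y \in \partial E$ that agrees with $x$ on the first $j-1 \geq |\nu|+1$ edges, hence lies in $Z(\nu \setminus F)$ (the edge at position $|\nu|+1$ is unchanged, so it avoids $F$), and yet $y \neq x$ since they disagree at position $j$. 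This contradiction shows $s(e_j)E^1 = \{e_j\}$ for all $j \geq |\nu|+2$, i.e.\ $e_{|\nu|+2} e_{|\nu|+3} \cdots$ is a semi-tail.

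The remaining task is ``these are the only isolated points.'' Any $x \in \partial E$ falls into one of four classes: a finite path in $\partial E$ (so $r(x)$ is singular); an eventually periodic infinite path; a wandering infinite path; or an infinite path that is neither eventually periodic nor wandering (some vertex appears infinitely often in the source sequence, yet the path is not eventually periodic). If $x = \mu$ is finite and $r(\mu)$ is an infinite emitter, then for any finite $F \subseteq r(\mu)E^1$ one can pick $e \in r(\mu)E^1 \setminus F$ and extend $\mu e$ to a boundary path, exhibiting another point of $Z(\mu \setminus F)$; so only sinks give isolated finite paths, handled by (1). For the fourth class, if $x$ were isolated the argument of (3) would still force $s(e_j)E^1 = \{e_j\}$ for all sufficiently large $j$; but if some vertex $v$ reappears at two such positions $j < k$ then $e_j = e_k$, which makes the path from $s(e_j)$ onward a cycle repeating indefinitely, so $x$ is eventually periodic, a contradiction. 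Thus only the three listed classes produce isolated points. The chief obstacle throughout is keeping track of how the finite set $F$ (which only restricts the edge immediately after $\nu$) interacts with deviations at later positions; once this is disentangled, the extension lemma for boundary paths does the rest.
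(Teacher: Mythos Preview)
The paper does not prove this proposition; it is stated with a citation to \cite[Proposition~3.1]{CW} and no argument is given. Your proof is correct and self-contained: you identify isolated points via the basic sets $Z(\nu \setminus F)$, use the extension lemma (every vertex admits a boundary path) to manufacture competing points, and handle the completeness claim by an exhaustive case split on $\partial E$. Two small remarks. First, in the infinite-emitter case you only treat basic neighbourhoods with $\nu = \mu$; this is fine because $\{Z(\mu \setminus F) : F \subseteq_{\text{finite}} r(\mu)E^1\}$ is a neighbourhood basis at $\mu$, but it is worth saying so. Second, in the fourth class you assert that $e_j = e_k$ ``makes the path from $s(e_j)$ onward a cycle repeating indefinitely''; the missing line is the induction $e_{j+l} = e_{k+l}$ for all $l \geq 0$ (each source emits a unique edge), giving $x_{[j,\infty)} = (e_j \cdots e_{k-1})^\infty$. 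With these two clarifications the argument is complete.
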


For each $n \in \mathbb{N}$ we set $\partial E^{\geq n} \coloneqq \{ x \in \partial E  \mid \left| x \right| \geq n \}$ and $\partial E^{n} \coloneqq \{ x \in \partial E  \mid \left| x \right| = n \}$. Each of the sets $\partial E^{\geq n}$ is an open subset of $\partial E$. The \emph{shift map} on $E$ is the map $\sigma_E \colon \partial E^{\geq 1} \to \partial E$ given by $\sigma_E(e_1 e_2 e_3 \ldots) = e_2 e_3 e_4 \ldots$ for $e_1 e_2 e_3 \ldots \in \partial E^{\geq 2}$ and $\osh(e) = r(e)$ for $e \in \partial E^{1}$. In other words, $\sigma_E(x) = x_{[2, \infty)}$. We have that
\[ \sigma_E \left( \partial E^{\geq 1} \right) = \{ x \in \partial E \mid E^1 s(x) \neq \emptyset \} = \partial E \setminus \left( \cup_{E^1 v \neq \emptyset} Z(v) \right), \]
which is an open set, and we see that $\osh$ is surjective if and only if $E$ has no sources. We let $\osh^n \colon \partial E^{\geq n}\to\partial E$ be the $n$-fold composition of $\osh[E]$ with itself, and we set $\osh^0 = \id_{\OSS}$. Each $\osh^n$ is then a local homeomorphism between open subsets of $\partial E$. Note that an infinite path $x \in E^\infty$ is eventually periodic if and only if there are distinct numbers $m,n \in \mathbb{N}_0$ such that $\osh^m(x) = \osh^n(x)$.

\subsection{Graph groupoids and their properties}
The \emph{graph groupoid} of a graph $E$ is the (generalized) \mbox{Renault-Deaconu} groupoid~(\cite{Dea}, \cite{Ren3}) of the dynamical system $(\partial E, \osh)$, that is  
\[\mathcal{G}_E \coloneqq \{(x,m-n,y) \mid m,n \in \mathbb{N}_0, x \in \partial E^{\geq m}, y \in \partial E^{\geq n}, \osh^m(x) = \osh^n(y)    \} \]
as a set. The groupoid structure is given by $(x,k,y) \cdot (y,l,z) \coloneqq (x,k+l,z)$ (and undefined otherwise), $(x,k,y)^{-1} \coloneqq (y,-k,x)$. The unit space is $\mathcal{G}^{(0)}_E = \{ (x,0,x) \mid x \in \partial E \}$, which we will identify with $\partial E$ via $(x,0,x) \leftrightarrow x$. Then $s(x,k,y) = y$ and $r(x,k,y) = x$. We equip~$\mathcal{G}_E$ with the topology generated by the basic sets 
\[Z(U,m,n,V) \coloneqq \{(x,m-n,y) \mid x\in U, y\in V, \osh[E]^m(x)=\osh[E]^n(y)\},\]
where $U \subseteq \partial E^{\geq m}$ and $V \subseteq \partial E^{\geq n}$ are open sets such that $\left(\sigma_E^m \right)_{| U}$ and $\left(\sigma_E^n \right)_{| V}$ are injective, and $\osh[E]^m(U)=\osh[E]^n(V)$. This makes $\mathcal{G}_E$ an étale groupoid, and the identification of the unit space with $\partial E$ is compatible with the topology on $\partial E$. Note however, that this topology on~$\mathcal{G}_E$ is finer than the relative topology induced from $\partial E \times \mathbb{Z} \times \partial E$. According to~\cite[page~394]{BCW} the family
\begin{equation}\label{eq:basis}
\left \{ Z(U, \left| \mu \right|,\left| \lambda \right|, V) \mid \osh^{\left| \mu \right|}(U) = \osh^{\left| \lambda \right|}(V) \right \},
\end{equation}
parametrized over all  $\mu, \lambda \in E^*$ with $r(\mu) = r(\lambda)$, $U \subseteq Z(\mu)$ compact open, $V \subseteq Z(\lambda)$ compact open, is also a basis for the same topology. Each set $Z(U, \left| \mu \right|,\left| \lambda \right|, V)$ is a compact open bisection, and they separate the elements of $\mathcal{G}_E$, so $\mathcal{G}_E$ is an ample Hausdorff groupoid. The family in \eqref{eq:basis} is countable precisely when $E$ is countable, and so the graph groupoid $\mathcal{G}_E$ is second countable exactly when $E$ is countable.

For a boundary path $x \in \partial E$, the isotropy group of $(x,0,x) \in \mathcal{G}_E^{(0)}$ is nontrivial if and only if $x$ is eventually periodic (and infinite). For graph groupoids, effectiveness coincides with topological principality (even without assuming second countability), which in turn is well-known to coincide with the graph satisfying Condition~(L).

\begin{proposition}[c.f.\ {\cite[Proposition~2.3]{BCW}}]
Let $E$ be a graph. The following are equivalent:
\begin{enumerate}
\item The groupoid $\mathcal{G}_E$ is effective.
\item The groupoid $\mathcal{G}_E$ is topologically principal.
\item The set of infinite paths which are not eventually periodic form a dense subset of the boundary path space $\partial E$.
\item The graph $E$ satisfies Condition (L).
\end{enumerate}
\end{proposition}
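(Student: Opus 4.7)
The plan is to prove the cyclic implication $(4) \Rightarrow (3) \Rightarrow (2) \Rightarrow (1) \Rightarrow (4)$. The key lever throughout is the characterization of isotropy recorded in the paragraph immediately preceding the proposition: for $x \in \partial E$, identifying $x$ with the unit $(x,0,x) \in \mathcal{G}_E^{(0)}$, the isotropy group at $x$ is nontrivial precisely when $x$ is an infinite eventually periodic path. Consequently, the set of boundary paths with trivial isotropy coincides with the set described in $(3)$ (interpreted as the boundary paths that are not infinite and eventually periodic), so $(3) \Leftrightarrow (2)$ is immediate from the definition of topological principality.

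For $(2) \Rightarrow (1)$, I would use that $\mathcal{G}_E$ is Hausdorff, so $\mathcal{G}_E^{(0)}$ is clopen in $\mathcal{G}_E$. Suppose for contradiction that some open set $V \subseteq \mathcal{G}_E'$ is not contained in $\mathcal{G}_E^{(0)}$. Then $V \setminus \mathcal{G}_E^{(0)}$ is a nonempty open subset of $\mathcal{G}_E'$, and since $\mathcal{G}_E$ is étale it contains a nonempty open bisection $B$. Every $g \in B$ satisfies $s(g) = r(g)$ with $g \neq s(g)$, so $s(g)$ has nontrivial isotropy witnessed by $g$. Hence $s(B)$ is a nonempty open subset of $\mathcal{G}_E^{(0)}$ consisting entirely of points with nontrivial isotropy, contradicting topological principality.

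For $(1) \Rightarrow (4)$, I would argue contrapositively. Suppose $\lambda$ is a cycle in $E$ with no exit; then every boundary path starting with $\lambda$ must continue as $\lambda^\infty$, so $Z(\lambda) = \{\lambda^\infty\}$. The basic open bisection $Z(Z(\lambda), |\lambda|, 0, Z(\lambda))$ (from the basis \eqref{eq:basis}) therefore reduces to the singleton $\{(\lambda^\infty, |\lambda|, \lambda^\infty)\}$, which is a nonempty open subset of $\mathcal{G}_E' \setminus \mathcal{G}_E^{(0)}$, so $\mathcal{G}_E$ is not effective.

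For $(4) \Rightarrow (3)$, the main combinatorial step, I would show that every nonempty basic open $Z(\mu \setminus F)$ contains a trivial-isotropy boundary path. If $Z(\mu \setminus F)$ contains a finite boundary path (necessarily ending at a singular vertex), that path already has trivial isotropy and we are done. Otherwise I would build an infinite $x \in Z(\mu \setminus F)$ that is not eventually periodic by inductively extending $\mu$: at each stage I would keep a list of the finitely many cycles accessible from the current endpoint that could trap $x$ into a periodic tail, and use Condition~(L) to choose the next edge so as to exit each listed cycle at some later stage. A diagonal argument across the stages then produces an $x$ that avoids becoming eventually periodic, while the first edge chosen after $\mu$ is taken outside $F$. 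I expect this last step to be the main obstacle, since one must coordinate countably many ``exit'' requirements with the initial avoidance of $F$ and with the possibility of encountering infinite emitters or forced extensions along the way; however, the combinatorial content of~(L)---that no cycle is ever inescapable---is exactly what powers the construction.
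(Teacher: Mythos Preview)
Your argument for $(1)\Rightarrow(4)$ is essentially identical to the paper's: both exhibit the singleton open bisection $\{(\lambda^\infty,|\lambda|,\lambda^\infty)\}$ when $\lambda$ is a cycle without exit. (The paper writes it as $Z(Z(\lambda^2),2|\lambda|,|\lambda|,Z(\lambda))$; your $Z(Z(\lambda),|\lambda|,0,Z(\lambda))$ is a slightly simpler presentation of the same set.) Your $(2)\Rightarrow(1)$ is the standard general fact the paper invokes via its earlier remark, and your $(2)\Leftrightarrow(3)$, under the reinterpretation you state, is tautological. The paper itself does not prove $(2)\Leftrightarrow(3)\Leftrightarrow(4)$ at all---it defers these to the cited reference and only supplies $(1)\Rightarrow(4)$---so on the parts the paper actually argues, you match it.

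The genuine gap is in your sketch of $(4)\Rightarrow(3)$. The phrase ``keep a list of the finitely many cycles accessible from the current endpoint'' is not well-founded: a vertex can connect to infinitely many cycles, so there is no such finite list, and ``the cycles that could trap $x$'' is not a well-defined finite collection at any stage. The standard construction is instead a diagonal argument over pairs of natural numbers: enumerate all pairs $(m,n)$ with $m<n$, and at stage $j$ extend the finite path so that the prefix already witnesses $\sigma^{m_j}(x)\neq\sigma^{n_j}(x)$. If the current prefix is, from position $m_j+1$ onward, periodic of period $n_j-m_j$, then that period determines a \emph{single} cycle $\tau$; Condition~(L) gives an exit for $\tau$, and you extend along $\tau$ until you reach the exit and then take it. If at any point you hit a singular vertex you may stop with a finite boundary path (which has trivial isotropy, consistent with your reinterpretation of~(3)). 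This replaces your ill-defined ``list of cycles'' with a single cycle per stage, determined by the pair $(m_j,n_j)$, and makes the construction go through.
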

\begin{proof}
The equivalence of (2), (3) and (4) is proved (for countable graphs) in~\cite[Proposition~2.3]{BCW}. The proof does not rely on the countability of the graph. As it is always the case that (2) implies (1) (c.f.\ Remark~\ref{rem:effective}), we only have to show that (1) implies (4). To that end, assume that $E$ does not satisfy Condition~(L). Then there is a cycle $\lambda \in E^*$ with no exit, and $\lambda^\infty$ is an isolated point in $\partial E$. But then the bisection \[Z\left(Z\left(\lambda^2\right), \vert \lambda \vert^2, \vert \lambda \vert, Z(\lambda)\right) = \{ \left(\lambda^\infty, \vert \lambda \vert, \lambda^\infty \right) \}\] is an open subset of $\mathcal{G}_E \setminus \mathcal{G}_E^{(0)}$, and hence $\mathcal{G}_E$ is not effective.
\end{proof}

We end this subsection by giving a characterization of minimality for graph groupoids. Let $E$ be a graph. Two infinite paths $x, y \in E^\infty$ are called \emph{tail equivalent} if there are natural numbers $k,l$ such that $x_{[k, \infty)} = y_{[l, \infty)}$. Similarly, two finite paths $\mu, \lambda \in E^*$ are \emph{tail equivalent} if $r(\mu) = r(\lambda)$. From the definition of $\mathcal{G}_E$ one sees that two boundary paths belong to the same $\mathcal{G}_E$-orbit if and only if they are tail equivalent. By combining~\cite[Theorem~5.1]{BCFS} with~\cite[Corollary~2.15]{DT} we arrive at the following result---of which we provide a self-contained proof.

\begin{proposition}
Let $E$ be a graph. Then the following are equivalent:
\begin{enumerate}
\item The groupoid $\mathcal{G}_E$ is minimal.
\item  The graph $E$ is cofinal, and for each $v \in E^0$ and $\smash{w \in E^0_\text{sing}}$, we have $v \geq w$.
\end{enumerate}
\end{proposition}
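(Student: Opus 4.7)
The plan rests on the identification of $\mathcal{G}_E$-orbits with tail-equivalence classes of boundary paths: two infinite paths are in the same orbit iff they eventually agree, two finite paths iff they share a range vertex, and no finite path is ever in the orbit of an infinite one. For $(1) \Rightarrow (2)$, I would first note that for any vertex $v$ the cylinder $Z(v)$ is nonempty (extend $v$ along edges, terminating at a singular vertex or continuing forever to an infinite path), hence is a nonempty open subset of $\partial E$. Cofinality then falls out by applying density of $\orb_{\mathcal{G}_E}(x)$, for $x = e_1 e_2 \cdots \in E^\infty$, to $Z(v)$: the intersection yields an infinite $y$ with $s(y) = v$ and $y_{[k, \infty)} = x_{[l, \infty)}$ for some $k, l$, whence the first $k-1$ edges of $y$ realize $v \geq s(e_l)$. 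The second condition in (2) follows similarly: for $v \in E^0_{\text{sing}}$, the trivial path $v \in \partial E$ has orbit consisting of finite paths ending at $v$, and density against $Z(w)$ produces a finite path from $w$ to $v$.

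The interesting direction is $(2) \Rightarrow (1)$. Given $x \in \partial E$ and a nonempty basic open set $Z(\mu \setminus F)$, I aim to exhibit a tail-equivalent $y \in Z(\mu \setminus F)$. The main case has $r(\mu)$ not a sink: then $F$ cannot exhaust $r(\mu)E^1$ (either $r(\mu)$ is regular and nonemptiness of $Z(\mu \setminus F)$ forces $F \subsetneq r(\mu)E^1$, or $r(\mu)$ is an infinite emitter and $F$ is finite), so I can pick $f \in r(\mu)E^1 \setminus F$ and set $v \coloneqq r(f)$. If $x$ is infinite, cofinality yields $v \geq s(e_n)$ for some $n$; choosing a connecting path $\alpha$, the path $y \coloneqq \mu f \alpha\, x_{[n, \infty)}$ lies in $Z(\mu f) \subseteq Z(\mu \setminus F)$ and is tail equivalent to $x$. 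If $x$ is finite, the singular condition applied to $r(x) \in E^0_{\text{sing}}$ gives $v \geq r(x)$; choosing a connecting path $\beta$, the finite boundary path $y \coloneqq \mu f \beta$ ends at $r(x)$ and again sits in $Z(\mu f)$.

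The residual case, where $r(\mu)$ is a sink, requires a preliminary observation that I expect to be the main obstacle. Applying condition (2) with $w = r(\mu)$ and $v$ ranging over the other singular vertices forces $r(\mu) \geq v$, which is impossible for a sink unless $v = r(\mu)$; hence $r(\mu)$ is the \emph{unique} singular vertex of $E$. Cofinality applied to any hypothetical infinite path together with $r(\mu)$ then precludes infinite paths entirely, since a sink $v$ satisfies $v \geq s(e_n)$ only if $v = s(e_n)$. Consequently $\partial E$ consists only of finite paths ending at this unique sink, $F = \emptyset$, $Z(\mu \setminus F) = \{\mu\}$, and $r(x) = r(\mu)$, so $x$ and $\mu$ are automatically tail equivalent. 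The most delicate structural point throughout will be keeping track of the finite-versus-infinite dichotomy among boundary paths when constructing $y$, since the orbits of infinite and finite paths are genuinely separate strata, and the singular-vertex condition in (2) is precisely what transports finite-path orbits across cylinder neighborhoods while cofinality handles the infinite stratum.
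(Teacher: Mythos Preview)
Your proof is correct and follows essentially the same approach as the paper's: both directions use density of orbits against cylinder neighborhoods, with cofinality handling infinite paths and the singular-vertex condition handling finite ones. The only organizational difference is that the paper dispatches the sink case globally at the outset (deducing from either hypothesis that a sink forces a unique singular vertex and no infinite paths, hence a transitive groupoid), whereas you fold this analysis into the $(2)\Rightarrow(1)$ direction and work with general basic sets $Z(\mu\setminus F)$ rather than reducing to cylinders $Z(\lambda)$; both treatments are equivalent.
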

\begin{proof}
If $E$ has a sink $\smash{w \in E^0_{\text{sing}}}$, then one immediately deduces from both statements that~$E$ cannot have any other singular vertices, nor any infinite paths. Consequently \[\partial E = \orb_{\mathcal{G}_E}(w) = \{ \mu \in E^* \mid r(\mu) = w \},\]
and this entails that $\mathcal{G}_E$ is a discrete transitive groupoid. Now, \emph{(1)} and \emph{(2)} are clearly equivalent in this case.

For the remainder of the proof we assume that $E$ has no sinks. Assume that \emph{(2)} holds. Let~$x \in E^\infty$ and let $\lambda \in E^*$. By cofinality, there is a path $\lambda'$ from $r(\lambda)$ to $s(x_n)$ for some~$n \in \mathbb{N}$. The infinite path $\lambda \lambda' x_n x_{n+1} \ldots$ then belongs to both $Z(\lambda)$ and $\smash{\orb_{\mathcal{G}_E}(x)}$. Hence the latter is dense in $\partial E$ (since every open set contains a cylinder set when there are no sinks). Next, suppose $\mu \in \partial E \cap E^*$ with $r(\mu)$ an infinite emitter. By assumption there is a path $\lambda''$ from $r(\lambda)$ to $r(\mu)$, and then $\lambda \lambda'' \in Z(\lambda) \cap \orb_{\mathcal{G}_E}(\mu)$. This shows that $\mathcal{G}_E$ is minimal.

Assume now that $\mathcal{G}_E$ is minimal. To see that $E$ is cofinal, let $x \in E^\infty$ and $v \in E^0$ be given. By minimality there is a $y \in E^\infty$ tail equivalent to $x$ such that $y \in Z(v)$. This implies that~$v$ can reach $x$. As for the second part of \emph{(2)}, let $v \in E^0$ and $w \in \smash{E^0_{\text{sing}}}$ be given. Again by minimality there is a $\lambda \in E^* \cap Z(v)$ tail equivalent to $w$, but this is just a path from $v$ to $w$, so $v \geq w$.
\end{proof}

\begin{remark}
The notion of cofinality is slightly weaker than strong connectedness. But for finite graphs with no sinks and no sources, cofinality coincides with strong connectedness. In fact, this is also true for infinite graphs which additionaly have no \emph{semi-heads} (the direction-reversed notion of a semi-tail). We also remark that for cofinal graphs, Condition~(L) is equivalent to Condition~(K).
\end{remark}

\section{Topological full groups of graph groupoids}\label{sec:tfgg}
 
We are now going to describe the elements in the topological full group of a graph groupoid. Some examples will be given at the end of the section. We begin by specifying yet another (equivalent) basis for $\mathcal{G}_E$, which in turn will allow us to describe  bisections combinatorially in terms of the graph.
 
 For two finite paths $\mu, \lambda \in E^*$ with $r(\mu) = r(\lambda) = v$ we define
\[Z(\mu, \lambda) \coloneqq Z\left(Z(\mu),\left| \mu \right|,\left| \lambda \right|,Z(\lambda) \right).\]
More generally, given a finite subset $F \subseteq v E^1$ as well, we define
\[Z(\mu, F, \lambda) \coloneqq Z\left(Z(\mu \setminus F),\left| \mu \right|,\left| \lambda \right|,Z(\lambda \setminus F) \right).\]
Each $Z(\mu, F, \lambda)$ is a compact open bisection in $\mathcal{G}_E$, and we will see shortly that they also form a basis. Observe that if $\smash{v \in E^0_{\text{reg}}}$, then $Z(\mu,F,\lambda)=\bigsqcup_{e \in v E^1\setminus F}Z(\mu e,\lambda e)$, and that this is a finite union.

 \begin{lemma}\label{lem_disj}
 Let $E$ be a graph. Let $\mu, \mu',\lambda,\lambda' \in E^*$ with $r(\mu)=r(\lambda)=v$, $r(\mu')=r(\lambda')=v'$ and let $F \subseteq_{\text{finite}} v E^1$, $F' \subseteq_{\text{finite}} v'E^1$. Then $Z(\mu,F,\lambda) \bigcap Z(\mu',F',\lambda')$ equals either
 \begin{enumerate}
 \item $\emptyset$, or
 \item $Z(\mu,F,\lambda)$, or
 \item $Z(\mu',F',\lambda')$, or
 \item $Z(\mu,F\cup F',\lambda)$, in which case $\mu=\mu'$, $\lambda=\lambda'$ and \[Z(\mu,F,\lambda) \cup Z(\mu',F',\lambda') = Z(\mu,F \cap F',\lambda).\]
 \end{enumerate}  
 \end{lemma}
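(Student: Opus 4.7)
The plan is to take an arbitrary element $(y,k,z)$ in the intersection and extract a rigid combinatorial relation between the two parameter triples. Write $y = \mu x = \mu' x'$ and $z = \lambda x = \lambda' x'$ for boundary paths $x \in v\partial E$ and $x' \in v'\partial E$, and observe that the cocycle value satisfies $k = |\mu|-|\lambda| = |\mu'|-|\lambda'|$. Without loss of generality assume $|\mu|\le |\mu'|$. Since $\mu x$ and $\mu' x'$ agree and have a common initial segment of length $|\mu|$, the path $\mu$ must be a prefix of $\mu'$; write $\mu' = \mu\alpha$ with $|\alpha| = |\mu'|-|\mu|$. The length equality forces $|\lambda'|-|\lambda| = |\alpha|$, so analogously $\lambda' = \lambda\beta$ for some path $\beta$ of length $|\alpha|$. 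Comparing $y = \mu x = \mu\alpha x'$ gives $x = \alpha x'$, while $z = \lambda x = \lambda\beta x'$ gives $x = \beta x'$; since $\alpha$ and $\beta$ are initial segments of $x$ of the same length, $\alpha = \beta$.

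If $|\alpha|=0$, then $\mu=\mu'$, $\lambda=\lambda'$, and $v=v'$. A triple $(\mu x, k, \lambda x)$ belongs to $Z(\mu,F,\lambda)$ precisely when $x$ is trivial or its first edge avoids $F$; hence it lies in both $Z(\mu,F,\lambda)$ and $Z(\mu,F',\lambda)$ exactly when the first edge of $x$ (if any) avoids $F\cup F'$, so the intersection is $Z(\mu, F\cup F', \lambda)$, which is case (4). The same first-edge bookkeeping shows that the union consists of those $(\mu x,k,\lambda x)$ for which $x$ is trivial or whose first edge lies outside $F\cap F'$, yielding $Z(\mu, F\cap F', \lambda)$ as claimed.

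If $|\alpha|\ge 1$, let $e\in vE^1$ denote the first edge of $\alpha$. Because $(y,k,z)\in Z(\mu,F,\lambda)$ and $x$ begins with $e$, we must have $e\notin F$. But then any $(\mu'y',k,\lambda'y')\in Z(\mu',F',\lambda')$ automatically lies in $Z(\mu,F,\lambda)$, since $\mu'y' = \mu(\alpha y')$ and $\alpha y'$ begins with $e\notin F$; consequently $Z(\mu',F',\lambda')\subseteq Z(\mu,F,\lambda)$ and the intersection equals $Z(\mu',F',\lambda')$, which is case (3). The symmetric scenario $|\mu|>|\mu'|$ produces case (2), and (1) is the default whenever no such overlapping element exists. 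The only delicate point in the argument is the identification $\alpha=\beta$ in the first paragraph, which couples the extension on the source side to the one on the range side; once this is in place, every remaining step amounts to tracking the first-edge restrictions encoded by $F$ and $F'$.
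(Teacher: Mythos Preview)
Your proof is correct and follows essentially the same route as the paper's: reduce to the cases $\mu=\mu'$ versus $\mu<\mu'$ via the length equality $|\mu|-|\lambda|=|\mu'|-|\lambda'|$, then check the first-edge constraints coming from $F,F'$. In fact you are slightly more explicit than the paper at one point: you verify that the extensions on the range and source sides coincide ($\alpha=\beta$), which is needed to get $\sigma^{|\mu|}(y')=\sigma^{|\lambda|}(z')$ for arbitrary $(y',k,z')\in Z(\mu',F',\lambda')$, whereas the paper simply asserts ``It follows from this that $Z(\mu',F',\lambda')\subseteq Z(\mu,F,\lambda)$'' after establishing the containments $Z(\mu'\setminus F')\subseteq Z(\mu\setminus F)$ and $Z(\lambda'\setminus F')\subseteq Z(\lambda\setminus F)$.
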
 
 \begin{proof}
Suppose $Z(\mu,F,\lambda)\cap Z(\mu',F',\lambda') \neq \emptyset$. Then we must have $\left| \mu \right| - \left| \lambda \right| = \left| \mu' \right| - \left| \lambda' \right|$, $Z(\mu \setminus F)\cap Z(\mu' \setminus F') \neq \emptyset$ and $Z(\lambda \setminus F)\cap Z(\lambda' \setminus F') \neq \emptyset$. Since
 \[ Z(\mu \setminus F) \bigcap Z(\mu' \setminus F')= \left\lbrace \begin{array}{ll}
Z(\mu\setminus (F\cup F')) & \text{if }\mu=\mu', \\
Z(\mu \setminus F) & \text{if }\mu' < \mu \text{ and } \mu_{ \left| \mu' \right| +1} \notin F', \\
Z(\mu' \setminus F') & \text{if }\mu <\mu' \text{ and } \mu'_{ \left| \mu \right| +1} \notin F, \\
\emptyset & \text{otherwise},\end{array} \right.  \]
we may suppose without loss of generality that $\mu \leq \mu'$. The equality $\left| \mu \right| - \left| \lambda \right| = \left| \mu' \right| - \left| \lambda' \right|$ then forces $\lambda \leq \lambda'$ as well. If $\mu = \mu'$, then we must also have $\lambda = \lambda'$ and it is easy to see that \emph{(4)} holds in this case.

Next, suppose $\mu < \mu'$, which forces $\lambda < \lambda'$. As the intersections above are non-empty we have $Z(\mu' \setminus F') \subseteq Z(\mu \setminus F)$ and $Z(\lambda' \setminus F') \subseteq Z(\lambda \setminus F)$. It follows from this that~$Z(\mu',F',\lambda') \subseteq Z(\mu,F,\lambda)$, and we are done.
 \end{proof}
 
 \begin{lemma}
The family \[\left\{ Z(\mu, F, \lambda) \mid \mu, \lambda \in E^*, r(\mu) = r(\lambda), F \subseteq_{\text{finite}} r(\mu)E^1 \right\}\] forms a basis for the topology on $\mathcal{G}_E$.
\end{lemma}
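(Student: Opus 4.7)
The plan is to verify both containments between the proposed basis $\{Z(\mu,F,\lambda)\}$ and the basis~(\ref{eq:basis}) already established for $\mathcal{G}_E$. First I would note that each $Z(\mu,F,\lambda)$ is itself an instance of~(\ref{eq:basis}): $Z(\mu \setminus F) \subseteq Z(\mu)$ and $Z(\lambda \setminus F) \subseteq Z(\lambda)$ are compact open, and the restrictions of $\sigma_E^{|\mu|}$ and $\sigma_E^{|\lambda|}$ send them homeomorphically onto the common open set $Z(r(\mu) \setminus F) = Z(r(\lambda) \setminus F)$, so the required shift-compatibility condition is satisfied.

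For the converse I would take an arbitrary basic set $Z(U,|\mu|,|\lambda|,V)$ from~(\ref{eq:basis}) and any point $g = (x,|\mu|-|\lambda|,y)$ in it, and produce $Z(\mu',F,\lambda')$ containing $g$ and sitting inside $Z(U,|\mu|,|\lambda|,V)$. Writing $z := \sigma_E^{|\mu|}(x) = \sigma_E^{|\lambda|}(y)$, so that $x = \mu z$ and $y = \lambda z$, I can pick basic clopen neighbourhoods $Z(\alpha \setminus G) \subseteq U$ of $x$ and $Z(\beta \setminus H) \subseteq V$ of $y$ in $\partial E$; by extending $\alpha$ and $\beta$ along $x$ and $y$ if necessary, I may assume $|\alpha| \geq |\mu|$ and $|\beta| \geq |\lambda|$, so that $\alpha = \mu\nu$ and $\beta = \lambda\rho$ for some $\nu, \rho \in r(\mu)E^{*}$. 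Applying $\sigma_E^{|\mu|}$ and $\sigma_E^{|\lambda|}$ respectively then yields $z \in Z(\nu \setminus G) \cap Z(\rho \setminus H)$ inside $\partial E$.

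By the $\partial E$-analog of Lemma~\ref{lem_disj} (or by direct case analysis on whether $\nu < \rho$, $\nu = \rho$, or $\rho < \nu$), this intersection is again a single basic set $Z(\tilde\nu \setminus \tilde F)$, where $\tilde\nu$ is the longer of $\nu$ and $\rho$ and $\tilde F$ is chosen accordingly. Setting $\mu' = \mu\tilde\nu$, $\lambda' = \lambda\tilde\nu$ and $F = \tilde F$, one then checks routinely that $g \in Z(\mu',F,\lambda')$ and that $Z(\mu' \setminus F) \subseteq Z(\mu\nu \setminus G) \subseteq U$ and $Z(\lambda' \setminus F) \subseteq Z(\lambda\rho \setminus H) \subseteq V$, which combined with the shift-compatibility built into $Z(\mu',F,\lambda')$ gives $Z(\mu',F,\lambda') \subseteq Z(U,|\mu|,|\lambda|,V)$. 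The only subtle step is the common-refinement argument for the two basic sets in $\partial E$; everything else amounts to bookkeeping with cylinder sets and the fact that $\sigma_E^{|\mu|}$ restricts to a homeomorphism on $Z(\mu)$.
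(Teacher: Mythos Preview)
Your argument is correct. Both directions hold, and the only slightly delicate point---the shrinking/intersection step in $\partial E$---is handled properly: since $z$ lies in both $Z(\nu \setminus G)$ and $Z(\rho \setminus H)$, their intersection is nonempty and is again a single basic set $Z(\tilde\nu \setminus \tilde F)$, as you note. One small thing worth making explicit is that the containment $Z(\mu',F,\lambda') \subseteq Z(U,|\mu|,|\lambda|,V)$ uses not only $Z(\mu' \setminus F) \subseteq U$ and $Z(\lambda' \setminus F) \subseteq V$, but also the injectivity of $\sigma_E^{|\tilde\nu|}$ on $Z(\tilde\nu)$ to pass from $\sigma_E^{|\mu'|}(x') = \sigma_E^{|\lambda'|}(y')$ back to $\sigma_E^{|\mu|}(x') = \sigma_E^{|\lambda|}(y')$; you allude to this under ``shift-compatibility'' but it deserves a sentence.

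Your approach differs from the paper's. The paper argues globally: since $\sigma_E^{|\mu|}(U) = \sigma_E^{|\lambda|}(V)$ is a compact open subset of $\partial E$, it decomposes as a finite disjoint union $\bigsqcup_{i=1}^k Z(\eta_i \setminus F_i)$, and pulling back along the shift gives directly
\[
Z(U,|\mu|,|\lambda|,V) = \bigsqcup_{i=1}^k Z(\mu\eta_i, F_i, \lambda\eta_i).
\]
This avoids the pointwise refinement and the intersection lemma entirely, and has the bonus of exhibiting each old basic set as a \emph{finite disjoint union} of new ones (which is used later in Lemma~\ref{lem_bis}). Your pointwise method is perfectly valid for establishing a basis and is perhaps more elementary in spirit, but it does not immediately yield that disjoint-union decomposition.
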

\begin{proof}
It suffices to write each basic set $Z(U, \left| \mu \right|,\left| \lambda \right|, V)$, where $\mu, \lambda \in E^*$ with $r(\mu) = r(\lambda)$, $U \subseteq Z(\mu)$ compact open, $V \subseteq Z(\lambda)$ compact open and $\osh^{\left| \mu \right|}(U) = \osh^{\left| \lambda \right|}(V)$, as a union of $Z(\mu', F', \lambda')$'s. Given such a basic set $Z(U, \left| \mu \right|,\left| \lambda \right|, V)$, we can then write 
\[ \osh^{\left| \mu \right|}(U) = \osh^{\left| \lambda \right|}(V) = \bigsqcup_{i=1}^k Z(\eta_i \setminus F_i),  \]
for some $\eta_i \in E^*$, $F_i \subseteq_{\text{finite}} r(\eta_i)E^1$, since the former two are compact open subsets of $\partial E$. It follows that 
\[U = \bigsqcup_{i=1}^k Z(\mu \eta_i \setminus F_i) \quad \text{and} \quad V = \bigsqcup_{i=1}^k Z( \lambda \eta_i \setminus F_i). \]
Hence 
\[Z(U, \left| \mu \right|,\left| \lambda \right|, V) = \bigsqcup_{i=1}^k Z(\mu \eta_i, F_i, \lambda \eta_i). \]
\end{proof}
 
Using the basis above, we may concretely describe the bisections in $\mathcal{G}_E$ as follows.
 
 \begin{lemma}\label{lem_bis}
 Let $E$ be graph, and let $U \subseteq \GG_E$ be a compact open bisection with $s(U) = r(U)$. Then $U$ is of the form 
\[U = \bigsqcup_{i=1}^k Z(\mu_i, F_i, \lambda_i), \]
where $\mu_i, \lambda_i \in E^*$ with $r(\mu_i) = r(\lambda_i)$, $F_i \subseteq_{\text{finite}} r(\mu_i) E^1$ and 
\[s(U) =\bigsqcup_{i=1}^k Z(\lambda_i \setminus F_i) = \bigsqcup_{i=1}^k Z(\mu_i \setminus F_i). \]
 \end{lemma}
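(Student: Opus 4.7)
The plan is to exploit the basis of compact open bisections $\{Z(\mu, F, \lambda)\}$ introduced just before the lemma, combined with the intersection calculus provided by Lemma~\ref{lem_disj}, and then read off the source and range decompositions at the end.

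First I would cover $U$ by basis elements. Since $U$ is compact and open, and the $Z(\alpha, G, \beta)$'s form a basis for $\mathcal{G}_E$, compactness yields finitely many $Z(\alpha_j, G_j, \beta_j)$ whose union equals $U$. Because $U$ is itself a bisection, each such basis element meets $U$ in a set on which $s$ and $r$ remain injective, but this is not yet quite the disjoint decomposition we want.

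Next I would disjointify the cover using Lemma~\ref{lem_disj}. That lemma asserts that the intersection of two basis bisections is very constrained: it is empty, it equals one of the two (proper containment), or the two bisections share the \emph{same} underlying pair of paths and the intersection is a single basis set with combined forbidden edge set (and similarly their union in that case). Iteratively I would (i) discard any basis element contained in another element of the cover, and (ii) merge any two elements with identical underlying paths $(\alpha_j,\beta_j)=(\alpha_{j'},\beta_{j'})$ into a single $Z(\alpha_j, G_j \cap G_{j'},\beta_j)$. The only remaining obstruction is a pair $Z(\alpha, G, \beta)$ and $Z(\alpha\nu, G', \beta\nu)$ with proper containment; in that situation one extends $\alpha, \beta$ by one edge at a time along $\nu$ and splits $Z(\alpha, G, \beta)$ as a disjoint union of $Z(\alpha e, \beta e)$ for $e\in r(\alpha)E^1\setminus G$, peeling off the forbidden direction and recursing. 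After finitely many such refinements the collection becomes pairwise disjoint, so we may write
\[ U \;=\; \bigsqcup_{i=1}^{k} Z(\mu_i, F_i, \lambda_i). \]

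Finally I would translate this to source and range. Each $Z(\mu_i, F_i, \lambda_i)$ is a bisection with $s(Z(\mu_i,F_i,\lambda_i)) = Z(\lambda_i\setminus F_i)$ and $r(Z(\mu_i,F_i,\lambda_i)) = Z(\mu_i\setminus F_i)$. Since $s$ (resp.\ $r$) is injective on the bisection $U$, disjointness of the $Z(\mu_i,F_i,\lambda_i)$'s forces disjointness of the $Z(\lambda_i\setminus F_i)$'s (and of the $Z(\mu_i\setminus F_i)$'s), giving
\[ s(U)=\bigsqcup_{i=1}^{k} Z(\lambda_i\setminus F_i), \qquad r(U)=\bigsqcup_{i=1}^{k} Z(\mu_i\setminus F_i). \]
The hypothesis $s(U)=r(U)$ then yields the double equality in the statement.

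The main obstacle I expect is the disjointification step, specifically the proper-containment sub-case, where one must verify that the set-theoretic difference of two basis bisections is again a finite disjoint union of basis bisections. Everything else — existence of the finite cover, injectivity of $s$ and $r$ on bisections, identification of source and range of a single $Z(\mu_i,F_i,\lambda_i)$ — is immediate from the setup in Section~\ref{sec:gg}.
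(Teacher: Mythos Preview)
Your proposal is correct and matches the paper's approach: cover by basis bisections (compactness plus the basis lemma), disjointify via Lemma~\ref{lem_disj}, then use injectivity of $s$ and $r$ on the bisection $U$ to read off the source/range decompositions. The paper simply compresses your first two paragraphs into the phrase ``by the preceding two lemmas''.

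The ``remaining obstruction'' you flag is not actually present. Lemma~\ref{lem_disj} exhausts the possibilities for a non-empty intersection of two basis sets: either one is contained in the other (handled by your step~(i)) or they share the same path pair (handled by your step~(ii)). Each application of (i) or (ii) strictly decreases the number of sets in the cover, so simply iterating them terminates in a pairwise disjoint collection---no set-difference computation is ever required. Your edge-by-edge splitting is therefore unnecessary, and as written it would fail when $r(\alpha)$ is an infinite emitter, since $r(\alpha)E^1\setminus G$ is then infinite; the finite split one would actually want there is $Z(\alpha, G, \beta) = Z(\alpha, G\cup\{e\}, \beta) \sqcup Z(\alpha e, \beta e)$ for the single relevant edge $e$.
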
 
\begin{proof}
Since $U$ is a compact open subset of $\mathcal{G}_E$ we may, by the preceding two lemmas, write~$U$ as a finite disjoint union of basic sets $Z(\mu, F, \lambda)$'s, say $U = \bigsqcup_{i=1}^k Z(\mu_i, F_i, \lambda_i)$. As~$r$ and~$s$ are injective on $U$ they preserve disjoint unions, so we have
\begin{align*}
&s(U) = s\left(\bigsqcup_{i=1}^k Z(\mu_i, F_i, \lambda_i) \right) = \bigsqcup_{i=1}^k s\left(Z(\mu_i, F_i, \lambda_i)\right) = \bigsqcup_{i=1}^k Z(\lambda_i \setminus F_i) \\
= \ &r(U) = r\left(\bigsqcup_{i=1}^k Z(\mu_i, F_i, \lambda_i) \right) = \bigsqcup_{i=1}^k r\left(Z(\mu_i, F_i, \lambda_i)\right) = \bigsqcup_{i=1}^k Z(\mu_i \setminus F_i).
\end{align*}
 \end{proof}  
 
In conjunction with Lemma~\ref{lem:extendBisection} we get that the elements in $\llbracket \mathcal{G}_E \rrbracket$ for an effective graph groupoid (i.e.\ the graph $E$ satisfying Condition (L)) may be described as follows, in terms of~$E$.

\begin{proposition}\label{prop:bis}
Let $E$ be a graph satisfying Condition (L). If $\pi_U \in \llbracket \mathcal{G}_E \rrbracket$, then the full bisection $U$ can be written as
\[U = \left( \bigsqcup_{i=1}^k Z(\mu_i, F_i, \lambda_i) \right) \bigsqcup \left( \partial E \setminus \supp(\pi_U) \right),  \]
where $\mu_i, \lambda_i \in E^*$ with $r(\mu_i) = r(\lambda_i)$, $F_i \subsetneq_{\text{finite}} r(\mu_i) E^1$ and 
\[ \supp(\pi_U) = \bigsqcup_{i=1}^k Z(\lambda_i \setminus F_i) = \bigsqcup_{i=1}^k Z(\mu_i \setminus F_i). \]
Moreover, $\mu_1, \ldots \mu_k$ are pairwise disjoint, $\lambda_1, \ldots \lambda_k$ are pairwise disjoint, and $\mu_i \neq \lambda_i$ for each $i$.
The homeomorphism $\pi_U \colon \partial E \to \partial E$ is given by $x = \lambda_i z \longmapsto \mu_i z$ for $x \in Z(\lambda_i \setminus F_i)$ and $x \longmapsto x$ otherwise.
\end{proposition}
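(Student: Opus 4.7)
The plan is to combine the general decomposition of bisections from Section~\ref{sec:tfg} with the combinatorial basis for $\mathcal{G}_E$ developed in Section~\ref{sec:tfgg}. Since $E$ satisfies Condition~(L), the groupoid $\mathcal{G}_E$ is effective (as well as ample and Hausdorff). Given $\pi_U \in \llbracket \mathcal{G}_E \rrbracket$, I first invoke Lemma~\ref{lem:extendBisection} to write
\[ U = U^\perp \sqcup \bigl(\partial E \setminus \supp(\pi_U)\bigr), \]
where $U^\perp$ is a compact bisection with $s(U^\perp) = r(U^\perp) = \supp(\pi_U)$. The second summand records the identity action off the support and will constitute the complementary piece in the proposition.

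Next, I apply Lemma~\ref{lem_bis} to $U^\perp$ to obtain an initial decomposition $U^\perp = \bigsqcup_{i=1}^k Z(\mu_i, F_i, \lambda_i)$ with $\bigsqcup_i Z(\lambda_i \setminus F_i) = \bigsqcup_i Z(\mu_i \setminus F_i) = \supp(\pi_U)$; after discarding any empty pieces we have $F_i \subsetneq r(\mu_i)E^1$. To verify $\mu_i \neq \lambda_i$, I argue that $U^\perp$ meets no units. Indeed, if a non-unit $g \in U^\perp$ satisfied $s(g) = r(g)$, then $U^\perp \setminus \mathcal{G}_E^{(0)}$ would be a non-empty open subset of the isotropy bundle strictly outside the unit space (using that $\mathcal{G}_E^{(0)}$ is closed by Hausdorffness), contradicting effectiveness. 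Since $Z(\mu_i, F_i, \mu_i) \subseteq \mathcal{G}_E^{(0)}$, the diagonal case $\mu_i = \lambda_i$ is ruled out.

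The remaining task is to refine the decomposition so that the $\mu_i$'s form an antichain in the prefix order, which via the tail-preserving bijection $\lambda_i w \mapsto \mu_i w$ automatically yields an antichain of $\lambda_i$'s as well. The essential tool is the splitting identity $Z(\mu, F, \lambda) = Z(\mu e, \lambda e) \sqcup Z(\mu, F \cup \{e\}, \lambda)$, valid for any $e \in r(\mu)E^1 \setminus F$, which extends the base paths by a common edge while leaving a residual piece at the same base. Whenever a prefix comparability $\mu_i < \mu_j$ appears in the current decomposition, the edge $\mu_j^{|\mu_i|+1}$ is (by disjointness of the basic sets, see Lemma~\ref{lem_disj}) already in $F_i$, and iteratively splitting $Z(\mu_i, F_i, \lambda_i)$ along appropriate edges pushes the base path deeper until no prefix conflicts remain. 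Finiteness of the initial decomposition and boundedness of the $|\mu_j|$'s guarantee termination. Once the antichain property holds, the formula for $\pi_U$ is forced by the very definition of $Z(\mu_i, F_i, \lambda_i)$: its elements are the triples $(\mu_i w, |\mu_i| - |\lambda_i|, \lambda_i w)$, so $\pi_U$ sends $\lambda_i w \mapsto \mu_i w$ on $Z(\lambda_i \setminus F_i)$ and fixes everything outside $\supp(\pi_U)$.

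The delicate part of the argument is the antichain refinement in the last step, most notably when $r(\mu_i)$ is an infinite emitter, because $r(\mu_i)E^1 \setminus F_i$ is then infinite and an exhaustive split would produce infinitely many pieces. The saving observation is that one only needs to distinguish $\mu_i$ from the finitely many paths $\mu_j$ extending it in the current decomposition, so only finitely many edges need to be moved from the residual piece into separate refined pieces. Organising this bookkeeping---with repeated appeals to Lemma~\ref{lem_disj} to recognise and eliminate overlaps---is the main combinatorial content of the proof, everything else being a direct reading off of the already-established structural lemmas.
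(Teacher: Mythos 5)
Your main decomposition follows the paper's route exactly: the paper obtains Proposition~\ref{prop:bis} precisely by combining Lemma~\ref{lem:extendBisection} with Lemma~\ref{lem_bis}, which are your first two steps. One small repair there: your justification of $\mu_i\neq\lambda_i$ is a non sequitur --- the sentence beginning ``Indeed'' rules out non-unit isotropy in $U^\perp$, which is not what is needed. What is needed is that $U^\perp$ contains no \emph{units}, and this follows because $\supp(\pi_U)=s(U\setminus\mathcal{G}_E^{(0)})$ (Lemma~\ref{lem_clopen}) together with injectivity of $s$ on $U$ gives $U^\perp=U\setminus\mathcal{G}_E^{(0)}$; since $Z(\mu,F,\mu)\subseteq\mathcal{G}_E^{(0)}$, a non-empty piece cannot have $\mu_i=\lambda_i$.

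The genuine gap is in the antichain refinement, exactly at the point you flag as delicate. Your ``saving observation'' for infinite emitters does not work: the splitting identity $Z(\mu,F,\lambda)=Z(\mu e,\lambda e)\sqcup Z(\mu,F\cup\{e\},\lambda)$ always leaves a residual piece with the \emph{same} base path $\mu$, so extracting finitely many edges never destroys a prefix relation $\mu_i<\mu_j$; only an exhaustive split over all of $r(\mu_i)E^1\setminus F_i$ would, and that set is infinite. Worse, path-disjointness of the $\mu_i$ is not achievable at all in general. Let $E$ consist of one vertex $v$ with infinitely many loops $e_1,e_2,\dots$ (so $E$ satisfies Condition~(L)), and let $\pi_U$ swap $Z(v\setminus\{e_1\})$ and $Z(e_1\setminus\{e_1\})$ via $w\mapsto e_1w$ and $e_1w\mapsto w$, fixing $Z(e_1e_1)$. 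Then $U^\perp=Z(e_1,\{e_1\},v)\sqcup Z(v,\{e_1\},e_1)$ contains the elements $(v,-1,e_1)$ and $(e_1,1,v)$; since $v$ and $e_1$ are finite boundary paths, any basic set $Z(\mu,F,\lambda)$ containing the first is forced to have $\mu=v$, and any containing the second is forced to have $\mu=e_1$, and $v\leq e_1$. Hence \emph{no} decomposition of this $U$ has pairwise disjoint base paths in the paper's sense. So the ``moreover'' clause, read with the paper's definition of disjoint paths, holds only for row-finite graphs --- where your refinement does succeed, most cleanly by replacing each piece with $F_i\neq\emptyset$ by the finite union $\bigsqcup_{f\notin F_i}Z(\mu_if,\lambda_if)$, after which disjointness of the ranges $Z(\mu_i)$ forces disjointness of the paths. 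For general graphs you should either prove only the weaker (and already displayed) statement that the sets $Z(\mu_i\setminus F_i)$ are pairwise disjoint, or add a row-finiteness hypothesis; as written, the refinement step of your proof cannot be completed.
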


\begin{remark}
The elements in $\llbracket \mathcal{G}_E \rrbracket$ may alternatively be described in more dynamical terms via the orbits by the shift map. From~\cite[Proposition~3.3]{BCW} one deduces that a homeomorphism $\alpha \in \homeo(\partial E)$ belongs to $\llbracket \mathcal{G}_E \rrbracket$ if and only if there are compactly supported continuous functions $m,n \colon \partial E \to \mathbb{N}_0$ such that $\osh^{m(x)}(\alpha(x)) = \osh^{n(x)}(x)$. This parallels Matui's definition for locally compact Cantor minimal systems mentioned in Remark~\ref{rem:matui}, and Matsumoto's definition for one-sided shifts of finite type in~\cite{Mats}. 
\end{remark}

Having completely described the topological full group of a graph groupoid, we provide an example to show that the assumption on the orbits in Lemma~\ref{ex_cover} is not a necessary condition. On the other hand, we also give an example to show that the statement is generally false without said assumption. These examples also provide examples of densely minimal groupoids which are not minimal.

\begin{example}\label{ex:1orbitCover}
Consider the following graph:
\[ \begin{tikzpicture}[vertex/.style={circle, draw = black, fill = black, inner sep=0pt,minimum size=5pt}]

\node at (-2,0) {$E$};
\node[vertex] (a) at (0,0) [label=above:$v$] {};
\node[vertex] (b) at (2,0) [label=right:$w$] {};

\path (a) 	edge[thick, loop, min distance = 10mm, looseness = 10, out = 135, in = 225,decoration={markings, mark=at position 0.99 with {\arrow{triangle 45}}}, postaction={decorate}] node[left] {$e$} (a)
		edge[thick, decoration={markings, mark=at position 0.99 with {\arrow{triangle 45}}}, postaction={decorate} ] node[above] {$f$} (b)
	(b) 	edge[thick,loop, min distance = 10mm, looseness = 20, out = 225, in = 315, decoration={markings, mark=at position 0.99 with {\arrow{triangle 45}}}, postaction={decorate}] node[below] {$g_2$} (b)
	(b) 	edge[thick,loop, min distance = 10mm, looseness = 20, out = 45, in = 135, decoration={markings, mark=at position 0.99 with {\arrow{triangle 45}}}, postaction={decorate}] node[above] {$g_1$} (b);
\end{tikzpicture} \]
The graph $E$ satisfies condition~(L), but is not cofinal, so $\mathcal{G}_E$ is effective, but not minimal. We claim that $\mathcal{G}_E$ is densely minimal. To see this, note that any non-empty open subset of $E^\infty$ must contain a cylinder set $Z(\mu)$ where $r(\mu) = w$. And the restriction of $\mathcal{G}_E$ to~$Z(\mu)$ is minimal. As for covering, observe that the orbit of $e^\infty \in \partial E$ has length $1$, i.e.~$\smash{\orb_{\mathcal{G}_E}(e^\infty)} = \{e^\infty\}$. However, the topological full group $\llbracket \mathcal{G}_E \rrbracket$ still covers $\mathcal{G}_E$. For instance, the isotropy element $(e^\infty,1,e^\infty)$ belongs to the  full bisection
\[ U = Z(e^2,e) \bigsqcup Z(ef,g_1g_2) \bigsqcup Z(g_1,g_1g_1) \bigsqcup Z(f,f) \bigsqcup Z(g_2,g_2). \]
Similar full bisections can be found for $(e^\infty,k,e^\infty)$ where $k$ is any integer.
\end{example}

\begin{example}\label{ex:2orbitCover}
Consider the following graph:
\[ \begin{tikzpicture}[vertex/.style={circle, draw = black, fill = black, inner sep=0pt,minimum size=5pt}]

\node at (-2,0) {$F$};
\node[vertex] (a) at (0,0) [label=above:$v$] {};
\node[vertex] (c) at (2,0) [label=below:$w$] {};
\node[vertex] (b) at (4,0) [label=right:$u$] {};

\path (a)	edge[thick, loop, min distance = 10mm, looseness = 10, out = 135, in = 225,decoration={markings, mark=at position 0.99 with {\arrow{triangle 45}}}, postaction={decorate}] node[left] {$e$} (a)
		edge[thick, decoration={markings, mark=at position 0.99 with {\arrow{triangle 45}}}, postaction={decorate} ] node[above] {$f$} (c)
	(b) 	edge[thick,loop, min distance = 10mm, looseness = 20, out = 225, in = 315, decoration={markings, mark=at position 0.99 with {\arrow{triangle 45}}}, postaction={decorate}] node[below] {$g_2$} (b)
	(b) 	edge[thick,loop, min distance = 10mm, looseness = 20, out = 45, in = 135, decoration={markings, mark=at position 0.99 with {\arrow{triangle 45}}}, postaction={decorate}] node[above] {$g_1$} (b)
	(c)	edge[thick, decoration={markings, mark=at position 0.99 with {\arrow{triangle 45}}}, postaction={decorate} ] node[above] {$i$} (b)
	edge[thick,loop, min distance = 10mm, looseness = 20, out = 45, in = 135, decoration={markings, mark=at position 0.99 with {\arrow{triangle 45}}}, postaction={decorate}] node[above] {$h$} (c);
\end{tikzpicture} \]
As in the previous example, $e^\infty \in \partial F$ has a singleton orbit. However, in contrast to the previous example, $\llbracket \mathcal{G}_F \rrbracket$ does \emph{not} cover $\mathcal{G}_F$. For there is no full bisection containing the element $(e^\infty,1,e^\infty)$. If $U$ is a bisection containing $(e^\infty,1,e^\infty)$, then $U$ must contain a bisection of the form $Z(e^{k+1}, e^k)$. Now since $Z(e^k) = Z(e^{k+1}) \sqcup Z(e^k f)$, it will be impossible to enlarge $U$ to a full bisection. By adding disjoint $Z(\mu, \lambda)$'s to write $U$ as in Proposition~\ref{prop:bis} one will always have one more $\mu$ ending in $w$ than $\lambda$'s. See also~\cite[Example~3.5]{BrixS} for the same phenomenon in a restricted transformation groupoid.
\end{example}

\section{Isomorphism theorems for graph groupoids}\label{sec:isogg}

In this section we will pursue specialized isomorphism theorems for graph groupoids. We will determine exactly when the topological full group of a graph groupoid belongs to $K^F$, and the conditions for this turn out to be weaker than minimality. We  will also determine, in terms of the graph, exactly when it belongs to $K^{LCC}$. From this we obtain two isomorphism theorems for graph groupoids.

\subsection{The class $K^F$}\label{subs:KF}
We are now going to give necessary and sufficient conditions for when~$(\Gamma, \partial E)$ belongs to $K^F$---for a graph $E$, and a subgroup $\Gamma \leq \llbracket \mathcal{G}_E \rrbracket$ containing $\DD(\llbracket \mathcal{G}_E \rrbracket)$. Of the three conditions (F1), (F2) and (F3) in Definition~\ref{def_F}, (F1) is the ``hardest'' one to satisfy. This is essentially because we need to produce elements in the topological full group with support containing a given point $x \in \partial E$, but also contained in a given neighbourhood of $x$. In the other two conditions we can get away with simply choosing a ``small enough'' support. As both conditions (F1) and (F3) fails in the presence of isolated points, we will only consider graphs that have no sinks, no semi-tails, and satisfy Condition~(L). We will see that Condition~(K) will be necessary for (F1) to hold for periodic\footnote{That is, $x = \lambda^\infty$ for some cycle $\lambda \in E^*$.} points. The two conditions in Definition~\ref{def:adhoc} below are needed to ensure that (F1) holds for wandering infinite paths, and for finite boundary paths, respectively. For notational convenience we make the following ad-hoc definitions.

\begin{definition}\label{def:adhoc}
Let $E$ be a graph. 
\begin{enumerate}
\item We say that $E$ satisfies \emph{Condition~(W)} if for every wandering infinite path $x \in E^\infty$, we have $\vert s(x) E^* r(x_n) \vert \geq 2$ for some $n \in \mathbb{N}$.
\item We say that $E$ satisfies \emph{Condition~($\infty$)} if for every infinite emitter $v \in E^0$, the set $\{ e \in v E^1 \mid r(e) \geq v \}$ is infinite.
\end{enumerate}
\end{definition}

The three conditions (K), (W) and ($\infty$) can be thought of as strengthenings of each of the three criteria for the boundary path space $\partial E$ being perfect (Proposition~\ref{prop:DEperfect}). The latter three criteria can informally be described as ``can exit'', whereas the former three can be described as ``can exit \emph{and} return''. More specifically, Condition~(L) means that one can exit every cycle, whereas Condition~(K) means that one can also return back to the same cycle. That~$E$ has no semi-tails means that every wandering infinite path has an exit, and Condition~(W) means that one can return to the same infinite path again. That~$E$ has no sinks can be reformulated as saying that every singular vertex has an exit (and hence infinitely many), whereas Condition~($\infty$) says that one can also return to the same vertex (from infinitely many of these exits). Note that Condition~($\infty$) holds in particular if every infinite emitter supports infinitely many loops. Also note that if $\vert s(x) E^* r(x_n) \vert \geq 2$ for some~$n \in \mathbb{N}$, then the same is true for each $m \geq n$. We now make two elementary observations needed in the proof of the next proposition.

\begin{lemma}\label{lem:cycles}
Let $E$ be a graph.
\begin{enumerate}
\item If $\mu \in E^*$ is a cycle and $E$ satisfies Condtion~(K), then there are infinitely many cycles $\lambda_1, \lambda_2, \ldots$ based at $s(\mu)$ such that $\mu, \lambda_1, \lambda_2, \ldots$ are mutually disjoint.
\item If $x = x_1 x_2 \ldots \in E^\infty$ is a wandering infinite path and $E$ satisfies Condition~(W), then for each $N \in \mathbb{N}$ there is an $n \in \mathbb{N}$ and paths $\mu_1, \ldots, \mu_N$ from $s(x)$ to $r(x_n)$ such that $x_{[1,n]}, \mu_1, \ldots, \mu_N$ are mutually disjoint.
\end{enumerate}
\end{lemma}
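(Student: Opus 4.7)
The plan is to turn the ``at least two paths'' provided by each hypothesis into arbitrarily many, by exploiting the basic fact that two distinct paths sharing source, range and length are automatically disjoint (since prefix comparability would otherwise force them to be equal).

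For (1), let $v = s(\mu)$ and let $\alpha$ be the prefix of $\mu$ that first returns to $v$, so that $\alpha$ is a return path at $v$ with $\alpha \leq \mu$. By Condition~(K) there is another return path $\beta$ at $v$ with $\beta \neq \alpha$. Since return paths do not revisit $v$ internally, neither of $\alpha, \beta$ can be a proper prefix of the other, so they are disjoint. I would then set $\lambda_n := \beta^n \alpha$ for $n \geq 1$; each $\lambda_n$ is a cycle at $v$. The claim that $\mu, \lambda_1, \lambda_2, \ldots$ are mutually disjoint reduces to a short case analysis on the prefix order: whenever two of them were comparable one could either strip off full copies of $\beta$ (when comparing $\lambda_n$ to $\lambda_m$) or match the leading blocks $\alpha$ and $\beta$ (when comparing $\mu$ to $\lambda_n$) to conclude that $\alpha$ and $\beta$ are comparable, or that one of them contains $v$ as an internal vertex, each of which contradicts the return-path hypothesis.

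For (2), the plan is to apply Condition~(W) iteratively along $x$, doubling the number of available paths at each stage. Fix $K$ with $2^K \geq N+1$ and set $n_0 := 0$. For $k = 1, \ldots, K$, the tail $x^{(k)} := x_{[n_{k-1}+1, \infty)}$ is itself a wandering infinite path, now emanating from $r(x_{n_{k-1}})$; Condition~(W) applied to $x^{(k)}$ delivers some $n_k > n_{k-1}$ together with a path $\nu_k$ from $r(x_{n_{k-1}})$ to $r(x_{n_k})$ distinct from $x_{[n_{k-1}+1,\, n_k]}$. For each $\epsilon \in \{0,1\}^K$, form the concatenation $\rho_\epsilon$ of $K$ blocks whose $k$-th block is $x_{[n_{k-1}+1,\, n_k]}$ if $\epsilon_k = 0$ and $\nu_k$ if $\epsilon_k = 1$. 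Then $\rho_\epsilon$ is a path from $s(x)$ to $r(x_{n_K})$ of length $n_K$, and distinct $\epsilon$ yield distinct paths (they disagree in the first block where the two choices differ, and those two choices are distinct equal-length paths). Taking $n := n_K$ and selecting any $N$ of the $2^K - 1$ strings $\epsilon \neq (0, \ldots, 0)$ gives $\mu_1, \ldots, \mu_N$; since $x_{[1, n]}, \mu_1, \ldots, \mu_N$ all share both endpoints and length, mutual disjointness is automatic from mere distinctness.

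The main obstacle is the iteration in (2): at each stage one must verify that Condition~(W) really applies to the tail $x^{(k)}$, which rests on the elementary but essential observation that any tail of a wandering infinite path is again wandering. Part (1) is routine combinatorics once two disjoint return paths are in hand via~(K).
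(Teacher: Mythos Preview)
Your argument for (1) is correct and close in spirit to the paper's: the paper takes two distinct return paths $\tau_1,\tau_2$ at $v=s(\mu)$, observes that $\mu$ must be disjoint from at least one of them (say $\tau_1$), and then uses the sequence $\mu,\tau_1\mu,\tau_1^2\mu,\ldots$. Your variant---taking the first-return prefix $\alpha\leq\mu$, a second return path $\beta$, and forming $\lambda_n=\beta^n\alpha$---works just as well.

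Part (2) has a real gap. Condition~(W) hands you a path $\nu_k$ from $r(x_{n_{k-1}})$ to $r(x_{n_k})$ that is merely \emph{distinct} from $x_{[n_{k-1}+1,\,n_k]}$; it says nothing about its length. Your claim that every $\rho_\epsilon$ has length $n_K$ is therefore unjustified, and without it the final step (``same endpoints and same length, so distinctness gives disjointness'') collapses. Concretely, nothing prevents $\nu_k$ from being a proper prefix of $x_{[n_{k-1}+1,\,n_k]}$: this happens exactly when $x$ passes through the vertex $r(x_{n_k})$ at some earlier index $m$ with $n_{k-1}<m<n_k$, which the wandering hypothesis does not exclude (it only rules out \emph{infinitely many} visits). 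In that situation two of your $\rho_\epsilon$ are comparable.

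What is missing is precisely the base case $N=1$: one must use wandering to upgrade ``distinct'' to ``disjoint''. The paper does this by passing to the \emph{last} index $m_1$ at which $x$ visits the relevant vertex and then running a short case analysis (on whether the second path is a sub- or super-path of $x_{[1,m_1]}$) to extract a path genuinely disjoint from $x_{[1,m_1]}$. Once each $\nu_k$ is chosen disjoint from its $x$-segment (not just distinct), your doubling construction goes through: at the first block where two $\rho_\epsilon$'s differ, disjointness of the two candidate blocks forces incomparability of the concatenations, regardless of length.
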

\begin{proof}
For the first part, let $\tau_1$ and $\tau_2$ be two distinct return paths based at $s(\mu)$. As distinct return paths are disjoint we must have that $\mu$ is disjoint from one of them, say $\tau_1$. And then the cycles $\mu, \tau_1 \mu, \tau_1^2 \mu, \tau_1^3 \mu, \ldots$ are all disjoint.

We argue inductively for the second part. Let $n_1 \in \mathbb{N}$ be such that $\vert s(x) E^* r(x_{n_1}) \vert \geq 2$, and put $v = r(x_{n_1})$. Since $x$ is wandering we can let $m_1 \geq n_1$ be the largest index such that~$r(x_{m_1}) = v$. So that $x$ never returns to $v$ after the $m_1$'th edge. Let $\mu$ be a path in~$s(x) E^* r(x_{m_1})$ distinct from $x_{[1,m_1]}$. If $x_{[1,m_1]}$ and $\mu$ are disjoint, then we are done with the base case. If not, then either $x_{[1,m_1]} < \mu$ or $x_{[1,m_1]} > \mu$. In the former case we have that~$\mu = x_{[1,m_1]} \rho$, where $\rho$ is a cycle based at $v$. As $x$ does not return to $v$ again we must have that $x_{[m_1 +1, m_1 + \vert \rho \vert]} \neq \rho$, and then $x_{[1, m_1 + \vert \rho \vert]}$ is disjoint from the path \[\mu_1 \coloneqq \mu x_{[m_1 +1, m_1 + \vert \rho \vert]} = x_{[1,m_1]} \rho x_{[m_1 +1, m_1 + \vert \rho \vert]}.\] If the latter is the case, then $\mu = x_{[1,k]}$ for some $k < m_1$ and $x_{[k+1, m_1]}$ is a cycle. And then the previous argument applied to $x_{[1,m_1]}$ and $\mu' = x_{[1,k]} x_{[k+1, m_1]} x_{[k+1, m_1]}$ shows that the statement holds for $N=1$.

Applying the above to the tail $x_{[m_1 + 1, \infty)}$, which is again a wandering infinite path, we get an index $m_2 > m_1$ and a path $\mu_2$ from $r(x_{m_1})$ to $r(x_{m_2})$ disjoint from $x_{[m_1 + 1, m_2]}$. By concatenating $x_{[1,m_1]}$ and $\mu_1$ with $x_{[m_1 +1,m_2]}$ and $\mu_2$ we obtain three paths from $s(x)$ to $r(x_{m_2})$ that are mutually disjoint, as well as disjoint from $x_{[1,m_2]}$. By continuing in this manner one sees that the result is true for all $N \in \mathbb{N}$.
\end{proof}

\begin{proposition}\label{prop:KFgraph}
Let $E$ be a graph with no sinks and let $\Gamma \leq \llbracket \mathcal{G}_E \rrbracket$ be a subgroup containing~$\DD(\llbracket \mathcal{G}_E \rrbracket)$. Then $(\Gamma, \partial E)$ belongs to $K^F$ if and only if $E$ satisfies Condition~(K), (W) and~($\infty$).
\end{proposition}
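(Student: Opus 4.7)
The plan is to argue the two directions separately. For necessity, I argue by contrapositive: if any of (K), (W), or ($\infty$) fails (with $E$ sinkless), I exhibit a boundary path $x \in \partial E$ and a basic clopen neighborhood $A \ni x$ for which $\orb_{\mathcal{G}_E}(x) \cap A = \{x\}$. Since every element of $\llbracket \mathcal{G}_E \rrbracket$ maps each point of its support to a distinct point of the same orbit (Lemma~\ref{lemma:orbit}), this precludes the existence of any involution $\alpha \in \Gamma \setminus \{1\}$ with $x \in \supp(\alpha) \subseteq A$, and so (F1) fails for $\Gamma$. Concretely: failure of (K) at a vertex $v$ with unique return path $\mu$ forces every cycle at $v$ to be a power of $\mu$, hence $\orb(\mu^\infty) \cap Z(\mu^k) = \{\mu^\infty\}$; failure of (W) via a wandering $x$ with $\vert s(x) E^* r(x_n) \vert = 1$ for all $n$ gives $\orb(x) \cap Z(s(x)) = \{x\}$; failure of ($\infty$) at an infinite emitter $v$ allows the choice of a finite $F \subseteq vE^1$ containing every edge $e$ with $r(e) \geq v$, whereupon $\orb(v) \cap Z(v \setminus F) = \{v\}$.

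For sufficiency, assume all three conditions. The crux is the orbit-density lemma that for every $x \in \partial E$ and every clopen neighborhood $A \ni x$, the set $\orb_{\mathcal{G}_E}(x) \cap A$ contains at least four distinct points. I would prove this case-by-case: in the periodic case $x = \eta\mu^\infty$, Lemma~\ref{lem:cycles}(1) supplies disjoint cycles $\lambda_1, \lambda_2, \lambda_3$ at $s(\mu)$, yielding the tail-equivalent paths $\eta\mu^k\lambda_i\mu^\infty \in Z(\eta\mu^k) \subseteq A$; in the wandering case, applying (W) via Lemma~\ref{lem:cycles}(2) to an appropriate tail of $x$ produces disjoint detours whose insertion into $x$ gives distinct orbit elements of any cylinder neighborhood; and in the finite case $x = \eta$ ending at a singular (hence infinite-emitting) vertex $v$, ($\infty$) supplies infinitely many $e \in vE^1$ outside any finite $F$ that return to $v$ via some path $\rho_e$, giving distinct orbit points $\eta e \rho_e \in Z(\eta \setminus F)$.

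Given four distinct orbit points $x, y, z, w \in A$, I would verify (F1) by realizing the permutation identity
\[ \bigl[(x\ y\ z),\ (x\ y\ w)\bigr] = (x\ y)(z\ w) \]
at the groupoid level. Take small pairwise disjoint clopen neighborhoods $C_x, C_y, C_z, C_w \subseteq A$ and bisections $V_{xy} \colon C_x \to C_y$, $V_{yz} \colon C_y \to C_z$, $V_{yw} \colon C_y \to C_w$ extending chosen arrows between the four points, with $V_{zx} := (V_{yz} V_{xy})^{-1}$ and $V_{wx} := (V_{yw} V_{xy})^{-1}$. The full-bisection extensions of $V_{xy} \sqcup V_{yz} \sqcup V_{zx}$ and $V_{xy} \sqcup V_{yw} \sqcup V_{wx}$ give three-cycle elements $\sigma, \tau \in \llbracket \mathcal{G}_E \rrbracket$, and a direct bisection-level computation confirms that $[\sigma, \tau] \in \DD(\llbracket \mathcal{G}_E \rrbracket) \subseteq \Gamma$ is supported on $C_x \cup C_y \cup C_z \cup C_w \subseteq A$, acts as $V_{xy}$ on $C_x$ (so $x \mapsto y$) and as $V_{yw} V_{yz}^{-1}$ on $C_z$, and squares to the identity on each piece---thus provides the involution required by (F1). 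Condition~(F3) follows analogously from $\bigl[(x\ y), (x\ z)\bigr] = (x\ y\ z)$, an element of $\DD$ of order $3$ supported in $A$. For (F2), given an involution $\alpha \in \Gamma \setminus \{1\}$ and a clopen $B \subseteq \supp(\alpha)$, I pick $z \in \orb(x) \cap B$ distinct from $x$ and $\alpha(x)$, take $V_{x\alpha(x)}$ as a restriction of $\alpha$'s bisection, and set $V_{\alpha(x)\alpha(z)} := (\alpha|_{C_z}) \circ V_{\alpha(x)z}$; this forces the resulting commutator to agree with $\alpha$ on all of $\supp([\sigma,\tau]) \subseteq B \cup \alpha(B)$.

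The main obstacle is (F1): since $K^F$ demands an \emph{involution} in $\Gamma$ and not merely a non-identity element, the naive commutator $[(x\ y),(x\ z)]$ on three orbit points (which only yields a 3-cycle) will not suffice, and one instead needs the four-orbit-point commutator-of-three-cycles construction together with the careful bisection-level verification that the commutator genuinely squares to the identity on each clopen piece of its support.
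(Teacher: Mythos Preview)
Your overall architecture is sound and in several places cleaner than the paper's, but there is one genuine gap in the necessity direction that you need to address.

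\textbf{The gap (Condition~(K) necessity).} Your argument rests on the claim that ``every element of $\llbracket \mathcal{G}_E \rrbracket$ maps each point of its support to a distinct point of the same orbit.'' This is \emph{false} as stated: for an eventually periodic point such as $x=\mu^\infty$, the isotropy group $(\mathcal{G}_E)_x^x$ is infinite cyclic, and a full bisection $U$ can pass through a non-unit isotropy element at $x$, giving $x\in\supp(\pi_U)=s(U\setminus\mathcal{G}_E^{(0)})$ while $\pi_U(x)=x$. Lemma~\ref{lemma:orbit} does not help here; it only identifies the $\llbracket\mathcal{G}_E\rrbracket$-orbit with the $\mathcal{G}_E$-orbit. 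Consequently, knowing $\orb_{\mathcal{G}_E}(\mu^\infty)\cap Z(\mu^k)=\{\mu^\infty\}$ only forces $\alpha(\mu^\infty)=\mu^\infty$, not $\mu^\infty\notin\supp(\alpha)$. The paper circumvents this by a direct bisection analysis: any $\pi_U$ with $\mu^\infty\in\supp(\pi_U)\subseteq Z(\mu)$ must contain some $Z(\mu^l,\mu^k)$ with $l\neq k$, and then one checks on a nearby point $\mu^{2k}fz$ (where $f$ is an exit) that $(\pi_U)^2$ is not the identity. Your route can be rescued, but only by adding the observation that for an \emph{involution} $\alpha=\pi_U$ one has $U=U^{-1}$ (via Lemma~\ref{lemma:homeoBis}), whence $x\in\supp(\alpha)$ with $\alpha(x)=x$ would force the element $g\in U$ over $x$ to satisfy $g=g^{-1}$, impossible for a non-unit in $(\mathcal{G}_E)_x^x\cong\mathbb{Z}$. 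You need to make this torsion-freeness argument explicit. (Your necessity arguments for (W) and ($\infty$) are fine as written, since those boundary paths have trivial isotropy.)

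\textbf{Comparison of the sufficiency constructions.} Your ``four orbit points plus commutator of three-cycles'' approach to (F1) is a genuine and correct alternative to the paper's. The paper instead finds four mutually disjoint paths $\mu_1,\ldots,\mu_4$ (with $x\in Z(\mu_4)$) directly from Lemma~\ref{lem:cycles} or Condition~($\infty$), sets $V=Z(\mu_1,\mu_2)\sqcup Z(\mu_3,\mu_4)$ and $W=Z(\mu_1,\mu_3)$, and takes $\alpha=[\pi_{\hat V},\pi_{\hat W}]$, which realizes the double transposition $(13)(24)$ as a commutator of two \emph{involutions}. Your version realizes $(xy)(zw)$ as a commutator of two \emph{three-cycles}; both land in $\DD(\llbracket\mathcal{G}_E\rrbracket)$ and both square to the identity, so either works. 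The paper's choice has the minor advantage that $\pi_{\hat V},\pi_{\hat W}$ come for free from Lemma~\ref{lemma:bisectionInvolution}, while yours requires assembling the three-cycle bisections by hand and then tracking the commutator through each $C_\bullet$---which you do correctly. Your (F3) via $[(xy),(xz)]=(xyz)$ matches the paper's $[\pi_{\hat V},\pi_{\hat W}]$ exactly. Your (F2) sketch is terse but the idea---choose $V_{x\alpha(x)}$ and $V_{\alpha(x)\alpha(z)}$ as restrictions built from $\alpha$'s own bisection so that the resulting double transposition literally equals $\alpha$ on its support---is correct and parallels the paper's construction, which achieves the same end with $V=Z(\lambda_j\rho\tau_1,\lambda_j\rho\tau_2)\sqcup Z(\mu_j\rho\tau_1,\mu_j\rho\tau_2)$ and $W=Z(\mu_j\rho\tau_1,\lambda_j\rho\tau_1)$.
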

\begin{proof}
This proof is inspired by Matui's proof of~\cite[Proposition~3.6]{Mat}. We employ similar tricks in this more concrete, yet non-minimal context. We will first show that (F2) and (F3) holds when $E$ satisfies Condition~(K) and~(W). And then we will show, in turn, that all three conditions are necessary and sufficient for (F1) to hold at certain boundary paths.

Suppose $E$ satisfies Condition~(K) and (W) (in addition to having no sinks). We verify~(F3) first. Let $A$ be any non-empty clopen subset of $\partial E$. There is then a path $\eta$ such that~${Z(\eta) \subseteq A}$. Now there are two possibilities. Either $r(\eta)$ connects to a cycle, or~$r(\eta) E^\infty$ consists only of wandering paths. In the first case we may assume, by extending~$\eta$, that~$r(\eta)$ supports a cycle. By Lemma~\ref{lem:cycles} we can find three disjoint cycles~$\lambda_1, \lambda_2, \lambda_3$ based at~$r(\eta)$. Define~${V = Z(\eta \lambda_1, \eta \lambda_2)}$, $W = Z(\eta \lambda_2, \eta \lambda_3)$ and $\alpha = [\pi_{\hat{V}}, \pi_{\hat{W}}]$ (as in Lemma~\ref{lemma:bisectionInvolution}). Then~${\alpha \in \Gamma \setminus \{1\}}$ has order $3$ and $\supp(\alpha) \subseteq Z(\eta) \subseteq A$. In the case that $r(\eta) E^\infty$ consists only of wandering paths we may find, again by Lemma~\ref{lem:cycles}, three disjoint paths $\lambda_1, \lambda_2, \lambda_3$ starting at $r(\eta)$, and such that $r(\lambda_1) = r(\lambda_2) = r(\lambda_3)$. Defining $\alpha$ as above shows that (F3) holds in this case as well.

Next we verify (F2). To that end, let $\alpha \in \Gamma \setminus \{1\}$ with $\alpha^2 = 1$ and $\emptyset \subsetneq A \subseteq \supp(\alpha)$ a clopen be given. We have $\alpha = \pi_U$ with  
\[U = \left( \bigsqcup_{i=1}^k Z(\mu_i, F_i, \lambda_i) \right) \bigsqcup \left( \mathcal{G}_E^{(0)} \setminus \supp(\pi_U) \right) \]
as in Proposition~\ref{prop:bis}. Arguing as above, we can find a finite path $\eta$ and an index $1 \leq j \leq k$ such that~${Z(\eta) \subseteq A \cap Z(\lambda_j \setminus F_j)}$, as well as two disjoint paths $\tau_1, \tau_2$ satisfying~${s(\tau_1) = s(\tau_2) = r(\eta)}$ and~${r(\tau_1) = r(\tau_2)}$. As $\lambda_j \leq \eta$ we can write $\eta = \lambda_j \rho$ for some path $\rho$ whose first edge does not belong to $F_{j}$. Define the bisections
\[V = Z(\lambda_{j} \rho \tau_1, \lambda_{j} \rho \tau_2) \bigsqcup Z(\mu_{j} \rho \tau_1, \mu_{j} \rho \tau_2) \] and
\[W = Z(\mu_{j} \rho \tau_1, \lambda_{j} \rho \tau_1). \]
Put $\beta = [\pi_{\hat{V}}, \pi_{\hat{W}}]$. As $\alpha$ is an involution we have that $\alpha(\lambda_{j} z) = \mu_{j} z$ for $\lambda_{j} z \in Z(\lambda_{j} \setminus F_{j})$ and vice versa. Now observe that $\beta \in \Gamma$,
\begin{align*}
\supp(\beta) &= Z(\lambda_j \rho \tau_1) \sqcup Z(\lambda_j \rho \tau_2) \sqcup Z(\mu_j \rho \tau_1) \sqcup Z(\mu_j \rho \tau_2) \\
& \subseteq Z(\eta) \cup \alpha(Z(\eta)) \subseteq A \cup \alpha(A),
\end{align*}
and that $\alpha$ and $\beta$ agree on $\supp(\beta)$ (as they both swap the inital paths $\lambda_j$ and $\mu_j$). 

Assume now that $E$ merely has no sinks, no semi-tails and satisfies Condition~(L). We will show that (F1) holds if and only if $E$ satisfies Condition~(K), (W) and ($\infty$). Let $x \in \partial E$ and $A$ a clopen neighbourhood of $x$ be given. We further divide this part into three cases, each one yielding the necessity of one of the three conditions.

\textbf{Condtion~(K)}: Assume $E$ satisfies Condtion~(K), and suppose $x = x_1 x_2 \ldots \in E^\infty$ is an infinite non-wandering path. For $m \in \mathbb{N}$ large enough, we have that $Z(x_{[1,m]}) \subseteq A$. As $x$ contains infinitely many cycles we can, by possibly choosing $m$ larger, assume that~$x_{[m+1,n]}$ is a return path at $r(x_m)$ for some $n > m$. Using Lemma~\ref{lem:cycles} we can find three mutually disjoint cycles $\lambda_1, \lambda_2, \lambda_3$ based at $r(x_m)$ which are also disjoint from $x_{[m+1,n]}$. Let $\mu_i = x_{[1,m]} \lambda_i$ for~$i = 1,2,3$ and let $\mu_4 = x_{[1,n]}$. Define 
\[V = Z(\mu_1, \mu_2) \bigsqcup Z(\mu_3, \mu_4) \] and
\[W = Z(\mu_1, \mu_3). \]
Then $\alpha = [\pi_{\hat{V}}, \pi_{\hat{W}}] \in \Gamma$ satisfies $\supp(\alpha) = \bigsqcup_{i=1}^4 Z(\mu_i) \subseteq Z(x_{[1,m]}) \subseteq A$, $\alpha^2 = 1$ and~$x \in Z(\mu_4) \subseteq \supp(\alpha)$ as desired.

To see that Condition~(K) is necessary, suppose that $E$ does not satisfy it. Then there is a vertex $v \in E^0$ supporting a unique return path, say $\tau$. We may assume that $\tau$ has an exit~$f$ with~$s(f) = v$. Consider $x = \tau^\infty$ and its neighbourhood $A = Z(\tau)$. We claim that (F1) fails for this pair. To see this, suppose $\pi_U \in \llbracket \mathcal{G}_E \rrbracket$ satisfies $\tau^\infty \in \supp(\pi_U) \subseteq Z(\tau)$. By Proposition~\ref{prop:bis} we can find $Z(\mu, \lambda) \subseteq U$ with $r(\mu) = r(\lambda)$, $\mu \neq \lambda$ and $\tau^\infty \in Z(\lambda)$, which means that $\lambda \leq \tau^k$ for some $k \geq 1$. By possibly extending $\mu$ and $\lambda$ we may assume that~$\lambda = \tau^k$. We also have $Z(\mu) \subseteq Z(\tau)$, i.e.\ $\tau \leq \mu$, and $r(\mu) = r(\lambda) = v$. But since $\tau$ is the only return path based at $v$ we must have $\mu = \tau^l$ for some $l \neq k$ as $\mu \neq \lambda$. Let $z \in r(f) \partial E$. Then $(\pi_U)^2\left(\tau^{2k}fz\right) = \tau^{2l}fz \neq \tau^{2k}fz$, hence $\pi_U$ is not an involution, and therefore $(\Gamma, \partial E)$ does not satisfy (F1).

\textbf{Condtion~(W)}: Assume $E$ satisfies Condtion~(W), and suppose $x = x_1 x_2 \ldots \in E^\infty$ is an infinite wandering path. Choose $m$ large enough so that $Z(x_{[1,m]}) \subseteq A$. By Lemma~\ref{lem:cycles} there is an $n \geq m$ and three paths $\lambda_1, \lambda_2, \lambda_3$ from $s(x)$ to $r(x_n)$ such that $\lambda_1, \lambda_2, \lambda_3, x_{[1,n]}$ are mutually disjoint. Setting $\mu_i = x_{[1,m]} \lambda_i$ for $i = 1,2,3$ and $\mu_4 = x_{[1,n]}$, and defining $\alpha$ in the same way as in the case of Condition~(K) above gives the desired element in $\Gamma$. 

To see that Condition~(W) is necessary, suppose that there is an infinite wandering path~$x = x_1 x_2 \ldots \in E^\infty$ such that $\vert s(x) E^* s(x_n) \vert = 1$ for all $n \in \mathbb{N}$. We claim that (F1) fails for $A = Z(x_1)$. Indeed, suppose $\pi_U \in \llbracket \mathcal{G}_E \rrbracket$ satisfies $x \in \supp(\pi_U) \subseteq Z(x_1)$. By Proposition~\ref{prop:bis} we can find $Z(\mu, \lambda) \subseteq U$ with $r(\mu) = r(\lambda)$, $\mu \neq \lambda$ and $x \in Z(\lambda)$, which implies that $\lambda = x_{[1,m]}$ for some $m \geq 1$. But as $Z(\mu) \subseteq Z(x_1)$ we have that $s(\mu) = s(x)$ and $r(\mu) = r(x_m)$. It now follows that $\mu = \lambda$ since $\vert s(x) E^* s(x_m) \vert = 1$. This contradiction shows that there is not even an element $\pi_U \in \llbracket \mathcal{G}_E \rrbracket$ such that $x \in \supp(\pi_U) \subseteq Z(x_1)$.

\textbf{Condtion~($\infty$)}: Assume $E$ satisfies Condtion~($\infty$), and suppose $x = x_1 \ldots x_m \in E^*$ is a finite boundary path. Then for some $F \subseteq_{\text{finite}} r(x)E^1$ we have $Z(x \setminus F) \subseteq A$. By Condition~($\infty$) we can find three distinct edges $e_1, e_2, e_3 \in r(x)E^1 \setminus F$, and three (necessarily disjoint) cycles $\tau_1, \tau_2,  \tau_3$ based at $r(x)$ such that $e_i \leq \tau_i$ for $i = 1,2,3$. Let $F' = F \sqcup \{e_1, e_2, e_3\}$. Now define 
\[V = Z(x \tau_1,F', x) \bigsqcup Z(x \tau_2,F', x \tau_3) \] and
\[W = Z(x \tau_1,F', x \tau_2). \]
Then $\alpha = [\pi_{\hat{V}}, \pi_{\hat{W}}] \in \Gamma$ satisfies $\supp(\alpha) = Z(x \setminus F') \bigsqcup_{i=1}^3 Z(x \tau_i \setminus F') \subseteq Z(x \setminus F) \subseteq A$, $\alpha^2 = 1$ and $x \in Z(x \setminus F') \subseteq \supp(\alpha)$.

Finally, if $E$ does not satisfy Condition~($\infty$), then there is an infinite emitter $v \in E^0$ such that the set $F = \{ e \in v E^1 \mid r(e) \geq v \}$ is finite. And then (F1) fails for $x = v$ and~$A = Z(v \setminus F)$ as there is no element $\pi_U \in \llbracket \mathcal{G}_E \rrbracket$ whose support is contained in $Z(v \setminus F)$ and contains $v$.
The argument for this is essentially the same as in the necessity of Condition~(W) above.
\end{proof}

\begin{remark}
From Proposition~\ref{prop:KFgraph} we see that for a graph groupoid $\mathcal{G}_E$, the topological full group $\llbracket \mathcal{G}_E \rrbracket$ (on the boundary path space $\partial E$) belongs to the class $K^F$ if and only if its commutator subgroup $\DD(\llbracket \mathcal{G}_E \rrbracket)$ does. This is not something one would expect in general from the definition of $K^F$. It is clear that (F1) and (F3) in Definition~\ref{def_F} passes to supergroups, but (F2) need not do so. It is even more peculiar that the properties (F1), (F2) and~(F3) pass down to the commutator from $\llbracket \mathcal{G}_E \rrbracket$. This phenomenon might be an artifact of the combinatorial nature of the topological full group of a graph groupoid, and so it might also hold for other concrete classes of groupoids.
\end{remark}

\subsection{The class $K^{LCC}$}
Our next objective is to perform a similar analysis of when the space-group pair $(\llbracket \mathcal{G}_E \rrbracket, \partial E)$ for a graph $E$ belongs to $K^{LCC}$. In this case the ``mixing conditions'' will be weaker than for $K^F$ (c.f.\ Proposition~\ref{prop:KFgraph}), but we are only able to prove membership for the topological full group itself---no proper subgroups. As in the case of $K^F$ we need to stipulate that the boundary path space $\partial E$ has no isolated points (c.f.\ condition~(K1) in Definition~\ref{def_KB}), but also that the graphs are countable (this also for condition (K1)). By the results in Section~\ref{sec:reggrpd} we only have to determine when $\mathcal{G}_E$ is non-wandering, and when all orbits have length at least $3$. We shall soon see that the former property is characterized by excluding certain ``tree-like'' components in the graph $E$, which we make precise in the following defintion.

\begin{definition}\label{condT}
We say that a graph $E$ satisfies \emph{Condition~(T)} if for every vertex $v \in E^0$, there exists a vertex $w \in E^0$ such that $\vert v E^* w \vert \geq 2$. 
\end{definition}

Note that Condition~(T) implies that there are no sinks and no semi-tails. It does not, however, imply Condition~(L) as one can traverse a cycle twice to get two different paths. As long as there are no sinks, Condition~(W) implies Condition~(T). Condition~(T) is a fairly weak condition; it is in fact satisfied by all graphs that have finitely many vertices and no sinks, and more generally by any graph in which every vertex connects to a cycle. The archetypical example of graphs not satisfying Condition~(T) are trees, or more generally graphs containing such components.

As for when $\mathcal{G}_E$ can have orbits of length $1$ or $2$ one finds, by merely exhausting all possibilites, that this happens exactly if one or more of the following kinds of vertices are present in the graph~$E$. 

\begin{definition}\label{def:degenerate}
Let $E$ be a graph. We say that a vertex $v \in E^0$ is \emph{degenerate} if it is one of the following types:
\begin{enumerate}
\item \textbf{``1-loop-source''}:  $E^1 v = \{e\}$ where $e$ is a loop.
\item \textbf{``1 source to 1-loop-source''}: $E^1 v = \{e,f \}$ where $e$ is a loop and $s(f)$ is a source.
\item \textbf{``2-loop-source''}: There is another vertex $w \in E^0$ distinct from $v$ such that $E^1 v = \{e\} = w E^1 v$ and $E^1 w = \{f\} = v E^1 w$.
\item \textbf{``Infinite source''}: $v E^1$ is infinite and $E^1 v$ is empty. 
\item \textbf{``1 source to singular''}: $v$ is singular and $E^1 v = \{f\}$ where $s(f)$ is a source.
\item \textbf{``Stranded''}: $vE^1$ and $E^1 v$ are both empty.
\end{enumerate}
\end{definition}

\begin{proposition}\label{prop:KLCCgraph}
Let $E$ be a graph.
\begin{enumerate}
\item $\mathcal{G}_E$ is non-wandering if and only if $E$ satisfies Condition~(L) and (T).
\item $\left| \orb_{\mathcal{G}_E}(x) \right| \geq 3$ for all $x \in \partial E$ if and only if $E$ has no degenerate vertices.
\end{enumerate}
\end{proposition}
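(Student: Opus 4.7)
The proof plan is to treat the two parts of the proposition separately, relying throughout on the facts that two boundary paths lie in the same $\mathcal{G}_E$-orbit if and only if they are tail-equivalent, and that the basic cylinders $Z(\mu \setminus F)$ form a basis for the topology of $\partial E$.

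For Part~(1), I will first prove necessity. If Condition~(L) fails, a cycle $\lambda$ without an exit yields the clopen singleton $Z(\lambda) = \{\lambda^\infty\}$, which cannot meet any orbit twice. If Condition~(T) fails at a vertex $v$, then for any two tail-equivalent boundary paths $x, y \in Z(v)$ the initial segments up to their meeting vertex $w$ are both paths in $vE^* w$, and $|vE^* w| \leq 1$ forces them to coincide; hence $x = y$, so $Z(v)$ meets every orbit at most once. For sufficiency, given a nonempty clopen $A$ I reduce to a basic set $Z(\mu \setminus F) \subseteq A$. Since Condition~(T) rules out sinks, I can choose $e \in r(\mu)E^1 \setminus F$ with $Z(\mu e) \subseteq A$. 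Applying Condition~(T) at $r(e)$ furnishes distinct $\alpha_1, \alpha_2 \in r(e)E^* w$, and I extend by a common tail $z$ starting at $w$ to produce tail-equivalent elements $\mu e \alpha_1 z$ and $\mu e \alpha_2 z$ in $A$. The delicate subcase is when one $\alpha_i$ is a proper prefix of the other, say $\alpha_2 = \alpha_1 \beta$ with $\beta$ a cycle at $w$: then $\alpha_1 z = \alpha_2 z$ precisely when $z = \beta^\infty$, and this is exactly where Condition~(L) is used---it provides an exit for $\beta$, allowing me to build a $z$ distinct from $\beta^\infty$.

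For Part~(2), the direction $(\Leftarrow)$ is a direct verification: for each of the six types in Definition~\ref{def:degenerate} I compute the orbit of the naturally associated point ($e^\infty$ for types~(1),(2), $(ef)^\infty$ for type~(3), and the vertex $v$ itself for types~(4),(5),(6)) and observe it has cardinality at most two. For the converse I argue contrapositively, splitting on $x \in \partial E$ with $|\orb_{\mathcal{G}_E}(x)| \leq 2$. If $x \in E^\infty$ is not eventually periodic, then the shifts $\sigma_E^n(x)$ are pairwise distinct and yield an infinite orbit. If $x$ is eventually periodic, its orbit equals that of $\lambda^\infty$ for a primitive cycle $\lambda$, and the $|\lambda|$ distinct shifts of $\lambda^\infty$ force $|\lambda| \in \{1,2\}$; then a case analysis of which incoming paths $\mu \lambda^\infty$ are tail-equivalent to $\lambda^\infty$ pins down $s(\lambda)$ (and the second vertex on $\lambda$ when $|\lambda|=2$) as a degenerate vertex of type~(1), (2), or (3). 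If $x \in E^*$ is finite, its orbit coincides with the orbit of the singular vertex $r(x)$, which consists of all finite paths ending at $r(x)$; size at most $2$ forces either $E^1 r(x) = \emptyset$ (yielding type~(4) when $r(x)$ is an infinite emitter and type~(6) when $r(x)$ is a sink) or $E^1 r(x) = \{f\}$ with $s(f)$ a source (type~(5)).

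The main obstacle is the prefix sub-case in Part~(1)$(\Leftarrow)$: it is the only step genuinely requiring Condition~(L), and it explains why (L) and (T) must appear together rather than (T) alone. Everything else is a careful but routine case analysis on the explicit basis of $\partial E$ and on the form of tail-equivalence classes.
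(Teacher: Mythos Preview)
Your proposal is correct and follows essentially the same strategy as the paper: tail-equivalence characterizes $\mathcal{G}_E$-orbits, and the cylinder basis reduces Part~(1) to producing two tail-equivalent boundary paths inside a single $Z(\mu)$. The necessity arguments for (L) and (T) in Part~(1) match the paper's exactly.

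For the sufficiency direction of Part~(1) your organization differs slightly from the paper's. The paper splits on whether $r(\mu)$ connects to a cycle: if so, it uses the cycle and its exit directly (via (L)); if not, all paths out of $r(\mu)$ are wandering, so two distinct paths furnished by (T) are automatically incomparable and no appeal to (L) is needed. You instead apply (T) uniformly at $r(e)$ and invoke (L) only in the prefix subcase $\alpha_2 = \alpha_1\beta$. Both arguments are valid; yours is a bit more streamlined, while the paper's case split makes it visually clearer where each condition enters.

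One minor correction in Part~(2): you have the directions swapped. Computing the small orbit for each of the six degenerate types shows that a degenerate vertex forces some orbit of size at most two---this is the contrapositive of $(\Rightarrow)$, not $(\Leftarrow)$. Your detailed case analysis on a point $x$ with $|\orb_{\mathcal{G}_E}(x)| \leq 2$ establishes the contrapositive of $(\Leftarrow)$. The mathematics is fine, only the labels need switching. Incidentally, the paper's own proof of Part~(2) is much terser than yours; it simply asserts which degenerate types correspond to which small orbits, so your expanded case analysis (especially the reduction to $|\lambda| \in \{1,2\}$ via primitivity) is a genuine elaboration rather than a deviation.
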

\begin{proof}
We prove part \emph{(1)} first. We may assume that $E$ has no sinks, as this is implied by both of the statements in \emph{(1)}. Suppose $E$ satisfies Condition~(L) and (T). Let $A$ be a non-empty clopen subset of $\partial E$. Then there is a path $\mu \in E^*$ such that $Z(\mu) \subseteq A$. Suppose first that~$r(\mu)$ connects to a cycle. Let $\lambda$ be such a cycle and let $\rho$ be a path from $r(\mu)$ to~$s(\lambda)$. We may assume that $\lambda$ has an exit $f$ with $s(f) = s(\lambda)$. Let $x \in r(f)E^\infty$. Then $\mu \rho f x$ and~$\mu \rho \lambda f x$ are two distinct tail-equivalent boundary paths in $A$. If, on the other hand, $r(\mu)$ does not connect to a cycle, then $r(\mu) E^\infty$ consists only of wandering paths that visit each vertex at most once. Let $w \in E^0$ be a vertex such that there are two distinct paths $\rho_1, \rho_2$ from $r(\mu)$ to $w$. Again letting $x \in wE^\infty$ be arbitrary we have that $\mu \rho_1 x$ and $\mu \rho_2 x$ are two distinct tail-equivalent boundary paths in $A$. Hence $A$ is not wandering.

To see that Condition~(L) and (T) are both necessary, note first that if $E$ does not satisfy Condition~(L), then $\partial E$ has an isolated point, and a clopen singleton is surely wandering. Assume instead that $E$ fails to satisfy Condition~(T), and let $v \in E^0$ be a vertex such that there is either no path or a unique path from $v$ to any other vertex in $E$. We claim that the cylinder set $Z(v)$ is wandering. We first consider a finite boundary path $\mu$ beginning in $v$ (if such a path exists). Then $r(\mu)$ is a singular vertex and \[ \orb_{\mathcal{G}_E}(\mu) \cap Z(v) = \{ \lambda \in E^* \mid s(\lambda) = v, \ r(\lambda) = r(\mu) \} = v E^* r(\mu) = \{\mu\}, \] as desired. Similarly, if $x \in v E^\infty$ and $y \in \orb_{\mathcal{G}_E}(x) \cap Z(v)$, then there are $k,l \in \mathbb{N}$ such that $x_{[k, \infty)} = y_{[l, \infty)}$. In particular $x_{[1, k-1]}$ and $y_{[1, l-1]}$ are finite paths from $v$ to $s(x_k) = s(y_l)$, hence these are equal and it follows then that $x = y$. Thus $\orb_{\mathcal{G}_E}(x) \cap Z(v) = \{x\}$. This proves the first part of the proposition.

For part \emph{(2)}, simply note that an orbit of length~$1$ can only occur if there are degenerate vertices of type (1), (4), or (6) as in Definition~\ref{def:degenerate} (the corresponding orbits of length~$1$ being $\{e^\infty\}$, $\{v\}$, $\{v\}$, respectively). And that an orbit of length $2$ can only occur if there are degenerate vertices of type (2), (3), or (5) (the corresponding orbits of length~$2$ being~$\{e^\infty, f e^\infty \}$, $\{ (ef)^\infty,  (fe)^\infty\}$, $\{v, f\}$, respectively).
\end{proof}

\begin{remark}\label{rem:notDM}
By an argument as in Example~\ref{ex:1orbitCover} one deduces that if a graph $E$ satisfies Condition~(I), then the graph groupoid $\mathcal{G}_E$ is densely minimal. However, statement \emph{(1)} in Proposition~\ref{prop:KLCCgraph} is strictly weaker than $\mathcal{G}_E$ being densely minimal. It is easy to cook up examples of infinite graphs satisfying Condition~(L) and (T), but whose graph groupoids are not densely minimal. One such example is:
\[ \begin{tikzpicture}[vertex/.style={circle, draw = black, fill = black, inner sep=0pt,minimum size=5pt}]

\node at (-4,0) {$E$};
\node[vertex] (a) at (0,0)  {};
\node[vertex] (c) at (2,0)  {};
\node[vertex] (b) at (4,0) {};
\node (e) at (6,0) {$\cdots$};
\node (f) at (-2,0) {$\cdots$};

\path (f)	edge[thick, decoration={markings, mark=at position 0.99 with {\arrow{triangle 45}}}, postaction={decorate} ]  (a)
	 (a) edge[thick,loop, min distance = 10mm, looseness = 20, out = 45, in = 135, decoration={markings, mark=at position 0.99 with {\arrow{triangle 45}}}, postaction={decorate}]  (a)
		edge[thick, decoration={markings, mark=at position 0.99 with {\arrow{triangle 45}}}, postaction={decorate} ]  (c)
	(b) 	edge[thick,loop, min distance = 10mm, looseness = 20, out = 45, in = 135, decoration={markings, mark=at position 0.99 with {\arrow{triangle 45}}}, postaction={decorate}]  (b) edge[thick, decoration={markings, mark=at position 0.99 with {\arrow{triangle 45}}}, postaction={decorate} ]  (e)
	(c)	edge[thick, decoration={markings, mark=at position 0.99 with {\arrow{triangle 45}}}, postaction={decorate} ]  (b)
	edge[thick,loop, min distance = 10mm, looseness = 20, out = 45, in = 135, decoration={markings, mark=at position 0.99 with {\arrow{triangle 45}}}, postaction={decorate}] (c);
\end{tikzpicture} \]
\end{remark}

\subsection{Isomorphism theorems}
Recall that all orbits having length at least $3$ is sufficient for the commutator subgroup of the topological full group to cover the groupoid (Lemma~\ref{ex_cover}). This in turn means that the groupoid can be recovered as the groupoid of germs of any subgroup between the topological full group and its commutator. Combined with Propositions~\ref{prop:KFgraph} and \ref{prop:KLCCgraph} we will obtain the two isomorphism results for graph groupoids. We begin by first observing that the conditions on the graph for membership in $K^F$ actually implies that all orbits are infinite.

\begin{lemma}\label{KForbits}
Let $E$ be a graph with no sinks and suppose $E$ satisfies Condition~(K) and~($\infty$). Then $\orb_{\mathcal{G}_E}(x)$ is infinite for each $x \in \partial E$. In particular, $E$ has no degenerate vertices. 
\end{lemma}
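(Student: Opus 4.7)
The plan is to prove that every orbit is infinite by splitting into three cases based on the nature of $x \in \partial E$, and in each case constructing infinitely many pairwise distinct boundary paths tail-equivalent to $x$. Recall that the $\mathcal{G}_E$-orbit of a boundary path coincides with its tail-equivalence class.

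Case (i), finite boundary paths. If $x \in E^*$, then $r(x) \in E^0_{\mathrm{sing}}$, and as $E$ has no sinks, $r(x)$ must be an infinite emitter. Condition~($\infty$) then supplies infinitely many edges $e \in r(x)E^1$ for which $r(e) \geq r(x)$; choosing, for each such $e$, a path $\rho_e$ from $r(e)$ back to $r(x)$, the finite boundary paths $xe\rho_e$ all end at $r(x)$, are pairwise distinct (they begin with distinct edges after the prefix $x$), and all lie in $\orb_{\mathcal{G}_E}(x)$.

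Case (ii), infinite non-eventually-periodic paths. If $x \in E^\infty$ is not eventually periodic then the shifts $\sigma_E^n(x)$ for $n \geq 0$ are pairwise distinct, since an equality $\sigma_E^m(x) = \sigma_E^n(x)$ with $m < n$ would force $x$ to be eventually periodic from position $m$ onward. All these shifts are tail-equivalent to $x$.

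Case (iii), eventually periodic paths. Write $x = \mu \lambda^\infty$ with $\lambda$ a cycle based at $v \coloneqq s(\lambda)$. The first-return subpath $\lambda_0$ of $\lambda$ is a return path at $v$, so Condition~(K) supplies a second return path $\tau \neq \lambda_0$ at $v$. I will then consider the family $y_n \coloneqq \mu \lambda_0^n \tau \lambda^\infty$ for $n \geq 0$; each has eventual tail $\lambda^\infty$ and so belongs to $\orb_{\mathcal{G}_E}(x)$. The main obstacle is showing $y_n \neq y_m$ for $n < m$: this equation reduces to the assertion that $\tau$ begins with $\lambda_0^{m-n}$, and the return-path property of $\tau$ (no interior vertex equals $v$) forces $m-n \leq 1$, while $m-n = 1$ would make $\lambda_0$ a proper prefix of the return path $\tau$, again contradicting the no-interior-return property unless $\tau = \lambda_0$, which was excluded. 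Hence the $y_n$ are pairwise distinct.

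Combining the three cases yields $|\orb_{\mathcal{G}_E}(x)| = \infty$ for every $x$. The ``in particular'' clause is then immediate from Proposition~\ref{prop:KLCCgraph}(2): infinite orbits certainly have length at least $3$, so $E$ contains no degenerate vertices.
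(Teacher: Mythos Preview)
Your proof is correct and follows essentially the same three-case split as the paper. Cases~(i) and~(ii) are identical to the paper's argument. In Case~(iii) the paper instead invokes Lemma~\ref{lem:cycles} to obtain infinitely many mutually disjoint cycles $\tau_1,\tau_2,\ldots$ based at $s(\lambda)$ and uses the family $\{\tau_i\lambda^\infty\}$; your direct construction $y_n=\mu\lambda_0^n\tau\lambda^\infty$ avoids that lemma but relies on the same underlying fact, namely that distinct return paths at a vertex are disjoint (neither is a prefix of the other).

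One small imprecision in your Case~(iii): the equation $\tau\lambda^\infty=\lambda_0^{m-n}\tau\lambda^\infty$ does not literally reduce to ``$\tau$ begins with $\lambda_0^{m-n}$'' when $|\tau|<|\lambda_0|$. In that case comparing the first $|\tau|$ edges shows instead that $\tau$ is a proper prefix of $\lambda_0$, so $v=r(\tau)$ would be an interior vertex of $\lambda_0$, contradicting that $\lambda_0$ is a return path. Adding this symmetric observation (or simply citing that distinct return paths are disjoint) closes the argument cleanly.
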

\begin{proof}
We first consider the $\mathcal{G}_E$-orbits of finite boundary paths. Suppose $v \in E^0$ is an infinite emitter. Condition~($\infty$) implies that there are infinitely many distinct return paths at~$v$, hence $\orb_{\mathcal{G}_E}(\mu)$ is infinite for each $\mu \in \partial E \cap E^*$.

Next, let $x \in E^\infty$ be an infinite path. If $x$ is eventually periodic, then $x = \mu \lambda^\infty$ for some finite path $\mu$ and some cycle $\lambda$. Lemma~\ref{lem:cycles} gives a sequence of mutually disjoint cycles~$\tau_1, \tau_2, \ldots$ based at $s(\lambda)$. And then $\{ \tau_1 \lambda^\infty, \tau_2 \lambda^\infty, \ldots \}$ is an infinite subset of $\orb_{\mathcal{G}_E}(x)$. If $x$ is not eventually periodic, then $\{ x, x_{[2, \infty]}, x_{[3, \infty]}, \ldots \}$ is an infinite subset of $\orb_{\mathcal{G}_E}(x)$.
\end{proof}

In terms of the class $K^F$ we obtain the following isomorphism result, which relaxes the assumptions in Theorem~\ref{thm:KFgroupoid} considerably for graph groupoids.

\begin{theorem}\label{thm:KFrigid}
Let $E$ and $F$ be graphs with no sinks, and suppose they both satisfy Condition~(K), (W) and ($\infty$). Suppose $\Gamma, \Lambda$ are subgroups with $\DD(\llbracket \mathcal{G}_{E} \rrbracket) \leq \Gamma \leq \llbracket \mathcal{G}_{E} \rrbracket$ and $\DD(\llbracket \mathcal{G}_{F} \rrbracket) \leq \Lambda \leq \llbracket \mathcal{G}_{F} \rrbracket$. If $\Gamma \cong \Lambda$ as abstract groups, then $\mathcal{G}_{E} \cong \mathcal{G}_{F}$ as topological groupoids. In particular, the following are equivalent:
\begin{enumerate}
\item $\mathcal{G}_{E} \cong \mathcal{G}_{F}$ as topological groupoids.
\item $\llbracket \mathcal{G}_{E} \rrbracket \cong \llbracket \mathcal{G}_{F} \rrbracket$ as abstract groups.
\item $\DD(\llbracket \mathcal{G}_{E} \rrbracket) \cong \DD(\llbracket \mathcal{G}_{F} \rrbracket)$ as abstract groups.
\end{enumerate}
\end{theorem}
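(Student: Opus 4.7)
The plan is to follow the diagrammatic strategy laid out in the introduction and already executed in Theorem~\ref{thm:KFgroupoid}, but with the minimality hypothesis replaced by the combinatorial Conditions~(K), (W), ($\infty$) on the graphs. Since $(1)\Rightarrow(2)\Rightarrow(3)$ are obvious (restriction of isomorphisms), the only content is in showing that an abstract isomorphism $\Phi \colon \Gamma \xrightarrow{\sim} \Lambda$, where $\DD(\llbracket \mathcal{G}_E \rrbracket) \leq \Gamma \leq \llbracket \mathcal{G}_E \rrbracket$ and similarly for $\Lambda$, lifts to an isomorphism of topological groupoids $\mathcal{G}_E \cong \mathcal{G}_F$.

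First I would invoke Proposition~\ref{prop:KFgraph} applied to both $E$ and $F$: since each graph has no sinks and satisfies Conditions~(K), (W) and ($\infty$), the space-group pairs $(\Gamma, \partial E)$ and $(\Lambda, \partial F)$ both belong to the class $K^F$. By Theorem~\ref{classF}, $K^F$ is faithful, so the given abstract group isomorphism $\Phi$ is spatially implemented: there is a homeomorphism $\phi \colon \partial E \to \partial F$ with $\Phi(\gamma) = \phi \circ \gamma \circ \phi^{-1}$ for every $\gamma \in \Gamma$. In particular $\phi$ is a spatial isomorphism in $\TopG$, so by Corollary~\ref{corol_inj_groupoid}(1) (or equivalently Proposition~\ref{faithGerm}) the functor $\G$ produces an isomorphism of étale groupoids
\[
\G(\phi) \colon \G\!\left(\Gamma, \partial E\right) \xrightarrow{\ \sim\ } \G\!\left(\Lambda, \partial F\right).
\]

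Next I would identify each of these groupoids of germs with the corresponding graph groupoid. By Lemma~\ref{KForbits}, Conditions~(K) and ($\infty$) together with the absence of sinks force every $\mathcal{G}_E$-orbit to be infinite, and in particular of length at least $3$; the same holds for $\mathcal{G}_F$. Lemma~\ref{ex_cover}(2) then guarantees that $\DD(\llbracket \mathcal{G}_E \rrbracket)$ covers $\mathcal{G}_E$, and hence so does any supergroup, in particular $\Gamma$; symmetrically $\Lambda$ covers $\mathcal{G}_F$. Since graph groupoids are effective ample Hausdorff, Proposition~\ref{prop_germs} supplies canonical isomorphisms $\mathcal{G}_E \cong \G(\Gamma, \partial E)$ and $\mathcal{G}_F \cong \G(\Lambda, \partial F)$. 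Concatenating these with $\G(\phi)$ yields the desired isomorphism $\mathcal{G}_E \cong \mathcal{G}_F$ of topological groupoids.

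Finally, the equivalence of $(1)$, $(2)$ and $(3)$ follows by specializing the above to $\Gamma = \llbracket \mathcal{G}_E \rrbracket,\ \Lambda = \llbracket \mathcal{G}_F \rrbracket$, and to $\Gamma = \DD(\llbracket \mathcal{G}_E \rrbracket),\ \Lambda = \DD(\llbracket \mathcal{G}_F \rrbracket)$, respectively. The substantive step is the spatial realization, which is where the combinatorial hypotheses enter; once one has Proposition~\ref{prop:KFgraph} in hand the proof is a mechanical assembly of the earlier results. Consequently the real obstacle is not in this theorem itself but already discharged in the proof of Proposition~\ref{prop:KFgraph}, where Conditions~(K), (W) and ($\infty$) are shown to be exactly what is needed to verify~(F1) at periodic, wandering, and finite boundary paths respectively.
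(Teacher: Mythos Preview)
Your proof is correct and matches the paper's own argument essentially verbatim: the paper's proof simply reads ``Combine Proposition~\ref{prop:KFgraph}, Theorem~\ref{classF}, Proposition~\ref{faithGerm}, Lemma~\ref{KForbits}, Lemma~\ref{ex_cover} and Proposition~\ref{prop_germs}.'' You have reproduced exactly this chain, with the implicit observation that Condition~(K) ensures Condition~(L) so that $\mathcal{G}_E$ is effective.
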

\begin{proof}
Combine Proposition~\ref{prop:KFgraph}, Theorem~\ref{classF}, Proposition~\ref{faithGerm}, Lemma~\ref{KForbits}, Lemma~\ref{ex_cover} and Proposition~\ref{prop_germs}.
\end{proof}

The preceding result covers---in particular---all finite graphs that have no sinks and satisfy Condition~(K). As for an isomorphism result in terms of $K^{LCC}$, we combine Proposition~\ref{prop:KLCCgraph} with Theorem~\ref{thm:KLCCgroupoid} to get the following result.

\begin{theorem}\label{thm:KLCCrigid}
Let $E$ and $F$ be countable graphs satisfying Condition~(L) and (T), and having no degenerate vertices. Then the following are equivalent:
\begin{enumerate}
\item $\mathcal{G}_{E} \cong \mathcal{G}_{F}$ as topological groupoids.
\item $\llbracket \mathcal{G}_{E} \rrbracket \cong \llbracket \mathcal{G}_{F} \rrbracket$ as abstract groups.
\end{enumerate}
\end{theorem}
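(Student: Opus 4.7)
The plan is to deduce this as a direct specialization of Theorem~\ref{thm:KLCCgroupoid} to the graph-groupoid setting; no essentially new arguments are needed. The implication $(1) \Rightarrow (2)$ is formal (an isomorphism of topological groupoids induces an isomorphism of their topological full groups), so I would focus on $(2) \Rightarrow (1)$.

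First I would verify that each of $\mathcal{G}_E$ and $\mathcal{G}_F$ satisfies the standing hypotheses of Theorem~\ref{thm:KLCCgroupoid}. Graph groupoids are always ample and Hausdorff, and Condition~(L) is equivalent to effectiveness by the characterization in Section~\ref{sec:gg}. The unit space $\partial E$ is Boolean by construction, and second countable because $E$ is countable. To see that $\partial E$ has no isolated points I would appeal to Proposition~\ref{prop:DEperfect}: Condition~(L) eliminates eventually-periodic isolated points; Condition~(T) rules out sinks, since a sink $v$ satisfies $|vE^* w| \leq 1$ for every $w$; and the same observation rules out semi-tails, for along a semi-tail $e_1 e_2 \ldots$ each vertex $s(e_i)$ emits only $e_i$, so from $s(e_1)$ there is at most one path to each subsequent vertex, contradicting Condition~(T). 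Hence $\partial E$ is a locally compact Cantor space.

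Next I would translate the two remaining hypotheses of Theorem~\ref{thm:KLCCgroupoid} into the graph setting by invoking Proposition~\ref{prop:KLCCgraph}. Part~(2) of that proposition identifies ``every $\mathcal{G}_E$-orbit has length at least~$3$'' with the absence of degenerate vertices, and part~(1) identifies ``every non-empty clopen subset of $\partial E$ meets some $\mathcal{G}_E$-orbit twice'' with the conjunction of Conditions~(L) and~(T). Both verifications therefore follow immediately from the hypotheses on $E$ (and analogously on $F$).

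With all hypotheses in place, Theorem~\ref{thm:KLCCgroupoid} applies verbatim and yields $\mathcal{G}_E \cong \mathcal{G}_F$ as topological groupoids from the abstract isomorphism $\llbracket \mathcal{G}_E \rrbracket \cong \llbracket \mathcal{G}_F \rrbracket$. There is no substantial obstacle here; the proof is a bookkeeping translation matching the graph-theoretic conditions with the abstract groupoid-theoretic ones already analyzed in Section~\ref{sec:reggrpd}. The only subtlety worth spelling out is the short argument above that Condition~(T) forbids both sinks and semi-tails, which is what allows us to get second-countable local-Cantor perfectness of $\partial E$ without invoking an extra ``no semi-tails'' hypothesis.
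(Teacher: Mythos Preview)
Your proposal is correct and follows essentially the same approach as the paper, which simply states that the theorem is obtained by combining Proposition~\ref{prop:KLCCgraph} with Theorem~\ref{thm:KLCCgroupoid}. Your added verification that Condition~(T) rules out sinks and semi-tails (hence $\partial E$ is perfect via Proposition~\ref{prop:DEperfect}) is exactly the observation the paper records immediately after Definition~\ref{condT}.
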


This result covers---in particular---all finite graphs that have no degenerate vertices nor sinks, and which satisfy Condition~(L).

\begin{remark}
In~\cite{Mats2}, Matsumoto established a version of Theorem~\ref{thm:KLCCrigid} for finite graphs which are strongly connected (and satisfy Condition~(L), or equivalently Condition~(K)). At about the same time, Matui announced~\cite{Mat}, and his Isomorphism Theorem therein applies to the enlarged class of graphs which have finitely many vertices, countably many edges, no sinks, are cofinal, satisfy Condition~(L) and for which every vertex can reach every infinite emitter.
\end{remark}

Combining Theorem~\ref{thm:KLCCrigid} with~\cite[Theorem~5.1]{BCW} and~\cite[Corollary~4.2]{CR} we obtain the rigidity result in Corollary~\ref{cor:BCW} below, which ties in many of the mathematical structures associated to (directed) graphs. For background on graph $C^*$-algebras, see~\cite{Rae}\footnote{Beware that the convention for paths in graphs in Raeburn's book is opposite of the one used in this paper.}, and for Leavitt path algebras, see~\cite{AAM}.

\begin{corollary}\label{cor:BCW}
Let $E$ and $F$ be countable graphs satisfying Condition~(L) and (T), and having no degenerate vertices. Let $R$ be an integral domain. Then the following are equivalent:
\begin{enumerate}
\item The graph groupoids $\mathcal{G}_{E}$ and $\mathcal{G}_{F}$ are isomorphic as topological groupoids.
\item There is an isomorphism of the graph $C^*$-algebras $C^*(E)$ and $C^*(F)$ which maps the diagonal $\mathcal{D}(E)$  onto $\mathcal{D}(F)$.   
\item There is an isomorphism of the Leavitt path algebras $L_R(E)$ and $L_R(F)$ which maps the diagonal $D_R(E)$ onto $D_R(F)$.
\item The pseudogroups $\mathcal{P}_E$ and $\mathcal{P}_F$ are spatially isomorphic.
\item The graphs $E$ and $F$ are (continuously) orbit equivalent.
\item The topological full groups $\llbracket \mathcal{G}_{E} \rrbracket$ and $\llbracket \mathcal{G}_{F} \rrbracket$ are isomorphic as abstract groups.
\end{enumerate}
\end{corollary}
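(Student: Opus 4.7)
The plan is to assemble the equivalence from three inputs: Theorem~\ref{thm:KLCCrigid} proved just above, the main theorem of~\cite{BCW}, and~\cite[Corollary 4.2]{CR}. Since all six conditions are pairwise linked to~(1), the strategy is to use groupoid isomorphism as the central pivot and establish $(1) \Leftrightarrow (k)$ for each $k \in \{2,3,4,5,6\}$ separately, rather than trying to find a more efficient cyclic route.

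First I would observe that $(1) \Leftrightarrow (6)$ is precisely the content of Theorem~\ref{thm:KLCCrigid}, which has already been established under the stated hypotheses. Next, for $(1) \Leftrightarrow (2) \Leftrightarrow (4) \Leftrightarrow (5)$, I would invoke the main theorem of Brownlowe--Carlsen--Whittaker~\cite{BCW}. Their rigidity result asserts these equivalences for graph groupoids whenever the underlying graphs satisfy Condition~(L); since we are assuming Condition~(L) outright, the hypothesis is verified. (The pseudogroup $\mathcal{P}_E$ and the notion of orbit equivalence of graphs are defined in~\cite{BCW}, and no further hypotheses need checking beyond Condition~(L).) Finally, $(1) \Leftrightarrow (3)$ is~\cite[Corollary 4.2]{CR}, which promotes the $C^*$-algebraic reconstruction statement to the purely algebraic setting of Leavitt path algebras over an integral domain $R$; again, Condition~(L) is the only hypothesis on the graphs required there.

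The only point requiring a moment of care is verifying that our conditions (L), (T), and the absence of degenerate vertices suffice to apply both~\cite{BCW} and~\cite{CR}: the cited theorems need only Condition~(L), so (T) and the non-degeneracy hypotheses are ``extra'' and used exclusively to guarantee the hypotheses of Theorem~\ref{thm:KLCCrigid} (which is where they enter, via Proposition~\ref{prop:KLCCgraph}). Hence the corollary follows by a direct concatenation of these three equivalence results with (1) as the common hub. No step presents a genuine obstacle; the proof is essentially a bookkeeping exercise confirming that the hypotheses of each external theorem are implied by our standing assumptions.
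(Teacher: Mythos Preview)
Your proposal is correct and matches the paper's own approach exactly: the paper states the corollary as a direct combination of Theorem~\ref{thm:KLCCrigid}, the main result of~\cite{BCW}, and~\cite[Corollary 4.2]{CR}, which is precisely the decomposition you describe (with (1) as the hub). Your additional remark that Conditions~(T) and non-degeneracy are needed only for Theorem~\ref{thm:KLCCrigid} is also reflected in the paper's subsequent remark that (1)--(5) are equivalent under Condition~(L) alone.
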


\begin{remark}
Statement~(5) in Corollary~\ref{cor:BCW} coincides with Li's notion of \emph{continuous orbit equivalence} for the partial dynamical systems associated to the graphs, c.f.~\cite{Li}.
\end{remark}

\begin{remark}
We remark that in Corollary~\ref{cor:BCW} statements (1), (2) and (3) are always equivalent, statements (4) and (5) are always equivalent and they are implied by (1), (2) and (3). Furthermore, if the graphs satisfy Condition~(L), then statements (1)--(5) are equivalent. Additionally, the equivalence of (1) and (2) has recently been shown in greater generality~\cite{CRST}. The same is true for (1) and (3) by recent work of Steinberg~\cite{Stein}, even with weaker assumptions on the ring $R$.
\end{remark}

\section{Embedding theorems}\label{sec:embed}
In this final section we will show that several classes of groupoids embed into a certain fixed graph groupoid---namely the groupoid of the graph that consists of a single vertex and two edges. This class includes graph groupoids and AF-groupoids. We will also discuss the induced embeddings of the associated graph algebras and the topological full groups.

\subsection{Embedding graph groupoids}
Let $E_2$ denote the graph with a single vertex $v$, and two edges $a$ and $b$:
\[ \begin{tikzpicture}[vertex/.style={circle, draw = black, fill = black, inner sep=0pt,minimum size=5pt}]
\node at (-2.5,0) {$E_2 $};
\node[vertex] (a) at (0,0) [label=above:$v$] {};

\path (a) 	edge[thick, loop, min distance = 20mm, looseness = 10, out = 135, in = 225,decoration={markings, mark=at position 0.99 with {\arrow{triangle 45}}}, postaction={decorate}] node[left] {$a$} (a)
edge[thick, loop, min distance = 20mm, looseness = 10, out = 45, in = 315,decoration={markings, mark=at position 0.99 with {\arrow{triangle 45}}}, postaction={decorate}] node[right] {$b$} (a);
\end{tikzpicture} \]

In~\cite{BS}, Brownlowe and Sørensen proved an algebraic analog of Kirchberg's Embedding Theorem (see~\cite{KP}) for Leavitt path algebras. They showed that for any countable graph $E$, and for any commutative unital ring $R$, the Leavitt path algebra $L_R(E)$ embeds (unitally, whenever it makes sense) into $L_R(E_2)$. By inspecting their proof one finds that this embedding is also \emph{diagonal-preserving}, i.e.\ that the canonical diagonal $D_R(E)$ is mapped into $D_R(E_2)$. A special case of Kirchberg's Embedding Theorem is that any graph $C^*$-algebra, $C^*(E)$, embeds into the Cuntz algebra $\mathcal{O}_2$, which is canonically isomorphic to the graph $C^*$-algebra $C^*(E_2)$ (and the groupoid $C^*$-algebra $C^*_r\left(\mathcal{G}_{E_2}\right)$). We denote the canonical diagonal subalgebra in $\mathcal{O}_2$ by $\mathcal{D}_2$. A priori, Kirchberg's embedding is of an analytic nature, but Brownlowe and Sørensen's results shows that in the case of graph $C^*$-algebras, algebraic embeddings exist. Both graph $C^*$-algebras and Leavitt path algebras have the same underlying groupoid models (being canonically isomorphic to the groupoid $C^*$-algebra, and the Steinberg $R$-algebra ($A_R(\mathcal{G}_E)$) of $\mathcal{G}_E$, respectively). Generally, isomorphisms of the graph groupoids correspond to diagonal preserving isomorphisms of the algebras. Thus, one could wonder whether there is an embedding of the underlying graph groupoids. We will show that this is indeed the case, modulo topological obstructions. Our proof is inspired by~{\cite[Proposition~5.1]{BS}} (and the examples following it).

\begin{lemma}\label{lem:emb}
Let $E$ be a countable graph with no sinks, no semi-tails, and suppose $E$ satisfies Condition~(L). Then there exists an injective local homeomorphism $\phi \colon \partial E \to E^\infty_2$ such that \[\phi \circ \llbracket \mathcal{G}_E \rrbracket \subseteq \llbracket \mathcal{G}_{E_2} \rrbracket \circ \phi.\]
If $E^0$ is finite, then $\phi$ is surjective (hence a homeomorphism), and if $E^0$ is infinite, then~$\phi(\partial E) = E_2^\infty \setminus \{a^\infty\}$. In particular, there exists an injective étale homomorphism \[\Phi \colon \G(\llbracket \mathcal{G}_E \rrbracket, \partial E) \to \GG_{E_2}.\]
\end{lemma}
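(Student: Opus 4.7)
The strategy is to construct $\phi$ explicitly via a prefix-code encoding of paths in $E$ as infinite words in $\{a,b\}$, then recognize $\phi$ as a morphism in $\TopG$, and finally obtain $\Phi$ by applying the germ functor from Section~\ref{sec:SpatG}. To build $\phi$, I would first fix an enumeration $E^0 = \{v_1, v_2, \ldots\}$ and, for each $v_i$, of $v_i E^1 = \{e_{i,1}, e_{i,2}, \ldots\}$, using countability of $E$. To each vertex $v_i$ I assign a finite word $\alpha_i$ in $E_2^*$, and to each edge $e_{i,j}$ a finite word $\beta_{i,j}$ in $E_2^*$, chosen from prefix codes. The vertex codes form a \emph{complete} prefix code when $|E^0| < \infty$ (so the cylinders $Z(\alpha_i)$ partition $E_2^\infty$) and the \emph{incomplete} code $\alpha_i = b^{i-1}a$ when $|E^0| = \infty$ (so the single omitted element of $E_2^\infty$ is precisely $a^\infty$). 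Similarly, the edge codes at $v_i$ form a complete prefix code when $v_i$ is regular, and the incomplete code $\beta_{i,j} = b^{j-1}a$ when $v_i$ is an infinite emitter. Then set $\phi(e_1 e_2 \cdots) \coloneqq \alpha_{s(e_1)} \beta(e_1) \beta(e_2) \cdots$ for infinite paths, $\phi(e_1 \cdots e_n) \coloneqq \alpha_{s(e_1)} \beta(e_1) \cdots \beta(e_n) b^\infty$ for finite boundary paths of positive length, and $\phi(v) \coloneqq \alpha_v b^\infty$ for singular vertices $v$ of length $0$. Unique decodability of concatenated prefix codes gives injectivity; continuity is immediate; and since each basic cylinder $Z(\mu)$ is mapped homeomorphically onto the compact open set $\alpha_{s(\mu)} \beta(\mu) \cdot \phi(Z(r(\mu)))$, $\phi$ is a local homeomorphism. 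The image description follows from the completeness choices above, and the hypotheses on $E$ guarantee by Proposition~\ref{prop:DEperfect} that $\partial E$ has no isolated points, matching the Cantor-space target.

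The crux, and the main obstacle, is verifying the intertwining $\phi \circ \llbracket \mathcal{G}_E \rrbracket \subseteq \llbracket \mathcal{G}_{E_2} \rrbracket \circ \phi$. Take $\pi_U \in \llbracket \mathcal{G}_E \rrbracket$ expressed as in Proposition~\ref{prop:bis} through bisection pieces $Z(\mu_i, F_i, \lambda_i)$; it acts as $\lambda_i z \mapsto \mu_i z$ on $Z(\lambda_i \setminus F_i)$. Under $\phi$ this becomes $\alpha_{s(\lambda_i)} \beta(\lambda_i) w \longmapsto \alpha_{s(\mu_i)} \beta(\mu_i) w$, where $w$ ranges over the compact clopen set $\phi(Z(r(\lambda_i))) \subseteq E_2^\infty$ (with the finitely many sub-cylinders corresponding to $F_i$ removed); note the tail $w$ is independent of $i$ because $r(\lambda_i) = r(\mu_i)$. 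Writing this compact clopen as a finite disjoint union of basic sets $Z(\xi_l \setminus G_l)$, which is possible by Stone duality in the Cantor space $E_2^\infty$, the induced self-map on $\phi(\partial E)$ is manifestly $\pi_V$ for the bisection $V = \bigsqcup_{i,l} Z\bigl(\alpha_{s(\mu_i)}\beta(\mu_i)\xi_l,\, G_l,\, \alpha_{s(\lambda_i)}\beta(\lambda_i)\xi_l\bigr)$, extended by the identity on the complement of $\phi(\supp(\pi_U))$ (itself compact open, so $\pi_V$ is compactly supported and belongs to $\llbracket \mathcal{G}_{E_2} \rrbracket$).

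With $\phi$ thus established as a morphism in $\TopG$ from $(\llbracket \mathcal{G}_E \rrbracket, \partial E)$ to $(\llbracket \mathcal{G}_{E_2} \rrbracket, E_2^\infty)$, applying the functor $\G$ (Proposition~\ref{prop:functorial}) together with Corollary~\ref{corol_inj_groupoid}(3) produces an injective étale homomorphism $\G(\phi)\colon \G(\llbracket \mathcal{G}_E \rrbracket, \partial E) \to \G(\llbracket \mathcal{G}_{E_2} \rrbracket, E_2^\infty)$. Since every $\mathcal{G}_{E_2}$-orbit is infinite, Lemma~\ref{ex_cover} shows that $\llbracket \mathcal{G}_{E_2} \rrbracket$ covers $\mathcal{G}_{E_2}$, and hence Corollary~\ref{cor:necessaryCover} identifies $\G(\llbracket \mathcal{G}_{E_2} \rrbracket, E_2^\infty) \cong \mathcal{G}_{E_2}$, yielding the desired injective étale homomorphism $\Phi$.
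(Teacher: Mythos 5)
Your construction is, in essence, the paper's own: an explicit prefix-code encoding of vertices and edges into words in $\{a,b\}$, the intertwining relation checked against the normal form of Proposition~\ref{prop:bis}, and the groupoid embedding extracted by applying the functor $\G(-)$ together with Corollary~\ref{corol_inj_groupoid}, Lemma~\ref{ex_cover} and Proposition~\ref{prop_germs}. So the route is the same; but two points need attention.

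The first is a genuine, if easily repaired, gap: you never verify that $\phi(x)$ is actually an \emph{infinite} word of $E_2$ when $x \in E^\infty$. A complete prefix code for a one-element set is necessarily the empty word, so every edge emitted by a vertex of out-degree one receives the empty codeword; if all but finitely many edges of $x$ are of this kind, the concatenation $\alpha_{s(x)}\beta(x_1)\beta(x_2)\cdots$ terminates after finitely many letters and $\phi(x)\notin E_2^\infty$. Ruling this out is precisely where the hypotheses on $E$ enter: a tail all of whose vertices emit a single edge is either a semi-tail (if wandering) or eventually a cycle with no exit (if not), and these are excluded by the no-semi-tails assumption and Condition~(L), respectively. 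Your proposal instead attributes the role of the hypotheses to the absence of isolated points in $\partial E$, which is a different (and here secondary) matter; without the well-definedness check, $\phi$ is not even a map into $E_2^\infty$, so the subsequent claims of injectivity and continuity have nothing to attach to. The second point is a trivial slip: with your codes $\alpha_i = b^{i-1}a$ the cylinders $Z(\alpha_i)$ exhaust $E_2^\infty\setminus\{b^\infty\}$, not $E_2^\infty\setminus\{a^\infty\}$; to match the statement literally one should swap the roles of $a$ and $b$ (the paper uses $a^{j-1}b$). The remainder --- injectivity from unique decodability, openness via the identity $\phi(Z(\mu)) = Z(\phi^*(\mu))$, the prefix-replacement description of $\phi\circ\pi_U\circ\phi^{-1}$ (where the tail is unchanged because $r(\mu_i)=r(\lambda_i)$), and the final passage through the germ functor using that $\llbracket\mathcal{G}_{E_2}\rrbracket$ covers the minimal groupoid $\mathcal{G}_{E_2}$ --- is sound and coincides with the paper's argument.
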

\begin{proof}
For transparency we first treat the case when $E^0$ is finite. The infinite case requires only a minor tweak. Let $n = | E^0 |$. Label the vertices and edges of $E$ (arbitrarily) as \[E^0 = \{w_1, w_2, \ldots w_n\} \quad \text{and} \quad w_i E^1 = \{ e_{i,j} \mid  1 \leq j \leq  k(i) \} \text{ for each } 1 \leq i \leq n, \]
where $k(i) = | s^{-1}(w_i) |$. When $w_i$ is an infinite emitter, $k(i) = \infty$, and we let $j$ range over~$\mathbb{N}$. For each pair $j,i$ with $j \in \mathbb{N}, \ i \in \mathbb{N} \cup \{\infty\}$ and $j \leq i$ we define a finite path $\alpha_{j,i} \in E_2^*$ as follows: $\alpha_{1,1} \coloneqq v$ and for $j \geq 2$ 
\[ \alpha_{j,i} \coloneqq \left\lbrace\begin{matrix}
b & \text{ if } j=1, \\
a^{j-1}b & \text{\qquad if } 1 < j < i, \\
a^{j-1} & \text{ if } j=i.
\end{matrix} \right. \]
Observe that for each fixed $i \in \mathbb{N}$, the set $\{ Z(\alpha_{j,i}) \mid 1 \leq j \leq i \}$ forms a partition of $E_2^\infty$. And for~$i = \infty$, $\{ Z(\alpha_{j,i} ) \mid 1 \leq j < \infty \}$ forms a partition of $E_2^\infty \setminus \{a^\infty\}$.

We now define the map $\phi \colon \partial E \to E_2^\infty$ as follows. For an infinite path~${x=e_{i_1,j_1}e_{i_2,j_2} \ldots}$ in~$E$ we set
\[\phi(x) = \alpha_{i_1,n} \alpha_{j_1,k(i_1)} \alpha_{j_2,k(i_2)} \ldots \ . \]
If $w_i \in E^0$ is an infinite emitter, then 
\[ \phi(w_i)= \alpha_{i,n} a^\infty. \]
For notational convenience, we define 
\[ \phi^*(\mu) \coloneqq \alpha_{i_1,n} \alpha_{j_1,k(i_1)} \alpha_{j_2,k(i_2)} \ldots \alpha_{j_m,k(i_m)} \in E_2^*  \]
for each finite path $\mu = e_{i_1,j_1} e_{i_2,j_2} \ldots e_{i_m,j_m} \in E^*$. Finally, if $\mu$ is a finite boundary path, then 
\[ \phi(\mu) = \phi^*(\mu) a^\infty. \]

Recall that $v \alpha = \alpha = \alpha v$ for each $\alpha \in E_2^*$. A priori, $\phi(x)$ could be a finite path in $E_2$. We argue that this is not the case. For a finite path $\mu \in E^*$, $\phi(\mu)$ is clearly infinite. For an infinite path $x = e_{i_1,j_1}e_{i_2,j_2} \ldots$, $\phi(x)$ is finite if and only if for some $M \in \mathbb{N}$, $\alpha_{j_m,k(i_m)} = v$ for all $m > M$, that is $k(i_m) = 1$ and $j_m = 1$. This means that $e_{i_{M+1},j_{M+1}}e_{i_{M+2},j_{M+2}} \ldots$ is either a semi-tail, or an eventually periodic point whose cycle has no exit. But there are by assumption no such paths in $E$. So we conclude that $\phi$ is well-defined.

Using the fact that $\{ Z(\alpha_{j,i})\}$ for fixed $i$ forms a partition of $E_2^\infty$, or $E_2^\infty \setminus \{a^\infty\}$, one easily sees that $\phi$ is a bijection. As for continuity, we define $F_{i,l} \coloneqq \{e_{i,1}, e_{i,2}, \ldots, e_{i,l}\} \subseteq w_iE^1$ for~$1 \leq l < k(w_i) +1$. Let $\mu = e_{i_1,j_1} e_{i_2,j_2} \ldots e_{i_m,j_m} \in E^*$ and suppose $r(\mu) = w_i$. Observe that 
\[\phi(Z(\mu)) = Z(\phi^*(\mu))\] and 
\[\phi(Z(\mu \setminus F_{i,l})) = Z(\phi^*(\mu) a^l).\]
For arbitrary $F = \{e_{i,j_1}, \ldots, e_{i,j_m}\}$ we have
\begin{equation}\label{eq:1}
Z(\mu \setminus F) = Z(\mu \setminus F_{i,j_m +1}) \bigsqcup \bigsqcup_{j \in J_F} Z(\mu e_{i,j}),
\end{equation}
where $J_F$ is the set of $j$'s with $1 \leq j \leq j_m$ and $e_{i,j} \notin F$. Thus $\phi$ is an open map. Conversely, we have that for $\beta \in E_2^*$
\[ \phi^{-1}(Z(\beta)) = \left( \bigcup_{\beta \leq \phi^*(\mu)} Z(\mu) \right) \bigcup \left( \bigcup_{l=1}^\infty \bigcup_{\beta \leq \phi^*(\lambda) a^l} Z(\lambda \setminus F_{r(\lambda),l}) \right),  \]
(and these unions may actually be taken to be finite). Hence $\phi$ is a homeomorphism. 

To see that $\phi \circ \llbracket \mathcal{G}_E \rrbracket \circ \phi^{-1}\subseteq \llbracket \mathcal{G}_{E_2} \rrbracket$, let $\mu, \lambda \in E^*$ with $r(\mu) = r(\lambda) = w_i$ be given, and let $1 \leq l < k(w_i) +1$. Observe that 
\[\phi \circ \pi_{Z(\mu, \lambda)} \circ \phi^{-1} = \pi_{Z(\phi^*(\mu), \phi^*(\lambda))} \colon Z(\phi^*(\lambda) \to Z(\phi^*(\mu)), \]
and
\[\phi \circ \pi_{Z(\mu, F_l ,\lambda)} \circ \phi^{-1} = \pi_{Z(\phi^*(\mu)a^l, \phi^*(\lambda)a^l)} \colon Z(\phi^*(\lambda) a^l \to Z(\phi^*(\mu)a^l), \]
as partial homeomorphisms. Utilizing a similar decomposition as in Equation~(\ref{eq:1}) for the basic set~$Z(\mu, F, \lambda)$ for arbitrary $F$, together with the description of elements in $\llbracket \mathcal{G}_E \rrbracket$ from Proposition~\ref{prop:bis}, we see that for each $\pi_U \in \llbracket \mathcal{G}_E \rrbracket$, the homeomorphism $\phi \circ \pi_U \circ \phi^{-1}$ belongs to~$\llbracket \mathcal{G}_{E_2} \rrbracket$.

In the case that $E^0$ is infinite, all the arguments above still go through, with the minor adjustment that the first word in $\phi(x)$ is $\alpha_{i_1, \infty}$. This word always ends with a $b$, so we see that $\phi$ becomes a homeomorphism from $\partial E$ onto $E_2^\infty \setminus \{a^\infty\}$.

The final statement follows from Corollary~\ref{corol_inj_groupoid} and Proposition~\ref{prop_germs} ($\llbracket \mathcal{G}_{E_2} \rrbracket$ covers $\mathcal{G}_{E_2}$ since the groupoid is minimal).
\end{proof}

\begin{remark}
The local homeomorphism $\phi$ constructed in the preceding proof depends on the choice of labeling of the graph. And there are of course many ways to label a graph, but each one gives a local homeomorphism $\phi$ with the desired properties.
\end{remark}

In order to conclude that $\GG_E$ embeds into $\GG_{E_2}$ it seems like we have to assume that $\llbracket \mathcal{G}_E \rrbracket$ covers $\GG_E$ (as this is not always the case). However, in the proof of Lemma~\ref{lem:emb} we are really showing that $\phi \circ \mathcal{P}_c(\mathcal{G}_E) \subseteq \mathcal{P}_c(\mathcal{G}_{E_2}) \circ \phi$, where $\mathcal{P}_c(\mathcal{G})$ denotes the inverse semigroup of partial homeomorphisms $\pi_U \colon s(U) \to r(U)$ coming from compact bisections $U \subseteq \mathcal{G}$. It is a sub-inverse semigroup of Renault's pseudogroup as in~\cite{Ren2}, \cite{BCW} (when $\mathcal{G}$ is effective). The constructions in Sections~\ref{sec:germ} and~\ref{sec:SpatG} apply more or less verbatim to $\mathcal{P}_c(\mathcal{G})$. The crucial difference is that $\mathcal{P}_c(\mathcal{G})$ always covers $\mathcal{G}$, when $\mathcal{G}$ is  ample. Thus, the analogs of Corollary~\ref{corol_inj_groupoid} and Proposition~\ref{prop_germs} for $\mathcal{P}_c(\mathcal{G})$ applied to $\phi$ induces the desired embedding of the graph groupoids---which we record in the following theorem. 

\begin{theorem}\label{thm:emb}
Let $E$ be a countable graph satisfying Condition~(L) and having no sinks nor semi-tails. Then there is an embedding of étale groupoids $\Phi \colon \GG_E \hookrightarrow \GG_{E_2}$. If $E^0$ is finite, then $\Phi$ maps $\partial E$ onto $E_2^\infty$.
\end{theorem}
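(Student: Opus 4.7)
The plan is to build $\Phi$ by exploiting the local homeomorphism $\phi \colon \partial E \to E_2^\infty$ already produced in Lemma~\ref{lem:emb}, but to upgrade the spatial framework from topological full groups to the inverse semigroup $\mathcal{P}_c(\mathcal{G})$ of partial homeomorphisms arising from compact bisections. The reason is precisely the obstruction flagged in the remark before the theorem: $\llbracket \mathcal{G}_E \rrbracket$ need not cover $\mathcal{G}_E$ when $E$ has vertices whose orbits are pathological. By contrast, for any effective ample Hausdorff groupoid $\mathcal{G}$, the pseudogroup $\mathcal{P}_c(\mathcal{G})$ tautologically covers $\mathcal{G}$ (every $g \in \mathcal{G}$ lies in some compact bisection), which makes it the correct replacement.

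First I would verify that the machinery of Section~\ref{sec:germ} and Section~\ref{sec:SpatG} is almost entirely local and hence extends verbatim to inverse semigroups of compactly supported partial homeomorphisms. In particular, for any effective ample Hausdorff groupoid $\mathcal{G}$, the groupoid of germs $\G(\mathcal{P}_c(\mathcal{G}), \mathcal{G}^{(0)})$ is well-defined, and the analog of Proposition~\ref{prop_germs} gives a canonical injective étale homomorphism $\iota \colon \G(\mathcal{P}_c(\mathcal{G}), \mathcal{G}^{(0)}) \to \mathcal{G}$ which is an isomorphism because $\mathcal{P}_c(\mathcal{G})$ covers $\mathcal{G}$. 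Likewise, the functorial construction of Proposition~\ref{prop:functorial} and the injectivity statement in Corollary~\ref{corol_inj_groupoid}(3) go through: a local homeomorphism $\phi \colon X_1 \to X_2$ satisfying $\phi \circ \mathcal{P}_1 \subseteq \mathcal{P}_2 \circ \phi$ (in the partial sense) induces an étale homomorphism $\G(\phi)$ between the germ groupoids, and this is injective whenever $\phi$ is.

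Next I would observe that the computations in the proof of Lemma~\ref{lem:emb}—namely the identities
\[
\phi \circ \pi_{Z(\mu,\lambda)} \circ \phi^{-1} = \pi_{Z(\phi^*(\mu),\,\phi^*(\lambda))}, \qquad
\phi \circ \pi_{Z(\mu, F_{i,l}, \lambda)} \circ \phi^{-1} = \pi_{Z(\phi^*(\mu)a^l,\,\phi^*(\lambda)a^l)},
\]
combined with the decomposition \eqref{eq:1} of a general $Z(\mu, F, \lambda)$ into basic sets of the above form—actually establish the stronger containment $\phi \circ \mathcal{P}_c(\mathcal{G}_E) \subseteq \mathcal{P}_c(\mathcal{G}_{E_2}) \circ \phi$, not merely the containment at the level of the topological full group. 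Indeed, by Lemma~\ref{lem_bis} every element of $\mathcal{P}_c(\mathcal{G}_E)$ is (the homeomorphism of) a finite disjoint union of such basic compact bisections, and conjugation by $\phi$ distributes over finite disjoint unions.

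The final step is then pure assembly. Applying the $\mathcal{P}_c$-analog of the functor $\G$ to the spatial morphism $\phi$ yields an injective étale homomorphism
\[
\G(\phi) \colon \G(\mathcal{P}_c(\mathcal{G}_E), \partial E) \hookrightarrow \G(\mathcal{P}_c(\mathcal{G}_{E_2}), E_2^\infty),
\]
and precomposing with $\iota^{-1}$ for $\mathcal{G}_E$ and postcomposing with $\iota$ for $\mathcal{G}_{E_2}$ produces the desired embedding $\Phi \colon \mathcal{G}_E \hookrightarrow \mathcal{G}_{E_2}$, whose restriction to unit spaces coincides with $\phi$. For the last assertion, when $E^0$ is finite Lemma~\ref{lem:emb} gives that $\phi$ is already a homeomorphism onto $E_2^\infty$, so $\Phi^{(0)} = \phi$ is surjective onto $E_2^\infty$. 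The main conceptual obstacle is the bookkeeping in upgrading the definitions and functoriality results from Sections~\ref{sec:germ}--\ref{sec:SpatG} from groups to inverse semigroups of partial homeomorphisms; since every argument there is phrased in local terms (germs, basic sets $Z[\gamma, A]$, local agreement of homeomorphisms), this translation is expected to be routine but should be spelled out carefully.
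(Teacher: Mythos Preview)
Your proposal is correct and matches the paper's own argument essentially line for line: the paper also upgrades from $\llbracket \mathcal{G}_E \rrbracket$ to the inverse semigroup $\mathcal{P}_c(\mathcal{G})$ of partial homeomorphisms from compact bisections, notes that the proof of Lemma~\ref{lem:emb} already establishes $\phi \circ \mathcal{P}_c(\mathcal{G}_E) \subseteq \mathcal{P}_c(\mathcal{G}_{E_2}) \circ \phi$, observes that $\mathcal{P}_c(\mathcal{G})$ always covers $\mathcal{G}$ for ample $\mathcal{G}$, and then invokes the inverse-semigroup analogs of Proposition~\ref{prop_germs} and Corollary~\ref{corol_inj_groupoid} to produce $\Phi$.
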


\begin{remark}
Theorem~\ref{thm:emb} is optimal in the sense there is no embedding if one relaxes the assumptions on $E$. For if $\partial E$ has isolated points, then there is no local homeomorphism from $\partial E$ to $E_2^\infty$, as the latter has no isolated points. And if $E$ is uncountable, then there is no embedding either, for then $\partial E$ is not second countable, while $E_2^\infty$ is. Similarly, $\partial E$ cannot map onto $E_2^\infty$ if $E^0$ is infinite, for then the former is not compact.
\end{remark}

\subsection{Diagonal embeddings of graph algebras}\label{subs:diagonal}
From Theorem~\ref{thm:emb} we recover Brownlowe and Sørensen's embedding theorem for Leavitt path algebras (albeit for the slightly smaller class of graphs $E$ with $\partial E$ having no isolated points). However, we get the additional conclusion that when $E^0$ is finite (i.e.\ the algebras are unital), the embedding can be chosen to not only be unital, but also to map the diagonal \emph{onto} the diagonal.

\begin{corollary}\label{cor:graphAlgEmb}
Let $E$ be a countable graph with no sinks, no semi-tails, and satisyfing Condition~(L).
\begin{enumerate}
\item There is an injective $*$-homomorphism $\psi \colon C^*(E) \to \mathcal{O}_2$ such that $\psi(\mathcal{D}(E)) \subseteq \mathcal{D}_2$. If $E^0$ is finite, then $\psi$ is unital and $\psi(\mathcal{D}(E)) = \mathcal{D}_2$.
\item For any commutative unital ring $R$, there is an injective $*$-algebra homomorphism $\rho \colon L_R(E) \to L_R(E_2)$ such that $\rho(D_R(E)) \subseteq D_R(E_2)$. If $E^0$ is finite, then $\rho$ is unital and $\rho(D_R(E)) = D_R(E_2)$. 
\end{enumerate}
\end{corollary}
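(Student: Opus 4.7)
The plan is to deduce the statement from the groupoid embedding provided by Theorem~\ref{thm:emb}, via the standard identifications of graph $C^*$-algebras and Leavitt path algebras as groupoid algebras. First I would recall that there is a canonical isomorphism $C^*(E) \cong C^*_r(\mathcal{G}_E) \cong C^*(\mathcal{G}_E)$ (the last equality since $\mathcal{G}_E$ is amenable under Condition~(L)) mapping the diagonal subalgebra $\mathcal{D}(E)$ onto $C_0(\mathcal{G}_E^{(0)}) = C_0(\partial E)$, and similarly $L_R(E) \cong A_R(\mathcal{G}_E)$ (the Steinberg $R$-algebra) mapping $D_R(E)$ onto $A_R(\mathcal{G}_E^{(0)})$. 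In particular, $\mathcal{O}_2 \cong C^*(\mathcal{G}_{E_2})$ and $L_R(E_2) \cong A_R(\mathcal{G}_{E_2})$, with the diagonals $\mathcal{D}_2$ and $D_R(E_2)$ corresponding to $C(E_2^\infty)$ and $A_R(E_2^\infty)$, respectively.

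Next, Theorem~\ref{thm:emb} provides an injective étale homomorphism $\Phi \colon \mathcal{G}_E \hookrightarrow \mathcal{G}_{E_2}$, whose image is an open étale subgroupoid of $\mathcal{G}_{E_2}$. As noted after the definition of the category $\Groupoid$, such an injective étale homomorphism induces injective $*$-homomorphisms
\[
\psi \colon C^*(\mathcal{G}_E) \hookrightarrow C^*(\mathcal{G}_{E_2}) \quad\text{and}\quad \rho \colon A_R(\mathcal{G}_E) \hookrightarrow A_R(\mathcal{G}_{E_2}),
\]
cf.\ \cite[page 113]{BNRSW} and \cite[Proposition 1.9]{Phil}. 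On generators these are the pushforwards $f \mapsto f \circ \Phi^{-1}$ (extended by zero off the image of $\Phi$), and on the unit-space subalgebras they restrict to the maps induced by the injective local homeomorphism $\Phi^{(0)} = \phi \colon \partial E \hookrightarrow E_2^\infty$. Since $\phi(\partial E)$ is an open subset of $E_2^\infty$, this restriction lands in $C_0(E_2^\infty) = \mathcal{D}_2$ (respectively $A_R(E_2^\infty) = D_R(E_2)$), which gives the containments $\psi(\mathcal{D}(E)) \subseteq \mathcal{D}_2$ and $\rho(D_R(E)) \subseteq D_R(E_2)$ after transporting back along the canonical identifications above.

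For the case $E^0$ finite, I would use that Theorem~\ref{thm:emb} gives $\phi(\partial E) = E_2^\infty$, so $\phi$ is in fact a homeomorphism between compact spaces. Consequently the induced maps on the unit-space subalgebras are isomorphisms $C(\partial E) \xrightarrow{\sim} C(E_2^\infty)$ and $A_R(\partial E) \xrightarrow{\sim} A_R(E_2^\infty)$, which translates into $\psi(\mathcal{D}(E)) = \mathcal{D}_2$ and $\rho(D_R(E)) = D_R(E_2)$. Unitality in this case is automatic: the unit of $C^*(E)$ corresponds to the characteristic function of $\partial E$, which is sent to the characteristic function of $E_2^\infty$, i.e.\ the unit of $\mathcal{O}_2$; the same holds for the Leavitt path algebras.

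No real obstacle is anticipated here, as everything reduces to functoriality and the surjectivity statement in Theorem~\ref{thm:emb}; the only minor subtlety is checking that the pushforward maps really do the ``right thing'' on the diagonal, which amounts to the fact that $\Phi$ restricted to the unit space is precisely $\phi$. If one prefers a more hands-on argument for part~(2), one can alternatively transport Brownlowe and Sørensen's explicit generator-level formula along the labelling used in the proof of Lemma~\ref{lem:emb} to write down $\rho$ directly (this is in fact the approach spelled out in Subsection~11.2); but the groupoid route gives both statements simultaneously with essentially no extra work.
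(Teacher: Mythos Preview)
Your proposal is correct and is essentially the same approach the paper intends: the corollary is stated without proof, as it follows from Theorem~\ref{thm:emb} together with the functoriality remark (in Section~\ref{sec:SpatG}) that injective \'etale homomorphisms induce diagonal-preserving injective $*$-homomorphisms on the groupoid $C^*$-algebras and Steinberg algebras, citing exactly the references \cite{BNRSW} and \cite{Phil} you used. One minor aside: graph groupoids are amenable regardless of Condition~(L), so the parenthetical justification for $C^*_r(\mathcal{G}_E) \cong C^*(\mathcal{G}_E)$ is unnecessary, though harmless here.
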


\begin{remark}\label{rem:emb}
For each labeling of a graph $E$ as in the proof of Lemma~\ref{lem:emb}, one obtains explicit embeddings of both the graph $C^*$-algebras and the Leavitt path algebras into~$\mathcal{O}_2$ and~$L_R(E_2)$, respectively, in terms of their canonical generators. This is done by expanding the scheme in~\cite[Proposition~5.1]{BS}. The canonical isomorphism between both $C^*(E)$ and $C^*(\mathcal{G}_E)$, and $L_R(E)$ and $A_R(\mathcal{G}_E)$ is given by $p_v \leftrightarrow 1_{Z(v)}$ for $v \in E^0$ (vertex projections) and $s_e \leftrightarrow 1_{Z(e, r(e))}$ for $e \in E^1$ (edge partial isometries). Denote the generators in $\mathcal{O}_2$ and~$L_R(E_2)$ by $s_a$ and $s_b$. Given a labeling $E^0 = \{w_1, w_2, w_3 \ldots\}$ and~${E^1 = \{ e_{i,j} \mid 1 \leq i \leq n, \ 1 \leq j \leq  k(i) \}}$, the embedding of the algebras induced by $\phi$ as in Lemma~\ref{lem:emb} is given on the generators by
\[p_{w_i} \longmapsto s_{\phi^*(w_i)} \left(s_{\phi^*(w_i)}\right)^*, \qquad s_{e_{i,j}} \longmapsto s_{\phi^*(e_{i,j})}\left(s_{\phi^*(r(e_{i,j}))}\right)^*,\]
where $\phi^*(\mu) \in \{a,b\}^*$ is as in the proof of Lemma~\ref{lem:emb} (recall that for $\mu = e_1, \ldots e_n \in E^*$, we define $s_{\mu} \coloneqq s_{e_1} \cdots s_{e_2}$).
\end{remark}

\begin{remark}
In the case that $E$ has infinitely many vertices, the image of the diagonals in Corollary~\ref{cor:graphAlgEmb} can be described as follows:
\[ \psi(\mathcal{D}(E)) = \overline{\spn \{ s_\alpha s_\alpha^* \mid \alpha \in E_2^* \setminus \{a, a^2, a^3, \ldots\}  \}}, \]
and 
\[ \rho(D_R(E)) = \spn_R \{ s_\alpha s_\alpha^* \mid \alpha \in E_2^* \setminus \{a, a^2, a^3, \ldots\}  \}. \]
\end{remark}

For examples of explicit embeddings for finite graphs satisfying Condition~(L) (possibly even having sinks), see Section~5 of~\cite{BS}. As for  infinite graphs, we provide a few examples below.

\begin{example}
Consider the following graph, whose graph $C^*$-algebra is the Cuntz algebra~$\mathcal{O}_\infty$:
\[ \begin{tikzpicture}[vertex/.style={circle, draw = black, fill = black, inner sep=0pt,minimum size=5pt}, implies/.style={double,double equal sign distance,-implies}]
\node at (-2,0) {$E_\infty$};
\node[vertex] (a) at (0,0) [label=below:$w$] {};

\path (a) 	edge[implies, thick, loop, min distance = 20mm, looseness = 10, out = 45, in = 135]  node[above] {$e_j$} (a);
\end{tikzpicture} \]
The double arrow indicates infinitely many edges, i.e.\ $E^1 = \{  e_1, e_2, e_3, \ldots \}$. For simplicity, we denote the edge isometries by $s_j$ for $j \in \mathbb{N}$. We label $w = w_1$ and $e_j = e_{1,j}$. Following the recipe in Remark~\ref{rem:emb} we obtain a unital embedding of $\mathcal{O}_\infty$ into $\mathcal{O}_2$ (and similarly of~$L_R(E_\infty)$ into $L_R(E_2)$) which maps the diagonal onto the diagonal, in terms of generators as follows:
\[p_w = 1_{\mathcal{O}_\infty} \longmapsto 1_{\mathcal{O}_2} = p_v, \qquad s_j \longmapsto s_{a^{j-1}b}. \]
\end{example}

\begin{example}
Next, consider the following graph:
\[ \begin{tikzpicture}[vertex/.style={circle, draw = black, fill = black, inner sep=0pt,minimum size=5pt}, implies/.style={double,double equal sign distance,-implies}]
\node at (-2,0) {$E$};
\node[vertex] (a) at (0,0) [label=below:$w_1$] {};
\node[vertex] (b) at (2,0) [label=below:$w_2$] {};

\path (a) edge[implies, thick, loop, min distance = 20mm, looseness = 10, out = 45, in = 135]  node[above] {$e_j$} (a) edge[thick, decoration={markings, mark=at position 0.99 with {\arrow{triangle 45}}}, postaction={decorate} ] node[above] {$h$} (b)
 (b) edge[implies, thick, loop, min distance = 20mm, looseness = 10, out = 45, in = 135]  node[above] {$f_j$}   (b);
\end{tikzpicture} \]
By labeling the edges as $h = e_{1,1}, \ e_j = e_{1,j+1}, \ f_j = e_{2,j}$ we get the following unital diagonal preserving embedding of $C^*(E)$ into $\mathcal{O}_2$:
\begin{align*}
&p_{w_1}  \longmapsto s_b s_b^*, \qquad p_{w_2}  \longmapsto s_a s_a^*, \\ &s_h \longmapsto s_{bb}s_a^*, \qquad s_{e_j} \longmapsto s_{ba^jb}s_b^*, \qquad s_{f_j} \longmapsto s_{ba^jb}s_a^*.
\end{align*}
\end{example}

\begin{example}
Finally, let us look at a graph with infinitely many vertices:
\[ \begin{tikzpicture}[vertex/.style={circle, draw = black, fill = black, inner sep=0pt,minimum size=5pt}]

\node at (-4,0) {$F$};
\node[vertex] (a) at (-2,0) [label=below:$w_1$] {};
\node[vertex] (b) at (0,0) [label=below:$w_2$]  {};
\node[vertex] (c) at (2,0) [label=below:$w_3$] {};
\node[vertex] (d) at (4,0)[label=below:$w_4$] {};
\node[vertex] (e) at (6,0)[label=below:$w_5$] {};
\node (f) at (8,0) {$\cdots$};

\path (a) edge[thick,loop, min distance = 10mm, looseness = 20, out = 45, in = 135, decoration={markings, mark=at position 0.99 with {\arrow{triangle 45}}}, postaction={decorate}] node[above] {$f_1$}  (a)
		edge[thick, decoration={markings, mark=at position 0.99 with {\arrow{triangle 45}}}, postaction={decorate} ] node[above] {$e_1$} (b)
	(b) edge[thick, decoration={markings, mark=at position 0.99 with {\arrow{triangle 45}}}, postaction={decorate} ] node[above] {$e_2$}  (c)
	(c)	edge[thick, decoration={markings, mark=at position 0.99 with {\arrow{triangle 45}}}, postaction={decorate} ] node[above] {$e_3$} (d)
	edge[thick,loop, min distance = 10mm, looseness = 20, out = 45, in = 135, decoration={markings, mark=at position 0.99 with {\arrow{triangle 45}}}, postaction={decorate}] node[above] {$f_3$}  (c)
	(d) edge[thick, decoration={markings, mark=at position 0.99 with {\arrow{triangle 45}}}, postaction={decorate} ] node[above] {$e_4$}  (e)
	(e) edge[thick, decoration={markings, mark=at position 0.99 with {\arrow{triangle 45}}}, postaction={decorate} ] node[above] {$e_5$} (f) edge[thick,loop, min distance = 10mm, looseness = 20, out = 45, in = 135, decoration={markings, mark=at position 0.99 with {\arrow{triangle 45}}}, postaction={decorate}] node[above] {$f_5$}  (e);
\end{tikzpicture} \]
We label the edges as $e_j = e_{j,1}$ for $j \in \mathbb{N}$, and $f_j = e_{j,2}$ for $j$ odd. The induced diagonal preserving embedding of $C^*(F)$ into $\mathcal{O}_2$ is then given on the generators as follows:
\begin{align*}
&p_{w_i}  \longmapsto s_{a^{i-1}b} \left(s_{a^{i-1}b} \right)^*, \qquad s_{f_j} \longmapsto s_{a^{j-1}ba} \left(s_{a^{j-1}b} \right)^* \ (j \text{ odd}),\\ 
& s_{e_j} \longmapsto \left\lbrace\begin{matrix}
s_{a^{j-1}b} \left(s_{a^{j}b} \right)^* & j \text{ even}, \\
s_{a^{j-1}b^2} \left(s_{a^{j}b} \right)^* & j \text{ odd}.
\end{matrix} \right.
\end{align*}
\end{example}

\subsection{Analytic properties of $\llbracket \mathcal{G}_E \rrbracket$}
Before generalizing the groupoid embedding theorem to a larger class of groupoids in the next subsection we take brief pause to discuss some analytic properties of the topological full groups $\llbracket \mathcal{G}_E \rrbracket$ for graphs $E$ as in Lemma~\ref{lem:emb}. First of all, $\llbracket \mathcal{G}_E \rrbracket$ is generally not amenable, as it often contains free products~\cite[Proposition~4.10]{Mat}.

Let $E_n$ for $n \geq 2$ denote the graph consisting of a single vertex and $n$ edges. And more generally, for $r \in \mathbb{N}$, let $E_{n,r}$ be the graph with $r$ vertices $w_1, w_2, \ldots, w_r$ and~${n+r-1}$ edges~$e_1, \ldots, e_n, f_1, \ldots, f_{r-1}$ such that $s(e_i) = w_1, r(e_i) = w_r$ for each $1 \leq i \leq n$ and~${s(f_i) = w_{i+1}, r(f_i) = w_i}$ for each $1 \leq i \leq r-1$. According to~\cite[Section~6]{Mat}, the topological full group $\llbracket \mathcal{G}_{E_{n,r}} \rrbracket $ is isomorphic to the \emph{Higman-Thompson group} $V_{n,r}$. In particular, $\llbracket \mathcal{G}_{E_2} \rrbracket \cong V_{2,1} = V$ (Thompson's group~$V$). As Lemma~\ref{lem:emb} in particular induces an algebraic embedding of the topological full groups, we have that $\llbracket \mathcal{G}_E \rrbracket$ embeds into $V$ for each graph $E$ as in Lemma~\ref{lem:emb}. Thus, Lemma~\ref{lem:emb} may be considered a generalization of the well-known embedding of $V_{n,r}$ into $V$. As $V$ has the \emph{Haagerup property}~\cite{Far}, we deduce that $\llbracket \mathcal{G}_E \rrbracket$ does as well.

\begin{corollary}
Let $E$ be a countable graph with no sinks, no semi-tails, and suppose $E$ satisfies Condition~(L). Then the topological full group $\llbracket \mathcal{G}_E \rrbracket$ has the Haagerup property.
\end{corollary}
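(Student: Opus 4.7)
The plan is to exhibit $\llbracket \mathcal{G}_E \rrbracket$ as a subgroup of Thompson's group $V$, and then invoke the well-known fact (due to Farley) that $V$ has the Haagerup property, together with the stability of the Haagerup property under passage to subgroups.

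First I would apply Lemma~\ref{lem:emb} to the graph $E$ to obtain the injective local homeomorphism $\phi \colon \partial E \to E_2^\infty$ with $\phi \circ \llbracket \mathcal{G}_E \rrbracket \subseteq \llbracket \mathcal{G}_{E_2} \rrbracket \circ \phi$. Since $\llbracket \mathcal{G}_{E_2} \rrbracket$ is locally closed (by Proposition~\ref{prop_TFGample}) and $\phi$ is injective, the construction $f_\phi$ described in the remark following Corollary~\ref{corol_inj_groupoid} provides an injective group homomorphism $f_\phi \colon \llbracket \mathcal{G}_E \rrbracket \hookrightarrow \llbracket \mathcal{G}_{E_2} \rrbracket$, sending $\gamma \in \llbracket \mathcal{G}_E \rrbracket$ to the unique element $\gamma' \in \llbracket \mathcal{G}_{E_2} \rrbracket$ which agrees with $\phi \circ \gamma \circ \phi^{-1}$ on $\phi(\partial E)$ and with the identity elsewhere.

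Next, I would identify $\llbracket \mathcal{G}_{E_2} \rrbracket$ with Thompson's group $V$, as recalled above via $V \cong V_{2,1} \cong \llbracket \mathcal{G}_{E_2} \rrbracket$. By Farley's theorem~\cite{Far}, $V$ has the Haagerup property. Since the Haagerup property passes to arbitrary subgroups (a standard and straightforward fact: one restricts a proper conditionally negative definite function on the ambient group to the subgroup), the image $f_\phi(\llbracket \mathcal{G}_E \rrbracket) \leq V$ inherits the Haagerup property, and hence so does $\llbracket \mathcal{G}_E \rrbracket$ itself via the isomorphism onto this image.

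There is essentially no obstacle here: all the genuine work has already been done in Lemma~\ref{lem:emb} (the construction of $\phi$) and in Farley's paper. The only small point that requires care is that Lemma~\ref{lem:emb} as stated only gives a spatial inclusion of the topological full groups, not an abstract group homomorphism, so one must explicitly invoke the $f_\phi$ construction to pass from the spatial statement to the algebraic embedding needed to apply hereditarity.
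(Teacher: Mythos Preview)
Your proof is correct and follows exactly the paper's approach: embed $\llbracket \mathcal{G}_E \rrbracket$ into $\llbracket \mathcal{G}_{E_2} \rrbracket \cong V$ via Lemma~\ref{lem:emb}, then invoke Farley's result~\cite{Far} and hereditarity of the Haagerup property under subgroups. Your explicit mention of the $f_\phi$ construction to pass from the spatial inclusion to an actual group embedding is a nice point of care that the paper leaves implicit.
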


\begin{remark}
For finite, strongly connected graphs, this was proved directly, using so-called \emph{zipper actions}, by Matui in~\cite{Mat}. Later, in~\cite{Mat3}, Matui proved that for any finite, strongly connected graph $E$, $\llbracket \mathcal{G}_E \rrbracket$ embeds into $\llbracket \mathcal{G}_{E_2} \rrbracket$. In fact, he proved even more, namely that $\mathcal{G}_{E_2}$ could be replaced by any groupoid with similar properties (see~\cite[Proposition~5.14]{Mat3} for the details). By our results, one may relax the conditions on $E$ considerably in Matui's embedding result.
\end{remark}

\subsection{Embedding equivalent groupoids}
We are now going to expand on the embedding theorem for graph groupoids to include all groupoids that are merely groupoid equivalent to a graph groupoid. To accomplish this we will make us of the fundamental results by Carlsen, Ruiz and Sims in~\cite{CRS}. Following their notation, let $\mathcal{R}$ denote the countably infinite discrete full equivalence relation, that is $\mathcal{R} = \mathbb{N} \times \mathbb{N}$ equipped with the discrete topology, whose product and inverse are given by $(k,m) \cdot (m,n) \coloneqq (k,n)$ and $(m,n)^{-1} \coloneqq (n,m)$. We refer to the product groupoid $\mathcal{G} \times \mathcal{R}$ as the \emph{stabilization} of the groupoid $\mathcal{G}$. For a graph $E$, let $SE$ denote the graph obtained from $E$ by adding a \emph{head} at every vertex---see the example below (see also~\cite{Tomf}). It is shown in~\cite{CRS} that $\mathcal{G}_{E} \times \mathcal{R} \cong \mathcal{G}_{S E}$ as topological groupoids for any graph $E$.

\begin{example}
The stabilized graph of $E_2$ is the following graph:
\[ \begin{tikzpicture}[vertex/.style={circle, draw = black, fill = black, inner sep=0pt,minimum size=5pt}]

\node at (-4,0) {$SE_2$};
\node[vertex] (a) at (0,0) [label=above:$w_2$]  {};
\node[vertex] (c) at (2,0) [label=above:$w_1$]  {};
\node[vertex] (b) at (4,0) [label=right:$v$] {};
\node (f) at (-2,0) {$\cdots$};

\path (f)	edge[thick, decoration={markings, mark=at position 0.99 with {\arrow{triangle 45}}}, postaction={decorate} ] node[above] {$c_3$}  (a)
	 (a) edge[thick, decoration={markings, mark=at position 0.99 with {\arrow{triangle 45}}}, postaction={decorate} ] node[above] {$c_2$}  (c)
	(b) 	edge[thick, loop, min distance = 20mm, looseness = 10, out = 225, in = 315,decoration={markings, mark=at position 0.99 with {\arrow{triangle 45}}}, postaction={decorate}] node[below] {$a$} (b)
edge[thick, loop, min distance = 20mm, looseness = 10, out = 135, in = 45,decoration={markings, mark=at position 0.99 with {\arrow{triangle 45}}}, postaction={decorate}] node[above] {$b$} (b)
	(c)	edge[thick, decoration={markings, mark=at position 0.99 with {\arrow{triangle 45}}}, postaction={decorate} ] node[above] {$c_1$}  (b);
\end{tikzpicture} \]
\end{example}

Let us first just say a few words on necessary conditions for an étale groupoid $\mathcal{H}$ to be embeddable into $\mathcal{G}_{E_2}$. First of all, it is clearly necessary that $\mathcal{H}$ is ample, Hausdorff and second countable, since $\mathcal{G}_{E_2}$ is. As we observed for the graph groupoids, it is also necessary that $\mathcal{H}^{(0)}$ has no isolated points, and hence that $\mathcal{H}^{(0)}$ is a locally compact Cantor space. Furthermore, since subgroupoids of effective groupoids are effective, it is also necessary that $\mathcal{H}$ be effective. As a final observation in this regard, any embedding $\Phi \colon \mathcal{H} \hookrightarrow \mathcal{G}_{E_2}$ induces an embedding of the isotropy bundles $\mathcal{H}' \hookrightarrow \left(\mathcal{G}_{E_2}\right)'$, meaning that $\Phi$ restricts to an embedding of the isotropy group $\mathcal{H}_y^y$ into $\left(\mathcal{G}_{E_2}\right)_{\Phi(y)}^{\Phi(y)}$ for each $y \in \mathcal{H}^{(0)}$. Now recall that for any graph groupoid $\mathcal{G}_E$ the isotropy groups are 
\[\left(\mathcal{G}_E\right)_x^x \cong \left\lbrace\begin{matrix}
\mathbb{Z} & \text{ if } x \text{ is eventually periodic}, \\
0 & \text{ otherwise}.
\end{matrix} \right. \]
Thus, a final necessary condition for embeddability is that the istropy bundle of $\mathcal{H}$ consists only of the groups $0$ and $\mathbb{Z}$. This rules out for instance (most) products of graph groupoids, since they typically have isotropy groups that are free abelian of rank up to the number of factors in the product. Note however, that taking the product with a principal groupoid does no harm in this regard. As we'll see imminently, taking the product with $\mathcal{R}$ (i.e.\ stabilizing) does not affect embeddability into $\mathcal{G}_{E_2}$.

\begin{proposition}\label{stabEmb}
Let $\mathcal{H}$ be an effective ample second countable Hausdorff groupoid with~$\mathcal{H}^{(0)}$ a locally compact Cantor space. Then $\mathcal{H}$ embeds into $\mathcal{G}_{E_2}$ if and only if the stabilized groupoid $\mathcal{H} \times \mathcal{R}$ embeds into $\mathcal{G}_{E_2}$.
\end{proposition}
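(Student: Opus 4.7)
My plan is to handle the two directions separately, with the harder work concentrated in the forward direction where I leverage both Theorem~\ref{thm:emb} and the known ``stabilization'' result for graph groupoids from~\cite{CRS}, namely that $\mathcal{G}_E \times \mathcal{R} \cong \mathcal{G}_{SE}$, where $SE$ denotes the graph obtained from $E$ by attaching an infinite head at every vertex.

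For the reverse direction, suppose $\mathcal{H} \times \mathcal{R}$ embeds into $\mathcal{G}_{E_2}$. Fixing any point $n_0 \in \mathbb{N}$, the map $h \mapsto (h,(n_0,n_0))$ is an injective étale homomorphism $\mathcal{H} \hookrightarrow \mathcal{H} \times \mathcal{R}$: it is a groupoid homomorphism by construction, and on unit spaces it is $x \mapsto (x,n_0)$, which is a homeomorphism onto the open set $\mathcal{H}^{(0)} \times \{n_0\}$ (since $\mathcal{R}$ is discrete). Composing with the given embedding yields the desired embedding $\mathcal{H} \hookrightarrow \mathcal{G}_{E_2}$.

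For the forward direction, assume $\mathcal{H}$ embeds into $\mathcal{G}_{E_2}$. Taking the product of this embedding with the identity on $\mathcal{R}$ gives an injective étale homomorphism $\mathcal{H} \times \mathcal{R} \hookrightarrow \mathcal{G}_{E_2} \times \mathcal{R}$. Invoking the stabilization result from~\cite{CRS}, $\mathcal{G}_{E_2} \times \mathcal{R} \cong \mathcal{G}_{SE_2}$, so the composite embedding lands in $\mathcal{G}_{SE_2}$. It remains to embed $\mathcal{G}_{SE_2}$ back into $\mathcal{G}_{E_2}$ via Theorem~\ref{thm:emb}, and the main task is to verify that $SE_2$ satisfies its hypotheses. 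The graph $SE_2$ is countable, clearly has no sinks (every vertex emits, including the head vertices which each emit one edge into the rest), and satisfies Condition~(L) since the only cycles live in the $E_2$-part and $E_2$ satisfies~(L). The only point requiring slight care is the absence of semi-tails: a semi-tail must be an \emph{infinite} path all of whose vertices emit a single edge, but every infinite path in $SE_2$ either stays entirely in $E_2$ (where each vertex emits two edges) or traverses finitely many head edges before entering $E_2$, so no such path exists. Thus Theorem~\ref{thm:emb} applies, giving $\mathcal{G}_{SE_2} \hookrightarrow \mathcal{G}_{E_2}$, and composing with the earlier embeddings yields $\mathcal{H} \times \mathcal{R} \hookrightarrow \mathcal{G}_{E_2}$.

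The only real obstacle is ensuring that all the objects along the way continue to satisfy the ``ample effective Hausdorff second countable with locally compact Cantor unit space'' hypotheses so that the cited results legitimately apply; but these properties are evidently stable under taking products with the discrete principal groupoid $\mathcal{R}$, and they are inherited by $\mathcal{G}_{SE_2}$ directly from the graph groupoid construction for the countable graph $SE_2$. Hence no extra hypotheses need to be verified beyond those assumed.
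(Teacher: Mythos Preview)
Your proof is correct and follows essentially the same approach as the paper's: the reverse direction uses the canonical embedding of $\mathcal{H}$ into its stabilization, and the forward direction factors through $\mathcal{G}_{E_2}\times\mathcal{R}\cong\mathcal{G}_{SE_2}$ via~\cite{CRS} and then applies Theorem~\ref{thm:emb} to $SE_2$. You simply spell out in more detail the verification that $SE_2$ satisfies the hypotheses of Theorem~\ref{thm:emb}, which the paper asserts without elaboration.
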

\begin{proof}
The ``if statement'' is trivial as a groupoid always embeds into its stabilization. Suppose $\Phi \colon \mathcal{H} \to \mathcal{G}_{E_2}$ is an injective étale homomorphism. Then $\phi \times \id \colon \mathcal{H} \times \mathcal{R} \to \mathcal{G}_{E_2} \times \mathcal{R}$ is an injective étale homomorphism as well. By~\cite[Lemma~4.1]{CRS} we have $\mathcal{G}_{E_2} \times \mathcal{R} \cong \mathcal{G}_{S E_2}$, and $S E_2$ is a countable graph satisfying Condition~(L) with no sinks nor semi-tails. So by Theorem~\ref{thm:emb}, $\mathcal{G}_{S E_2}$ embeds into $\mathcal{G}_{E_2}$. Thus $\mathcal{H} \times \mathcal{R}$ embeds into $\mathcal{G}_{E_2}$.
\end{proof}

The next lemma shows that any étale embedding of a groupoid $\mathcal{H}$, with compact unit space, into $\mathcal{G}_{E_2}$ can be ``twisted'' into an embedding that hits the whole unit space of $\mathcal{G}_{E_2}$.

\begin{lemma}\label{unitalEmb}
Let $\mathcal{H}$ be an effective ample second countable Hausdorff groupoid with $\mathcal{H}^{(0)}$ a Cantor space. If $\mathcal{H}$ embeds into $\mathcal{G}_{E_2}$, then there exists an embedding $\Phi \colon \mathcal{H} \hookrightarrow \mathcal{G}_{E_2}$ such that $\Phi\left(\mathcal{H}^{(0)}\right) = E_2^\infty$.
\end{lemma}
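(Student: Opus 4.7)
The plan is to prove the lemma by post-composing the given embedding with a suitable ``scaling'' automorphism induced by a bisection of $\mathcal{G}_{E_2}$, exploiting the self-similar structure of $E_2^\infty$. Let $\Phi_0 \colon \mathcal{H} \hookrightarrow \mathcal{G}_{E_2}$ be the given embedding. Since $\mathcal{H}^{(0)}$ is compact and $\Phi_0$ restricts to a local homeomorphism on the unit space, the image $A \coloneqq \Phi_0(\mathcal{H}^{(0)})$ is a compact open subset of $E_2^\infty$. If $A = E_2^\infty$ there is nothing to prove, so assume $A \subsetneq E_2^\infty$. The goal will be to find a compact bisection $W \subseteq \mathcal{G}_{E_2}$ with $s(W) = A$ and $r(W) = E_2^\infty$, and to conjugate $\Phi_0$ by $W$.

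To construct $W$, first decompose $A$ as a finite disjoint union of cylinder sets $A = \bigsqcup_{i=1}^n Z(\mu_i)$ with $\mu_i \in E_2^*$; this is possible since $A$ is compact open in $E_2^\infty$, and after a single refinement step (replacing some $Z(\mu_i)$ by $Z(\mu_i a) \sqcup Z(\mu_i b)$) we may assume $n \geq 2$. Next, exhibit a complete prefix code $\lambda_1,\ldots,\lambda_n \in E_2^*$ of the same cardinality, so that $E_2^\infty = \bigsqcup_{i=1}^n Z(\lambda_i)$; such a code exists for every $n \geq 1$, obtained iteratively by splitting one of the cylinders $Z(\alpha)$ into $Z(\alpha a) \sqcup Z(\alpha b)$. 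Since $E_2$ has a single vertex the range conditions in the notation $Z(\lambda_i, \mu_i)$ are automatically satisfied, so we may form the compact bisection
\[
W \coloneqq \bigsqcup_{i=1}^n Z(\lambda_i, \mu_i),
\]
which by the basis description in Section~\ref{sec:tfgg} has $s(W) = \bigsqcup_i Z(\mu_i) = A$ and $r(W) = \bigsqcup_i Z(\lambda_i) = E_2^\infty$.

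Finally, define a conjugation map $\Psi \colon (\mathcal{G}_{E_2})_{|A} \to \mathcal{G}_{E_2}$ by
\[
\Psi(g) \coloneqq w_{r(g)} \cdot g \cdot w_{s(g)}^{-1},
\]
where for $x \in A$, $w_x$ denotes the unique element of $W$ with $s(w_x) = x$. A direct verification shows that $\Psi$ is an isomorphism of étale groupoids (its inverse being conjugation by $W^{-1}$), and that on the unit space $\Psi$ restricts to the homeomorphism $\pi_W \colon A \to E_2^\infty$. Since $\Phi_0(\mathcal{H}) \subseteq (\mathcal{G}_{E_2})_{|A}$, we may set $\Phi \coloneqq \Psi \circ \Phi_0$, obtaining an injective étale homomorphism $\Phi \colon \mathcal{H} \to \mathcal{G}_{E_2}$ with $\Phi(\mathcal{H}^{(0)}) = \Psi(A) = E_2^\infty$, as required. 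The only slightly subtle point is checking that $\Psi$ is a \emph{topological}, not merely algebraic, isomorphism; this follows from $W$ being a clopen bisection, so that $x \mapsto w_x$ and its inverse are continuous, making conjugation by $W$ continuous in both directions.
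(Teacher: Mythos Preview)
Your proof is correct and follows the same overall strategy as the paper: find a compact bisection $W \subseteq \mathcal{G}_{E_2}$ with $s(W)=A$ and $r(W)=E_2^\infty$, then conjugate the given embedding by $W$. The difference lies in how the bisection is obtained. The paper invokes \cite[Theorem~6.4]{Mat} together with \cite[Example~3.3~(3)]{Mat2}, which identify $\mathcal{G}_{E_2}$ with the SFT-groupoid of the matrix $[2]$ and use the vanishing of $H_0(\mathcal{G}_{E_2})$ to conclude that any nonempty clopen subset of $E_2^\infty$ is the source of a bisection with full range. Your argument instead constructs $W$ directly from the self-similar combinatorics of $E_2$: write $A$ as a disjoint union of $n$ cylinders, choose any complete prefix code of size $n$ for $E_2^\infty$, and pair them off. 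This is more elementary and entirely self-contained, avoiding the appeal to groupoid homology; it also makes the bisection (and hence the resulting embedding) explicit. The paper's route, on the other hand, immediately generalises to any SFT-groupoid with trivial $H_0$, whereas your construction exploits the particularly simple structure of $E_2$. One minor remark: your refinement to $n\geq 2$ is harmless but unnecessary, since even $n=1$ (with $\lambda_1$ the empty word) already yields a valid bisection $Z(v,\mu_1)$.
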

\begin{proof}
Let $\Psi \colon \mathcal{H} \to \mathcal{G}_{E_2}$ be an injective étale homomorphism and let $Y = \Psi\left(\mathcal{H}^{(0)}\right)$. Then~$Y$ is a compact open (hence clopen) subset of $E_2^\infty$. We claim that there exists a compact open bisection $U \subseteq \mathcal{G}_{E_2}$ such that $s(U) = Y$ and $r(U) = E_2^\infty$. The claim follows from~\cite[Theorem~6.4]{Mat} and~\cite[Example~3.3~(3)]{Mat2} by identifying $\mathcal{G}_{E_2}$ with the \emph{SFT-groupoid} of the $1 \times 1$ matrix $A = [2]$ (see~\cite[Example~2.5]{Mat2}). Now define $\Phi(h) = U \cdot \Psi(h) \cdot U^{-1}$ for $h \in \mathcal{H}$. Then $\Phi$ is an injective étale homomorphism and \[\Phi\left(\mathcal{H}^{(0)}\right) = U  Y  U^{-1} = U U^{-1} = r(U) = E_2^\infty.\]
\end{proof}

We now state the most general version of our embedding theorem.

\begin{theorem}\label{eqEmbed}
Let $\mathcal{H}$ be an effective ample second countable Hausdorff groupoid whose unit space~$\mathcal{H}^{(0)}$ is a locally compact Cantor space. If $\mathcal{H}$ is groupoid equivalent to $\mathcal{G}_E$, for some countable graph $E$ satisfying Condition~(L) and having no sinks nor semi-tails, then $\mathcal{H}$ embeds into~$\mathcal{G}_{E_2}$. Moreover, if $\mathcal{H}^{(0)}$ is compact, then the embedding maps $\mathcal{H}^{(0)}$ onto $E_2^\infty$.
\end{theorem}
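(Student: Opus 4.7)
The plan is to reduce everything to Theorem~\ref{thm:emb} (which handles graph groupoids) by passing through the stabilization $\mathcal{H} \times \mathcal{R}$. First I would invoke the standard fact that groupoid equivalence of ample Hausdorff groupoids is equivalent to isomorphism after stabilization: since $\mathcal{H}$ is groupoid equivalent to $\mathcal{G}_E$, we obtain an isomorphism $\mathcal{H} \times \mathcal{R} \cong \mathcal{G}_E \times \mathcal{R}$. This is precisely the kind of result established in~\cite{CRS} (and going back to Muhly--Renault--Williams in a broader setting), which is already cited in the preceding proofs in this section.

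Next, combine this with the isomorphism $\mathcal{G}_E \times \mathcal{R} \cong \mathcal{G}_{SE}$ from~\cite{CRS} used in Proposition~\ref{stabEmb}. I would then verify that the stabilized graph $SE$ still satisfies the hypotheses of Theorem~\ref{thm:emb}: $SE$ is countable, it satisfies Condition~(L) because one only adds heads (infinite sequences of edges pointing \emph{into} each vertex), and for the same reason $SE$ has no sinks and no semi-tails---the added edges only create new infinite \emph{incoming} tails, not outgoing ones, and the original in/out structure at vertices of $E$ is preserved. Consequently, Theorem~\ref{thm:emb} gives an embedding $\mathcal{G}_{SE} \hookrightarrow \mathcal{G}_{E_2}$.

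Chaining these steps together yields an injective étale homomorphism $\mathcal{H} \times \mathcal{R} \hookrightarrow \mathcal{G}_{E_2}$. Since $\mathcal{H}$ is effective, ample, second countable, Hausdorff with locally compact Cantor unit space, and these properties pass to (and from) the stabilization, Proposition~\ref{stabEmb} applies and we conclude that $\mathcal{H}$ itself embeds into $\mathcal{G}_{E_2}$. For the moreover statement, if $\mathcal{H}^{(0)}$ is compact, then Lemma~\ref{unitalEmb} upgrades the embedding to one sending $\mathcal{H}^{(0)}$ onto $E_2^\infty$.

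The main obstacle is the first step: invoking that groupoid equivalence yields isomorphism of stabilizations by $\mathcal{R}$. This is not proved in the excerpt but is a well-established fact in the literature on étale groupoid Morita theory, and it is exactly the device that lets the rigidity/embedding results for the distinguished class (graph groupoids) propagate to the full Morita-equivalence class. The verification that $SE$ inherits the required graph-theoretic conditions is essentially routine, and the final two deductions are direct applications of results already assembled in this section.
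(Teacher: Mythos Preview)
Your argument is correct and follows essentially the same route as the paper: groupoid equivalence gives $\mathcal{H}\times\mathcal{R}\cong\mathcal{G}_E\times\mathcal{R}$ via \cite[Theorem~3.2]{CRS}, one then embeds the stabilization into $\mathcal{G}_{E_2}$ using Theorem~\ref{thm:emb} together with Proposition~\ref{stabEmb}, and finishes with Lemma~\ref{unitalEmb} for the compact case. The only cosmetic difference is that you unfold part of Proposition~\ref{stabEmb} by passing through $\mathcal{G}_{SE}$ directly (and checking $SE$ inherits the graph conditions), whereas the paper applies Proposition~\ref{stabEmb} as a black box to $\mathcal{G}_E$; both are the same argument at slightly different levels of granularity.
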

\begin{proof}
Suppose $\mathcal{H}$ is groupoid equivalent to $\mathcal{G}_E$ as above. Then by~\cite[Theorem~3.2]{CRS} we have $\mathcal{H} \times \mathcal{R} \cong \mathcal{G}_E \times \mathcal{R}$. By Theorem~\ref{thm:emb} and Proposition~\ref{stabEmb}, $\mathcal{G}_E \times \mathcal{R}$ embeds into~$\mathcal{G}_{E_2}$, hence so does $\mathcal{H} \times \mathcal{R}$ and $\mathcal{H}$. The second statement follows from Lemma~\ref{unitalEmb}.
\end{proof}

\begin{remark}
We note that for any groupoid $\mathcal{H}$ as in the above theorem, its topological full group $\llbracket \mathcal{H} \rrbracket$ also has the Haagerup property.
\end{remark}

\subsection{Embedding AF-groupoids}
A well-studied class of groupoids satisfying the hypothesis of Theorem~\ref{eqEmbed}, yet conceptually different from graph groupoids, are the \emph{AF-groupoids}. See~\cite{GPS2} (wherein they are dubbed \emph{AF-equivalence relations}). Let $\mathcal{G}$ be an ample Hausdorff second countable groupoid with $\mathcal{G}^{(0)}$ a locally compact Cantor space. Then $\mathcal{G}$ is called an \emph{AF-groupoid} if there exists an increasing sequence $\mathcal{K}_1 \subseteq \mathcal{K}_2 \subseteq \ldots \subseteq \mathcal{G}$ of clopen subgroupoids such that 
\begin{itemize}
\item $\mathcal{K}_n$ is principal for each $n \in \mathbb{N}$.
\item $\mathcal{K}_n^{(0)} = \mathcal{G}^{(0)}$ for each $n \in \mathbb{N}$.
\item $\mathcal{K}_n \setminus \mathcal{G}^{(0)}$ is compact for each $n \in \mathbb{N}$.
\item $\bigcup_{n=1}^\infty \mathcal{K}_n = \mathcal{G}$.
\end{itemize}
This entails that $\mathcal{G}$ is principal.

\begin{remark}
The terminology AF-groupoid is due to Renault~\cite{Ren}, and is also used by Matui in~\cite{Mat1} and~\cite{Mat2}. Note however, that Matui only considered the case of a compact unit space therein.
\end{remark}

In the following example we explain how \emph{Bratteli diagrams} give rise to AF-groupoids.

\begin{example}[c.f.~{\cite[Example~2.7(ii)]{GPS2}}]\label{ex:BD}
A \emph{Bratteli diagram} $B$ is a directed graph whose vertex set $V$ and edge set $E$ can be written as countable disjoint unions of non-empty finite sets
\begin{equation}\label{eq:BD}
V = V_0 \sqcup V_1 \sqcup V_2 \sqcup \ldots \quad \text{and} \quad  E =  E_1 \sqcup E_2 \sqcup E_3 \sqcup \ldots
\end{equation}
such that the source and range maps satisfy $s(E_n) = V_{n-1}$ and $r(E_n) \subseteq V_n$\footnote{This notation is inconsistent with what we have been using for directed graphs so far. But since Bratteli diagrams are very special kinds of graphs we have chosen to use the well-established notation from the literature. In this way we can, albeit somewhat artificially, distinguish a Bratteli diagram from its underlying graph.}. In particular, there are no sinks in $B$. Let $S_B \subseteq V$ denote the set of sources in $B$. Then $V_0 \subseteq S_B$. We call~$B$ a \emph{standard} Bratteli diagram if there is only one source in $B$, i.e.\ $S_B = \{v_0\} = V_0$. We say that $B$ is \emph{simple} if for every vertex $v \in V_n$, there is an $m > n$ such that there is a path from $v$ to every vertex in $V_m$. The partitions of the vertices and edges (into \emph{levels} as in Equation \eqref{eq:BD}) is considered part of the data of the Bratteli diagram $B$. We let $E_B$ denote the underlying graph where we ``forget'' about the partitions.

For a source $v \in S_B \cap V_n$ on level $n$ we let $X_v$ denote the set of infinite paths starting in~$v$, that is 
\[X_v \coloneqq \{ e_{n+1} e_{n+2} e_{n+3} \ldots  \mid s(e_{n+1}) = v, e_{n+k} \in E_{n+k}, s(e_{n+k}) = r(e_{n+k-1}), k > 1  \}\] 
The \emph{path space} of $B$ is 
\[X_B \coloneqq \bigsqcup_{v \in S_B} X_v \]
whose topology is given by the basis of \emph{cylinder sets} \[C(\mu) \coloneqq \{ e_{n+1} e_{n+2} \ldots \in X_{s(\mu)} \mid e_{n+1} \ldots e_{n+ \vert \mu \vert} = \mu \}\] where $\mu$ is a finite path such that $s(\mu) = v$ for some source $v \in S_B \cap V_n$. The path space $X_B$ is Boolean, and it is compact if and only if $S_B$ is finite. Further, $X_B$ is perfect if and only if~$E_B$ has no semi-tails. Two infinite paths in $X_B$ are \emph{tail-equivalent} if they agree from some level on. With this equivalence relation as the starting point, let for each $N \in \mathbb{N}$ \[\mathcal{P}_N \coloneqq \{ (x,y) \in X_B \times X_B \mid s(x) \in V_m, s(y) \in V_n, m,n \leq N, x_k = y_k \text{ for all } k > N  \}.\]
That is, $\mathcal{P}_N$ consists of all pairs of infinite paths which start before the $N$'th level and agrees from the $N$'th level and onwards. Equipping $\mathcal{P}_N$ with the relative topology from $X_B \times X_B$ makes $\mathcal{P}_N$ a compact principal ample Hausdorff groupoid whose unit space is identified with $\bigsqcup_{n=1}^N \bigsqcup_{v \in S_B \cap V_n} Z(v)$.

We define the \emph{groupoid of the Bratteli diagram $B$} as the increasing union \[\mathcal{G}_B \coloneqq \bigcup_{N=1}^\infty \mathcal{P}_N\] equipped with the inductive limit topology. For two finite paths $\mu, \lambda$ with $s(\mu), s(\lambda) \in S_B$ and $r(\mu) = r(\lambda)$ we define \[C(\mu, \lambda) \coloneqq \left\{ (x,y) \in C(\mu) \times C(\lambda) \mid x_{\left[\vert \mu \vert +1, \infty\right)} = y_{\left[\vert \lambda \vert +1, \infty\right)} \right\}. \]
A straightforward computation shows that the family of $C(\mu, \lambda)$'s form a compact open basis for the inductive limit topology on $\mathcal{G}_B$. We identify $\mathcal{G}_B^{(0)}$ with $X_B$. By setting $\mathcal{K}_n = \mathcal{P}_n \cup \mathcal{G}_B^{(0)}$ one sees that $\mathcal{G}_B$ is an AF-groupoid. The groupoid $\mathcal{G}_B$ is minimal if and only if $B$ is a simple Bratteli diagram.
\end{example}

\begin{remark}
Although the AF-groupoid $\mathcal{G}_B$ is defined in terms of a very special graph, namely the Bratteli diagram $B$, it is generally not isomorphic to a graph groupoid. To see this, recall that $\mathcal{G}_B$ is always principal, while a graph groupoid $\mathcal{G}_E$ is principal if and only if the graph~$E$ has no cycles. If $X_B$ is compact, perfect and infinite (this is essentially stipulating that the Bratteli diagram is standard and ``non-degenerate''), then $\mathcal{G}_B$ cannot be isomorphic to any graph groupoid. For any such  $\mathcal{G}_E$ would have a compact unit space, i.e.~$E$ has finitely many vertices, and $E$ would have no cycles and no sinks. There are clearly no such graphs.
\end{remark}

Giordano, Putnam and Skau showed that, just as with AF-algebras~\cite{Bra}, every AF-groupoid can be realized by a Bratteli diagram as in Example~\ref{ex:BD}.

\begin{theorem}[{\cite[Theorem~3.9]{GPS2}}]
Let $\mathcal{H}$ be an AF-groupoid. Then there exists a Bratteli diagram $B$ such that $\mathcal{H} \cong \mathcal{G}_B$. If $\mathcal{H}^{(0)}$ is compact, then $B$ can be chosen to be standard.
\end{theorem}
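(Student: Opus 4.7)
The plan is to build a Bratteli diagram $B$ level by level, mirroring the AF-structure $\mathcal{K}_1 \subseteq \mathcal{K}_2 \subseteq \cdots$ of $\mathcal{H}$ by successively finer clopen partitions of $\mathcal{H}^{(0)}$. First I would establish the crucial finiteness property: since each $\mathcal{K}_n$ is principal, ample, Hausdorff with $\mathcal{K}_n \setminus \mathcal{H}^{(0)}$ compact, every $\mathcal{K}_n$-orbit is finite, and $\mathcal{K}_n$ restricted to any compact open subset of $\mathcal{H}^{(0)}$ is a compact étale equivalence relation with only finitely many orbit-classes. This uses that $r$ and $s$ are local homeomorphisms and that the isotropy is trivial, forcing the slices $\{g \in \mathcal{K}_n : s(g) = x\}$ to be discrete, finite sets for each $x$.

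Next I would iteratively produce clopen partitions $\mathcal{P}_n$ of $\mathcal{H}^{(0)}$ (partial when $\mathcal{H}^{(0)}$ is non-compact) satisfying: (a) $\mathcal{P}_{n+1}$ refines $\mathcal{P}_n$; (b) each piece $A \in \mathcal{P}_n$ is \emph{$\mathcal{K}_n$-coherent}, meaning that for every $x \in A$ the orbit $\mathrm{Orb}_{\mathcal{K}_n}(x)$ is contained in the union of certain pieces of $\mathcal{P}_n$ and $\mathcal{K}_n$ restricted to that union acts transitively on these pieces; (c) the algebra generated by $\bigcup_n \mathcal{P}_n$ is a basis for the topology on $\mathcal{H}^{(0)}$. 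In the non-compact case, one fixes an exhaustion $\mathcal{H}^{(0)} = \bigcup_k U_k$ by compact opens and at level $n$ refines only inside $U_n$, leaving the complementary compact open piece untouched; this precisely accounts for the multiple sources of $B$ appearing at successive levels.

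Now I would define the Bratteli diagram $B$: the vertices at level $n$ are the $\mathcal{K}_n$-equivalence classes of pieces in $\mathcal{P}_n$, and for each piece $B \in \mathcal{P}_{n+1}$ contained in a piece $A \in \mathcal{P}_n$ I would draw one edge from $[A]$ to $[B]$. An infinite path $e_1 e_2 \cdots$ in $B$ corresponds to a nested sequence of clopen pieces whose intersection, by (c) and compactness of closures, is a single point; this defines a homeomorphism $X_B \to \mathcal{H}^{(0)}$. To extend to a groupoid isomorphism $\mathcal{G}_B \cong \mathcal{H}$, I would verify that two infinite paths are tail-equivalent (at level $n$) iff the corresponding points of $\mathcal{H}^{(0)}$ lie in the same $\mathcal{K}_n$-orbit. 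Combined with $\mathcal{H} = \bigcup_n \mathcal{K}_n$ this gives a bijective algebraic homomorphism, and checking that basic bisections $C(\mu,\lambda)$ in $\mathcal{G}_B$ correspond to compact bisections in $\mathcal{K}_{\max(|\mu|,|\lambda|)}$ yields homeomorphism. When $\mathcal{H}^{(0)}$ is compact, I would take $\mathcal{P}_0 = \{\mathcal{H}^{(0)}\}$ as the trivial one-piece partition, producing a Bratteli diagram with a single source $v_0$ at level $0$, i.e.\ a standard one.

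The main obstacle will be carrying out step (b) above consistently across all levels: given the partition $\mathcal{P}_{n}$, one must refine it to a partition $\mathcal{P}_{n+1}$ that is simultaneously $\mathcal{K}_{n+1}$-coherent \emph{and} finer than $\mathcal{P}_n$. Coherence at level $n+1$ may force merging of pieces that were separated at level $n$, which conflicts with refinement. The resolution is to start the construction by first subdividing each $\mathcal{K}_n$-coherent piece of $\mathcal{P}_n$ into smaller $\mathcal{K}_{n+1}$-coherent pieces using the finite $\mathcal{K}_{n+1}$-orbits and an auxiliary choice of base points, a standard Kakutani--Rokhlin type argument. In the non-compact case additional care is needed to ensure the exhaustion $U_k$ is compatible with the $\mathcal{K}_n$-orbit structure, which can be arranged by replacing $U_k$ with the $\mathcal{K}_k$-saturation $r(s^{-1}(U_k) \cap \mathcal{K}_k)$, a compact open set by the compactness of $\mathcal{K}_k \setminus \mathcal{H}^{(0)}$.
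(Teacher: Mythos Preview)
The paper does not give its own proof of this statement: it is quoted verbatim as \cite[Theorem~3.9]{GPS2} and used as a black box, with no argument supplied. So there is nothing in the paper to compare your proposal against.

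For what it is worth, your sketch is essentially the standard Kakutani--Rokhlin tower construction that Giordano, Putnam and Skau carry out in \cite{GPS2}: one builds nested clopen partitions adapted to the $\mathcal{K}_n$-orbit structure, reads off a Bratteli diagram from the refinement data, and identifies the path space with $\mathcal{H}^{(0)}$ via nested intersections. Your identification of the main difficulty (refining while preserving $\mathcal{K}_{n+1}$-coherence) and of the fix (choose base sets and saturate, replacing $U_k$ by its $\mathcal{K}_k$-saturation in the non-compact case) is exactly right. If you want to flesh this out into a full proof you should consult \cite{GPS2} directly, where the bookkeeping is done carefully; the present paper simply imports the result.
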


\begin{remark}\label{AFfullgroups}
As another example of a concrete description of the topological full group of an ample groupoid, we remark that Matui described the topological full group of an AF-groupoid with compact unit space in terms of a definining Bratteli diagram in~\cite[Proposition~3.3]{Mat5}. The topological full group $\llbracket \mathcal{G}_B \rrbracket$, where $B$ is a Bratteli diagram, is the direct limit of the finite groups $\Gamma_N$ for $N \in \mathbb{N}$, where $\Gamma_N \leq \homeo(X_B)$ consists of all permutations of the finite set of paths from level $V_0$ to $V_N$ such that the permutation preserves the range of these paths (and the action on $X_B$ is by permuting the intial segment of an infinite path). We should also mention that these groups were originally studied by Krieger in~\cite{Kri}, without emphasis on the underlying groupoids.
\end{remark}

By the preceding remark it is clear that the topological full group of any AF-groupoid is a locally finite group. And actually, this characterizes the AF-groupoids. This is somewhat of a folklore result, but a proof is published by Matui in the compact case, and it is not hard to see that his proof extends to locally compact unit spaces as well. 

\begin{proposition}[c.f.\ {\cite[Proposition~3.2]{Mat5}}]\label{AFlocfin}
Let $\mathcal{G}$ be an ample principal Hausdorff second countable groupoid with $\mathcal{G}^{(0)}$ a locally compact Cantor space. Then the topological full group $\llbracket \mathcal{G} \rrbracket$ is locally finite if and only if $\mathcal{G}$ is an AF-groupoid.
\end{proposition}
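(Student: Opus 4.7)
First I would record two set-up observations that power both directions. Since $\mathcal{G}$ is principal, every $g \in \mathcal{G} \setminus \mathcal{G}^{(0)}$ satisfies $s(g) \neq r(g)$, so Lemma~\ref{lemma:bisectionExistence} provides a full bisection containing $g$ whose induced homeomorphism lies in $\llbracket \mathcal{G} \rrbracket$; combined with the trivial full bisection $\mathcal{G}^{(0)}$ for units, this shows $\llbracket \mathcal{G} \rrbracket$ covers $\mathcal{G}$. By Proposition~\ref{prop_germs} this yields an isomorphism $\mathcal{G} \cong \G(\llbracket \mathcal{G} \rrbracket, \mathcal{G}^{(0)})$, which I will use throughout. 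Also, second countability of $\mathcal{G}$ implies that $\llbracket \mathcal{G} \rrbracket$ is a countable group.

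For the forward direction, assume $\mathcal{G}$ is AF with filtration $\mathcal{K}_1 \subseteq \mathcal{K}_2 \subseteq \cdots$, and let $\pi_{U_1}, \ldots, \pi_{U_k} \in \llbracket \mathcal{G} \rrbracket$. By Lemma~\ref{lem:extendBisection} each $U_i \setminus \mathcal{G}^{(0)}$ is compact, so the compact set $K \coloneqq \bigcup_i (U_i \setminus \mathcal{G}^{(0)})$ lies inside some $\mathcal{K}_n$ (the $\mathcal{K}_n$'s forming an increasing open cover of $\mathcal{G}$). Let $A \coloneqq \bigcup_i \supp(\pi_{U_i})$, which is compact open in $\mathcal{G}^{(0)}$ by Lemma~\ref{lem_clopen}, and let $B$ be the $\mathcal{K}_n$-saturation of $A$. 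Using compactness of $A$ and of $\mathcal{K}_n \setminus \mathcal{G}^{(0)}$ together with the Hausdorff property, one verifies that $s^{-1}(A) \cap \mathcal{K}_n$ is compact, hence $B$ is compact; it is open since $r$ is an open map, so $B$ is clopen. The restriction $\mathcal{K}_n|_B$ is then a compact principal ample Hausdorff groupoid over the Cantor space $B$, and is trivially AF. Matui's compact case \cite[Proposition~3.2]{Mat5} then gives that $\llbracket \mathcal{K}_n|_B \rrbracket$ is locally finite. Each $\pi_{U_i}$ has its support in $B$ and satisfies $U_i \setminus \mathcal{G}^{(0)} \subseteq K \cap s^{-1}(B) \cap r^{-1}(B) \subseteq \mathcal{K}_n|_B$, so it restricts to an element of $\llbracket \mathcal{K}_n|_B \rrbracket$. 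Consequently the subgroup of $\llbracket \mathcal{G} \rrbracket$ generated by these elements is finite.

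For the converse, assume $\llbracket \mathcal{G} \rrbracket$ is locally finite and enumerate it as $\{\gamma_1, \gamma_2, \ldots\}$. Let $\Gamma_n$ be the subgroup generated by $\gamma_1, \ldots, \gamma_n$; by local finiteness $\Gamma_n$ is finite, $\Gamma_n \subseteq \Gamma_{n+1}$, and $\bigcup_n \Gamma_n = \llbracket \mathcal{G} \rrbracket$. Define $\mathcal{K}_n \coloneqq \G(\Gamma_n, \mathcal{G}^{(0)})$, regarded as an open subgroupoid of $\mathcal{G}$ via the isomorphism $\mathcal{G} \cong \G(\llbracket \mathcal{G} \rrbracket, \mathcal{G}^{(0)})$ from the first paragraph. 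Then $\mathcal{K}_n^{(0)} = \mathcal{G}^{(0)}$ by construction, $\mathcal{K}_n$ is principal since $\mathcal{G}$ is, and $\bigcup_n \mathcal{K}_n = \mathcal{G}$ since $\bigcup_n \Gamma_n = \llbracket \mathcal{G} \rrbracket$ covers $\mathcal{G}$. Using Lemma~\ref{lem_clopen}, each $\supp(\gamma)$ is compact open, so
\[
\mathcal{K}_n \setminus \mathcal{G}^{(0)} \;=\; \bigcup_{\gamma \in \Gamma_n \setminus \{1\}} Z[\gamma,\supp(\gamma)]
\]
is a finite union of compact open bisections, hence compact. Since $\mathcal{G}^{(0)}$ is clopen in the ample Hausdorff groupoid $\mathcal{G}$ and $\mathcal{K}_n \setminus \mathcal{G}^{(0)}$ is a compact (hence closed) open subset of $\mathcal{G}$, the subgroupoid $\mathcal{K}_n$ is clopen. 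Thus the sequence $(\mathcal{K}_n)_{n\geq 1}$ exhibits $\mathcal{G}$ as an AF-groupoid.

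The main obstacle is the forward direction: one needs to reduce an arbitrary finitely generated subgroup of $\llbracket \mathcal{G} \rrbracket$ to a situation handled by Matui's compact unit space theorem, which requires producing a $\mathcal{K}_n$-invariant compact open set $B$ containing all the relevant supports. Hausdorffness is essential for turning compact subsets into closed, and hence clopen, subsets throughout both directions.
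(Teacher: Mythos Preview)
Your proof is correct and carries out in detail what the paper only sketches: the paper provides no argument beyond the remark that Matui's compact-case proof extends, whereas you have actually written out the extension. Both directions are sound. In the forward direction your reduction via the $\mathcal{K}_n$-saturation $B$ is clean; note that you could bypass the citation of Matui entirely by observing that $\mathcal{K}_n|_B$ is a \emph{compact} (not merely compact-unit-space) principal ample groupoid, hence a finite union of compact open bisections, which forces $\llbracket \mathcal{K}_n|_B \rrbracket$ to embed in a finite symmetric group---this is in fact the content of the easy direction of Matui's result, so nothing is lost. In the converse direction, your use of Proposition~\ref{prop_germs} to realize $\mathcal{K}_n$ as $\G(\Gamma_n,\mathcal{G}^{(0)})$ inside $\mathcal{G}$ is exactly the right mechanism and makes transparent why $\mathcal{K}_n \setminus \mathcal{G}^{(0)}$ is compact.
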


\begin{remark}\label{rem:LDA}
The commutator subgroups $\DD(\mathcal{G}) \leq \llbracket \mathcal{G} \rrbracket$ for AF-groupoids $\mathcal{G}$ are quite interesting in their own right. In fact, these exhaust\footnote{With the single exception of the infinite finitary alternating group.} the class of so-called \emph{strongly diagonal limits of products of alternating groups} (also called \emph{LDA-groups}, see~\cite{LN} where these are classified using the dimension groups of their Bratteli diagrams). These form a subclass of the locally finite simple groups. By Corollary~\ref{AFembed} below, all the LDA-groups embed into Thompson's group $V$.
\end{remark}

We now demonstrate that every AF-groupoid is groupoid equivalent to a graph groupoid. This is essentially just a reformulation of the main theorem from~\cite{Drin}, wherein it is shown that any AF-algebra can be recovered as a certain \emph{pointed} graph $C^*$-algebra of a defining Bratteli diagram. In contrast, in Proposition~\ref{AFgrpdeq} below we emphasize the groupoids, rather than their $C^*$-algebras. Also, since we use ``unlabeled'' Bratteli diagrams here, as opposed to \emph{labeled Bratteli diagrams} (c.f.~\cite[Section~2]{Drin}), the computations are easier.

\begin{proposition}\label{AFgrpdeq}
Let $B$ be a Bratteli diagram. Then the AF-groupoid $\mathcal{G}_B$ is isomorphic to the restriction of the graph groupoid $\mathcal{G}_{E_B}$ to the open subset $ \bigsqcup_{v \in S_B} Z(v) \subseteq E_B^\infty$. In particular, every AF-groupoid is groupoid equivalent to a graph groupoid. 
\end{proposition}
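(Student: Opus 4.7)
The plan is to construct an explicit isomorphism $\Phi \colon \mathcal{G}_B \to (\mathcal{G}_{E_B})_{|\bigsqcup_{v \in S_B} Z(v)}$ and then invoke general theory to obtain the groupoid equivalence. The crucial observation to exploit first is that in the Bratteli diagram $B$, the edge sets $E_1, E_2, \ldots$ are pairwise disjoint, so every edge in $E_B$ has a unique ``level.'' This immediately forces two structural consequences: first, $E_B$ contains no cycles, so $\mathcal{G}_{E_B}$ is principal and $\partial E_B = E_B^\infty$ (there are no sinks nor infinite emitters since each $V_n, E_n$ is finite); and second, whenever $x, y \in E_B^\infty$ with $s(x) \in V_m$, $s(y) \in V_n$, and $\sigma^k(x) = \sigma^l(y)$, the shifted paths must lie at a common level, whence $m + k = n + l$. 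Thus the integer cocycle $k - l$ is automatically equal to $n - m$ and is completely determined by the source vertices.

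With this in hand, identify the path space $X_B$ with $\bigsqcup_{v \in S_B} Z(v) \subseteq E_B^\infty$ in the obvious way (a path starting at $v \in S_B \cap V_m$ is identified with the corresponding element of $Z(v)$). Define
\[\Phi(x, y) \coloneqq (x,\, n - m,\, y), \qquad s(x) \in V_m \cap S_B,\; s(y) \in V_n \cap S_B. \]
If $(x,y) \in \mathcal{G}_B$, then $x$ and $y$ agree from some level $N$ onwards, which gives $\sigma^{N - m}(x) = \sigma^{N - n}(y)$ and witnesses that $\Phi(x,y)$ lies in $\mathcal{G}_{E_B}$; so $\Phi$ is well-defined. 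Checking that $\Phi$ preserves source, range, inverse and composition is routine, and injectivity is immediate from the fact that the unit components already determine the element. Surjectivity follows from the key observation above: any $(x, k - l, y)$ in the restriction satisfies $k - l = n - m$, so by taking $k, l$ large enough that $\sigma^k(x) = \sigma^l(y)$ one obtains tail-equivalence of $x$ and $y$ as Bratteli paths, placing $(x,y) \in \mathcal{G}_B$ with $\Phi(x,y) = (x, k-l, y)$.

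For the topology, work with the compact open basis $\{C(\mu, \lambda)\}$ of $\mathcal{G}_B$ (finite paths $\mu, \lambda$ in $B$ starting at sources with $r(\mu) = r(\lambda)$) and the basis $\{Z(\mu, \lambda)\}$ of $\mathcal{G}_{E_B}$; since $E_B$ has only regular non-source vertices we do not need the finite exclusion sets~$F$. A direct computation shows $\Phi(C(\mu, \lambda)) = Z(\mu, \lambda)$ and the sets on the right form a basis for the subspace topology on the restriction (those $Z(\mu, \lambda)$ with $s(\mu), s(\lambda) \notin S_B$ being irrelevant). Hence $\Phi$ is a homeomorphism, completing the proof of the first assertion. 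For the second assertion, note that $\bigsqcup_{v \in S_B} Z(v)$ is open in $E_B^\infty$ and is $\mathcal{G}_{E_B}$-full: any $z \in E_B^\infty$ with $s(z) \in V_n$ can be pulled back along edges in $E_n, E_{n-1}, \ldots$ in finitely many steps to reach a source, producing a boundary path in some $Z(v)$ with $v \in S_B$ that is tail-equivalent to $z$. Restriction to a full open subset yields a groupoid equivalent groupoid, so combining this with the Giordano--Putnam--Skau realization theorem (every AF-groupoid is isomorphic to $\mathcal{G}_B$ for some Bratteli diagram $B$) gives the last sentence. The only mildly delicate step is the level-rigidity observation in the first paragraph; once that is in place, everything else reduces to unpacking definitions.
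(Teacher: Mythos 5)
Your proposal is correct and follows essentially the same route as the paper's proof: the same level-rigidity observation that the cocycle $k$ is forced to equal $n-m$ by the source levels, the same matching of the bases $C(\mu,\lambda) \leftrightarrow Z(\mu,\lambda)$, and the same fullness argument (pull any infinite path back to a source) combined with the restriction-to-a-full-open-set equivalence from Carlsen--Ruiz--Sims. The only cosmetic difference is that you define the isomorphism from $\mathcal{G}_B$ into the restriction rather than in the other direction.
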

\begin{proof}
Let $A = \bigsqcup_{v \in S_B} Z(v)$. Then 
\[\left(\mathcal{G}_{E_B}\right)_{| A} = \{ (x,k,y) \mid s(x), s(y) \in S_B, \sigma_{E_B}(x)^m = \sigma_{E_B}(y)^n, k = m - n  \}.  \]
Due to the special structure of the graph $E_B$, the lag $k$ in $(x,k,y) \in \left(\mathcal{G}_{E_B} \right)_{| A}$ is uniquely determined by $x$ and $y$. In fact, $k$ is determined by the levels on which $x$ and $y$ start in the Bratteli diagram. Indeed, let $m,n \in \mathbb{N}$ be such that $s(x) \in V_m$ and $s(y) \in V_n$, then~${k = n-m}$. This means that the map $\Phi \colon \left(\mathcal{G}_{E_B} \right)_{| A} \to \mathcal{G}_B$ defined by~${\Phi((x,k,y)) = (x,y)}$ is a bijection. It is easy to see that $\Phi$ is also a groupoid homomorphism. Finally, to see that $\Phi$ is a homeomorphism simply note that the family of $Z(\mu, \lambda)$'s where $\mu, \lambda$ are finite paths with~$s(\mu), s(\lambda) \in S_B$ and $r(\mu) = r(\lambda)$ form a basis for $\left(\mathcal{G}_{E_B}\right)_{| A}$, and that~${\Phi(Z(\mu, \lambda)) = C(\mu, \lambda)}$. Thus $\left(\mathcal{G}_{E_B}\right)_{| A} \cong \mathcal{G}_B$ as étale groupoids.

We claim that $A$ is a $\mathcal{G}_{E_B}$-full subset of $E_B^\infty$, and then the second statement follows from~\cite[Theorem~3.2]{CRS}. To see this, let $z \in E_B^\infty$ be an infinite path starting anywhere in the Bratteli diagram and simply note that by following $s(z)$ upwards in the Bratteli diagram, one eventually reaches a source $v \in S_B$ such that $v$ connects to $s(z)$. Letting $\mu$ be any path from $v$ to $s(z)$ we have that $z$ belongs to the $\mathcal{G}_{E_B}$-orbit of $\mu z \in A$.
\end{proof}

As a special case of Theorem~\ref{eqEmbed} we obtain the following.

\begin{corollary}\label{AFembed}
Let $\mathcal{G}$ be an AF-groupoid with $\mathcal{G}^{(0)}$ perfect. Then there exists an embedding of étale groupoids $\mathcal{G} \hookrightarrow \mathcal{G}_{E_2}$. If $\mathcal{G}^{(0)}$ is compact, then $\mathcal{G}^{(0)}$ maps onto $E_2^\infty$.
\end{corollary}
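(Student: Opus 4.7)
The plan is to combine the Bratteli diagram realization of AF-groupoids with Proposition~\ref{AFgrpdeq} and Theorem~\ref{eqEmbed}. By the Giordano--Putnam--Skau theorem cited just before Example~\ref{ex:BD}, there is a Bratteli diagram $B$ with $\mathcal{G} \cong \mathcal{G}_B$, and under this isomorphism $\mathcal{G}^{(0)}$ is identified with the path space $X_B$. Proposition~\ref{AFgrpdeq} then produces a groupoid equivalence between $\mathcal{G}_B$ and the graph groupoid $\mathcal{G}_{E_B}$ of the underlying directed graph. Hence it will suffice to check that $E_B$ meets the hypotheses of Theorem~\ref{eqEmbed}: namely that it is countable, satisfies Condition~(L), has no sinks, and has no semi-tails.

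The first three properties are immediate from the structure of a Bratteli diagram. Each $V_n$ and $E_n$ is finite, so $E_B$ is countable; the partition into levels forces $E_B$ to be acyclic, so Condition~(L) holds vacuously; and since $s(E_n) = V_{n-1}$, every vertex of $V_{n-1}$ emits at least one edge into $V_n$, ruling out sinks. The only substantive point is the absence of semi-tails, and for this I would argue by contrapositive. Suppose $E_B$ admits a semi-tail $e_n e_{n+1} \ldots$ with $s(e_n) \in V_m$ for some $m \in \NN_0$. Starting from $s(e_n)$, repeatedly choose any incoming edge; each such choice strictly decreases the level index, so after at most $m$ steps the procedure terminates at some source $v \in S_B$ and yields a finite path $\mu$ with $s(\mu) = v$ and $r(\mu) = s(e_n)$. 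The semi-tail condition $s(e_k)E_B^{\,1} = \{e_k\}$ for all $k \geq n$ forces any element of the cylinder $C(\mu) \subseteq X_B$ to agree with $\mu e_n e_{n+1} \ldots$ edge by edge, so $C(\mu) = \{\mu e_n e_{n+1} \ldots\}$ is an isolated singleton in $X_B$. This contradicts the hypothesis that $\mathcal{G}^{(0)} \cong X_B$ is perfect.

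With the hypotheses of Theorem~\ref{eqEmbed} verified, the embedding $\mathcal{G} \hookrightarrow \mathcal{G}_{E_2}$ follows at once: AF-groupoids are by definition ample, Hausdorff and second countable, and being principal they are automatically effective, while $\mathcal{G}^{(0)}$ is a locally compact Cantor space by the standing hypothesis together with these structural properties. The second assertion, that $\mathcal{G}^{(0)}$ is mapped onto $E_2^\infty$ whenever $\mathcal{G}^{(0)}$ is compact, is delivered directly by the corresponding clause of Theorem~\ref{eqEmbed}. The main (and really only) obstacle in the argument is the semi-tail verification; the rest is assembling pieces already established.
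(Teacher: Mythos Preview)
Your proof is correct and follows exactly the route the paper intends: the corollary is stated there simply as a special case of Theorem~\ref{eqEmbed}, invoking Proposition~\ref{AFgrpdeq} and the Bratteli realization, and you have supplied the details. The semi-tail verification you spell out is already recorded in Example~\ref{ex:BD} (``$X_B$ is perfect if and only if $E_B$ has no semi-tails''), so you could have cited that directly, but your argument for it is sound.
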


From this we obtain an analogue of Corollary~\ref{cor:graphAlgEmb} for AF-algebras and their diagonals. Let $A$ be an AF-algebra. By an \emph{AF Cartan subalgebra} $D \subseteq A$ we mean a Cartan subalgebra arising from the diagonalization method of Str\u atil\u a and Voiculescu~\cite{SV}. See~\cite[Section~4]{Drin} for a description of these diagonals for non-unital AF-algebras. Note that they are also $C^*$-diagonals in the sense of Kumjian~\cite{Kum}. According to~\cite[Subsection~6.2]{Ren2} these are precisely the Cartain pairs arising as $\left(C^*_r\left(\mathcal{G}_B\right), C_0(X_B)\right)$ for a Bratteli diagram $B$.

\begin{corollary}\label{cor:AFemb}
Let $A$ be an infinite-dimensional AF-algebra and let $D \subseteq A$ be any AF Cartan subalgebra in $A$ whose spectrum is perfect. Then there is an injective \mbox{$*$-homomorphism} ${\psi \colon A \hookrightarrow \mathcal{O}_2}$ such that $\psi(D) \subseteq \mathcal{D}_2$. If $A$ is unital, then so is $\psi$, and $\psi(D) = \mathcal{D}_2$. 
\end{corollary}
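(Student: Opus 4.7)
The plan is to reduce the corollary to a statement about groupoids, apply the AF-groupoid embedding of Corollary~\ref{AFembed}, and then pass back to $C^*$-algebras using the functoriality of the reduced groupoid $C^*$-algebra construction under injective étale homomorphisms. The starting point is Renault's characterization of AF Cartan pairs (recalled just above the statement): every AF Cartan pair $(A,D)$ with totally disconnected spectrum is of the form $(C^*_r(\mathcal{G}_B), C_0(X_B))$ for some Bratteli diagram $B$, where $\mathcal{G}_B$ is the AF-groupoid of Example~\ref{ex:BD} and $X_B$ is its unit space.

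First, I would fix such a Bratteli diagram $B$ with $(A,D) \cong (C^*_r(\mathcal{G}_B), C_0(X_B))$. Since $A$ is infinite-dimensional, $X_B$ is infinite. If $X_B$ has isolated points, then by Proposition~\ref{prop:DEperfect} applied to $E_B$ (after identifying $X_B$ with the relevant subspace of $\partial E_B$ via Proposition~\ref{AFgrpdeq}), these correspond exactly to semi-tails in the underlying graph $E_B$; I would eliminate these by a standard telescoping of $B$ that does not alter the isomorphism class of $(A,D)$, thereby arranging that $\mathcal{G}_B^{(0)} = X_B$ is a locally compact Cantor space (compact when $A$ is unital, since then $1_A \in D$ forces $X_B$ to be compact). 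Now Corollary~\ref{AFembed} applies and yields an injective étale homomorphism $\Phi \colon \mathcal{G}_B \hookrightarrow \mathcal{G}_{E_2}$, with $\Phi(X_B) = E_2^\infty$ in the unital case.

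Second, I would push $\Phi$ through the reduced groupoid $C^*$-algebra construction. Because $\Phi$ is an injective étale homomorphism between second countable Hausdorff ample groupoids, it induces an injective $*$-homomorphism $\psi \colon C^*_r(\mathcal{G}_B) \hookrightarrow C^*_r(\mathcal{G}_{E_2})$ by the functoriality noted in Section~\ref{sec:SpatG} (with reference to \cite[Proposition 1.9]{Phil}), and the restriction $\Phi^{(0)} \colon X_B \hookrightarrow E_2^\infty$ guarantees that $\psi$ maps $C_0(X_B)$ into $C_0(E_2^\infty)$. Under the canonical identifications $C^*_r(\mathcal{G}_{E_2}) \cong \mathcal{O}_2$ and $C_0(E_2^\infty) \cong \mathcal{D}_2$, this gives the desired diagonal-preserving embedding $\psi \colon A \hookrightarrow \mathcal{O}_2$ with $\psi(D) \subseteq \mathcal{D}_2$. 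In the unital case, $\Phi^{(0)}$ is a homeomorphism onto $E_2^\infty$, so $\psi$ sends $1_A$ to $1_{\mathcal{O}_2}$ and $\psi(D) = \mathcal{D}_2$.

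The main obstacle I anticipate is the perfectness arrangement in the first step: while any AF Cartan pair admits many Bratteli diagram presentations, some care is required to ensure that $(A,D)$ can be represented by a $B$ with no semi-tails so that Corollary~\ref{AFembed} is applicable. Once a semi-tail-free representing diagram is in hand, the rest of the argument is a formal consequence of the groupoid embedding theorem and the standard functoriality of $C^*_r(-)$ under injective étale homomorphisms.
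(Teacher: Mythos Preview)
Your approach is essentially the one the paper intends: the corollary is stated without proof, as an immediate consequence of Corollary~\ref{AFembed} together with Renault's identification of AF Cartan pairs with $(C^*_r(\mathcal{G}_B), C_0(X_B))$ and the functoriality of $C^*_r(-)$ under injective étale homomorphisms. Your outline of these steps is accurate and complete.

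One refinement regarding the obstacle you flag: telescoping a Bratteli diagram does not change the homeomorphism type of the spectrum of $D$, since that spectrum is intrinsic to the Cartan pair $(A,D)$ and not to the particular presentation $B$. Isolated points in $X_B$ correspond to minimal projections in $D$, and if such projections exist they persist under any choice of $B$. So the perfectness needed to invoke Corollary~\ref{AFembed} cannot be manufactured by telescoping; it must already hold for the given $D$. In effect, the hypothesis ``totally disconnected'' in the corollary (which is automatic for AF Cartan subalgebras) should be read as ``perfect'' (equivalently, $D$ has no minimal projections), and under that reading your argument goes through without the telescoping step.
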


\begin{remark}\label{LCCMfullgroups}
As a final remark, we note that certain transformation groupoids (by virtue of actually being AF-groupoids) also embed into $\mathcal{G}_{E_2}$. Let $X$ be a \emph{non-compact} locally compact Cantor space and let $T$ be a minimal homeomorphism on $X$. It follows from~\cite[Theorem~4.3]{GPS2} that the transformation groupoid $\mathbb{Z} \ltimes_T X$ is an AF-groupoid, and consequently~$\mathbb{Z} \ltimes_T X$ embeds into $\mathcal{G}_{E_2}$.

An indirect way of seeing that $\mathbb{Z} \ltimes_T X$ is an AF-groupoid is via Proposition~\ref{AFlocfin}. By realizing the dynamical system $(X,T)$ as a so-called \emph{Bratteli-Vershik system} on a (standard) \emph{almost simple orderered Bratteli diagram} $B = (V,E, \geq)$ c.f.~\cite{Dan}, one easily observes (as Matui did in~\cite{Mat4}) that $\llbracket \mathbb{Z} \ltimes_T X \rrbracket$ is locally finite. This is because each element of~$\llbracket \mathbb{Z} \ltimes_T X \rrbracket$ only depends on the inital edges down to level $N$ for some fixed $N$ (determined by the group element), for each infinite path in $X_B$. This actually allows one to describe the topological full group $\llbracket \mathbb{Z} \ltimes_T X \rrbracket$ explicitly in terms of a conjugate Bratteli-Vershik system.

A third way of demonstrating that $\mathbb{Z} \ltimes_T X$ is an AF-groupoid is that one can go from a conjugate Bratteli-Vershik system on an ordered Bratteli diagram $B = (V,E, \geq)$ to an ``unordered'' Bratteli diagram $B'$ such that $\mathbb{Z} \ltimes_T X \cong \mathcal{G}_{B'}$ as étale groupoids. Indeed, let~$e_1 e_2 e_3 \ldots \in X_B$ denote the unique maximal and minimal path in $X_B$ (c.f.~\cite{Dan}). By ``forgetting'' the ordering and removing each of the edges $e_n$ for all $n \in \mathbb{N}$, and thereby introducing a source at each of the vertices $s(e_n)$, one obtains the modified Bratteli diagram~$B'$, and it is not hard to see that the AF-groupoid $\mathcal{G}_{B'}$ is isomorphic to $\mathbb{Z} \ltimes_T X$.
\end{remark}

\newcommand{\etalchar}[1]{$^{#1}$} 

\end{document}